\apptocmd{\thebibliography}{\raggedright}{}{}
\definecolor{colorblind_blue}{RGB}{0,114,178}
\definecolor{colorblind_orange}{RGB}{213,94,0}
\newcommand{\orange}[1]{\ensuremath{{\color{colorblind_orange} #1}}}
\newcommand{\blue}[1]{\ensuremath{{\color{colorblind_blue} #1}}}
\newcommand{\Cg}[1]{\ensuremath{\llparenthesis #1 \rrparenthesis}}
\newcommand{\CgCg}[2]{\ensuremath{\underline{#1 \otimes #2}}}
\newcommand{\Pres}[1]{\ensuremath{\llbracket #1 \rrbracket}}
\newcommand{\PresPrime}[1]{\ensuremath{\llbracket #1 \rrbracket'}}
\newcommand{\GrpR}[1]{\ensuremath{\{#1\}}}
\patchcmd{\@maketitle}{\global\topskip42\p@\relax}
  {\global\topskip42\p@\relax \vspace*{-38pt}}
  {}{}
\renewcommand*{\backref}[1]{}
\renewcommand*{\backrefalt}[4]{%
    \ifcase #1 (Not cited.)%
    \or        (Cited on page~#2.)%
    \else      (Cited on pages~#2.)%
    \fi}
\newcommand{\arxiv}[1]{\href{http://arxiv.org/abs/#1}{{\tt arXiv:#1}}}
\newcommand*{\Cdot}[1][1.25]{%
  \mathpalette{\CdotAux{#1}}\cdot%
}
\newdimen\CdotAxis
\newcommand*{\CdotAux}[3]{%
  {%
    \settoheight\CdotAxis{$#2\vcenter{}$}%
    \sbox0{%
      \raisebox\CdotAxis{%
        \scalebox{#1}{%
          \raisebox{-\CdotAxis}{%
            $\mathsurround=0pt #2#3$%
          }%
        }%
      }%
    }%
    \dp0=0pt %
    \sbox2{$#2\bullet$}%
    \ifdim\ht2<\ht0 %
      \ht0=\ht2 %
    \fi
    \sbox2{$\mathsurround=0pt #2#3$}%
    \hbox to \wd2{\hss\usebox{0}\hss}%
  }%
}
\numberwithin{equation}{section}
\theoremstyle{plain}
\newtheorem{theorem}{Theorem}[section]
\newtheorem{maintheorem}{Theorem}
\newtheorem{proposition}[theorem]{Proposition}
\newtheorem{lemma}[theorem]{Lemma}
\newtheorem{corollary}[theorem]{Corollary}
\newtheorem*{unnumberedclaim}{Claim}
\newenvironment{step}[1]
 {\stepx}
 {\endstepx}
\newenvironment{claim}[1]
 {\claimx}
 {\endclaimx}
\theoremstyle{definition}
\newtheorem{asm}[theorem]{Assumption}
\newenvironment{assumption}[1][]{\begin{asm}[#1]\pushQED{\qed}}{\popQED \end{asm}}
\newtheorem{defn}[theorem]{Definition}
\newenvironment{definition}[1][]{\begin{defn}[#1]\pushQED{\qed}}{\popQED \end{defn}}
\newtheorem{notn}[theorem]{Notation}
\theoremstyle{remark}
\newtheorem{rmk}[theorem]{Remark}
\newenvironment{remark}[1][]{\begin{rmk}[#1] \pushQED{\qed}}{\popQED \end{rmk}}
\newtheorem{eg}[theorem]{Example}
\newtheorem{cvn}[theorem]{Convention}
\theoremstyle{plain}
\newcommand\Figure[1]{\centerline{\psfig{file=#1,scale=1}}}
\DeclareMathOperator{\coker}{coker}
\DeclareMathOperator{\Image}{Im}
\DeclareMathOperator{\Sp}{Sp}
\newcommand\Z{\ensuremath{\mathbb{Z}}}
\newcommand\Q{\ensuremath{\mathbb{Q}}}
\DeclareMathOperator{\HH}{H}
\DeclareMathOperator{\Interior}{Int}
\DeclareMathOperator{\Sym}{Sym}
\newcommand\Span[1]{\ensuremath{\langle #1 \rangle}}
\newcommand\Set[2]{\ensuremath{\left\{\text{#1 $|$ #2}\right\}}}
\newcommand\SpanSet[2]{\ensuremath{\langle \text{#1 $|$ #2} \rangle}}
\newcommand\cC{\ensuremath{\mathcal{C}}}
\newcommand\cK{\ensuremath{\mathcal{K}}}
\newcommand\cM{\ensuremath{\mathcal{M}}}
\newcommand\cQ{\ensuremath{\mathcal{Q}}}
\newcommand\cV{\ensuremath{\mathcal{V}}}
\newcommand\cW{\ensuremath{\mathcal{W}}}
\newcommand\fB{\ensuremath{\mathfrak{B}}}
\newcommand\fK{\ensuremath{\mathfrak{K}}}
\newcommand\fa{\ensuremath{\mathfrak{a}}}
\newcommand\fc{\ensuremath{\mathfrak{c}}}
\newcommand\fh{\ensuremath{\mathfrak{h}}}
\newcommand\fp{\ensuremath{\mathfrak{p}}}
\newcommand\fq{\ensuremath{\mathfrak{q}}}
\newcommand\hfq{\ensuremath{\widehat{\mathfrak{q}}}}
\newcommand\fr{\ensuremath{\mathfrak{r}}}
\newcommand\bV{\ensuremath{\mathbf{V}}}
\newcommand\bb{\ensuremath{\mathbf{b}}}
\newcommand\bd{\ensuremath{\mathbf{d}}}
\newcommand\bk{\ensuremath{\mathbf{k}}}
\newcommand\bpartial{\ensuremath{\bm{\partial}}}
\newcommand\tgamma{\ensuremath{\widetilde{\gamma}}}
\newcommand\tdelta{\ensuremath{\widetilde{\delta}}}
\newcommand\tSigma{\ensuremath{\widetilde{\Sigma}}}
\newcommand\tast{\ensuremath{\widetilde{\ast}}}
\newcommand\oQ{\ensuremath{\overline{Q}}}
\newcommand\oc{\ensuremath{\overline{c}}}
\newcommand\ox{\ensuremath{\overline{x}}}
\newcommand\oy{\ensuremath{\overline{y}}}
\newcommand\oz{\ensuremath{\overline{z}}}
\newcommand\osigma{\ensuremath{\overline{\sigma}}}
\newcommand\okappa{\ensuremath{\overline{\kappa}}}
\newcommand\ophi{\ensuremath{\overline{\phi}}}
\newcommand\ofr{\ensuremath{\overline{\fr}}}
\newcommand\obpartial{\ensuremath{\overline{\bpartial}}}
\newcommand\ogamma{\ensuremath{\overline{\gamma}}}
\newcommand\oalpha{\ensuremath{\overline{\alpha}}}
\newcommand\obeta{\ensuremath{\overline{\beta}}}
\newcommand\oeta{\ensuremath{\overline{\eta}}}
\newcommand\oomega{\ensuremath{\overline{\omega}}}
\newcommand\olambda{\ensuremath{\overline{\lambda}}}
\newcommand\precon[2]{\ensuremath{\prescript{#1}{}{#2}}}
\newcommand\Mod{\ensuremath{\operatorname{Mod}}}
\newcommand\Torelli{\ensuremath{\mathcal{I}}}
\title{Abelian covers of surfaces and the homology of the Torelli group}
\author{Daniel Minahan}
\address{Dept of Mathematics; University of Chicago; Chicago, IL 60637}
\email{dminahan@uchicago.edu}
\author{Andrew Putman}
\address{Dept of Mathematics; University of Notre Dame; 255 Hurley Hall; Notre Dame, IN 46556}
\email{andyp@nd.edu}
\thanks{AP was supported by NSF grant DMS-2305183.  DM was supported by NSF grant DMS-2402060.}
\begin{document}

\newpage

\begin{abstract}
We study the first homology group of the mapping class group and Torelli group with coefficients in the first rational
homology group of the universal abelian cover of the surface.  We prove two contrasting results: for
surfaces with one boundary component these twisted homology groups are finite-dimensional, but
for surfaces with one puncture they are infinite-dimensional.  These results play 
an important role in a recent paper of the authors calculating the second rational homology group
of the Torelli group.
\end{abstract}

\maketitle
\thispagestyle{empty}

\section{Introduction}

Let $\Sigma_{g,p}^b$ be an oriented genus $g$ surface with $p$ punctures 
and $b$ boundary components.\footnote{We omit $p$ or $b$ if they vanish.}  The
mapping class group $\Mod_{g,p}^b$ is the group
of isotopy classes of orientation-preserving diffeomorphisms of $\Sigma_{g,p}^b$ that fix each puncture
and boundary
component pointwise.  Assume\footnote{See \cite{PutmanCutPaste} for a discussion of the Torelli
group on surfaces with multiple boundary components.  Our main results are about
$\Mod_g^1$ and $\Mod_{g,1}$, so we do not need to go into this here.} 
that $p+b \leq 1$.  By Poincar\'{e} duality, the
intersection form on $\HH_1(\Sigma_{g,p}^b) \cong \Z^{2g}$ is a symplectic form.  The
action of $\Mod_{g,p}^b$ on $\HH_1(\Sigma_{g,p}^b)$ preserves this form, yielding
a surjection $\Mod_{g,p}^b \rightarrow \Sp_{2g}(\Z)$ whose kernel $\Torelli_{g,p}^b$
is the Torelli group.  This fits into an exact sequence
\[1 \longrightarrow \Torelli_{g,p}^b \longrightarrow \Mod_{g,p}^b \longrightarrow \Sp_{2g}(\Z) \longrightarrow 1.\]
In this paper, we study the first homology of $\Mod_{g,p}^b$ and $\Torelli_{g,p}^b$ with coefficients in the homology
of the universal abelian cover of $\Sigma_{g,p}^b$.

\subsection{Homology of mapping class group}

The mapping class group $\Mod_{g,p}^b$ is of type $F_{\infty}$.  In other words, it has a classifying space
whose $k$-skeleton is compact for all $k \geq 0$ (see, e.g., \cite{HarerDuality}).  This
implies that $\Mod_{g,p}^b$ is finitely presented and 
all of its homology groups are finitely generated.  In fact, for any finitely generated
$\Mod_{g,p}^b$-module $V$ the homology group $\HH_k(\Mod_{g,p}^b;V)$ is finitely generated.  These have
been calculated in many cases, at least in the ``stable range'' when $g \gg k$.  See, e.g., 
\cite{KawazumiSymplectic, LooijengaSymplectic, MadsenWeiss, ZhongStableLevel}.

\subsection{Low degree homology of Torelli}

We return to the case where $p+b \leq 1$.  Since $\Torelli_{g,p}^b$ is an
infinite-index subgroup of $\Mod_{g,p}^b$, it does not inherit any finiteness
properties.  In fact, it is known that many of its homology groups are
infinitely generated; see \cite{AkitaTorelli, BestvinaBuxMargalit, Gaifullin}.

However, it does have some unexpected finiteness properties.  Johnson \cite{JohnsonFinite}
proved that $\Torelli_{g,p}^b$ is finitely generated for $g \geq 3$.  He also calculated
its first homology group \cite{JohnsonKer, JohnsonAbel}.  Over $\Q$, this has
the following simple description: letting $H = \HH_1(\Sigma_{g,p}^b;\Q)$, we have
\[\HH_1(\Torelli_g^1;\Q) \cong \HH_1(\Torelli_{g,1};\Q) \cong \wedge^3 H \quad \text{and} \quad \HH_1(\Torelli_g;\Q) \cong (\wedge^3 H)/H.\]
The conjugation action of $\Mod_{g,p}^b$ on $\Torelli_{g,p}^b$
induces an action of $\Sp_{2g}(\Z)$ on each $\HH_d(\Torelli_{g,p}^b)$.  
The above isomorphisms are $\Sp_{2g}(\Z)$-equivariant.  They imply that
$\HH_1(\Sigma_{g,p}^b;\Q)$ is not just finite-dimensional, but is also
an algebraic\footnote{A representation $\bV$ of $\Sp_{2g}(\Z)$ over a field
$\bk$ of characteristic $0$ is algebraic if the action of $\Sp_{2g}(\Z)$ on $\bV$ extends to
a polynomial representation of
the $\bk$-points $\Sp_{2g}(\bk)$ of the algebraic group $\Sp_{2g}$.  Since
$\Sp_{2g}(\Z)$ is Zariski dense in $\Sp_{2g}(\bk)$, such an extension is unique if
it exists.} representation of $\Sp_{2g}(\Z)$.  

Verifying a long-standing folk conjecture, the authors \cite{MinahanPutmanH2} recently
calculated $\HH_2(\Torelli_{g,p}^b;\Q)$ for $g \geq 6$.  Like
the first homology, the second homology is also a finite-dimensional algebraic representation of
$\Sp_{2g}(\Z)$.  The proof in \cite{MinahanPutmanH2} required a result about the first homology
of $\Torelli_g^1$ with certain twisted coefficients that we prove
in the present paper (see Theorem \ref{maintheorem:findim} below). 

\begin{remark}
It is not known if the integral homology group $\HH_2(\Torelli_{g,p}^b)$ is finitely generated.
\end{remark}

\subsection{Action on fundamental group}

Fix basepoints $\ast$ on $\Sigma_g^1$ and $\Sigma_g$, with the basepoint for $\Sigma_g^1$ on $\partial \Sigma_g^1$.
Define
\[\pi_g^1 = \pi_1(\Sigma_g^1,\ast) \quad \text{and} \quad \pi_g = \pi_1(\Sigma_g,\ast).\]
By definition, elements of $\Mod_g^1$ fix $\partial \Sigma_g^1$ pointwise.  In particular, they
fix the basepoint $\ast \in \partial \Sigma_g^1$, so we get a well-defined action of $\Mod_g^1$
on $\pi_g^1$.  Also, we have $\Sigma_{g,1} \cong \Sigma_g \setminus \{\ast\}$, so
we can regard $\Mod_{g,1}$ as the group of isotopy classes of orientation-preserving diffeomorphisms
of $\Sigma_g$ that fix $\ast$.  We therefore get a well-defined action of $\Mod_{g,1}$ on $\pi_g$.

Define\footnote{The $\cC$ stands for ``commutator subgroup''.}
\[\cC_g^1 = \HH_1([\pi_g^1,\pi_g^1];\Q) \quad \text{and} \quad \cC_g = \HH_1([\pi_g,\pi_g];\Q).\]
Alternatively, $\cC_g^1$ and $\cC_g$ are the first rational homology
groups of the universal abelian covers of $\Sigma_g^1$ and $\Sigma_{g}$.
The action of $\Mod_g^1$ on $\pi_g^1$ preserves $[\pi_g^1,\pi_g^1]$, so $\Mod_g^1$ acts
on $\cC_g^1$.  Similarly, $\Mod_{g,1}$ acts on $\cC_g$.  
The vector spaces $\cC_g^1$ and $\cC_g$ are infinite-dimensional representations of
$\Mod_g^1$ and $\Mod_{g,1}$, and have been intensely studied
via the so-called ``Magnus representations''.  See \cite{SakasaiMagnusSurvey} for a survey.

\begin{remark}
Since they do not preserve basepoints, neither $\Mod_g$ nor $\Torelli_g$ act on $\cC_g$.
\end{remark}

\subsection{Main theorems}

Since $\cC_g^1$ and $\cC_g$ are infinite-dimensional, there is no reason to expect
that homology with these representations  as coefficients has any finiteness properties.  However,
we will prove:

\begin{maintheorem}
\label{maintheorem:findim}
For $g \geq 4$, both $\HH_1(\Mod_g^1;\cC_g^1)$ and $\HH_1(\Torelli_g^1;\cC_g^1)$ are finite-dimensional.
Moreover, $\HH_1(\Torelli_g^1;\cC_g^1)$ is an algebraic representation of $\Sp_{2g}(\Z)$.
\end{maintheorem}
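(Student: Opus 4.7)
The plan is to set up a $\Mod_g^1$-equivariant resolution of $\cC_g^1$ by free modules over the group algebra $\Lambda := \Q[H]$, where $H = \HH_1(\Sigma_g^1;\Q)$, and then to control low-degree group homology via the induced long exact sequences, exploiting the $\Lambda$-module structure throughout.

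Let $\widetilde{\Sigma_g^1} \to \Sigma_g^1$ denote the universal abelian cover, with deck group $H$ and $\HH_1(\widetilde{\Sigma_g^1};\Q) = \cC_g^1$.  Choose a CW structure on $\Sigma_g^1$ with one $0$-cell, $2g$ $1$-cells, and no $2$-cells (valid because $\Sigma_g^1$ deformation retracts onto a wedge of $2g$ circles).  Lifting to $\widetilde{\Sigma_g^1}$ yields a two-term complex of free $\Lambda$-modules $\Lambda^{2g} \to \Lambda$, whose only nonzero homology is $\HH_0 = \Q$ and $\HH_1 = \cC_g^1$.  Because there are no $2$-cells, this complex carries a genuine $\Mod_g^1$-action (no ambiguity from cellular approximations), giving the $\Mod_g^1$-equivariant $4$-term exact sequence
\[
0 \to \cC_g^1 \to \Lambda^{2g} \to \Lambda \to \Q \to 0,
\]
where $\Mod_g^1$ acts on $\Lambda^{2g}$ via the Magnus representation and on $\Lambda$ via the symplectic action on $H$.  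Split this into $0 \to \cC_g^1 \to \Lambda^{2g} \to I \to 0$ and $0 \to I \to \Lambda \to \Q \to 0$ with $I$ the augmentation ideal, and take long exact sequences in $\HH_*(\Torelli_g^1;-)$ to express $\HH_1(\Torelli_g^1;\cC_g^1)$ in terms of $\HH_*(\Torelli_g^1;\Lambda^{2g})$ and $\HH_*(\Torelli_g^1;\Lambda)$.

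Because $\Torelli_g^1$ acts trivially on $H$, it acts trivially on $\Lambda$, so the universal coefficient theorem combined with Johnson's calculation $\HH_1(\Torelli_g^1;\Q) \cong \wedge^3 H$ yields $\HH_1(\Torelli_g^1;\Lambda) \cong \wedge^3 H \otimes \Lambda$ and pins down the ``simpler'' $\Lambda$-terms in the long exact sequences.  The key structural observation is that since $\Torelli_g^1$ fixes $H$ pointwise, the $\Lambda$-module structure on $\cC_g^1$ coming from deck transformations commutes with the $\Torelli_g^1$-action, so each $\HH_p(\Torelli_g^1;\cC_g^1)$ inherits a canonical $\Lambda$-module structure.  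The plan is then to prove that $\HH_1(\Torelli_g^1;\cC_g^1)$ is finitely generated as a $\Lambda$-module and is annihilated by a power of the augmentation ideal $I$, which together force finite-dimensionality over $\Q$.  Algebraicity follows because the whole construction is $\Sp_{2g}(\Z)$-equivariant and the class of algebraic $\Sp_{2g}(\Z)$-representations is closed under subquotients.  Finite-dimensionality of $\HH_1(\Mod_g^1;\cC_g^1)$ then follows from the Lyndon--Hochschild--Serre spectral sequence for $1 \to \Torelli_g^1 \to \Mod_g^1 \to \Sp_{2g}(\Z) \to 1$, using that algebraic representations of $\Sp_{2g}(\Z)$ have finite-dimensional low-degree group homology in the stable range $g \geq 4$.

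The main obstacle I expect is controlling $\HH_*(\Torelli_g^1;\Lambda^{2g})$: the Magnus representation makes the $\Torelli_g^1$-action on $\Lambda^{2g}$ genuinely nontrivial, so Johnson's theorem does not apply directly.  This will likely be handled by a finer analysis of the Magnus representation through the Johnson filtration on $\Torelli_g^1$, combined with the Noetherianity of $\Lambda$ to extract finite-generation statements whose support in $\Spec(\Lambda)$ is controlled; tracking the $\Sp_{2g}(\Z)$-equivariance throughout is essential for the algebraicity claim.
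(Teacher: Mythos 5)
The proposal sets up a plausible algebraic framework, but the proof has a genuine gap exactly where all the difficulty lies, and the framework itself cannot distinguish the finite-dimensional case from the infinite-dimensional one that the paper proves in Theorem \ref{maintheorem:infdim}.

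First, a minor but consequential slip: the deck group of the universal abelian cover is $H_{\Z} = \HH_1(\Sigma_g^1;\Z) \cong \Z^{2g}$, not $H = \HH_1(\Sigma_g^1;\Q)$, so you should be working over the Laurent polynomial ring $\Lambda = \Q[H_{\Z}]$; the ring $\Q[H_{\Q}]$ is not Noetherian. With that correction, your observation that $\HH_1(\Torelli_g^1;\cC_g^1)$ carries a natural $\Lambda$-module structure is correct (the point-preserving Torelli action commutes with deck transformations since Torelli acts trivially on $H_{\Z}$), and finite generation of that module over $\Lambda$ does follow, as you implicitly suggest, from Johnson's finite generation of $\Torelli_g^1$ together with Noetherianity of $\Lambda$: $\HH_1$ is a subquotient of a finitely generated $\Lambda$-module.

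The gap is in the other half of your strategy. You assert that $\HH_1(\Torelli_g^1;\cC_g^1)$ is annihilated by a power of the augmentation ideal $I$, and defer the proof to ``a finer analysis of the Magnus representation through the Johnson filtration.'' No argument is given, and in fact this is the entire content of the theorem; the rest of your sketch is formal bookkeeping. Worse, your framework applies verbatim to $\Torelli_{g,1}$ acting on $\cC_g$: $\Torelli_{g,1}$ is finitely generated, $\cC_g$ is a finitely generated $\Lambda$-module (since $\tSigma_g \to \Sigma_g$ is cocompact), the $\Lambda$-structure commutes with the $\Torelli_{g,1}$-action, and the Noetherianity argument again gives finite generation of $\HH_1(\Torelli_{g,1};\cC_g)$ over $\Lambda$. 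But Theorem \ref{maintheorem:infdim} says this homology group is \emph{infinite}-dimensional. So whatever argument would establish $I$-power annihilation in the boundary case must detect the precise difference between $\Sigma_g^1$ and $\Sigma_{g,1}$, and nothing in your sketch engages with that. The paper locates this difference in the Reidemeister pairing $\fr\colon \cC_g \otimes \cC_g \to \Q[H_{\Z}]$ arising as a connecting homomorphism (Lemma \ref{lemma:reidemeisterboundary}): $\HH_1(\Torelli_{g,1};\cC_g)$ contains $\Image(\fr)$, which is large, while $\HH_1(\Torelli_g^1;\cC_g^1)$ is controlled by $\ker(\ofr)$, and the bulk of the paper (Parts \ref{part:2} and \ref{part:3}) is a hands-on computation proving that $\ker(\ofr)$ is a finite-dimensional algebraic representation (Theorem \ref{theorem:bigtheorem}). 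None of this mechanism appears in your approach, and without a substitute for it, the $I$-power annihilation claim is unsupported.
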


\begin{remark}
\label{remark:notcalc}
Theorem \ref{maintheorem:findim} is what is needed for the authors' work
on the second homology group of the Torelli group in \cite{MinahanPutmanH2}.
\end{remark}

Typically decorations like boundary components or punctures have only a minor effect on
the mapping class group, so
Theorem \ref{maintheorem:findim} might lead the reader to expect that $\HH_1(\Mod_{g,1};\cC_g)$
and $\HH_1(\Torelli_{g,1};\cC_g)$ are also finite-dimensional.  However, to illustrate the subtlety
of Theorem \ref{maintheorem:findim} we will prove:

\begin{maintheorem}
\label{maintheorem:infdim}
For $g \geq 4$, both $\HH_1(\Mod_{g,1};\cC_g)$ and $\HH_1(\Torelli_{g,1};\cC_g)$ are infinite-dimensional.
\end{maintheorem}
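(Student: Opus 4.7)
The plan is to derive both cases from the short exact sequence of coefficient modules
\[
0 \to \Q[H] \to \cC_g^1 \to \cC_g \to 0.
\]
Since $\partial\Sigma_g^1$ is nullhomologous, it lifts to a loop in the universal abelian cover $\widetilde{\Sigma_g^1}$, whose class $\sigma\in\cC_g^1$ generates a cyclic $\Q[H]$-submodule isomorphic to $\Q[H]$. Capping $\Sigma_g^1$ to obtain $\Sigma_{g,1}$ corresponds, on universal abelian covers, to filling in an $H$-family of disks; a Mayer--Vietoris computation identifies $\cC_g$ with the quotient $\cC_g^1/\Q[H]\cdot\sigma$. Since $T_\partial$ acts on $\cC_g^1 = [\pi_g^1,\pi_g^1]^{\mathrm{ab}}\otimes\Q$ by an inner automorphism of $\pi_g^1$, hence trivially, this is a sequence of $\Mod_{g,1}=\Mod_g^1/\langle T_\partial\rangle$-modules.

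Applying $\HH_\ast(G;-)$ for $G\in\{\Torelli_{g,1},\Mod_{g,1}\}$ yields the long exact sequence fragment
\[
\HH_1(G;\cC_g^1) \to \HH_1(G;\cC_g) \xrightarrow{\delta} \HH_0(G;\Q[H]) \to \HH_0(G;\cC_g^1).
\]
The first term is finite-dimensional: the LHS spectral sequence for $1\to\langle T_\partial\rangle\to\Mod_g^1\to\Mod_{g,1}\to 1$ (and its Torelli analogue), combined with triviality of the $T_\partial$-action, identifies $\HH_1(G;\cC_g^1)$ with a subgroup of the corresponding group for $\Sigma_g^1$, which is finite-dimensional by Theorem~\ref{maintheorem:findim}.

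The last term $\HH_0(G;\cC_g^1)$ is also finite-dimensional. The Birman exact sequence $1\to\pi_g\to G\to G'\to 1$ (with $G'$ equal to $\Torelli_g$ or $\Mod_g$) gives $\HH_0(G;\cC_g^1)=((\cC_g^1)_{\pi_g})_{G'}$, and the point-push subgroup $\pi_g$ acts on $\cC_g^1$ through $\pi_g\twoheadrightarrow H$ and the $\Q[H]$-module structure, so $(\cC_g^1)_{\pi_g}=(\cC_g^1)_H$. A Fox-calculus computation identifies this with $\wedge^2 H$, under which $\sigma\mapsto\omega$ (the symplectic form). Since $\Torelli_g$ acts trivially on $H$, we obtain $\HH_0(\Torelli_{g,1};\cC_g^1)=\wedge^2 H$, and a further $\Sp_{2g}(\Z)$-coinvariant computation gives $\HH_0(\Mod_{g,1};\cC_g^1)=\Q\cdot\omega$. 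By contrast, the middle term $\HH_0(G;\Q[H])$ is infinite-dimensional: for $G=\Torelli_{g,1}$ it equals $\Q[H]$ (trivial action), while for $G=\Mod_{g,1}$ the $\Sp_{2g}(\Z)$-orbits on $\Z^{2g}$ are parametrized by the divisibility $n\in\N$, yielding a countably infinite-dimensional space.

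Putting these together, the composition $\HH_0(G;\Q[H])\to\HH_0(G;\cC_g^1)$ is essentially the augmentation $p\mapsto\epsilon(p)\cdot\omega$ (or its $\Sp$-coinvariant version, sending each orbit class $[n]$ to $\omega$), so its kernel has finite codimension in the infinite-dimensional group $\HH_0(G;\Q[H])$ and is therefore itself infinite-dimensional. By exactness this kernel equals the image of $\delta$, and since the kernel of $\delta$ is a quotient of the finite-dimensional $\HH_1(G;\cC_g^1)$, the group $\HH_1(G;\cC_g)$ is forced to be infinite-dimensional. The main obstacle is carefully establishing the short exact sequence via the Mayer--Vietoris picture on universal abelian covers and verifying the Fox-calculus identification $(\cC_g^1)_H\cong\wedge^2 H$; once these are in hand, the rest of the argument is essentially formal.
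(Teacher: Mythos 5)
Your overall strategy is sound, and it is a genuinely different (and arguably more economical) route than the paper's. Both proofs start from the coefficient sequence $0 \to \Q[H_{\Z}] \to \cC_g^1 \to \cC_{g} \to 0$ and aim to show the connecting homomorphism $\delta\colon \HH_1(G;\cC_g) \to \HH_0(G;\Q[H_{\Z}])$ has infinite-dimensional image. The paper does this for $G = \Torelli_{g,1}$ by restricting to $[\pi_g,\pi_g]$ and invoking Theorem \ref{theorem:bigtheorem}, which identifies $\delta$ with the Reidemeister pairing and shows its cokernel is finite-dimensional; for $G = \Mod_{g,1}$, it then runs a separate five-term/coinvariants argument on top of the Torelli result. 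You instead observe that, by exactness, $\Image(\delta)$ is the kernel of $\HH_0(G;\Q[H_{\Z}]) \to \HH_0(G;\cC_g^1)$, and since the target is finite-dimensional this kernel has finite codimension in the infinite-dimensional $\HH_0(G;\Q[H_{\Z}])$. This treats both cases uniformly and avoids Theorem \ref{theorem:bigtheorem} entirely, at the cost of needing the identification $(\cC_g^1)_{\Torelli_{g,1}}\cong\wedge^2 H$ (Lemma \ref{lemma:cg1torellicoinv}). Note also that your invocation of Theorem \ref{maintheorem:findim} for finite-dimensionality of $\HH_1(G;\cC_g^1)$ is superfluous: the image of $\delta$ being infinite-dimensional already forces $\HH_1(G;\cC_g)$ to be so, irrespective of $\ker(\delta)$. (Minor remark: the Lyndon--Hochschild--Serre argument exhibits $\HH_1(\Torelli_{g,1};\cC_g^1)$ as a \emph{quotient}, not a subgroup, of $\HH_1(\Torelli_g^1;\cC_g^1)$.)

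There is, however, a genuine gap in how you establish finite-dimensionality of $\HH_0(\Torelli_{g,1};\cC_g^1)$. You claim that the point-pushing subgroup $\pi_g < \Torelli_{g,1}$ acts on $\cC_g^1$ through the abelianization $\pi_g \twoheadrightarrow H_{\Z}$ and the $\Q[H_{\Z}]$-module (deck) structure, so that $(\cC_g^1)_{\pi_g} = (\cC_g^1)_{H_{\Z}}$. This is false: as the paper notes in \S\ref{section:reidemeister} when setting up the connecting-map interpretation of the Reidemeister pairing, the subgroup $[\pi_g,\pi_g] < \pi_g$ does \emph{not} act trivially on $\cC_g^1$ (this nontriviality is precisely what makes $\HH_1([\pi_g,\pi_g];\cC_g^1)$ differ from $\HH_1([\pi_g,\pi_g];\cC_g)$ and produces the Reidemeister pairing as a connecting map). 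So the mapping-class action of $\pi_g$ on $\cC_g^1$ does not factor through $H_{\Z}$, and you are conflating two distinct actions: the deck action of $H_{\Z}$ on $\HH_1(\widetilde{\Sigma_g^1};\Q)$ and the point-pushing action of $\pi_g\subset\Mod_{g,1}$. Your ``Fox-calculus'' identification of $(\cC_g^1)_{H_{\Z}}$ with $\wedge^2 H$ is correct for the deck action, but is not the coinvariants you actually need. The repair is simply to replace this step with a citation of Lemma \ref{lemma:cg1torellicoinv}, whose proof (via the split Birman sequence for $\Torelli_{g,1}^1$ and Putman's computation of $\HH_1(\Torelli_{g,1}^1;\Q)$) is independent of the false claim; the rest of your argument then goes through.
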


\subsection{Linear congruence subgroups}

For $\ell \geq 2$, let $\Mod_{g,p}^b[\ell]$ be the level-$\ell$ congruence subgroup of $\Mod_{g,p}^b$, i.e.,
the kernel of the action of $\Mod_{g,p}^b$ on $\HH_1(\Sigma_{g,p}^b;\Z/\ell)$.  This is a sort of
``mod-$\ell$ Torelli group''.  In \cite{PutmanAbelian}, Putman proved versions of
Theorems \ref{maintheorem:findim} and \ref{maintheorem:infdim} for $\Mod_g^1[\ell]$ and
$\Mod_{g,1}[\ell]$, which we now describe.

The group $[\pi_g^1,\pi_g^1]$ is the kernel of the map $\pi_g^1 \rightarrow \HH_1(\pi_g^1)$, and similarly
for $[\pi_g,\pi_g]$.  This suggests defining
\[\pi_g^1[\ell] = \ker(\pi_g^1 \rightarrow \HH_1(\Sigma_g^1;\Z/\ell)) \quad \text{and} \quad
\pi_g[\ell] = \ker(\pi_g \rightarrow \HH_1(\Sigma_{g,1};\Z/\ell)).\]
We then set
$\cC_g^1[\ell] = \HH_1(\pi_g^1[\ell];\Q)$ and $\cC_g[\ell] = \HH_1(\pi_g[\ell];\Q)$.
Just like $\cC_g^1$ and $\cC_g$ are the first rational homology groups of the universal abelian
covers of $\Sigma_g^1$ and $\Sigma_g$, the vector spaces $\cC_g^1[\ell]$ and $\cC_g[\ell]$ are the first
rational homology groups of the universal mod-$\ell$ covers of these surfaces.

Since $\cC_g^1[\ell]$ and $\cC_g[\ell]$ are finite-dimensional, homology with coefficients in
them will also be finite-dimensional.  What Putman proved in \cite{PutmanAbelian} is
that for $g \geq 4$, we have
\begin{equation}
\label{eqn:leveliso}
\HH_1(\Mod_g^1[\ell];\cC_g^1[\ell]) \cong \HH_1(\Mod_g^1;\cC_g^1[\ell]) = \Q.
\end{equation}
Moreover, letting $H_{\Z/\ell} = \HH_1(\Sigma_g;\Z/\ell)$ and $\tau(\ell)$ be the number of
positive divisors of $\ell$, Putman also proved that for $g \geq 4$ we have
\[\HH_1(\Mod_{g,1}[\ell];\cC_g[\ell]) \cong \Q[H_{\Z/\ell}] \quad \text{and} \quad \HH_1(\Mod_{g,1};\cC_g[\ell]) \cong \Q^{\tau(\ell)}.\]
In particular, the dimensions of $\HH_1(\Mod_{g,1}[\ell];\cC_g[\ell])$ and $\HH_1(\Mod_{g,1};\cC_g[\ell])$
are much larger than those of $\HH_1(\Mod_g^1[\ell];\cC_g^1[\ell])$ and $\HH_1(\Mod_g^1;\cC_g^1[\ell])$.
These should be seen as analogues of Theorems \ref{maintheorem:findim} and \ref{maintheorem:infdim}, and
indeed our proofs of these results use some of the ideas from \cite{PutmanAbelian}.

\begin{remark}
The first isomorphism in \eqref{eqn:leveliso} was extended to higher homology groups
in \cite{PutmanStableLevel}, and these higher twisted homology groups were calculated
in \cite{ZhongStableLevel}.
\end{remark}

\subsection{Reidemeister pairing}

Much our work is devoted to understanding an equivariant intersection form on our
representations.
Let $\tSigma_g$ be the universal abelian cover of $\Sigma_g$.  Letting $H_{\Z} = \HH_1(\Sigma_g;\Z)$, this
is a regular $H_{\Z}$-cover of $\Sigma_g$ with $\cC_g = \HH_1(\tSigma_g;\Q)$.  Using the action of $H_{\Z}$
on $\HH_1(\tSigma_g;\Q)$, the algebraic intersection
pairing on $\cC_g = \HH_1(\tSigma_g;\Q)$ can be enriched to a pairing
\[\fr\colon \cC_g \otimes \cC_g \longrightarrow \Q[H_{\Z}]\]
called the Reidemeister pairing.  See \S \ref{section:reidemeister} for the details.

The group $\Mod_{g,1}$ acts on $\cC_g \otimes \cC_g$ and $\Q[H_{\Z}]$, and the Reidemeister pairing $\fr$ is $\Mod_{g,1}$-equivariant.
In particular, since $\Torelli_{g,1}$ acts trivially on $\Q[H_{\Z}]$ the map $\fr$ provides a large
trivial quotient of the $\Torelli_{g,1}$-representation $\cC_g \otimes \cC_g$.  It turns out that
$\fr$ is a connecting homomorphism in a long exact sequence arising when comparing $\HH_{\bullet}(\Torelli_g^1;\cC_g^1)$
and $\HH_{\bullet}(\Torelli_{g,1};\cC_g)$.  

Roughly speaking, the mechanism behind
the difference between $\HH_1(\Torelli_g^1;\cC_g^1)$ and $\HH_1(\Torelli_{g,1};\cC_g)$ is that
$\HH_1(\Torelli_{g,1};\cC_g)$ involves $\Image(\fr)$, while $\HH_1(\Torelli_g^1;\cC_g^1)$ involves
$\ker(\fr)$.  The main technical result that goes into our proofs is as follows.  Since $\Torelli_{g,1}$
acts trivially on $H_{\Z}$, the Reidemeister pairing factors through the $\Torelli_{g,1}$-coinvariants
of $\cC_g \otimes \cC_g$:
\[\ofr\colon (\cC_g \otimes \cC_g)_{\Torelli_{g,1}} \rightarrow \Q[H_{\Z}].\]
The action of $\Mod_{g,1}$ on $(\cC_g \otimes \cC_g)_{\Torelli_{g,1}}$ factors
through $\Sp_{2g}(\Z)$, and most of this paper is devoted to proving that $\ker(\ofr)$
is a finite-dimensional algebraic
representation of $\Sp_{2g}(\Z)$.
In fact, we identify this representation precisely: letting
$H = \HH_1(\Sigma_g;\Q)$, the vector space $\ker(\ofr)$ is the kernel of a contraction
\begin{equation}
\label{eqn:symcontractionintro}
\fc\colon \left((\wedge^2 H)/\Q\right)^{\otimes 2} \rightarrow \Sym^2(H).
\end{equation}
See \S \ref{section:kgalg}.  This same kernel appears in our work on the second homology of Torelli.

\subsection{Outline}
This paper has three parts.  In Part \ref{part:1}, we describe the Reidemeister
pairing and show how to reduce Theorems \ref{maintheorem:findim} and \ref{maintheorem:infdim}
to the description of its kernel we described above.  We then construct generators
for its kernel in Part \ref{part:2} and relations in Part \ref{part:3}.  

\subsection{Notation and conventions}
\label{section:conventions}

We will use all the notation discussed above.  In particular, $H$ will always
mean $\HH_1(\Sigma_{g,1};\Q) \cong \HH_1(\Sigma_g^1;\Q) \cong \HH_1(\Sigma_g;\Q)$
and $H_{\Z}$ will always mean $\HH_1(\Sigma_{g,1}) \cong \HH_1(\Sigma_g^1) \cong \HH_1(\Sigma_g)$.
This is a little ambiguous since the genus $g$ does not appear in $H$ or $H_{\Z}$, but the
correct $g$ will always be clear from the context.  As is traditional when writing about
the mapping class group, we will not distinguish between curves and their homotopy classes.
For instance, we will talk about simple closed curves in $\pi_g$.

\subsection{Standing assumption}
To avoid having to constantly impose hypotheses to rule out low-genus exceptions to our results,
we make the following standing assumption:

\begin{assumption}
\label{assumption:genus}
Throughout the paper, we assume that $g \geq 4$.
\end{assumption}

\subsection{Warning}
Though this is in some sense a sequel to \cite{PutmanAbelian}, 
our notation is often different from \cite{PutmanAbelian}.  For instance,
in \cite{PutmanAbelian} the notation $\Sigma_g^1$ means a surface with one puncture and $\Sigma_{g,1}$
a surface with one boundary component, while our convention is the opposite.\footnote{This better aligns our notation
with the algebro-geometric notation $\cM_{g,p}$ for the moduli space of smooth genus-$g$ curves with $p$ marked points.}
We also use various brackets like $\Pres{x,y}$ differently than \cite{PutmanAbelian}.
The paper \cite{PutmanAbelian} was
written while the second author was a postdoc, and he wishes to apologize for his youthful expository sins.

\part{The Reidemeister pairing and the homology of the Torelli group}
\label{part:1}

This part of the paper introduces the Reidemeister and reduces Theorems \ref{maintheorem:findim} and \ref{maintheorem:infdim}
to a description of its kernel.  It has four sections.  In \S \ref{section:birman} -- \ref{section:twocalc}, 
we discuss how $\Mod_g^1$ and $\Mod_{g,1}$ as well as $\cC_g^1$ and $\cC_g$ are related.  In \S \ref{section:reidemeister},
we give two different characterizations of the Reidemeister pairing.  
Finally, in \S \ref{section:mainproofs} we derive
our main theorems from the aforementioned description of the kernel of the Reidemeister pairing.

\section{Boundary components, punctures, and the Birman exact sequence}
\label{section:birman}

This section explores the inter-relationships between the different mapping class
groups and representations appearing in our theorems.

\subsection{Boundary component vs puncture: mapping class groups}

The only difference between $\Mod_g^1$ and $\Mod_{g,1}$ is the Dehn
twist\footnote{For a simple closed curve $\gamma$, our convention is that $T_{\gamma}$
denotes the left-handed Dehn twist about $\gamma$.} about the boundary component in $\Mod_g^1$:

\begin{lemma}[{\cite[Proposition 3.19]{FarbMargalitPrimer}}]
\label{lemma:capboundary}
We have a central extension
\[\begin{tikzcd}
1 \arrow{r} & \Z \arrow{r} & \Mod_g^1 \arrow{r} & \Mod_{g,1} \arrow{r} & 1,
\end{tikzcd}\]
where the map $\Mod_g^1 \rightarrow \Mod_{g,1}$ is induced by gluing a punctured
disk to $\partial = \partial \Sigma_g^1$ and the central $\Z$ is generated
by $T_{\partial}$.
\end{lemma}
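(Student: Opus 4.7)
The plan is to derive the sequence as part of the long exact sequence in homotopy associated to a Birman-style fibration of diffeomorphism groups. The homomorphism $\Mod_g^1 \to \Mod_{g,1}$ is given concretely by extending a diffeomorphism $\phi$ of $\Sigma_g^1$ (which fixes $\partial$ pointwise) by the identity over the glued-on punctured disk; this is well defined on isotopy classes and fixes $\ast$.

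The key input is the restriction fibration
\[
\Diff(\Sigma_g^1,\partial) \longrightarrow \Diff(\Sigma_g,\ast) \longrightarrow \Emb_\ast(D^2, \Sigma_g),
\]
where $\Emb_\ast(D^2, \Sigma_g)$ denotes smooth embeddings of the closed $2$-disk sending $0$ to the marked point $\ast$, and the fibre over a chosen embedding is identified with $\Diff(\Sigma_g^1,\partial)$ via restriction to the complement of the image disk. Differentiation at $0$ realizes $\Emb_\ast(D^2,\Sigma_g)$ as homotopy equivalent to the unit circle bundle of $T_\ast \Sigma_g$, hence to $S^1$. Applying $\pi_0$ and $\pi_1$, and invoking Earle--Eells--Schatz contractibility of the identity component of $\Diff(\Sigma_g,\ast)$ for $g \geq 2$, gives the four-term exact sequence
\[
\pi_1(S^1) \longrightarrow \pi_0\Diff(\Sigma_g^1,\partial) \longrightarrow \pi_0\Diff(\Sigma_g,\ast) \longrightarrow \ast,
\]
which is exactly $\Z \to \Mod_g^1 \to \Mod_{g,1} \to 1$.

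It remains to identify the image of the connecting map and to check that $T_\partial$ has infinite order and is central. For the first, one lifts a generating loop of $\Emb_\ast(D^2,\Sigma_g)$, taken to be a full rotation of the embedded disk about its center, to a path in $\Diff(\Sigma_g,\ast)$; its terminal value, read inside $\Diff(\Sigma_g^1,\partial)$, is manifestly $T_\partial$. Infinite order follows from the nontrivial action of $T_\partial$ on $\pi_g^1$, e.g.\ on the homotopy class of a proper arc from $\partial$ back to $\partial$ crossing itself once. Centrality is then immediate: every element of $\Mod_g^1$ has a representative supported in the complement of a collar of $\partial$ containing the support of $T_\partial$, so the two diffeomorphisms commute on the nose.

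The main obstacle is really the fibration input itself: that the restriction $\Diff(\Sigma_g,\ast) \to \Emb_\ast(D^2,\Sigma_g)$ is a locally trivial fibration with the stated fibre, and that $\Emb_\ast(D^2,\Sigma_g) \simeq S^1$. Both are standard consequences of parametrized isotopy extension and contractibility of $\Diff(D^2,\partial D^2)$, but they constitute the real technical content of the lemma and are precisely what is packaged in \cite{FarbMargalitPrimer}.
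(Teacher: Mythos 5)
The paper does not prove this lemma; it is stated purely as a citation to \cite[Proposition~3.19]{FarbMargalitPrimer}. Your fibration argument---the restriction fibration $\Diff(\Sigma_g^1,\partial)\to\Diff(\Sigma_g,\ast)\to\Emb_\ast(D^2,\Sigma_g)$, the identification $\Emb_\ast(D^2,\Sigma_g)\simeq S^1$, and the Earle--Eells--Schatz contractibility of the identity component of $\Diff(\Sigma_g,\ast)$---is correct and is the standard proof of this fact, essentially the one packaged in the cited reference. One small streamlining: since you already invoke Earle--Eells--Schatz, you have $\pi_1\Diff(\Sigma_g,\ast)=0$, so the long exact sequence of the fibration begins $0\to\pi_1(S^1)\to\pi_0\Diff(\Sigma_g^1,\partial)\to\cdots$; injectivity of the boundary map, hence the infinite order of $T_\partial$, is then automatic, and your separate verification via the action on a proper arc is redundant (though harmless). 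The identification of the fibre with $\Diff(\Sigma_g^1,\partial)$ also quietly uses a collar argument to pass between ``fixes the disk pointwise'' and ``fixes $\partial$ pointwise,'' but this is standard.
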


Since the action of $\Mod_g^1$ on $H_{\Z} = \HH_1(\Sigma_g^1) \cong \HH_1(\Sigma_{g,1})$ factors
through $\Mod_{g,1}$, this lemma immediately implies a corresponding result for the Torelli group:

\begin{corollary}
\label{corollary:capboundarytorelli}
We have a central extension
\[\begin{tikzcd}
1 \arrow{r} & \Z \arrow{r} & \Torelli_g^1 \arrow{r} & \Torelli_{g,1} \arrow{r} & 1,
\end{tikzcd}\]
where the map $\Torelli_g^1 \rightarrow \Torelli_{g,1}$ is induced by gluing a punctured
disk to $\partial = \partial \Sigma_g^1$ and the central $\Z$ is generated
by $T_{\partial}$.
\end{corollary}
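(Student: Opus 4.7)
The plan is to derive this directly from the central extension in Lemma \ref{lemma:capboundary} by restricting to the Torelli subgroups. The main observation, which is already noted in the paragraph preceding the corollary, is that the action of $\Mod_g^1$ on $H_{\Z} = \HH_1(\Sigma_g^1)$ factors through $\Mod_{g,1}$ via the map of Lemma \ref{lemma:capboundary} (the identification $\HH_1(\Sigma_g^1) \cong \HH_1(\Sigma_{g,1})$ is equivariant for the quotient, since capping off a boundary disk does not change first homology). Consequently, the preimage of $\Torelli_{g,1} \subset \Mod_{g,1}$ under the surjection $\Mod_g^1 \twoheadrightarrow \Mod_{g,1}$ is precisely $\Torelli_g^1$.

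First I would pull back the short exact sequence of Lemma \ref{lemma:capboundary} along the inclusion $\Torelli_{g,1} \hookrightarrow \Mod_{g,1}$. Using the identification of the preimage above, this produces a short exact sequence
\[1 \longrightarrow \Z \cap \Torelli_g^1 \longrightarrow \Torelli_g^1 \longrightarrow \Torelli_{g,1} \longrightarrow 1,\]
where $\Z \cap \Torelli_g^1$ is the intersection of $\Torelli_g^1$ with the central $\Z$ generated by $T_{\partial}$. Next I would check that $T_{\partial}$ itself lies in $\Torelli_g^1$. One quick way is to note that $\partial$ is null-homologous in $\Sigma_g^1$ (it bounds the complement of a collar, and more directly any boundary-parallel curve in a compact surface with boundary is null-homologous), so $T_{\partial}$ acts trivially on $H_{\Z}$. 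Alternatively, $T_{\partial}$ maps to the identity in $\Mod_{g,1}$, and the action of $\Mod_g^1$ on $H_{\Z}$ factors through $\Mod_{g,1}$, giving the same conclusion. Hence $\Z \cap \Torelli_g^1 = \langle T_{\partial} \rangle \cong \Z$.

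Finally, centrality of the subgroup $\langle T_{\partial}\rangle$ in $\Torelli_g^1$ is inherited from its centrality in the larger group $\Mod_g^1$ established in Lemma \ref{lemma:capboundary}. This yields the desired central extension, with the geometric description of the quotient map and of the generator of the kernel carried over verbatim from Lemma \ref{lemma:capboundary}. There is no real obstacle here; the only thing to verify carefully is that pulling back does not shrink the kernel, which is precisely the content of the observation that $T_{\partial} \in \Torelli_g^1$.
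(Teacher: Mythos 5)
Your argument is correct and is exactly the approach the paper takes: the paper notes that the $\Mod_g^1$-action on $H_{\Z}$ factors through $\Mod_{g,1}$ and states that Lemma \ref{lemma:capboundary} then immediately yields the corollary. You have simply written out the routine verification (preimage of $\Torelli_{g,1}$ is $\Torelli_g^1$, $T_{\partial}$ is in the kernel, centrality is inherited) that the paper leaves implicit.
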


\subsection{Factoring the representation}

Using Lemma \ref{lemma:capboundary}, we prove:

\begin{lemma}
\label{lemma:factoraction}
The action of $\Mod_g^1$ on $\cC_g^1$ factors through $\Mod_{g,1}$.
\end{lemma}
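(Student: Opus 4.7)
By Lemma \ref{lemma:capboundary}, the kernel of the surjection $\Mod_g^1 \twoheadrightarrow \Mod_{g,1}$ is the infinite cyclic subgroup generated by the boundary Dehn twist $T_\partial$. My plan is therefore to show that $T_\partial$ acts trivially on $\cC_g^1 = \HH_1([\pi_g^1,\pi_g^1];\Q)$, from which the lemma follows immediately.

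The first step is to identify the automorphism of $\pi_g^1 = \pi_1(\Sigma_g^1,\ast)$ induced by $T_\partial$. Because the basepoint $\ast$ lies on $\partial$, one can model $T_\partial$ as a shear supported in a collar $\partial \times [0,1]$ of $\partial$ and directly trace the deformation of an arbitrary based loop as it enters and leaves this collar. A short computation, comparing the winding numbers of the resulting spiral to the original radial path, yields the standard formula
\[T_\partial(\gamma) = \partial \cdot \gamma \cdot \partial^{-1} \qquad (\gamma \in \pi_g^1),\]
where $\partial$ denotes the boundary loop based at $\ast$, viewed as an element of $\pi_g^1$.

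Second, in the standard symplectic presentation of the surface group one has the relation $\partial = \prod_{i=1}^{g}[a_i,b_i]$, so $\partial$ lies in $[\pi_g^1,\pi_g^1]$. Hence $T_\partial$ restricts to an inner automorphism of $[\pi_g^1,\pi_g^1]$, namely conjugation by one of its own elements. The conclusion then follows from the standard group-homology fact that for a group $G$ with normal subgroup $N$, the conjugation action of $G$ on $\HH_*(N)$ factors through $G/N$; in particular, conjugation by any element of $N$ acts trivially. Applied with $N = [\pi_g^1,\pi_g^1]$ and the element $\partial \in N$, this shows $T_\partial$ acts trivially on $\cC_g^1$, so the action of $\Mod_g^1$ descends to $\Mod_{g,1}$. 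The only mildly delicate step is the geometric computation in the first paragraph; everything else is formal.
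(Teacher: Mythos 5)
Your proof is correct and follows essentially the same route as the paper: reduce to $T_{\partial}$ via Lemma \ref{lemma:capboundary}, observe that $T_{\partial}$ acts on $\pi_g^1$ by conjugation by the boundary loop (which lies in $[\pi_g^1,\pi_g^1]$ since it is a product of commutators), and conclude using the standard fact that inner automorphisms act trivially on group homology. The only cosmetic difference is the sign convention ($\delta^{-1} x \delta$ versus $\partial\gamma\partial^{-1}$), which is immaterial to the argument.
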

\begin{proof}
Let $\partial = \partial \Sigma_g^1$.  By Lemma \ref{lemma:capboundary}, it is enough to prove
that $T_{\partial}$ acts trivially on $\cC_g^1$.  Letting $\delta \in \pi_g^1$ be the loop going
around $\partial$ with $\Sigma_g^1$ to its left, for $x \in \pi_g^1$ we have $T_{\partial}(x) = \delta^{-1} x \delta$:\\
\Figure{TwistConjugate}
Since $\delta \in [\pi_g^1,\pi_g^1]$, this implies that $T_{\partial}$ acts trivially on the abelianization
of $[\pi_g^1,\pi_g^1]$, and thus also acts trivially on $\cC_g^1$.
\end{proof}

Using this, we will henceforth view $\cC_g^1$ as a representation of both $\Mod_g^1$ and $\Mod_{g,1}$.
Via the map $\Mod_g^1 \rightarrow \Mod_{g,1}$, the group $\Mod_g^1$ acts on $\cC_g$, so $\cC_g$ is also
a representation of both $\Mod_g^1$ and $\Mod_{g,1}$.

\subsection{Boundary component vs puncture: representations}

The following lemma explains the relationship between the $\Mod_{g}^1$-representations
$\cC_g^1$ and $\cC_{g}$.  

\begin{lemma}
\label{lemma:capboundaryrep}
We have a short exact sequence
\[\begin{tikzcd}
0 \arrow{r} & \Q[H_{\Z}] \arrow{r} & \cC_g^1 \arrow{r} & \cC_{g} \arrow{r} & 0
\end{tikzcd}\]
of $\Mod_{g}^1$-representations.
\end{lemma}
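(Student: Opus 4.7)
The plan is to apply the long exact sequence of the pair $(\tSigma_g, \tSigma_g^1)$ to the universal abelian cover of $\Sigma_g$. Write $\Sigma_g = \Sigma_g^1 \cup D$, where $D$ is a closed disk glued along $\partial = \partial \Sigma_g^1$; under the identification $\Sigma_{g,1} = \Sigma_g \setminus \{\ast\}$, the puncture $\ast$ lies in the interior of $D$. Since the inclusion $\Sigma_g^1 \hookrightarrow \Sigma_g$ induces an isomorphism on first homology, the kernel of $\pi_g^1 \to H_\Z$ agrees with the kernel of $\pi_g^1 \to \pi_g \to H_\Z$, so the preimage of $\Sigma_g^1$ in $\tSigma_g$ is exactly the universal abelian cover $\tSigma_g^1$. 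Since $D$ is simply connected, its preimage is a disjoint union $\tD = \bigsqcup_{h \in H_\Z} D_h$ of copies of $D$ indexed by the deck group.

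Next I would compute using excision and the pair LES. By excision,
\[
H_k(\tSigma_g, \tSigma_g^1; \Q) \;\cong\; H_k(\tD, \partial\tD; \Q) \;\cong\; \bigoplus_{h \in H_\Z} H_k(D_h, \partial D_h; \Q),
\]
which is $\Q[H_\Z]$ if $k = 2$ and vanishes for $k \leq 1$. Since $\tSigma_g$ is a non-compact $2$-manifold (the deck group $H_\Z \cong \Z^{2g}$ is infinite), we have $H_2(\tSigma_g; \Q) = 0$. The relevant segment of the pair LES therefore collapses to
\[
0 \longrightarrow H_2(\tSigma_g, \tSigma_g^1; \Q) \longrightarrow H_1(\tSigma_g^1; \Q) \longrightarrow H_1(\tSigma_g; \Q) \longrightarrow 0,
\]
which, under the identifications above, is exactly the claimed short exact sequence $0 \to \Q[H_\Z] \to \cC_g^1 \to \cC_g \to 0$.

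Finally I would verify $\Mod_g^1$-equivariance. Every $\phi \in \Mod_g^1$ fixes $\partial$ pointwise and hence extends to a diffeomorphism of $\Sigma_g$ fixing $\ast$ via the surjection $\Mod_g^1 \to \Mod_{g,1}$; after choosing compatible basepoint lifts, this diffeomorphism lifts canonically to $\tSigma_g^1$ and $\tSigma_g$. Naturality of the pair LES makes every map $\Mod_g^1$-equivariant, and under the identification $H_2(\tSigma_g, \tSigma_g^1; \Q) \cong \Q[H_\Z]$ via fundamental classes $[D_h, \partial D_h]$, the action of $\phi$ permutes the disks $D_h$ according to the induced action of $\Sp_{2g}(\Z)$ on $H_\Z$, which is precisely the permutation representation on $\Q[H_\Z]$. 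The only step requiring some care is this last identification, but it follows directly from tracking how the canonical lift of $\phi$ acts on the components of $\tD$.
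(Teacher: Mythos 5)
Your proof is correct and follows essentially the same route as the paper: identify the preimage of $\Sigma_g^1$ in $\tSigma_g$ with $\tSigma_g^1$, recognize the preimage of the disk as $\bigsqcup_{h\in H_\Z} D_h$, and use the non-compactness of the cover to get exactness on the left. You phrase the final step as $H_2(\tSigma_g;\Q)=0$ inside the long exact sequence of the pair $(\tSigma_g,\tSigma_g^1)$, while the paper instead observes that non-compactness of $\tSigma_g^1$ forces the classes of the boundary circles to be linearly independent in $H_1(\tSigma_g^1;\Q)$; these are two formulations of the same vanishing and the computations are interchangeable.
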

\begin{proof}
Let $\tSigma_g^1$ and $\tSigma_g$ be the universal abelian covers of $\Sigma_g^1$ and $\Sigma_g$, respectively, so
$\cC_g^1 = \HH_1(\tSigma_g^1;\Q)$ and 
$\cC_g = \HH_1(\tSigma_g;\Q)$.
To construct $\tSigma_g$ from $\tSigma_g^1$, we glue disks to all components of $\partial \tSigma_g^1$.
Let $K$ be the image of $\HH_1(\partial \tSigma_g^1;\Q)$ in $\cC_g^1 = \HH_1(\tSigma_g^1;\Q)$,
so we have a short exact sequence
\[\begin{tikzcd}
0 \arrow{r} & K \arrow{r} & \cC_g^1 \arrow{r} & \cC_{g} \arrow{r} & 0.
\end{tikzcd}\]
We must prove that $K \cong \Q[H_{\Z}]$.  Let $\partial_0$ be a
component of $\partial \tSigma_g^1$, oriented such that $\tSigma_g^1$ is to its left.  The deck group $H_{\Z}$ of the cover $\tSigma_g^1$ acts simply transitively on
its boundary components.  Since $\tSigma_g^1$ is not compact, the
homology classes of its boundary components are linearly independent. 
The map $\Q[H_{\Z}] \rightarrow K$ taking $h \in H_{\Z}$ to the homology class of $h \Cdot \partial_0$
is thus an isomorphism, as desired.
\end{proof}

\begin{remark}
\label{remark:capboundaryrep}
For later use, we make the above injection $\Q[H_{\Z}] \hookrightarrow \cC_g^1$ explicit as follows.
For $z \in [\pi_g^1,\pi_g^1]$, let $\Cg{z}$ denote the corresponding element of $\cC_g^1 = \HH_1([\pi_g^1,\pi_g^1];\Q)$.  Also, for
$x \in \pi_g^1$ let $\ox$ denote the corresponding element of $H_{\Z}$.  Let
$\delta \in [\pi_g^1,\pi_g^1]$ be the following curve:\\
\Figure{BoundaryLoop}
Then for $x \in \pi_g^1$, the element of $\cC_g^1$ corresponding to $\ox \in H_{\Z}$ is $\Cg{x \delta x^{-1}}$.
\end{remark}

\subsection{Birman exact sequence}
\label{section:birmanexactseq}

The effect on the mapping class group of adding a puncture is described by the Birman exact sequence:

\begin{theorem}[\cite{BirmanThesis}]
\label{theorem:birman}
Let $p,b \geq 0$ and let $x_0 \in \Sigma_{g,p}^b$ be the location of the $(p+1)^{\text{st}}$
puncture in $\Sigma_{g,p+1}^b$.  We then have a short exact sequence
\[\begin{tikzcd}
1 \arrow{r} & \pi_1(\Sigma_{g,p}^b,x_0) \arrow{r} & \Mod_{g,p+1}^b \arrow{r} & \Mod_{g,p}^b \arrow{r} & 1,
\end{tikzcd}\]
where the map $\Mod_{g,p+1}^b \rightarrow \Mod_{g,p}^b$ fills in the $(p+1)^{\text{st}}$ puncture.
\end{theorem}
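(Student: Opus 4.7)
The plan is to derive this from an evaluation fibration of diffeomorphism groups. Let $\Diff^+(\Sigma_{g,p}^b)$ denote the topological group of orientation-preserving diffeomorphisms that fix each of the $p$ existing punctures and that fix $\partial \Sigma_{g,p}^b$ pointwise. Evaluating at the distinguished interior point $x_0$ gives a map
\[
\mathrm{ev}_{x_0}\colon \Diff^+(\Sigma_{g,p}^b) \longrightarrow \Sigma_{g,p}^b, \qquad \phi \longmapsto \phi(x_0),
\]
whose fiber over $x_0$ is the subgroup that also fixes $x_0$; once $x_0$ is relabelled as the $(p+1)$-st puncture, this fiber is exactly $\Diff^+(\Sigma_{g,p+1}^b)$. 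The first step is to check that $\mathrm{ev}_{x_0}$ is a Serre fibration, which follows from the isotopy extension theorem: any short path of points in $\Sigma_{g,p}^b$ is realized by a compactly supported isotopy, yielding local sections and hence local triviality.

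Next, I would feed this fibration into the long exact sequence of homotopy groups based at $\mathrm{id}$:
\[
\pi_1(\Diff^+(\Sigma_{g,p}^b)) \to \pi_1(\Sigma_{g,p}^b, x_0) \to \pi_0(\Diff^+(\Sigma_{g,p+1}^b)) \to \pi_0(\Diff^+(\Sigma_{g,p}^b)) \to \pi_0(\Sigma_{g,p}^b).
\]
The rightmost term vanishes since $\Sigma_{g,p}^b$ is connected, which gives surjectivity of the forgetful map $\Mod_{g,p+1}^b \to \Mod_{g,p}^b$. Exactness at $\pi_1(\Sigma_{g,p}^b,x_0)$ requires $\pi_1(\Diff^+(\Sigma_{g,p}^b)) = 0$, and here I would invoke the Earle--Eells (and Earle--Schatz, Gramain) theorem that the identity component of $\Diff^+$ of a hyperbolic surface is contractible; Assumption \ref{assumption:genus} makes the hypothesis $2g-2+p+b > 0$ automatic. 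Finally, I would unwind the connecting homomorphism as ``point pushing'': for a loop $\gamma$ at $x_0$, isotopy extension produces a path $\phi_t \in \Diff^+(\Sigma_{g,p}^b)$ with $\phi_0 = \mathrm{id}$ and $\phi_t(x_0) = \gamma(t)$, and $\phi_1 \in \Diff^+(\Sigma_{g,p+1}^b)$ represents the element of $\Mod_{g,p+1}^b$ corresponding to $[\gamma]$.

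The main obstacle is the input $\pi_1(\Diff^+(\Sigma_{g,p}^b)) = 0$, which is the only non-formal ingredient and depends on the rather deep contractibility result of Earle--Eells and its extensions to surfaces with boundary and punctures. Everything else---local triviality of $\mathrm{ev}_{x_0}$ via isotopy extension, the translation between $\pi_0$ of diffeomorphism groups and mapping class groups, and the identification of the connecting homomorphism with point pushing---follows directly from the definitions.
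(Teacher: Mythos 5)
Your argument is the standard modern proof of the Birman exact sequence (essentially the one in Farb--Margalit's \emph{Primer}, Theorem 4.6), and it is correct. The paper does not prove this theorem at all -- it simply cites Birman's thesis -- so there is no internal proof to compare against. A few small remarks on your write-up. First, the step where you identify the fiber of $\mathrm{ev}_{x_0}$ with $\Diff^+(\Sigma_{g,p+1}^b)$ is fine at the level of $\pi_0$, which is all you use, but it is worth flagging that a diffeomorphism of $\Sigma_{g,p}^b$ fixing $x_0$ and a diffeomorphism of the punctured surface $\Sigma_{g,p+1}^b$ are not literally the same objects; the standard fact you are implicitly invoking is that passing from a marked point to a puncture does not change the mapping class group. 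Second, the long exact sequence of a fibration is a priori a sequence of pointed sets once one reaches the $\pi_0$ terms; to get an exact sequence of \emph{groups} you need the extra structure that $\Diff^+(\Sigma_{g,p}^b)$ is a topological group acting transitively on $\Sigma_{g,p}^b$ with stabilizer $\Diff^+(\Sigma_{g,p+1}^b)$, so that the sequence comes from a short exact sequence of topological groups in the appropriate sense -- this is precisely what makes the point-pushing map a group (anti)homomorphism, a subtlety the paper itself remarks on in a footnote after this theorem. Finally, your appeal to $g\geq 4$ (Assumption \ref{assumption:genus}) to guarantee $\pi_1(\Diff_0)=0$ is appropriate, though it is worth noting that the Birman exact sequence already fails to be an injection on the left for low-complexity surfaces like the torus or sphere, so some hypothesis of this kind is genuinely needed, not merely a convenience.
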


The normal subgroup $\pi_{g,p}^b = \pi_1(\Sigma_{g,p}^b,x_0)$ of $\Mod_{g,p+1}^b$ is called the {\em point-pushing subgroup}.
Using the action of $\Mod_{g,p+1}^b$ on $\pi_{g,p}^b$, the point-pushing subgroup can be identified with the group
of inner automorphisms of $\pi_{g,p}^b$ in the sense that if $\gamma \in \pi_{g,p}^b$ and $f_{\gamma} \in \Mod_{g,p+1}^b$ 
is the associated mapping class, then\footnote{The reader might expect to see $f_{\gamma}(x) = \gamma x \gamma^{-1}$ since
this is what would be needed for $f_{\gamma_1 \gamma_2} = f_{\gamma_1} \circ f_{\gamma_2}$ to hold.  This points to a tiny annoying issue:
in $\Mod_{g,p+1}^b$, elements are composed right to left like functions, but in $\pi_{g,p}^b$ paths are composed left to right.
Strictly speaking, therefore, the map taking $\gamma \in \pi_{g,p}^b$ to $f_{\gamma} \in \Mod_{g,p+1}^b$ is not an isomorphism
but an anti-isomorphism.  We could avoid this by changing our conventions about either
functions or fundamental groups, but this would lead to endless confusion.  This does 
not affect any of our calculations, and we will not mention it again.}
\begin{equation}
\label{eqn:innerauto}
f_{\gamma}(x) = \gamma^{-1} x \gamma \quad \text{for all $x \in \pi_{g,p}^b$}.
\end{equation}
This implies in particular that the point-pushing subgroup acts trivially on $\HH_1(\Sigma_{g,p}^b)$.

The Torelli group $\Torelli_{g,p+1}^b$ is only defined thus far for $(p+1)+b \leq 1$, so 
as far as Torelli is concerned the above is only relevant for $p = b = 0$.  In that case,
since $\HH_1(\Sigma_{g,1}) \cong \HH_1(\Sigma_g)$ the point-pushing subgroup
of $\Mod_{g,1}$ acts trivially on $\HH_1(\Sigma_{g,1})$ and
thus lies in $\Torelli_{g,1}$.  Even more is true: the action of $\Mod_{g,1}$ on
$\HH_1(\Sigma_{g,1}) \cong \HH_1(\Sigma_g)$ factors through $\Mod_g$, so:

\begin{corollary}
\label{corollary:birmantorelli}
The Birman exact sequence restricts to a short exact sequence
\[\begin{tikzcd}
1 \arrow{r} & \pi_g \arrow{r} & \Torelli_{g,1} \arrow{r} & \Torelli_g \arrow{r} & 1.
\end{tikzcd}\]
\end{corollary}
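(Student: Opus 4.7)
The plan is to observe that this is a diagram chase: the Birman exact sequence of Theorem \ref{theorem:birman} is already in hand, and the preceding discussion establishes the two key facts we need, so only a short formal argument is required.

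Let $\pi\colon \Mod_{g,1} \to \Mod_g$ be the surjection in the Birman exact sequence, so $\ker(\pi) = \pi_g$. First I would record that the action of $\Mod_{g,1}$ on $\HH_1(\Sigma_{g,1}) \cong \HH_1(\Sigma_g)$ factors as $\Mod_{g,1} \xrightarrow{\pi} \Mod_g \to \Sp_{2g}(\Z)$; this is precisely the observation quoted just above the corollary statement, and the reason is that filling in the puncture is a homotopy equivalence $\Sigma_{g,1} \simeq \Sigma_g$ that is equivariant for the two mapping class group actions up to the Birman projection. From this, $\Torelli_{g,1} = \ker(\Mod_{g,1} \to \Sp_{2g}(\Z)) = \pi^{-1}(\Torelli_g)$.

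With this identification in hand, the proof is immediate. Since $\pi_g = \ker(\pi) \subseteq \pi^{-1}(\Torelli_g) = \Torelli_{g,1}$, restricting $\pi$ to $\Torelli_{g,1}$ yields an exact sequence
\[1 \longrightarrow \pi_g \longrightarrow \Torelli_{g,1} \xrightarrow{\pi|_{\Torelli_{g,1}}} \pi(\Torelli_{g,1}) \longrightarrow 1,\]
and since $\pi$ is surjective we have $\pi(\pi^{-1}(\Torelli_g)) = \Torelli_g$, giving the desired sequence. Alternatively, one can invoke equation \eqref{eqn:innerauto} to verify by hand that a point-pushing element acts on $\HH_1(\Sigma_{g,1})$ by conjugation in $\pi_{g,1}$, which descends to the identity on the abelianization; this gives the inclusion $\pi_g \subseteq \Torelli_{g,1}$ directly.

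There is no real obstacle: the content is entirely contained in the two preparatory observations (Theorem \ref{theorem:birman} and the factoring of the symplectic representation through $\Mod_g$), and the corollary is their formal consequence via a snake-lemma-style restriction of short exact sequences.
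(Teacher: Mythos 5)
Your proposal is correct and is essentially the same argument the paper uses: the paper deduces the corollary from the observation, made in the paragraph immediately preceding it, that the action of $\Mod_{g,1}$ on $\HH_1(\Sigma_{g,1}) \cong \HH_1(\Sigma_g)$ factors through $\Mod_g$, which gives $\Torelli_{g,1} = \pi^{-1}(\Torelli_g)$ and hence the restricted exact sequence. Your alternative remark about checking $\pi_g \subseteq \Torelli_{g,1}$ directly via \eqref{eqn:innerauto} also mirrors the paper's preceding comment that the point-pushing subgroup acts trivially on homology.
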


\subsection{Coinvariants}
\label{section:cgpicoinv}

We will henceforth view $\cC_g^1$ and $\cC_g$ as representations of $\pi_g$ by embedding
$\pi_g$ into $\Mod_{g,1}$ as the point-pushing subgroup and using the actions of $\Mod_{g,1}$ on $\cC_g^1$ and $\cC_g$.
By \eqref{eqn:innerauto}, this action of $\pi_g$ on $\cC_{g} = \HH_1([\pi_g,\pi_g];\Q)$
is the action induced by the conjugation action of $\pi_g$ on $[\pi_g,\pi_g]$.
However, the action of $\pi_g$ on $\cC_g^1$ is not as easy to describe.

As a first calculation, we determine the coinvariants $(\cC_g)_{\pi_g}$.
We can view the algebraic intersection form on $H$ as an $\Sp_{2g}(\Q)$-invariant
element $\omega \in \wedge^2 H$.  If $\{a_1,b_1,\ldots,a_g,b_g\}$ is a symplectic basis
for $H$, then
\[\omega = a_1 \wedge b_1 + \cdots + a_g \wedge b_g.\]
Henceforth we will view $\Q$ as the trivial subrepresentation of $\wedge^2 H$ spanned
by $\omega$, which allows us to talk about $(\wedge^2 H)/\Q$.  We then have:

\begin{lemma}
\label{lemma:cgpicoinv}
Letting the notation be as above, we have $(\cC_g)_{\pi_g} \cong (\wedge^2 H)/\Q$.
\end{lemma}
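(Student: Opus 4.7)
The plan is to run the five-term exact sequence in rational group homology for the extension
\[1 \longrightarrow [\pi_g, \pi_g] \longrightarrow \pi_g \longrightarrow H_{\Z} \longrightarrow 1.\]
Before doing so, I would first check that the $\pi_g$-action on $\cC_g$ used in the statement (coming from embedding $\pi_g$ as the point-pushing subgroup of $\Mod_{g,1}$) coincides, for the purposes of coinvariants, with the natural conjugation action of $\pi_g$ on $\HH_1([\pi_g, \pi_g]; \Q)$. By \eqref{eqn:innerauto}, the point-push $f_{\gamma}$ acts on $\pi_g$ as conjugation by $\gamma^{-1}$, hence acts on $\cC_g$ via conjugation. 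Since $[\pi_g, \pi_g]$ acts trivially on its own abelianization, this conjugation action factors through $H_{\Z} = \pi_g^{\mathrm{ab}}$. In particular, $(\cC_g)_{\pi_g}$ equals the $H_{\Z}$-coinvariants that appear in the five-term sequence.

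The five-term sequence then reads
\[\HH_2(\pi_g; \Q) \longrightarrow \HH_2(H_{\Z}; \Q) \longrightarrow (\cC_g)_{H_{\Z}} \longrightarrow \HH_1(\pi_g; \Q) \longrightarrow \HH_1(H_{\Z}; \Q) \longrightarrow 0.\]
The rightmost map is the abelianization, which is the identity $H \to H$ on rational $\HH_1$, and is in particular an isomorphism. Therefore $(\cC_g)_{\pi_g}$ is the cokernel of the leftmost map. Next I identify its source and target: since $H_{\Z} \cong \Z^{2g}$ is free abelian, $\HH_2(H_{\Z}; \Q) = \wedge^2 H$, and since $\pi_g = \pi_1(\Sigma_g)$ is a surface group, $\HH_2(\pi_g; \Q) = \HH_2(\Sigma_g; \Q) = \Q$, generated by the fundamental class $[\Sigma_g]$.

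The only real computational step is to identify the image of $[\Sigma_g]$ in $\wedge^2 H$. Using the standard presentation $\pi_g = \langle a_1, b_1, \ldots, a_g, b_g \mid [a_1,b_1]\cdots[a_g,b_g]\rangle$ together with the Hopf-formula description of $\HH_2$ in terms of the single defining relator, the image is exactly $\omega = \sum_i a_i \wedge b_i$. (Equivalently, this is the classical fact that the abelianization map $\HH_2(\pi_g) \to \HH_2(H_{\Z})$ sends $[\Sigma_g]$ to the Poincar\'e dual of the intersection form.) Combining this with the previous paragraph yields $(\cC_g)_{\pi_g} \cong (\wedge^2 H)/\Q\omega = (\wedge^2 H)/\Q$, as desired. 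I do not anticipate a real obstacle: the argument is a textbook five-term sequence plus a standard identification of the image of the surface fundamental class in $\wedge^2 H$.
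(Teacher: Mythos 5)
Your proof is correct, but it takes a genuinely different route from the paper's. The paper's proof identifies $(\cC_g)_{\pi_g}$ directly with $\bigl([\pi_g,\pi_g]/[\pi_g,[\pi_g,\pi_g]]\bigr) \otimes \Q$ (a graded piece of the lower central series) and then cites a classical computation of this quotient as $(\wedge^2 H)/\Q$, pointing to \cite[Theorem D]{PutmanCommutator}. You instead unwind that classical fact via the five-term exact sequence for $1 \to [\pi_g,\pi_g] \to \pi_g \to H_{\Z} \to 1$, reducing the problem to identifying the image of $\HH_2(\pi_g;\Q) \cong \Q$ in $\HH_2(H_{\Z};\Q) \cong \wedge^2 H$ as the line spanned by $\omega$, which you justify via the Hopf formula and the single surface-group relator. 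Both approaches are sound. The paper's is shorter because it outsources the content to a reference; yours is more self-contained and makes the role of the fundamental class $[\Sigma_g]$ explicit, at the cost of carrying out the homological bookkeeping that the citation hides. Your preliminary remark checking that the point-pushing action agrees (up to the harmless inversion $\gamma \mapsto \gamma^{-1}$) with the conjugation action and hence factors through $H_{\Z}$ is exactly the observation the paper makes in the paragraph just before the lemma, so no gap there either.
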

\begin{proof}
Using \eqref{eqn:innerauto}, we have
\[(\cC_g)_{\pi_g} = \HH_1([\pi_g,\pi_g];\Q)_{\pi_g} \cong \frac{[\pi_g,\pi_g]}{[\pi_g,[\pi_g,\pi_g]]} \otimes \Q.\]
That this is $(\wedge^2 H)/\Q$ is now classical.\footnote{In fact, even before tensoring
with $\Q$ this is isomorphic to $(\wedge^2 H_{\Z})/\Z$.}  See, for instance, \cite[Theorem D]{PutmanCommutator}.
\end{proof}

\begin{remark}
Since $\Mod_{g,1}/\pi_g \cong \Mod_g$, the coinvariants $(\cC_g)_{\pi_g}$ are
a representation of $\Mod_g$.  The isomorphism in Lemma \ref{lemma:cgpicoinv}
is an isomorphism of $\Mod_g$-representations, where $\Mod_g$ acts on $(\wedge^2 H)/\Q$
via $\Sp_{2g}(\Z)$.
\end{remark}

\section{Some deeper relations between punctures and boundary components}
\label{section:twocalc}

We continue our study of the interplay between punctures and boundary components.

\subsection{Splitting Birman exact sequence}

Consider the Birman exact sequence for $\Mod_{g,1}^1$:
\[\begin{tikzcd}
1 \arrow{r} & \pi_g^1 \arrow{r} & \Mod_{g,1}^1 \arrow{r} & \Mod_g^1 \arrow{r} & 1.
\end{tikzcd}\]
This splits via the map $\Mod_g^1 \rightarrow \Mod_{g,1}^1$ induced by
embedding $\Sigma_g^1$ as a subsurface of $\Sigma_{g,1}^1$ and extending mapping
classes by the identity; see here:\\
\Figure{SplitBirman}
This leads to a semidirect product decomposition
$\Mod_{g,1}^1 = \pi_g^1 \rtimes \Mod_g^1$, and in particular an action of $\Mod_g^1$ on $\pi_g^1$.  Regarding
the puncture of $\Sigma_{g,1}^1$ as a marked point, we can deformation retract $\Sigma_{g,1}^1$ onto
$\Sigma_g^1$ such that the marked point ends up as a basepoint on $\partial \Sigma_g^1$:\\
\Figure{BasepointSplitBirman}
From this, we see that our action of $\Mod_g^1$ on $\pi_g^1$ is the natural one arising from placing
the basepoint of $\pi_g^1 = \pi_1(\Sigma_g^1)$ on $\partial \Sigma_g^1$.

\subsection{Extending Torelli}

For this section only, we need a version of the Torelli group on $\Sigma_{g,1}^1$.  View the puncture
on $\Sigma_{g,1}^1$ as a marked point $x_0 \in \Sigma_g^1$.  Let $y_0 \in \partial \Sigma_g^1$ be another point.
Let $S \cong \Sigma_g^1$ be a subsurface of $\Sigma_{g}^1$ such that $x_0,\partial \Sigma_g^1 \subset \Sigma_{g,1}^1 \setminus S$
and let $\delta \in \HH_1(\Sigma_{g}^1,\{x_0,y_0\})$ be the homology class of
an arc connecting $x_0$ to $y_0$ in $\Sigma_{g,1}^1 \setminus S$:\\
\Figure{RelativeHomology}
We have
\[\HH_1(\Sigma_{g}^1,\{x_0,y_0\}) = \Z\Span{\delta} \oplus \HH_1(S) \cong \Z \oplus \Z^{2g}.\]
Let $\Torelli_{g,1}^1$ be the kernel of the action of $\Mod_{g,1}^1$ on $\HH_1(\Sigma_g^1,\{x_0,y_0\})$.
The point-pushing subgroup $\pi_g^1$ of $\Mod_{g,1}^1$ does not lie in $\Torelli_{g,1}^1$
since it does not fix $\delta$.  In fact:
 
\begin{theorem}[{\cite[Theorem 1.2]{PutmanCutPaste}}]
\label{theorem:birmantorelli2}
The intersection of the point-pushing subgroup $\pi_g^1 < \Mod_{g,1}^1$ with $\Torelli_{g,1}^1$ is
$[\pi_g^1,\pi_g^1]$, and the Birman exact sequence restricts to a split
exact sequence
\[\begin{tikzcd}
1 \arrow{r} & {[\pi_g^1,\pi_g^1]} \arrow{r} & \Torelli_{g,1}^1 \arrow{r} & \Torelli_g^1 \arrow{r} & 1.
\end{tikzcd}\]
\end{theorem}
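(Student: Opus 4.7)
The plan is to exploit the semidirect product decomposition $\Mod_{g,1}^1 = \pi_g^1 \rtimes \Mod_g^1$ coming from the splitting $s \colon \Mod_g^1 \to \Mod_{g,1}^1$ described just above the theorem statement. Choose the subsurface $S \cong \Sigma_g^1$ used to define the relative homology $\HH_1(\Sigma_g^1,\{x_0,y_0\})$ to be the image of the splitting embedding, so that its complement in $\Sigma_{g,1}^1$ is an annular neighborhood containing $x_0$ and $\partial \Sigma_g^1$, and take the arc representing $\delta$ to lie in this annular complement. With these choices, any $f \in \Mod_{g,1}^1$ factors uniquely as $f = f_\gamma \cdot s(\bar h)$ with $\gamma \in \pi_g^1$ and $\bar h \in \Mod_g^1$, and I can compute the action of $f$ on $\HH_1(\Sigma_g^1,\{x_0,y_0\}) = \Z\Span{\delta} \oplus \HH_1(S)$ one factor at a time.

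For the $\bar h$-factor, the mapping class $s(\bar h)$ is supported in $S$, so it fixes $\delta$ pointwise and acts on $\HH_1(S)$ exactly as $\bar h$ acts on $\HH_1(\Sigma_g^1)$. For the $f_\gamma$-factor, formula \eqref{eqn:innerauto} shows that $f_\gamma$ acts on $\pi_g^1$ by conjugation, hence trivially on $\HH_1(\Sigma_g^1)$, and therefore trivially on $\HH_1(S)$. The key topological computation is that
\[f_\gamma(\delta) = \delta + [\gamma] \quad \text{in } \HH_1(\Sigma_g^1,\{x_0,y_0\}),\]
where $[\gamma] \in \HH_1(\Sigma_g^1)$ denotes the homology class of $\gamma$. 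This holds because the point-pushing isotopy realizing $f_\gamma$ drags the endpoint $x_0$ of $\delta$ once around $\gamma$, making $f_\gamma(\delta)$ homotopic rel endpoints to the concatenation $\gamma \cdot \delta$.

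Combining these two computations, $f = f_\gamma \cdot s(\bar h)$ lies in $\Torelli_{g,1}^1$ precisely when $\bar h \in \Torelli_g^1$ and $[\gamma] = 0$ in $H_{\Z}$, i.e.\ when $\gamma \in [\pi_g^1,\pi_g^1]$. This yields
\[\Torelli_{g,1}^1 = [\pi_g^1,\pi_g^1] \rtimes \Torelli_g^1,\]
from which the split short exact sequence and the identification $\pi_g^1 \cap \Torelli_{g,1}^1 = [\pi_g^1,\pi_g^1]$ follow immediately. Normality of $[\pi_g^1,\pi_g^1]$ inside $\Torelli_{g,1}^1$ is automatic: it is characteristic in the normal subgroup $\pi_g^1 \lhd \Mod_{g,1}^1$.

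The only real obstacle is the verification of the relation $f_\gamma(\delta) = \delta + [\gamma]$. The minor wrinkle here is a basepoint convention (the point-pushing subgroup is naturally $\pi_1(\Sigma_g^1,x_0)$ rather than $\pi_1(\Sigma_g^1,y_0) = \pi_g^1$), but the splitting $s$ supplies an identification via a path between the two basepoints, and the computation then reduces to tracking $\delta$ through the canonical point-pushing isotopy and recording the endpoint trace, as in the standard proof of the Birman sequence.
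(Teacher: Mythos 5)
Your proof is correct. The paper does not itself give a proof; it cites \cite[Theorem 1.2]{PutmanCutPaste} and explains in Remark \ref{remark:puncturesboundary} how the argument there (stated for surfaces with multiple boundary components) carries over to $\Sigma_{g,1}^1$. You instead give a clean, self-contained proof of the special case at hand: choosing $S$ to be the image of the splitting $s\colon \Mod_g^1 \to \Mod_{g,1}^1$ puts $\delta$ in the complementary annular region, and the semidirect-product factorization $f = f_\gamma \cdot s(\bar h)$ reduces the action on $\HH_1(\Sigma_g^1,\{x_0,y_0\}) = \Z\Span{\delta} \oplus \HH_1(S)$ to two easily computed factors. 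The endpoint-trace formula $f_\gamma(\delta) = \delta \pm [\gamma]$, the one computation you leave as a sketch, is the standard cylinder argument: the singular $2$-chain $(s,t) \mapsto \phi_t(\delta(s))$ swept out by the point-pushing isotopy has boundary $f_\gamma(\delta) - \delta \mp \gamma$, since $x_0$ traces $\gamma$ while $y_0$ is fixed, and this gives the asserted relation in $\HH_1(\Sigma_g^1,\{x_0,y_0\})$. The sign ambiguity (which depends on the orientation of $\delta$ and whether one views the point-push as dragging forward or backward) is immaterial, since all the argument needs is that $f_\gamma \in \Torelli_{g,1}^1 \Leftrightarrow [\gamma] = 0 \Leftrightarrow \gamma \in [\pi_g^1,\pi_g^1]$. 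What your direct argument gains over the citation is concision and self-containedness for the one case this paper needs; what \cite{PutmanCutPaste} offers is the broader multi-boundary-partition generality, which is more machinery than is required here.
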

    
\begin{remark} 
\label{remark:puncturesboundary}
The results in \cite{PutmanCutPaste} are for surfaces with multiple boundary components,
not boundary components and punctures.  However, the proofs work verbatim to prove
the results we discuss above.  In fact, even more is true: letting $\{\partial_1,\partial_2\}$
be the boundary components of $\Sigma_g^2$, the group $\Torelli_{g,1}^1$ we discussed above
is {\em isomorphic} to the group $\Torelli(\Sigma_g^2,\{\{\partial_1,\partial_2\}\})$ from
\cite{PutmanCutPaste}.  The isomorphism glues a punctured disk to $\partial_1$; since
no power of $T_{\partial_1}$ lies in $\Torelli(\Sigma_g^2,\{\{\partial_1,\partial_2\}\})$,
this does not change the Torelli group (c.f.\ Lemma \ref{lemma:capboundary}).
\end{remark}

The conjugation action of $\Mod_{g,1}^1$ on
$\Torelli_{g,1}^1$ induces an action of $\Mod_{g,1}^1/\Torelli_{g,1}^1$ on each $\HH_d(\Torelli_{g,1}^1)$.
Using the Birman exact sequences for $\Mod_{g,1}^1$ and $\Torelli_{g,1}^1$, we get an exact
sequence
\[\begin{tikzcd}[row sep=small]
1 \arrow{r} & \pi_g^1/{[\pi_g^1,\pi_g^1]} \arrow[equals]{d} \arrow{r} & \Mod_{g,1}^1/\Torelli_{g,1}^1 \arrow{r} & \Mod_g^1/\Torelli_g^1 \arrow[equals]{d} \arrow{r} & 1. \\
            & H_{\Z}                                                  &                                         & \Sp_{2g}(\Z)                                      &
\end{tikzcd}\]
The map $\Mod_g^1 \rightarrow \Mod_{g,1}^1$ that splits the Birman exact sequence induces a splitting of this exact sequence,
giving an inclusion $\Sp_{2g}(\Z) \hookrightarrow \Mod_{g,1}^1/\Torelli_{g,1}^1$ and thus an action
of $\Sp_{2g}(\Z)$ on each $\HH_d(\Torelli_{g,1}^1)$.  Putman \cite{PutmanJohnson} calculated
$\HH_1(\Torelli_{g,1}^1;\Q)$.  His calculation implies:\footnote{Like Johnson's work on the abelianization of Torelli, this
theorem requires $g \geq 3$; see Assumption \ref{assumption:genus}.}

\begin{theorem}[{\cite[Theorem B]{PutmanJohnson}}]
\label{theorem:abeltorelli}
The homology group $\HH_1(\Torelli_{g,1}^1;\Q)$ is a finite-dimensional
algebraic representation of $\Sp_{2g}(\Z)$.
\end{theorem}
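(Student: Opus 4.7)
The plan is to apply the Lyndon--Hochschild--Serre spectral sequence to the split short exact sequence
\[
1 \longrightarrow [\pi_g^1,\pi_g^1] \longrightarrow \Torelli_{g,1}^1 \longrightarrow \Torelli_g^1 \longrightarrow 1
\]
from Theorem \ref{theorem:birmantorelli2}. Using the definition $\cC_g^1 = \HH_1([\pi_g^1,\pi_g^1];\Q)$, the five-term exact sequence in rational homology reads
\[
\HH_2(\Torelli_g^1;\Q) \longrightarrow (\cC_g^1)_{\Torelli_g^1} \longrightarrow \HH_1(\Torelli_{g,1}^1;\Q) \longrightarrow \HH_1(\Torelli_g^1;\Q) \longrightarrow 0.
\]
Because the displayed short exact sequence splits, the rightmost map is split surjective and I obtain an $\Sp_{2g}(\Z)$-equivariant direct-sum decomposition
\[
\HH_1(\Torelli_{g,1}^1;\Q) \;\cong\; \HH_1(\Torelli_g^1;\Q) \,\oplus\, K,
\]
where $K \subseteq \HH_1(\Torelli_{g,1}^1;\Q)$ is the image of $(\cC_g^1)_{\Torelli_g^1}$. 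It then suffices to show each summand is finite-dimensional and algebraic.

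The first summand is handled by Johnson's theorem (cited in the introduction), which gives $\HH_1(\Torelli_g^1;\Q) \cong \wedge^3 H$; this is already a finite-dimensional algebraic $\Sp_{2g}(\Z)$-representation. The remaining work lies in bounding $K$. To attack this I would exploit the short exact sequence $0 \to \Q[H_\Z] \to \cC_g^1 \to \cC_g \to 0$ from Lemma \ref{lemma:capboundaryrep}, analyzing the $\Torelli_g^1$-coinvariants of each piece. For the quotient $\cC_g$, a calculation modeled on Lemma \ref{lemma:cgpicoinv}, combined with the fact that $\Torelli_g^1$ acts on the lower central series quotients of $\pi_g^1$ through the Johnson filtration, should cut $(\cC_g)_{\Torelli_g^1}$ down to a finite-dimensional algebraic quotient of $\wedge^2 H$.

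The main obstacle is the $\Q[H_\Z]$ summand. The action of $\Torelli_g^1$ on $\Q[H_\Z]$ is trivial, so its naive coinvariants $\Q[H_\Z]_{\Torelli_g^1} = \Q[H_\Z]$ are infinite-dimensional; finiteness of the corresponding contribution to $K$ must come from relations in $\Torelli_{g,1}^1$ that are invisible at the level of the coinvariant functor. Concretely, the translates $h \Cdot \delta$ of the boundary loop $\delta \in \cC_g^1$ described in Remark \ref{remark:capboundaryrep} must be identified in $\HH_1(\Torelli_{g,1}^1;\Q)$ by a sufficient supply of geometric relations --- lantern, chain, and bounding-pair relations on suitably chosen separating twists, bounding pair maps, and point-pushes --- to reduce the resulting contribution to a finite-dimensional algebraic $\Sp_{2g}(\Z)$-representation. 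Producing these relations in a symplectic-equivariant way is the crux of the argument, and is precisely the geometric analysis carried out in \cite{PutmanJohnson}.
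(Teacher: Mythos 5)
The paper does not supply a proof of this theorem; it simply cites \cite[Theorem B]{PutmanJohnson}. Your proposal gives a sensible structural sketch of how such a proof could go, and it correctly isolates the hard geometric step. However, at exactly that step --- showing that the a priori infinite-dimensional contribution of $\Q[H_{\Z}]$ to $(\cC_g^1)_{\Torelli_g^1}$ collapses to something finite-dimensional and algebraic --- your proposal explicitly defers to \cite{PutmanJohnson}. So what you have written is not an independent proof; it is a reconstruction of the framing surrounding the cited result, with the genuine content still outsourced.

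Two further remarks. First, your handling of $(\cC_g)_{\Torelli_g^1}$ does not need the Johnson filtration: the $\Torelli_g^1$-action on $\cC_g$ factors through the surjection $\Torelli_g^1 \twoheadrightarrow \Torelli_{g,1}$, whose point-pushing subgroup is $\pi_g$, so $(\cC_g)_{\Torelli_g^1}$ is already $(\cC_g)_{\pi_g} \cong (\wedge^2 H)/\Q$ by Lemma \ref{lemma:cgpicoinv} (compare Lemma \ref{lemma:cgtorellicoinv}), with no further filtration analysis required. Second, observe that the paper's Lemma \ref{lemma:cg1torellicoinv} runs your exact sequence in the \emph{opposite} logical direction: it uses the explicit calculation of $\HH_1(\Torelli_{g,1}^1;\Q)$ from \cite{PutmanJohnson} to read off $(\cC_g^1)_{\Torelli_g^1} \cong \wedge^2 H$, whereas you propose to compute $(\cC_g^1)_{\Torelli_g^1}$ directly and deduce the homology. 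Executing your direction honestly would amount to reproving that lemma from scratch, and the step where $\Q[H_{\Z}]$ must be shown to have finite-dimensional image in the coinvariants is precisely where the geometric relations (lanterns, chains, bounding pairs acting on separating twists) of \cite{PutmanJohnson} enter. You have named the obstacle accurately but not overcome it.
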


\begin{remark}
\label{remark:calch1}
The same caveat about punctures vs boundary components from Remark \ref{remark:puncturesboundary} also
applies to \cite{PutmanJohnson}.
\end{remark}

\subsection{Capping boundary of groups}

Since $\Mod_{g,1} / \Torelli_{g,1} \cong \Sp_{2g}(\Z)$, the coinvariants $(\cC_g^1)_{\Torelli_{g,1}}$
are a representation of $\Sp_{2g}(\Z)$.  We have:

\begin{lemma}
\label{lemma:cg1torellicoinv}
We have $(\cC_g^1)_{\Torelli_{g,1}} \cong \wedge^2 H$.
\end{lemma}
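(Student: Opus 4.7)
The plan is to combine two computations of $(\cC_g^1)_{\Torelli_{g,1}}$. The first uses the short exact sequence of $\Mod_{g,1}$-modules from Lemma \ref{lemma:capboundaryrep},
\[0 \to \Q[H_{\Z}] \to \cC_g^1 \to \cC_g \to 0.\]
Since $\Torelli_{g,1}$ acts trivially on $\Q[H_{\Z}]$, taking $\Torelli_{g,1}$-coinvariants and using right-exactness yields
\[\Q[H_{\Z}] \to (\cC_g^1)_{\Torelli_{g,1}} \to (\cC_g)_{\Torelli_{g,1}} \to 0.\]
Using Corollary \ref{corollary:birmantorelli} I rewrite $(\cC_g)_{\Torelli_{g,1}} = ((\cC_g)_{\pi_g})_{\Torelli_g}$; applying Lemma \ref{lemma:cgpicoinv} and the triviality of the $\Torelli_g$-action on $\wedge^2 H$, this equals $(\wedge^2 H)/\Q$. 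Hence $(\cC_g^1)_{\Torelli_{g,1}}$ is an extension of $(\wedge^2 H)/\Q$ by some quotient $W$ of $\Q[H_{\Z}]$.

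To control $W$, I would apply the Lyndon--Hochschild--Serre five-term exact sequence to the split extension $1 \to [\pi_g^1,\pi_g^1] \to \Torelli_{g,1}^1 \to \Torelli_g^1 \to 1$ from Theorem \ref{theorem:birmantorelli2}. The splitting makes the transgression vanish, so the sequence collapses to
\[0 \to (\cC_g^1)_{\Torelli_g^1} \to \HH_1(\Torelli_{g,1}^1;\Q) \to \HH_1(\Torelli_g^1;\Q) \to 0,\]
and by Lemma \ref{lemma:factoraction} the left-hand term equals $(\cC_g^1)_{\Torelli_{g,1}}$. By Theorem \ref{theorem:abeltorelli} the middle term is a finite-dimensional algebraic $\Sp_{2g}(\Z)$-representation, so $(\cC_g^1)_{\Torelli_{g,1}}$ is also finite-dimensional algebraic. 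Combined with the first step, $W$ is a finite-dimensional algebraic $\Sp_{2g}(\Z)$-quotient of $\Q[H_{\Z}]$; since every nonzero $\Sp_{2g}(\Z)$-orbit in $H_{\Z}$ is infinite, the only such quotient is $0$ or the trivial representation $\Q$.

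The main obstacle is to rule out $W=0$ and conclude $W \cong \Q$. The cleanest route is to invoke Putman's explicit computation in \cite{PutmanJohnson}, which identifies $\HH_1(\Torelli_{g,1}^1;\Q)$ with $\wedge^3 H \oplus \wedge^2 H$ and the map $\HH_1(\Torelli_{g,1}^1;\Q) \to \HH_1(\Torelli_g^1;\Q) \cong \wedge^3 H$ with projection onto the first summand; this forces $(\cC_g^1)_{\Torelli_{g,1}} \cong \wedge^2 H$, and in particular $W \cong \Q$. Once $W$ is known to be one-dimensional, complete reducibility of algebraic $\Sp_{2g}$-representations splits $\wedge^2 H$ as $\Q \oplus (\wedge^2 H)/\Q$, matching the extension from the first step and giving the desired isomorphism. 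An alternative route, avoiding the explicit form of $\HH_1(\Torelli_{g,1}^1;\Q)$, would be to show directly that the class $\Cg{\delta} \in \cC_g^1$ survives in $(\cC_g^1)_{\Torelli_{g,1}}$---for instance via an intersection-theoretic pairing of cycles in $\tSigma_g^1$ with its boundary circles---but constructing such a functional and checking its $\Sp_{2g}(\Z)$-equivariance is the bulk of the work.
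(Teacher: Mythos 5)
Your proposal is correct and its core argument is the same as the paper's: both apply the five-term exact sequence to the split extension $1 \to [\pi_g^1,\pi_g^1] \to \Torelli_{g,1}^1 \to \Torelli_g^1 \to 1$ from Theorem \ref{theorem:birmantorelli2} to exhibit $(\cC_g^1)_{\Torelli_{g,1}}$ as the kernel of $\HH_1(\Torelli_{g,1}^1;\Q) \to \HH_1(\Torelli_g^1;\Q)$, and then read off the answer from Putman's computation of $\HH_1(\Torelli_{g,1}^1;\Q)$ in \cite{PutmanJohnson}. Your first paragraph (taking $\Torelli_{g,1}$-coinvariants of $0 \to \Q[H_{\Z}] \to \cC_g^1 \to \cC_g \to 0$ to pin down the quotient $(\wedge^2 H)/\Q$ and isolate a $W$) is a harmless but unnecessary detour, since by the time you invoke the explicit form of $\HH_1(\Torelli_{g,1}^1;\Q)$ and the projection map you already have the full answer.
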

\begin{proof}
Consider the split exact sequence from Theorem \ref{theorem:birmantorelli2}:
\[\begin{tikzcd}
1 \arrow{r} & {[\pi_g^1,\pi_g^1]} \arrow{r} & \Torelli_{g,1}^1 \arrow{r} & \Torelli_g^1 \arrow{r} & 1.
\end{tikzcd}\]
The above short exact sequence yields a five term exact sequence in group homology.  Since our short exact sequence splits, the rightmost three terms of the associated five term exact sequence are in fact a short exact sequence
\[\begin{tikzcd}[row sep=small]
0 \arrow{r} & {\left(\HH_1([\pi_g^1,\pi_g^1];\Q)\right)_{\Torelli_g^1}} \arrow[equals]{d} \arrow{r} & \HH_1(\Torelli_{g,1}^1;\Q) \arrow{r} & \HH_1(\Torelli_g^1;\Q) \arrow{r} & 0 \\
            & (\cC_g^1)_{\Torelli_{g,1}}                                                         &                                      &                                  &
\end{tikzcd}\]
The result now follows from the calculation of $\HH_1(\Torelli_{g,1}^1;\Q)$ from \cite{PutmanJohnson}.
\end{proof}

This has the following corollary:

\begin{corollary}
\label{corollary:capboundaryseq}
We have a short exact sequence of $\Sp_{2g}(\Z)$-representations
\[\begin{tikzcd}
0 \arrow{r} & V \arrow{r} & \HH_1(\Torelli_g^1;\cC_g^1) \arrow{r} & \HH_1(\Torelli_{g,1};\cC_g^1) \arrow{r} & 0
\end{tikzcd}\]
with $V$ a finite-dimensional algebraic representation of $\Sp_{2g}(\Z)$.
\end{corollary}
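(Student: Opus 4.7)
The plan is to run the five-term exact sequence of the Hochschild--Serre spectral sequence attached to the central extension from Corollary \ref{corollary:capboundarytorelli}
\[1 \longrightarrow \Z \longrightarrow \Torelli_g^1 \longrightarrow \Torelli_{g,1} \longrightarrow 1,\]
with coefficients in $\cC_g^1$. Since the central $\Z$ is generated by $T_\partial$ and Lemma \ref{lemma:factoraction} shows that the action of $\Mod_g^1$ on $\cC_g^1$ factors through $\Mod_{g,1}$, the element $T_\partial$ acts trivially on $\cC_g^1$. Therefore $(\cC_g^1)_\Z = \cC_g^1$ and $\HH_1(\Z;\cC_g^1) \cong \cC_g^1$.

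With this simplification, the five-term exact sequence reads
\[\HH_2(\Torelli_g^1;\cC_g^1) \to \HH_2(\Torelli_{g,1};\cC_g^1) \to (\cC_g^1)_{\Torelli_{g,1}} \to \HH_1(\Torelli_g^1;\cC_g^1) \to \HH_1(\Torelli_{g,1};\cC_g^1) \to 0.\]
Letting $V$ be the image of $(\cC_g^1)_{\Torelli_{g,1}}$ in $\HH_1(\Torelli_g^1;\cC_g^1)$, exactness gives the desired short exact sequence
\[0 \longrightarrow V \longrightarrow \HH_1(\Torelli_g^1;\cC_g^1) \longrightarrow \HH_1(\Torelli_{g,1};\cC_g^1) \longrightarrow 0.\]
Since $V$ is a quotient of $(\cC_g^1)_{\Torelli_{g,1}} \cong \wedge^2 H$ by Lemma \ref{lemma:cg1torellicoinv}, it is automatically finite-dimensional. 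Moreover $\wedge^2 H$ is an algebraic $\Sp_{2g}(\Z)$-representation, and any $\Sp_{2g}(\Z)$-equivariant quotient of such is again algebraic (its action extends to the unique polynomial representation obtained by restricting the extension from the ambient algebraic group and quotienting), so $V$ is a finite-dimensional algebraic representation of $\Sp_{2g}(\Z)$.

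The one point that needs care is equivariance. The action of $\Sp_{2g}(\Z)$ on the terms of the five-term exact sequence should come from the outer action of $\Mod_{g,1}/\Torelli_{g,1} \cong \Sp_{2g}(\Z)$ on the extension, combined with its compatible action on the coefficient module $\cC_g^1$; since every map in the Hochschild--Serre spectral sequence is natural with respect to such an outer action, the sequence above is one of $\Sp_{2g}(\Z)$-representations. I expect this to be the only mildly delicate step: one must check that the identification $\HH_1(\Z;\cC_g^1) \cong \cC_g^1$ together with the edge map to $\HH_1(\Torelli_g^1;\cC_g^1)$ is equivariant under the $\Mod_{g,1}$-action, but since the extension, the trivialization of the $T_\partial$-action, and Lemma \ref{lemma:cg1torellicoinv} are all $\Mod_{g,1}$-equivariant, this is automatic.
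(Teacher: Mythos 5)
Your proof is correct and takes essentially the same approach as the paper: both use the five-term exact sequence for the central extension of Corollary \ref{corollary:capboundarytorelli} with coefficients in $\cC_g^1$, the triviality of the $T_\partial$-action from Lemma \ref{lemma:factoraction}, and the identification $(\cC_g^1)_{\Torelli_{g,1}} \cong \wedge^2 H$ from Lemma \ref{lemma:cg1torellicoinv}. The paper only quotes the last three terms of the five-term sequence and invokes the closure property \eqref{eqn:algclosed}, but the content is the same as what you wrote.
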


For the proof of Corollary \ref{corollary:capboundaryseq} and many future proofs as well, we need:
\begin{align*}
\tag{$\spadesuit$}\label{eqn:algclosed} &\text{\space\space\space the collection of finite-dimensional algebraic representations of $\Sp_{2g}(\Z)$ is closed}\\ &\text{\space\space\space under subquotients, extensions, and tensor products.}
\end{align*}

\begin{proof}[Proof of Corollary \ref{corollary:capboundaryseq}]
Consider the central extension from Corollary \ref{corollary:capboundarytorelli}:
\[\begin{tikzcd}
1 \arrow{r} & \Z \arrow{r} & \Torelli_g^1 \arrow{r} & \Torelli_{g,1} \arrow{r} & 1
\end{tikzcd}\]
Let $\partial = \partial \Sigma_g^1$.  The central $\Z$ is generated by $T_{\partial}$, which
acts trivially on $\cC_g^1$.  It follows that the associated 5-term exact sequence in group
homology contains 
\[\begin{tikzcd}[row sep=small]
\HH_1(\Z;\cC_g^1)_{\Torelli_{g,1}} \arrow{r} \arrow[equals]{d} & \HH_1(\Torelli_g^1;\cC_g^1) \arrow{r} & \HH_1(\Torelli_{g,1};\cC_g^1) \arrow{r} & 0 \\
\left(\cC_g^1\right)_{\Torelli_{g,1}}
\end{tikzcd}\]
Lemma \ref{lemma:cg1torellicoinv} implies that $(\cC_g^1)_{\Torelli_{g,1}}$
is a finite-dimensional algebraic representation of
$\Sp_{2g}(\Z)$.  An application of \eqref{eqn:algclosed} now proves the corollary.
\end{proof}

\subsection{Capping boundary of representations}

Corollary \ref{corollary:capboundaryseq} shows how to go from $\Torelli_g^1$ to $\Torelli_{g,1}$.  We now
show how to go from $\cC_g^1$ to $\cC_g$:

\begin{lemma}
\label{lemma:capboundaryrep2}
We have an exact sequence
\[\begin{tikzcd}
0 \arrow{r} & W \arrow{r} & \HH_1(\Torelli_{g}^1;\cC_g^1) \arrow{r} & \HH_1(\Torelli_{g}^1;\cC_g)
\end{tikzcd}\]
with $W$ a finite-dimensional algebraic representation of $\Sp_{2g}(\Z)$.
\end{lemma}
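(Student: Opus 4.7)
The plan is to apply the functor $\HH_{\bullet}(\Torelli_g^1; -)$ to the short exact sequence of $\Mod_g^1$-modules
\[0 \longrightarrow \Q[H_\Z] \longrightarrow \cC_g^1 \longrightarrow \cC_g \longrightarrow 0\]
from Lemma \ref{lemma:capboundaryrep}.  The resulting long exact sequence in group homology identifies $W = \ker(\HH_1(\Torelli_g^1; \cC_g^1) \to \HH_1(\Torelli_g^1; \cC_g))$ with the image of the induced map
\[f \colon \HH_1(\Torelli_g^1; \Q[H_\Z]) \longrightarrow \HH_1(\Torelli_g^1; \cC_g^1).\]
Since this long exact sequence is natural in the $\Mod_g^1$-action on the coefficient sequence and since $\Mod_g^1$ acts on $\Torelli_g^1$ by conjugation with kernel containing $\Torelli_g^1$, the $\Sp_{2g}(\Z)$-equivariance of the identification is automatic.

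Next I would compute the source.  The $\Mod_g^1$-action on $\Q[H_\Z]$ factors through $\Sp_{2g}(\Z)$, so $\Torelli_g^1$ acts trivially on $\Q[H_\Z]$.  Combining the universal coefficient theorem with Johnson's computation $\HH_1(\Torelli_g^1;\Q)\cong\wedge^3 H$ gives
\[\HH_1(\Torelli_g^1; \Q[H_\Z]) \cong \HH_1(\Torelli_g^1;\Q) \otimes_{\Q} \Q[H_\Z] \cong \wedge^3 H \otimes \Q[H_\Z].\]
This source is infinite-dimensional, so the whole content of the lemma is that the image of $f$ is finite-dimensional.

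The key step in my plan is to show that $f$ factors through the augmentation quotient
\[\wedge^3 H \otimes \Q[H_\Z] \xrightarrow{\ \id \otimes \epsilon\ } \wedge^3 H,\]
which reduces the lemma to the observation that $W$ is then a quotient of the finite-dimensional algebraic representation $\wedge^3 H$ of $\Sp_{2g}(\Z)$, and hence finite-dimensional algebraic by \eqref{eqn:algclosed}.  Concretely this factorization amounts to verifying that for every $\phi \in \Torelli_g^1$ and every $h \in H_\Z$, the class
\[\phi \otimes \bigl((h-1) \cdot \Cg{\delta}\bigr) = \phi \otimes \bigl(\Cg{x\delta x^{-1}} - \Cg{\delta}\bigr)\]
vanishes in $\HH_1(\Torelli_g^1; \cC_g^1)$, using Remark \ref{remark:capboundaryrep} to express the injection $\Q[H_\Z] \hookrightarrow \cC_g^1$ in terms of conjugates of $\delta$ by a lift $x \in \pi_g^1$ of $h$.

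The main obstacle is producing the required $2$-chains in the bar complex.  My hope is to exploit two pieces of structure simultaneously: (i) $\Torelli_g^1$ fixes $\Cg{\delta}$ pointwise (since elements of $\Torelli_g^1$ fix $\partial\Sigma_g^1$), which forces the bar classes $\eta \otimes \Cg{\delta}$ to depend only on the image of $\eta$ in $(\Torelli_g^1)^{\mathrm{ab}}$, and (ii) the split extension $1 \to [\pi_g^1,\pi_g^1] \to \Torelli_{g,1}^1 \to \Torelli_g^1 \to 1$ of Theorem \ref{theorem:birmantorelli2}, which realizes the element $x \in \pi_g^1$ as a point-pushing mapping class inside $\Torelli_{g,1}^1$ whose action on $\cC_g^1$ coincides with the $H_\Z$-action.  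Transferring this action via the splitting should produce the $2$-chain witnessing the desired vanishing.
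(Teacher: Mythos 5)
Your setup is correct and matches the paper: $W$ is the image of the induced map $f\colon\HH_1(\Torelli_g^1;\Q[H_\Z])\to\HH_1(\Torelli_g^1;\cC_g^1)$, and since $\Torelli_g^1$ acts trivially on $\Q[H_\Z]$ the source is $\HH_1(\Torelli_g^1;\Q)\otimes\Q[H_\Z]\cong\wedge^3 H\otimes\Q[H_\Z]$.  However, the central claim of your plan --- that $f$ factors through $\id\otimes\epsilon\colon\wedge^3 H\otimes\Q[H_\Z]\to\wedge^3 H$, i.e.\ that $\phi\otimes\bigl(\Cg{x\delta x^{-1}}-\Cg{\delta}\bigr)$ vanishes in $\HH_1(\Torelli_g^1;\cC_g^1)$ --- is left unproven, and neither of your two structural observations produces the missing $2$-chain.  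Point (i) is true but only controls bar classes $\eta\otimes\Cg{\delta}$; it says nothing about $\eta\otimes\Cg{x\delta x^{-1}}$ since $\Torelli_g^1$ does not fix $\Cg{x\delta x^{-1}}$.  Point (ii) contains an error: for $x\in\pi_g^1$ with $\ox=h\neq 0$, the corresponding point-pushing mapping class lies in $\Mod_{g,1}^1$ but \emph{not} in $\Torelli_{g,1}^1$ --- by Theorem \ref{theorem:birmantorelli2}, only the commutator subgroup $[\pi_g^1,\pi_g^1]$ of the point-pushing group intersects $\Torelli_{g,1}^1$.  So ``transferring this action via the splitting'' does not furnish a $2$-chain with entries in a Torelli group, and the factorization remains a hope rather than a proof.

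The paper sidesteps the augmentation factorization entirely.  It collapses the Hochschild--Serre spectral sequence of the split extension $1\to[\pi_g^1,\pi_g^1]\to\Torelli_{g,1}^1\to\Torelli_g^1\to 1$ (two rows, split) to get an injection $\HH_1(\Torelli_g^1;\cC_g^1)\hookrightarrow\HH_2(\Torelli_{g,1}^1;\Q)$, then identifies the point-pushing class of $\delta$ with $T_xT_\partial^{-1}\in\Torelli_{g,1}^1$ and shows via $\Mod_{g,1}^1$-equivariance of the composite $\Phi$ (the point-pushing subgroup acts transitively on $H_\Z$ and trivially on $\HH_1(\Torelli_g^1;\Q)$) that it suffices to land $\Phi([f]\otimes\delta)=\overline{[f]\otimes[T_x]}-\overline{[f]\otimes[T_\partial]}$ in the $\Mod_{g,1}^1$-invariant subspace $U=\overline{[T_\partial]\otimes\HH_1(\Torelli_{g,1}^1;\Q)}$.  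The genuinely nontrivial step is killing $\overline{[f]\otimes[T_x]}$, which the paper does by reducing $f$ to genus-$1$ bounding pair generators and invoking \cite[Lemma 6.2]{PutmanJohnson} on the subsurface bounded by $x\cup a\cup b$ (using $g\geq 4$).  Note that the paper bounds $W$ inside a quotient of $\HH_1(\Torelli_{g,1}^1;\Q)$ rather than of $\wedge^3 H=\HH_1(\Torelli_g^1;\Q)$; this is weaker than your claimed augmentation factorization, but still suffices by \eqref{eqn:algclosed} --- and crucially it is what the equivariance argument actually delivers, since $\pi_g^1$ need not act trivially on $\overline{[f]\otimes[T_\partial]}$.
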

\begin{proof}
From the long exact sequence in group homology associated to the short exact sequence
\[\begin{tikzcd}
0 \arrow{r} & \Q[H_{\Z}] \arrow{r} & \cC_g^1 \arrow{r} & \cC_{g} \arrow{r} & 0
\end{tikzcd}\]
of $\Torelli_g^1$-representations from Lemma \ref{lemma:capboundaryrep}, it is enough to prove that the image $W$ of the map
$\HH_1(\Torelli_{g}^1;\Q[H_{\Z}]) \rightarrow \HH_1(\Torelli_{g}^1;\cC_g^1)$
is a finite-dimensional algebraic representation of $\Sp_{2g}(\Z)$.

Now consider the split exact sequence from Theorem \ref{theorem:birmantorelli2}:
\begin{equation}
\label{eqn:splitbirman2}
\begin{tikzcd}
1 \arrow{r} & {[\pi_g^1,\pi_g^1]} \arrow{r} & \Torelli_{g,1}^1 \arrow{r} & \Torelli_g^1 \arrow{r} & 1.
\end{tikzcd}
\end{equation}
Associated to \eqref{eqn:splitbirman2} is a Hochschild--Serre spectral sequence with the following properties:
\begin{itemize}
\item Since $[\pi_g^1,\pi_g^1]$ is a free group, we have $\HH_q([\pi_g^1,\pi_g^1];\Q) = 0$ for $q \geq 2$ and
thus our spectral sequence only has two nonzero rows.
\item Since \eqref{eqn:splitbirman2} splits, all the differentials coming out of the bottom row of our
spectral sequence vanish.
\end{itemize}
Combining these two facts, our spectral sequence collapses on page $2$.  It therefore breaks up into
short exact sequences, one of which is
\[\begin{tikzcd}[row sep=small]
0 \arrow{r} & \HH_1(\Torelli_g^1;\HH_1([\pi_g^1,\pi_g^1];\Q)) \arrow[equals]{d} \arrow{r} & \HH_2(\Torelli_{g,1}^1;\Q) \arrow{r} & \HH_2(\Torelli_g^1;\Q) \arrow{r} & 0. \\
            & \HH_1(\Torelli_g^1;\cC_g^1)                                                 &                                      &                                  &
\end{tikzcd}\]
From this, we get an injection $\HH_1(\Torelli_{g}^1;\cC_g^1) \hookrightarrow \HH_2(\Torelli_{g,1}^1;\Q)$.  Let
$\Phi$ be the composition
\[\begin{tikzcd}
\HH_1(\Torelli_g^1;\Q) \otimes \Q[H_{\Z}] = \HH_1(\Torelli_g^1;\Q[H_{\Z}]) \arrow{r} & \HH_1(\Torelli_{g}^1;\cC_g^1) \arrow[hookrightarrow]{r} & \HH_2(\Torelli_{g,1}^1;\Q).
\end{tikzcd}\]
To prove that $W$ is a finite-dimensional algebraic representation of $\Sp_{2g}(\Z)$, it is 
enough to prove the same statement for $\Image(\Phi)$.

We introduce some notation:
\begin{itemize}
\item For subgroups $G_1,G_2 < \Torelli_{g,1}^1$ with $[G_1,G_2] = 1$, let
$\HH_1(G_1;\Q) \otimes \HH_1(G_2;\Q)$ denote the corresponding subgroup of $\HH_2(G_1 \times G_2;\Q)$ and let
$\overline{\HH_1(G_1;\Q) \otimes \HH_1(G_2;\Q)}$ be its image under the map
$\HH_2(G_1 \times G_2;\Q) \rightarrow \HH_2(\Torelli_{g,1}^1;\Q)$.
\item For a subgroup $G < \Torelli_{g,1}^1$ and an element $x \in \Torelli_{g,1}^1$ with $[G,x] = 1$, let
$[x] \in \HH_1(\Span{x};\Q)$ be the corresponding element, let $\HH_1(G;\Q) \otimes [x]$ be the corresponding
subgroup of $\HH_2(G \times \Span{x};\Q)$, and let $\overline{\HH_1(G;\Q) \otimes [x]}$ be its image
in $\overline{\HH_1(G;\Q) \otimes \HH_1(\Span{x};\Q)}$.
\item For $x,y \in \Torelli_g^1$ with $[x,y] = 1$, let $[x] \otimes [y] \in \HH_1(\Span{x} \times \Span{y};\Q)$
be the corresponding element and let $\overline{[x] \otimes [y]}$ be its image in
$\overline{\HH_1(\Span{x};\Q) \otimes \HH_1(\Span{y};\Q)}$.
\end{itemize}
Now let $\partial = \partial \Sigma_{g,1}^1$, so $T_{\partial} \in \Torelli_{g,1}^1$.  Since $T_{\partial}$
is a central element, we can define
\[U = \overline{[T_{\partial}] \otimes \HH_1(\Torelli_{g,1}^1;\Q)} \subset \HH_2(\Torelli_{g,1}^1;\Q).\]
This is a quotient of $\HH_1(\Torelli_{g,1}^1;\Q)$, which by Theorem \ref{theorem:abeltorelli} is a finite-dimensional
algebraic representation of $\Sp_{2g}(\Z)$.  By \eqref{eqn:algclosed}, we have that $U$ is also a finite-dimensional
algebraic representation of $\Sp_{2g}(\Z)$.  
Again using \eqref{eqn:algclosed}, to prove that $\Image(\Phi)$
a finite-dimensional algebraic representation of $\Sp_{2g}(\Z)$, it is enough to show:

\begin{unnumberedclaim}
$\Image(\Phi) < U$.
\end{unnumberedclaim}

Consider $f \in \Torelli_g^1$ and $h \in H_{\Z}$.  Let $[f] \in \HH_1(\Torelli_g^1;\Q)$ be the corresponding element.
We must prove that $\Phi([f] \otimes h) \in U$.
The conjugation action of $\Mod_{g,1}^1$ induces an action of $\Mod_{g,1}^1$ on $\HH_2(\Torelli_{g,1}^1;\Q)$.  Since
$T_{\partial}$ is central, this action preserves $U$.
The group $\Mod_{g,1}^1$ also acts on 
\[\Q[H_{\Z}] < \cC_g^1 = \HH_1([\pi_g^1,\pi_g^1];\Q)\]
via its action on $[\pi_g^1,\pi_g^1]$ and on 
$\HH_1(\Torelli_{g}^1;\Q)$ via its projection $\Mod_{g,1}^1 \rightarrow \Mod_{g}^1$.  With
respect to these actions, the map
\[\Phi\colon \HH_1(\Torelli_g^1;\Q) \otimes \Q[H_{\Z}] \rightarrow \HH_2(\Torelli_{g,1}^1;\Q)\]
is $\Mod_{g,1}^1$-equivariant.  To check that $\Phi([f] \otimes h) \in U$,
we can therefore first apply any element of $\Mod_{g,1}^1$ we wish to $[f] \otimes h$.

The point-pushing subgroup $\pi_g^1 < \Mod_{g,1}^1$ acts on $\Q[H_{\Z}]$ via its projection
$\pi_g^1 \rightarrow H_{\Z}$.  Since this projection is surjective, we see that $\pi_g^1$ acts
transitively on $H_{\Z}$.  In light of Remark \ref{remark:capboundaryrep}, applying an appropriate element of $\Mod_{g,1}^1$ we can thus assume that
$h \in \cC_g^1$ is the homology class of the element $\delta \in [\pi_g^1,\pi_g^1]$ shown here:\\
\Figure{DeltaLoop}
Here the subsurface $\Sigma_g^1$ is shaded and $x$ is a loop parallel to $\partial \Sigma_g^1$.  
The loops $\delta$ and $x$ are homotopic to
the following configuration, which illustrates the element of the point-pushing subgroup of $\Torelli_{g,1}^1$ corresponding to
$\delta$:\\
\Figure{PointPushing}
As is clear from this figure, this element is $T_{x} T_{\partial}^{-1}$.
Since $T_x$ is in the center of $\Mod_g^1$, 
embedding $\Torelli_g^1$ into $\Torelli_{g,1}^1$ using the subsurface inclusion we indicated
above identifies $f \in \Torelli_g^1$ with an element of $\Torelli_{g,1}^1$ that commutes with both
$T_x$ and $T_{\partial}$.  Tracing through the definitions of all of our maps, we have
\[\Phi([f] \otimes h) = \overline{[f] \otimes [T_{x} T_{\partial}^{-1}]} = \overline{[f] \otimes [T_{x}]} - \overline{[f] \otimes [T_{\partial}]}.\]
Since $\overline{[f] \otimes [T_{\partial}]} \in U$, it is enough to prove that $\overline{[f] \otimes [T_x]} = 0$.

The group $\Torelli_g^1$ is generated by genus-$1$ bounding pair maps \cite{JohnsonGenus1BP}, i.e., maps $T_a T_b^{-1}$ such that $a$ and $b$ are disjoint
curves such that $a \cup b$ separates $\Sigma_g^1$ into two subsurfaces, one of which is homeomorphic to $\Sigma_1^2$.
It is thus enough to prove that $\overline{[f] \otimes [T_x]} = 0$ for $f = T_a T_b^{-1}$ a genus-$1$ bounding pair map. 
Let $T \cong \Sigma_{g-2}^3$ be the subsurface bounded by $x \cup a \cup b$:\\
\Figure{BoundingPair}
Let $\Torelli_{g,1}^1(T)$ be the subgroup of $\Torelli_{g,1}^1$ consisting of mapping classes
supported on $T$.  We have 
\[\overline{[f] \otimes [T_x]} \in \overline{[f] \otimes \HH_1(\Torelli_{g,1}^1(T);\Q)}.\]  
Since $g \geq 4$ (see Assumption \ref{assumption:genus}), the surface $T$ has genus at least $2$.  It then follows from \cite[Lemma 6.2]{PutmanJohnson} that $T_x$ maps to $0$ in
$\HH_1(\Torelli_{g,1}^1(T);\Q)$.  This implies that $\overline{[f] \otimes [T_x]} = 0$, as desired.
\end{proof}

\section{Reidemeister pairing}
\label{section:reidemeister}

We now turn to the Reidemeister pairing on $\cC_g = \HH_1([\cC_g,\cC_g];\Q)$.

\subsection{Definition of pairing}
\label{section:reidemeisterdefinition}

Let $\tSigma_g \rightarrow \Sigma_g$ be the universal abelian cover of $\Sigma_g$.
On $\cC_g = \HH_1(\tSigma_g;\Q)$, we have the algebraic intersection form $\iota\colon \cC_g \times \cC_g \rightarrow \Q$.  
We also have the action of the deck group $H_{\Z}$ on $\tSigma_g$ and hence on $\cC_g$.  The
{\em Reidemeister pairing} \cite{Reidemeister1, Reidemeister2} is the linear map $\fr\colon \cC_g^{\otimes 2} \rightarrow \Q[H_{\Z}]$ defined
via the formula\footnote{Since $x$ and $y$ are supported on compact subsurfaces of $\tSigma_g$ and $H_{\Z}$ acts properly discontinuously
on $\tSigma_g$, all but finitely many terms in this sum vanish.}
\[\fr(x \otimes y) = \sum_{h \in H_{\Z}} \iota(h \Cdot x, y) h \quad \text{for $x,y \in \cC_g$}.\]

\subsection{Connecting homomorphism}

The reason the Reidemeister pairing is relevant to our work is the following.  Consider
the short exact sequence of representations from Lemma \ref{lemma:capboundaryrep}:
\[\begin{tikzcd}
0 \arrow{r} & \Q[H_{\Z}] \arrow{r} & \cC_g^1 \arrow{r} & \cC_{g} \arrow{r} & 0.
\end{tikzcd}\]
These are representations of $\Mod_{g,1}$, but we will view them as representations
of the subgroup $[\pi_g,\pi_g]$ of the point-pushing subgroup $\pi_g < \Mod_{g,1}$.  The
group $[\pi_g,\pi_g]$ acts trivially on $\Q[H_{\Z}]$ and $\cC_g$, but it does not
act trivially on $\cC_g^1$.  Consider the long exact sequence in group homology:
\[\begin{tikzcd}[row sep=small, column sep=small]
\cdots \arrow{r} & \HH_1([\pi_g,\pi_g];\cC_g^1) \arrow{r} & \HH_1([\pi_g,\pi_g];\cC_g) \arrow[equals]{d} \arrow{r}{\fr'} & \HH_0([\pi_g,\pi_g];\Q[H_{\Z}]) \arrow[equals]{d} \arrow{r} & \cdots \\
                 & & \cC_g^{\otimes 2}                                                                                   & \Q[H_{\Z}] & 
\end{tikzcd}\]
As the following shows, the connecting homomorphism $\fr'\colon \cC_g^{\otimes 2} \rightarrow \Q[H_{\Z}]$ equals the Reidemeister pairing:

\begin{lemma}
\label{lemma:reidemeisterboundary}
Let the notation be as above.  Then $\fr' = \fr$.
\end{lemma}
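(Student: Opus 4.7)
The plan is to unfold $\fr'$ at the chain level and match it with the Reidemeister sum by a local picture-calculation in the universal abelian cover. Since $[\pi_g,\pi_g]$ acts trivially on $\cC_g$ and $\HH_1([\pi_g,\pi_g];\Q) = [\pi_g,\pi_g]^{\mathrm{ab}}\otimes\Q \cong \cC_g$, the universal coefficient theorem gives a canonical identification $\HH_1([\pi_g,\pi_g];\cC_g) \cong \cC_g\otimes\cC_g$ under which a simple tensor $\Cg{\alpha}\otimes y$ (for $\alpha\in[\pi_g,\pi_g]$ and $y\in\cC_g$) is represented by the bar $1$-cycle $[\alpha]\otimes y$. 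The standard snake-lemma description of the connecting homomorphism for $0 \to \Q[H_{\Z}] \to \cC_g^1 \to \cC_g \to 0$ then yields
\[\fr'(\Cg{\alpha}\otimes y) = (\alpha - 1)\cdot\tilde y \;\in\; \ker(\cC_g^1 \to \cC_g) = \Q[H_{\Z}],\]
where $\tilde y\in\cC_g^1$ is any lift of $y$ and $\alpha$ acts on $\cC_g^1$ via the composition $[\pi_g,\pi_g]\subset\pi_g\hookrightarrow\Mod_{g,1}$.

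Next, realize $(\alpha - 1)\tilde y$ as a geometric cycle. Represent $\alpha$ by a loop $\hat\alpha\subset\Sigma_g^1$, choose a lift $\tilde f_\alpha\in\Mod_g^1$ of the point-push $f_\alpha\in\Mod_{g,1}$ supported in a tubular neighborhood of $\hat\alpha$ (well-defined modulo $T_\partial$, which acts trivially on $\cC_g^1$ by Lemma~\ref{lemma:factoraction}), and lift further to $\hat f_\alpha$ on the universal abelian cover $\tSigma_g^1$. Represent $\tilde y\in\cC_g^1 = \HH_1(\tSigma_g^1;\Q)$ by a smooth $1$-cycle $C\subset\tSigma_g^1$ transverse to the preimage of $\hat\alpha$. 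Because $\alpha\in[\pi_g,\pi_g]$, this preimage is a disjoint union of closed loops and decomposes as $\bigsqcup_{h\in H_{\Z}} h\cdot\tilde\alpha$ for a chosen base lift $\tilde\alpha$; the cycle $\hat f_\alpha(C) - C$ representing $(\alpha-1)\tilde y$ is then supported in a neighborhood of this preimage.

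Then compute the local contributions one translate at a time. Reducing to the case of a simple loop $\hat\alpha$ by linearity in $\Cg{\alpha}\otimes y$, we have $f_\alpha = T_{\hat\alpha_-}T_{\hat\alpha_+}^{-1}$ for the two boundary curves of a tubular neighborhood of $\hat\alpha$. A picture-level check shows that near each translate $h\cdot\tilde\alpha$, the cycle $\hat f_\alpha(C) - C$ is a disjoint union of boundary-parallel loops in $\tSigma_g^1$ whose signed count equals the algebraic intersection number of $C$ with $h\cdot\tilde\alpha$. Under the isomorphism $\Q[H_{\Z}] \cong \ker(\cC_g^1\to\cC_g)$ of Remark~\ref{remark:capboundaryrep}, each such boundary-parallel loop near $h\cdot\tilde\alpha$ corresponds to the basis element $h$. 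Projecting $\tSigma_g^1\to\tSigma_g$ by capping boundaries sends $[C]\mapsto y$ and $[\tilde\alpha]\mapsto\Cg{\alpha}\in\cC_g$, so the signed intersection count is $\iota(h\cdot\Cg{\alpha}, y)$, yielding
\[\fr'(\Cg{\alpha}\otimes y) = \sum_{h\in H_{\Z}} \iota(h\cdot\Cg{\alpha}, y)\, h = \fr(\Cg{\alpha}\otimes y).\]

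The main obstacle is this local computation: verifying that the boundary-parallel loops produced by $\hat f_\alpha$ are correctly identified, with the right sign, with the basis elements $h\in \Q[H_{\Z}]$ of Remark~\ref{remark:capboundaryrep}. This amounts to a careful picture of how point-pushing along $\hat\alpha$ drags the boundary components of $\tSigma_g^1$ around the lifts of $\hat\alpha$ in the abelian cover, using that these boundary components are freely permuted by the deck group $H_{\Z}$.
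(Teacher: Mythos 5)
Your plan takes essentially the same route as the sources the paper's proof points to (\cite[Lemma~5.2]{PutmanAbelian} and \cite[Lemma~7.1]{PutmanStableLevel}): unwind the snake-lemma connecting map as $(\alpha-1)\tilde y$, realize the point-push action on the universal abelian cover, and measure the difference $\hat f_\alpha(C)-C$ by intersections with the lifts of $\alpha$. The framing is correct. One small caveat on the reduction step: ``linearity in $\Cg{\alpha}\otimes y$'' only lets you pass to simple $\alpha$ once you know that $\cC_g$ is spanned by classes $\Cg{\alpha}$ with $\alpha$ an \emph{embedded} based separating loop; this needs the fact that $[\pi_g,\pi_g]$ is generated (not just normally generated) by such loops (cf.\ \cite[Lemma~A.1]{PutmanCutPaste}), together with the additivity of both $\fr$ and $\fr'$ in the first factor.

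The genuine gap is the step you yourself flag as ``the main obstacle.'' The assertion that near $h\cdot\tilde\alpha$ the chain $\hat f_\alpha(C)-C$ consists of loops parallel to $h\cdot\tilde\partial$, signed exactly by the crossings of $C$ with $h\cdot\tilde\alpha$, is stated but not verified, and that verification is the entire content of the lemma --- it is precisely what the cited references actually carry out. As written there is also an unresolved sign: you first say the local count equals ``the algebraic intersection number of $C$ with $h\cdot\tilde\alpha$'' (that is, $\iota(C,h\cdot\tilde\alpha)$) and in the concluding display substitute $\iota(h\cdot\Cg{\alpha},y)$, which is its negative. To close the argument you would need to actually draw the local model of the lifted twists $T_{\hat\alpha_-}T_{\hat\alpha_+}^{-1}$ acting near one translate of $\tilde\alpha$, track which translate of $\tilde\partial$ is captured (your claim that it is $h\cdot\tilde\partial$ because $\tilde\alpha$ is based at $\tilde\ast\in\tilde\partial$ is the right heuristic), and confirm the sign against the paper's conventions ($f_\gamma(x)=\gamma^{-1}x\gamma$, left-handed twists). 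Until that computation is done, this is a correct roadmap rather than a proof.
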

\begin{proof}
This can be proved exactly like \cite[Lemma 5.2]{PutmanAbelian}.  See \cite[Lemma 7.1]{PutmanStableLevel} for an alternate
exposition of the argument.
\end{proof}

\section{Proofs of main theorems}
\label{section:mainproofs}

We now come to the proofs of our main theorems.  The key to them
is the following.  Since $\Torelli_{g,1}$ acts trivially
on $H_{\Z}$, the
Reidemeister pairing $\fr\colon \cC_g^{\otimes 2} \rightarrow \Q[H_{\Z}]$ factors
through a map $\ofr\colon (\cC_g^{\otimes 2})_{\Torelli_{g,1}} \rightarrow \Q[H_{\Z}]$
of $\Sp_{2g}(\Z)$-representations that we will call the {\em coinvariant Reidemeister pairing}.  We then have:

\begin{theorem}
\label{theorem:bigtheorem}
Let $\ofr\colon (\cC_g^{\otimes 2})_{\Torelli_{g,1}} \rightarrow \Q[H_{\Z}]$ be the coinvariant Reidemeister pairing.
Then both $\ker(\ofr)$ and $\coker(\ofr) = \coker(\fr)$ are finite-dimensional.  Moreover, $\ker(\ofr)$ is an algebraic
representation of $\Sp_{2g}(\Z)$.
\end{theorem}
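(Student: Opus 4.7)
The plan is to identify $(\cC_g^{\otimes 2})_{\Torelli_{g,1}}$ together with the coinvariant Reidemeister pairing $\ofr$ explicitly enough to match $\ker(\ofr)$ with the kernel of the contraction $\fc\colon ((\wedge^2 H)/\Q)^{\otimes 2}\to \Sym^2(H)$ advertised in the introduction. I would decompose the computation using the chain $[\pi_g,\pi_g] \triangleleft \pi_g \triangleleft \Torelli_{g,1}$, where the middle inclusion is the point-pushing subgroup from Corollary \ref{corollary:birmantorelli}. Since $[\pi_g,\pi_g]$ acts trivially on its own abelianization $\cC_g$, the $\pi_g$-action on $\cC_g$ factors through $H_{\Z}$, so $\cC_g$ is a $\Q[H_{\Z}]$-module. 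The diagonal $\pi_g$-coinvariants of $\cC_g \otimes \cC_g$ then become a tensor product over $\Q[H_{\Z}]$, and $\fr$ descends automatically by its built-in $H_{\Z}$-equivariance. Taking further $\Torelli_g$-coinvariants, where $\Torelli_g \cong \Torelli_{g,1}/\pi_g$, yields $(\cC_g^{\otimes 2})_{\Torelli_{g,1}}$ with its natural $\Sp_{2g}(\Z)$-action.

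Next comes the content of Parts \ref{part:2} and \ref{part:3}. Part \ref{part:2} would construct generators: classes in $(\cC_g)_{\pi_g} \cong (\wedge^2 H)/\Q$ are represented by lifts of commutators of simple closed curves (Lemma \ref{lemma:cgpicoinv}), and translating this across both tensor factors produces a surjection from $((\wedge^2 H)/\Q)^{\otimes 2}$ onto the part of $(\cC_g^{\otimes 2})_{\Torelli_{g,1}}$ containing $\ker(\ofr)$. For the cokernel $\coker(\ofr) = \coker(\fr)$, one shows the image of $\fr$ is cofinite in $\Q[H_{\Z}]$: composing with the augmentation recovers the pullback of $\omega$ along $\cC_g \to H$, which is surjective onto $\Q$, while realizing the remaining $H_{\Z}$-translates amounts to choosing explicit pairs of curves with controlled lifts so that $\Q[H_{\Z}]$ is exhausted up to a finite-dimensional quotient.

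The crux is Part \ref{part:3}, identifying the relations that force the generating surjection to descend through $\fc$. One must realize every element of $\ker(\fc)$ as an image of zero — coming from bounding-pair maps and other Torelli elements that act nontrivially on $\cC_g$ and identify generators after coinvariants — and then show no extraneous relations occur in $\ker(\ofr)$. Once $\ker(\ofr) \cong \ker(\fc)$ is established, finite-dimensionality and algebraicity are immediate from \eqref{eqn:algclosed} applied to the subrepresentation $\ker(\fc) \subset ((\wedge^2 H)/\Q)^{\otimes 2}$. The hardest step is verifying completeness of the relations: this requires a presentation-style argument using an explicit generating set for $\Torelli_{g,1}$ together with careful bookkeeping of the $\Q[H_{\Z}]$-module structure, since the combinatorial structure of curve intersections on $\tSigma_g$ does not immediately yield a finite generating set of relations and one must leverage the full $H_{\Z}$-equivariance to cut down to finitely many $\Sp_{2g}(\Z)$-orbits.
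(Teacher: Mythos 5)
Your high-level picture is the right one: $\ker(\ofr)$ should be identified with the kernel of the contraction $\fc$, and the chain $[\pi_g,\pi_g] \triangleleft \pi_g \triangleleft \Torelli_{g,1}$ is the correct lens. The observation that the diagonal $\pi_g$-coinvariants of $\cC_g^{\otimes 2}$ is a tensor product over $\Q[H_{\Z}]$ is also correct, although the paper never invokes this directly. However, there is a concrete error in your cokernel sketch: the composition of the Reidemeister pairing with the augmentation $\epsilon\colon \Q[H_{\Z}] \rightarrow \Q$ is \emph{zero}, not the pullback of $\omega$. Indeed, $\sum_{h} \iota(h\cdot x, y) = \iota_{\Sigma_g}(\rho_*(x),\rho_*(y))$, and the pushforward $\rho_*\colon \cC_g = \HH_1([\pi_g,\pi_g];\Q) \rightarrow \HH_1(\Sigma_g;\Q)$ vanishes because $[\pi_g,\pi_g]$ dies in $H_{\Z}$. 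The paper's remark after the theorem statement records precisely that $\Image(\ofr) = \ker(\epsilon \times \fh)$, so the augmentation detects nothing. Consequently your proposed bootstrap for showing the image is cofinite needs to be restarted.

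More fundamentally, your proposal glosses over the central technical step. You cannot directly construct a surjection from $((\wedge^2 H)/\Q)^{\otimes 2}$ onto a subspace containing $\ker(\ofr)$ by ``translating lifts across both tensor factors'' — the algebraization map $(\cC_g^{\otimes 2})_{\Torelli_{g,1}} \rightarrow ((\wedge^2 H)/\Q)^{\otimes 2}$ has infinite-dimensional kernel, and a priori $\ker(\ofr)$ sits inside $(\cC_g^{\otimes 2})_{\Torelli_{g,1}}$ with no control. The paper's actual key input (its Theorem \ref{theorem:part2theorem}) is that $\ker(\ofr)$ is spanned by the \emph{geometric} elements $\CgCg{\Cg{\gamma_1}^{h_1}}{\Cg{\gamma_2}^{h_2}}$ with $\gamma_1,\gamma_2$ separated; this is proved by an explicit combinatorial analysis of the quotient $\cQ_g$ (Sections \ref{section:xelements}--\ref{section:calcquotient3}), not by a module-theoretic argument. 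Only with this generating set in hand can one construct the ``$\Pres{V,\kappa}$'' elements and argue (Part \ref{part:3}) that the algebraization map is injective on $\ker(\ofr)$ — and that step itself relies on the presentation of the symmetric kernel proved in a companion paper, rather than a presentation of $\Torelli_{g,1}$ (which is not known to be finite). Your proposed ``presentation-style argument using an explicit generating set for $\Torelli_{g,1}$'' is therefore a different and substantially harder route, and as sketched it does not close the gap.
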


\begin{remark}
With a bit more work, one can show that $\Image(\ofr) = \Image(\fr)$ is as follows.  Let
$\epsilon\colon \Q[H_{\Z}] \rightarrow \Q$ be the augmentation and let $\fh\colon \Q[H_{\Z}] \rightarrow H$
be the map coming from the inclusion $H_{\Z} \hookrightarrow H$.
Then $\Image(\ofr)$ is the kernel of the surjective map $\epsilon \times \fh\colon \Q[H_{\Z}] \rightarrow \Q \times H$.
Consequently, $\coker(\ofr) \cong \Q \oplus H$ is also an algebraic representation of $\Sp_{2g}(\Z)$.
\end{remark}

The rest of this paper will be devoted to proving Theorem \ref{theorem:bigtheorem}: in Part \ref{part:2}
we calculate $\Image(\ofr)$ and find generators for $\ker(\ofr)$, and in Part \ref{part:3} we
find some relations in $\ker(\ofr)$ and use these relations to 
prove that $\ker(\ofr)$ is a finite-dimensional algebraic representation of $\Sp_{2g}(\Z)$.
For now, we assume the truth of Theorem \ref{theorem:bigtheorem} and show how to prove our main results.

\subsection{First main theorem}
The first of these main theorems is:\footnote{This is copied from the introduction before we imposed our genus
assumption, but we remind the reader that we are assuming throughout the paper that $g \geq 4$ (see Assumption \ref{assumption:genus}).}

\newtheorem*{maintheorem:findim}{Theorem \ref{maintheorem:findim}}
\begin{maintheorem:findim}
For $g \geq 4$, both $\HH_1(\Mod_g^1;\cC_g^1)$ and $\HH_1(\Torelli_g^1;\cC_g^1)$ are finite-dimensional.
Moreover, $\HH_1(\Torelli_g^1;\cC_g^1)$ is an algebraic representation of $\Sp_{2g}(\Z)$.
\end{maintheorem:findim}
\begin{proof}[Proof of Theorem \ref{maintheorem:findim} for the Torelli group, assuming Theorem \ref{theorem:bigtheorem}]
Lemma \ref{lemma:capboundaryrep} gives an exact sequence of representations
\[\begin{tikzcd}
0 \arrow{r} & \Q[H_{\Z}] \arrow{r} & \cC_g^1 \arrow{r} & \cC_{g} \arrow{r} & 0.
\end{tikzcd}\]
The associated long exact sequence in $\Torelli_g^1$-homology contains 
\[\begin{tikzcd}[row sep=small]
\HH_1(\Torelli_g^1;\cC_g^1) \arrow{r}{\iota} & \HH_1(\Torelli_g^1;\cC_{g}) \arrow{r}{\bb} & \HH_0(\Torelli_g^1;\Q[H_{\Z}]). \arrow[equals]{d} \\
                                             &                                            & \Q[H_{\Z}]
\end{tikzcd}\]
Lemma \ref{lemma:capboundaryrep2} says that $\ker(\iota)$ is a finite-dimensional algebraic representation of $\Sp_{2g}(\Z)$.  We have $\Image(\iota) = \ker(\bb)$, so using
\eqref{eqn:algclosed} we deduce that it is enough to prove that the kernel of the connecting homomorphism $\bb$ is also a finite-dimensional
algebraic representation of $\Sp_{2g}(\Z)$.

There is a similar connecting homomorphism $\bd\colon \HH_1(\Torelli_{g,1};\cC_g) \rightarrow \Q[H_{\Z}]$.  
Regarding $\pi_g$ as the point-pushing subgroup of $\Torelli_{g,1}$, 
we also have connecting homomorphisms
$\bpartial\colon \HH_1(\pi_g;\cC_g) \rightarrow \Q[H_{\Z}]$ and $\fr\colon \HH_1([\pi_g,\pi_g];\cC_g) \rightarrow \Q[H_{\Z}]$.  Identifying
$\HH_1([\pi_g,\pi_g];\cC_g)$ with $\cC_g^{\otimes 2}$, Lemma \ref{lemma:reidemeisterboundary} says that $\fr$ is the Reidemeister pairing.
These factor through maps $\obpartial\colon \HH_1(\pi_g;\cC_g)_{\Torelli_{g,1}} \rightarrow \Q[H_{\Z}]$ and
$\ofr\colon \HH_1([\pi_g,\pi_g];\cC_g)_{\Torelli_{g,1}} \rightarrow \Q[H_{\Z}]$, with $\ofr$ the coinvariant Reidemeister pairing.
These all fit into a commutative diagram
\[\begin{tikzcd}[column sep=small]
\HH_1([\pi_g,\pi_g];\cC_g)_{\Torelli_{g,1}} \arrow{d}{\ofr} \arrow{r} & \HH_1(\pi_g;\cC_g)_{\Torelli_{g,1}} \arrow{d}{\obpartial} \arrow{r} & \HH_1(\Torelli_{g,1};\cC_g) \arrow{d}{\bd} & \HH_1(\Torelli_g^1;\cC_g) \arrow{d}{\bb} \arrow{l} \\
\Q[H_{\Z}]                \arrow[equals]{r}        & \Q[H_{\Z}]                  \arrow[equals]{r} & \Q[H_{\Z}]         \arrow[equals]{r}              & \Q[H_{\Z}]
\end{tikzcd}\]
From this, we get maps
\[\begin{tikzcd}
\ker(\ofr) \arrow{r} & \ker(\obpartial) \arrow{r} & \ker(\bd) & \ker(\bb) \arrow{l}
\end{tikzcd}\]
Our goal is to prove that $\ker(\bb)$ is a finite-dimensional algebraic representation of $\Sp_{2g}(\Z)$, and by Theorem \ref{theorem:bigtheorem} we know that
this holds for $\ker(\ofr)$.  We work from left to right: 

\begin{claim}{1}
\label{claim:fintorelli1}
We have that $\ker(\obpartial)$ is a finite-dimensional algebraic representation of $\Sp_{2g}(\Z)$.
\end{claim}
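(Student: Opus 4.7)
The plan is to compare $\obpartial$ to the coinvariant Reidemeister pairing $\ofr$ by means of a Hochschild--Serre spectral sequence argument. I would apply the Hochschild--Serre spectral sequence to the group extension
\[
1 \longrightarrow [\pi_g, \pi_g] \longrightarrow \pi_g \longrightarrow H_{\Z} \longrightarrow 1
\]
with coefficients in $\cC_g$. Since $[\pi_g, \pi_g]$ is free (it is the fundamental group of the infinite genus surface $\tSigma_g$) and acts trivially on $\cC_g$ (inner automorphisms act trivially on the abelianization), the $E_2$ page has only two nonzero rows: $E_2^{p, 0} = \HH_p(H_{\Z}; \cC_g)$ and $E_2^{p, 1} = \HH_p(H_{\Z}; \cC_g^{\otimes 2})$. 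The resulting five-term exact sequence reads
\[
(\cC_g^{\otimes 2})_{H_{\Z}} \xrightarrow{\alpha_0} \HH_1(\pi_g; \cC_g) \longrightarrow \HH_1(H_{\Z}; \cC_g) \longrightarrow 0.
\]
Since $\pi_g$ acts on $\cC_g$ through $H_{\Z}$ and $\pi_g$ is normal in $\Torelli_{g,1}$, we have $((\cC_g^{\otimes 2})_{H_{\Z}})_{\Torelli_{g,1}} = (\cC_g^{\otimes 2})_{\Torelli_{g,1}}$. Applying the right-exact functor of $\Torelli_{g,1}$-coinvariants therefore yields
\[
(\cC_g^{\otimes 2})_{\Torelli_{g,1}} \xrightarrow{\alpha} \HH_1(\pi_g; \cC_g)_{\Torelli_{g,1}} \longrightarrow (\HH_1(H_{\Z}; \cC_g))_{\Torelli_{g,1}} \longrightarrow 0.
\]

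Next, I would invoke the naturality of the connecting homomorphism in the long exact sequence associated to $0 \to \Q[H_{\Z}] \to \cC_g^1 \to \cC_g \to 0$ under the subgroup inclusion $[\pi_g, \pi_g] \hookrightarrow \pi_g$. Combined with Lemma \ref{lemma:reidemeisterboundary}, this shows $\obpartial \circ \alpha = \ofr$. Hence $\alpha(\ker \ofr) \subseteq \ker \obpartial$, and the quotient $\ker(\obpartial) / \alpha(\ker \ofr)$ embeds into $\coker(\alpha) = (\HH_1(H_{\Z}; \cC_g))_{\Torelli_{g,1}}$. By Theorem \ref{theorem:bigtheorem}, the group $\alpha(\ker \ofr)$ is a quotient of the finite-dimensional algebraic representation $\ker(\ofr)$, so in view of \eqref{eqn:algclosed} it suffices to show $(\HH_1(H_{\Z}; \cC_g))_{\Torelli_{g,1}}$ is a finite-dimensional algebraic representation of $\Sp_{2g}(\Z)$.

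To control $\HH_1(H_{\Z}; \cC_g)$, I would run the same spectral sequence with trivial $\Q$ coefficients, converging to $\HH_*(\pi_g; \Q) = \HH_*(\Sigma_g; \Q)$, so that $E_2^{p, 0} = \wedge^p H$ and $E_2^{p, 1} = \HH_p(H_{\Z}; \cC_g)$. The low-degree five-term exact sequence, together with the fact that $\HH_1(\pi_g; \Q) \to \HH_1(H_{\Z}; \Q) = H$ is an isomorphism, forces $d_2 \colon \wedge^2 H \to (\wedge^2 H)/\Q$ to be the natural surjection with kernel $\Q \omega$, so that $E_{\infty}^{2, 0} = \Q = \HH_2(\Sigma_g; \Q)$. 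Consequently $E_{\infty}^{1, 1} = 0$, which forces $d_2 \colon \wedge^3 H \to \HH_1(H_{\Z}; \cC_g)$ to be surjective. Thus $\HH_1(H_{\Z}; \cC_g)$ is a quotient of $\wedge^3 H$, hence finite-dimensional. By naturality of the spectral sequence this surjection is $\Mod_{g,1}$-equivariant; since $\Torelli_{g,1}$ acts trivially on $\wedge^3 H$, it acts trivially on the quotient as well. Therefore $(\HH_1(H_{\Z}; \cC_g))_{\Torelli_{g,1}} = \HH_1(H_{\Z}; \cC_g)$ is a quotient of the algebraic $\Sp_{2g}(\Z)$-representation $\wedge^3 H$, and the conclusion follows via \eqref{eqn:algclosed}.

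The main obstacle I anticipate is verifying $\obpartial \circ \alpha = \ofr$ rigorously: while morally this is just the naturality of connecting homomorphisms under subgroup inclusions, one must carefully identify the edge map of the Hochschild--Serre spectral sequence for $[\pi_g, \pi_g] \triangleleft \pi_g$ with the connecting map in the coefficient long exact sequence, and apply Lemma \ref{lemma:reidemeisterboundary} to identify the latter restricted to $[\pi_g, \pi_g]$ with the Reidemeister pairing.
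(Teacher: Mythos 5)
Your proof is correct and follows essentially the same route as the paper: both take the five‑term exact sequence of $1 \to [\pi_g,\pi_g] \to \pi_g \to H_{\Z} \to 1$ with coefficients in $\cC_g$, take $\Torelli_{g,1}$-coinvariants, and then observe that $\ker(\obpartial)$ is squeezed between $\alpha(\ker\ofr)$ (a quotient of $\ker\ofr$, which is algebraic by Theorem~\ref{theorem:bigtheorem}) and a subrepresentation of $\HH_1(H_{\Z};\cC_g)$. The commutativity $\obpartial\circ\alpha = \ofr$ that you flag as the main technical worry is exactly what the paper's commutative diagram records, justified by the naturality of the connecting homomorphism under the inclusion $[\pi_g,\pi_g]\hookrightarrow\pi_g$ and the identification of the $E^{0,1}$-edge map of Hochschild--Serre with the corestriction map — your instinct to be careful here is right, but the identification is standard and the paper treats it the same way.

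The one genuine difference is how you handle $\HH_1(H_{\Z};\cC_g)$. The paper cites $\HH_1(H_{\Z};\cC_g) \cong \wedge^3 H$ from \cite[Theorem~D]{PutmanCommutator}, whereas you re-derive the needed finite-dimensionality from scratch by running the same Hochschild--Serre spectral sequence with trivial $\Q$-coefficients, using $\HH_*(\pi_g;\Q) = \HH_*(\Sigma_g;\Q)$ to force $d_2\colon\wedge^3 H \to \HH_1(H_{\Z};\cC_g)$ to be surjective. That argument is valid (and in fact, since the spectral sequence has only two rows so that $E_3 = E_\infty$, the vanishing $E_\infty^{3,0}=0$ also forces $d_2$ to be injective on $\wedge^3 H$, recovering the full isomorphism — though you only need the surjection). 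Your version is more self-contained; the paper's is shorter because it already uses \cite{PutmanCommutator} elsewhere.
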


Since $\ker(\ofr)$ is a finite-dimensional algebraic representation of $\Sp_{2g}(\Z)$, by \eqref{eqn:algclosed}
it is enough to prove
that this also holds for the cokernel of the map $\ker(\ofr) \rightarrow \ker(\obpartial)$.
Consider the short exact sequence
\[\begin{tikzcd}
1 \arrow{r} & {[\pi_g,\pi_g]} \arrow{r} & \pi_g \arrow{r} & H_{\Z} \arrow{r} & 1.
\end{tikzcd}\]
Since $[\pi_g,\pi_g]$ acts trivially on $\cC_g$, it follows from the 5-term exact sequence in group
homology with coefficients in $\cC_g$ that we have an exact sequence\footnote{The usual 5-term exact
sequence has $\HH_1([\pi_g,\pi_g];\cC_g)_{H_{\Z}}$; however, taking these coinvariants is only needed
if you want to continue it to the left.}
\[\begin{tikzcd}[column sep=small]
\HH_1([\pi_g,\pi_g];\cC_g) \arrow{r} & \HH_1(\pi_g;\cC_g) \arrow{r} & \HH_1(H_{\Z};\cC_g) \arrow{r} & 0.
\end{tikzcd}\]
It follows from \cite[Theorem D]{PutmanCommutator} that
$\HH_1(H_{\Z};\cC_g) \cong \wedge^{3} H$, which is a finite-dimensional algebraic representation
of $\Sp_{2g}(\Z)$.  Since
taking coinvariants is right-exact, the above remains exact if we take $\Torelli_{g,1}$-coinvariants.
Do this and add $\ofr$ and $\obpartial$:
\[\begin{tikzcd}[column sep=small]
\HH_1([\pi_g,\pi_g];\cC_g)_{\Torelli_{g,1}} \arrow{d}{\ofr} \arrow{r} & \HH_1(\pi_g;\cC_g)_{\Torelli_{g,1}} \arrow{d}{\obpartial} \arrow{r} & \HH_1(H_{\Z};\cC_g)_{\Torelli_{g,1}} \arrow[equals]{d} \arrow{r} & 0 \\
\Q[H_{\Z}]                                  \arrow[equals]{r}         & \Q[H_{\Z}]                                                          & \wedge^3 H
\end{tikzcd}\]
This is a commutative diagram of $\Mod_{g,1}$-representations.  Let $U$ be the image of $\ker(\obpartial)$ in $\wedge^3 H$.  By \eqref{eqn:algclosed}, $U$ is a finite-dimensional
algebraic representation of $\Sp_{2g}(\Z)$.  Examining
the above diagram, we see that its top row restricts to an exact sequence
\[\begin{tikzcd}[column sep=small]
\ker(\ofr) \arrow{r} & \ker(\obpartial) \arrow{r} & U  \arrow{r} & 0.
\end{tikzcd}\]
In other words, the cokernel of the map $\ker(\ofr) \rightarrow \ker(\obpartial)$
is isomorphic to $U$, which is a finite-dimensional algebraic representation of $\Sp_{2g}(\Z)$, as desired.

\begin{claim}{2}
We have that $\ker(\bd)$ is a finite-dimensional algebraic representation of $\Sp_{2g}(\Z)$.
\end{claim}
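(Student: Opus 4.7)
The plan is to parallel the strategy of Claim \ref{claim:fintorelli1}, but now using the Birman exact sequence $1 \to \pi_g \to \Torelli_{g,1} \to \Torelli_g \to 1$ from Corollary \ref{corollary:birmantorelli} in place of the extension $1 \to [\pi_g,\pi_g] \to \pi_g \to H_{\Z} \to 1$. I would feed this into Hochschild--Serre with coefficients in $\cC_g$. Since inner automorphisms act trivially on group homology, $\pi_g$ acts trivially on $\HH_1(\pi_g;\cC_g)$, so $\HH_1(\pi_g;\cC_g)_{\Torelli_{g,1}} = \HH_1(\pi_g;\cC_g)_{\Torelli_g}$, and the 5-term exact sequence reads
\[
\HH_1(\pi_g;\cC_g)_{\Torelli_g} \xrightarrow{\beta} \HH_1(\Torelli_{g,1};\cC_g) \xrightarrow{\gamma} \HH_1(\Torelli_g;(\cC_g)_{\pi_g}) \to 0.
\]

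Next I would identify the rightmost term. By Lemma \ref{lemma:cgpicoinv}, $(\cC_g)_{\pi_g} \cong (\wedge^2 H)/\Q$, and since $\Torelli_g$ acts trivially on $H$ it acts trivially on $(\wedge^2 H)/\Q$. Combining with Johnson's computation $\HH_1(\Torelli_g;\Q) \cong (\wedge^3 H)/H$ yields
\[
\HH_1(\Torelli_g;(\cC_g)_{\pi_g}) \cong \HH_1(\Torelli_g;\Q) \otimes (\wedge^2 H)/\Q \cong \big((\wedge^3 H)/H\big) \otimes \big((\wedge^2 H)/\Q\big),
\]
which is a finite-dimensional algebraic representation of $\Sp_{2g}(\Z)$ by \eqref{eqn:algclosed}.

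The key input is the identity $\bd \circ \beta = \obpartial$, which follows from naturality of the connecting homomorphism of $0 \to \Q[H_{\Z}] \to \cC_g^1 \to \cC_g \to 0$ under the corestriction $\HH_*(\pi_g;-) \to \HH_*(\Torelli_{g,1};-)$. Given this, a short diagram chase yields an exact sequence
\[
0 \to \beta(\ker(\obpartial)) \to \ker(\bd) \to U' \to 0
\]
with $U' \subseteq \HH_1(\Torelli_g;(\cC_g)_{\pi_g})$. The left end is a quotient of $\ker(\obpartial)$ (finite-dimensional algebraic by Claim \ref{claim:fintorelli1}), and the right end is a subrepresentation of the finite-dimensional algebraic representation identified above. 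Three applications of \eqref{eqn:algclosed} (for quotients, subrepresentations, and extensions) then complete the proof.

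The main obstacle is making the identification $\bd \circ \beta = \obpartial$ fully precise, as it involves simultaneously tracking two connecting homomorphisms --- one from the extension of groups, one from the extension of coefficients --- together with the Hochschild--Serre edge map. This is formal but easy to miscompute; at the chain level it reduces to compatible snake-lemma diagrams, in the same spirit as the analogous step in Claim \ref{claim:fintorelli1}.
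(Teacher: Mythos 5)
Your proposal is correct and takes essentially the same route as the paper: both feed the Birman sequence $1 \to \pi_g \to \Torelli_{g,1} \to \Torelli_g \to 1$ into the five-term exact sequence with $\cC_g$-coefficients, use $(\cC_g)_{\pi_g} \cong (\wedge^2 H)/\Q$ and Johnson's computation of $\HH_1(\Torelli_g;\Q)$ to control the rightmost term, invoke the commutative square $\bd \circ \beta = \obpartial$ (naturality of the coefficient connecting homomorphism under corestriction), and deduce via a diagram chase that $\coker(\ker(\obpartial) \to \ker(\bd))$ injects into $\HH_1(\Torelli_g;(\cC_g)_{\pi_g})$, finishing with \eqref{eqn:algclosed}. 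Your write-up is if anything slightly more explicit than the paper about the commutativity $\bd \circ \beta = \obpartial$, which the paper leaves implicit by saying ``just like in Claim 1.''
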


Since $\ker(\obpartial)$ is a finite-dimensional algebraic representation of $\Sp_{2g}(\Z)$, by
\eqref{eqn:algclosed} it is enough to prove that this also holds for the cokernel of the map
$\ker(\obpartial) \rightarrow \ker(\bd)$.
Consider the Birman exact sequence from Corollary \ref{corollary:birmantorelli}:
\[\begin{tikzcd}
1 \arrow{r} & \pi_g \arrow{r} & \Torelli_{g,1} \arrow{r} & \Torelli_g \arrow{r} & 1.
\end{tikzcd}\]
Just like in Claim \ref{claim:fintorelli1}, we can derive from this a commutative diagram\footnote{The reader might
expect to see the coinvariants $\HH_1(\Torelli_{g,1};\cC_g)_{\Torelli_{g,1}}$ here, but since $\Torelli_{g,1}$ acts trivially
on $\HH_1(\Torelli_{g,1};\cC_g)$ we have $\HH_1(\Torelli_{g,1};\cC_g)_{\Torelli_{g,1}} = \HH_1(\Torelli_{g,1};\cC_g)$, so the coinvariants are not needed.}
\[\begin{tikzcd}[column sep=small]
\HH_1(\pi_g;\cC_g)_{\Torelli_{g,1}} \arrow{r} \arrow{d}{\obpartial} & \HH_1(\Torelli_{g,1};\cC_g) \arrow{d}{\bd} \arrow{r} & \HH_1(\Torelli_g;(\cC_g)_{\pi_g}) \arrow{r} & 0. \\
\Q[H_{\Z}]                                  \arrow[equals]{r}         & \Q[H_{\Z}]                                         &                                             &
\end{tikzcd}\]
whose first row is exact.
Lemma \ref{lemma:cgpicoinv} says that $(\cC_g)_{\pi_g} \cong (\wedge^2 H)/\Q$, so
\[\HH_1(\Torelli_g;(\cC_g)_{\pi_g}) \cong \HH_1(\Torelli_g;(\wedge^2 H)/\Q) \cong \HH_1(\Torelli_g;\Q) \otimes ((\wedge^2 H)/\Q).\]
Johnson \cite{JohnsonAbel} proved that $\HH_1(\Torelli_g;\Q)$ is a finite-dimensional algebraic representation of $\Sp_{2g}(\Z)$,
so by \eqref{eqn:algclosed} we deduce that $\HH_1(\Torelli_g;(\cC_g)_{\pi_g})$ is as well.
An argument identical to the one used in Claim \ref{claim:fintorelli1} now shows that the 
cokernel of the map
$\ker(\obpartial) \rightarrow \ker(\bd)$ is isomorphic to a subrepresentation
of $\HH_1(\Torelli_g(\cC_g)_{\pi_g})$, and thus by \eqref{eqn:algclosed} is a finite-dimensional
algebraic representation of $\Sp_{2g}(\Z)$, as desired.

\begin{claim}{3}
We have that $\ker(\bb)$ is a finite-dimensional algebraic representation of $\Sp_{2g}(\Z)$.
\end{claim}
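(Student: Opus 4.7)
The plan is to reduce Claim 3 to Claim 2 via the central extension
\[1 \longrightarrow \Z \longrightarrow \Torelli_g^1 \longrightarrow \Torelli_{g,1} \longrightarrow 1\]
from Corollary \ref{corollary:capboundarytorelli}. The central generator $T_\partial$ acts trivially on $\cC_g^1$ by Lemma \ref{lemma:factoraction}, hence also on its quotient $\cC_g$ via Lemma \ref{lemma:capboundaryrep}. The Hochschild--Serre five-term exact sequence for this central extension with coefficients in $\cC_g$ therefore reads
\[\HH_2(\Torelli_{g,1}; \cC_g) \longrightarrow (\cC_g)_{\Torelli_{g,1}} \longrightarrow \HH_1(\Torelli_g^1; \cC_g) \longrightarrow \HH_1(\Torelli_{g,1}; \cC_g) \longrightarrow 0.\]
Letting $K$ denote the image of $(\cC_g)_{\Torelli_{g,1}}$ in $\HH_1(\Torelli_g^1; \cC_g)$, I obtain a short exact sequence
\[0 \longrightarrow K \longrightarrow \HH_1(\Torelli_g^1; \cC_g) \longrightarrow \HH_1(\Torelli_{g,1}; \cC_g) \longrightarrow 0.\]

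First I would show that $K$ is a finite-dimensional algebraic representation of $\Sp_{2g}(\Z)$. Since $\pi_g$ is contained in $\Torelli_{g,1}$ as the point-pushing subgroup (Corollary \ref{corollary:birmantorelli}), there is a surjection $(\cC_g)_{\pi_g} \twoheadrightarrow (\cC_g)_{\Torelli_{g,1}}$. By Lemma \ref{lemma:cgpicoinv}, $(\cC_g)_{\pi_g} \cong (\wedge^2 H)/\Q$, which is finite-dimensional algebraic. By \eqref{eqn:algclosed} the same holds for $(\cC_g)_{\Torelli_{g,1}}$, and hence for its quotient $K$.

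The essential technical point is to verify that $\bb$ vanishes on $K$. The map $(\cC_g)_{\Torelli_{g,1}} \to \HH_1(\Torelli_g^1; \cC_g)$ appearing in the five-term sequence sends the coinvariant class of $v \in \cC_g$ to the class of the bar $1$-cycle $T_\partial \otimes v$, and such chains span $K$. To compute $\bb$ on the class of $T_\partial \otimes v$: lift $v$ to $\tv \in \cC_g^1$ using Lemma \ref{lemma:capboundaryrep} and apply the bar differential in the $\cC_g^1$-complex; the result is $(T_\partial - 1)\tv \in \cC_g^1$, which a priori lies in the subspace $\Q[H_\Z]$ and represents $\bb$ of the class. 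By Lemma \ref{lemma:factoraction}, $T_\partial$ acts trivially on $\cC_g^1$, so this expression vanishes and $\bb|_K = 0$.

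With $\bb|_K = 0$ in hand, I obtain a morphism of short exact sequences
\[\begin{tikzcd}
0 \arrow{r} & K \arrow{r} \arrow{d}{0} & \HH_1(\Torelli_g^1; \cC_g) \arrow{r} \arrow{d}{\bb} & \HH_1(\Torelli_{g,1}; \cC_g) \arrow{r} \arrow{d}{\bd} & 0 \\
0 \arrow{r} & 0 \arrow{r} & \Q[H_\Z] \arrow[equals]{r} & \Q[H_\Z] \arrow{r} & 0
\end{tikzcd}\]
whose leftmost vertical arrow is zero. The snake lemma yields an exact sequence $0 \to K \to \ker(\bb) \to \ker(\bd) \to 0$. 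Combining Claim 2, which identifies $\ker(\bd)$ as a finite-dimensional algebraic representation of $\Sp_{2g}(\Z)$, with the analogous statement for $K$ established above, a final application of \eqref{eqn:algclosed} concludes the proof. The only nonformal step is the vanishing $\bb|_K = 0$, and even that reduces to the elementary fact that $T_\partial$ acts trivially on $\cC_g^1$; the remainder is diagram chasing.
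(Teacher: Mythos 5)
Your argument is correct, and it follows a genuinely different route from the paper's.

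The paper attacks $\ker(\bb)$ by comparing the full long exact sequences of $\Torelli_g^1$ and $\Torelli_{g,1}$ against the coefficient sequence $0 \to \Q[H_\Z] \to \cC_g^1 \to \cC_g \to 0$. The key inputs there are Corollary~\ref{corollary:capboundaryseq} (the five-term sequence for the central extension, but with coefficients in $\cC_g^1$, leaning on Theorem~\ref{theorem:abeltorelli}) and the fact that $\Psi\colon\HH_1(\Torelli_g^1;\Q[H_\Z]) \to \HH_1(\Torelli_{g,1};\Q[H_\Z])$ is an isomorphism, which rests on Johnson's computation of $\HH_1$ of the Torelli group; a diagram chase then shows $\ker(\Phi)\to\ker(\phi)$ is onto. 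You instead take the five-term sequence for the same central extension with coefficients in $\cC_g$, obtaining $K$ as the image of $(\cC_g)_{\Torelli_{g,1}}$ in $\HH_1(\Torelli_g^1;\cC_g)$, identify $K$ as a quotient of $(\cC_g)_{\pi_g}\cong(\wedge^2 H)/\Q$ via Lemma~\ref{lemma:cgpicoinv}, and then handle the interaction with $\bb$ by the direct cocycle computation $\bb([T_\partial\otimes v]) = (T_\partial - 1)\tv = 0$, which reduces to Lemma~\ref{lemma:factoraction}. The snake lemma then gives $0 \to K \to \ker(\bb) \to \ker(\bd) \to 0$. The trade-off: your route replaces the paper's diagram chase and its reliance on Johnson's abelianization and Theorem~\ref{theorem:abeltorelli} with a short, concrete computation of the connecting map on $K$; the paper's route is more formal (no bar-resolution manipulations) but pulls in heavier external input. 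Both are valid, and both ultimately hinge on the triviality of $T_\partial$'s action on $\cC_g^1$ — you just apply it at the cocycle level rather than one step up.

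Two small points worth making explicit if you wrote this up: (i) the snake-lemma diagram is a diagram of $\Sp_{2g}(\Z)$-representations by naturality of the LHS spectral sequence and of connecting homomorphisms under the $\Mod_g^1$-conjugation action, so the resulting short exact sequence is one of $\Sp_{2g}(\Z)$-representations; (ii) you never need $(\cC_g)_{\Torelli_{g,1}} \cong (\wedge^2 H)/\Q$ (Lemma~\ref{lemma:cgtorellicoinv}, which appears later in the paper), only the surjection from $(\cC_g)_{\pi_g}$ — you correctly use only the surjection, avoiding a circularity.
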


Since $\ker(\bd)$ is a finite-dimensional algebraic representation of $\Sp_{2g}(\Z)$, by
\eqref{eqn:algclosed} it is enough to prove that this also holds for the kernel of the map
$\ker(\bb) \rightarrow \ker(\bd)$.  To help us manipulate this kernel, we call this map
$\phi\colon \ker(\bb) \rightarrow \ker(\bd)$.

Recall that our connecting homomorphisms $\bb$ and $\bd$ form part of the long exact sequences in homology associated
to the short exact sequence
\[\begin{tikzcd}
0 \arrow{r} & \Q[H_{\Z}] \arrow{r} & \cC_g^1 \arrow{r} & \cC_{g} \arrow{r} & 0
\end{tikzcd}\]
of representations from Lemma \ref{lemma:capboundaryrep}.  They thus fit into a commutative diagram
\[\begin{tikzcd}
\HH_1(\Torelli_g^1;\Q[H_{\Z}])   \arrow{r} \arrow{d}{\Psi} & \HH_1(\Torelli_g^1;\cC_g^1)   \arrow{r} \arrow{d}{\Phi} & \HH_1(\Torelli_g^1;\cC_g)   \arrow{r}{\bb} \arrow{d} & \Q[H_{\Z}] \arrow[equals]{d} \\
\HH_1(\Torelli_{g,1};\Q[H_{\Z}]) \arrow{r}                 & \HH_1(\Torelli_{g,1};\cC_g^1) \arrow{r}                 & \HH_1(\Torelli_{g,1};\cC_g) \arrow{r}{\bd}           & \Q[H_{\Z}]
\end{tikzcd}\]
with exact rows.  This gives a commutative diagram
\[\begin{tikzcd}
\HH_1(\Torelli_g^1;\Q[H_{\Z}])   \arrow{r} \arrow{d}{\Psi} & \HH_1(\Torelli_g^1;\cC_g^1)   \arrow{r} \arrow{d}{\Phi} & \ker(\bb) \arrow{d}{\phi} \arrow{r} & 0\\
\HH_1(\Torelli_{g,1};\Q[H_{\Z}]) \arrow{r}                 & \HH_1(\Torelli_{g,1};\cC_g^1) \arrow{r}                 & \ker(\bd) \arrow{r}                 & 0
\end{tikzcd}\]
with exact rows.

We claim that the map $\Psi$ in this diagram is an isomorphism.  Indeed, both $\Torelli_g^1$ and $\Torelli_{g,1}$ act trivially on $\Q[H_{\Z}]$, so
\[\HH_1(\Torelli_g^1;\Q[H_{\Z}]) = \HH_1(\Torelli_g^1;\Q) \otimes \Q[H_{\Z}] \quad \text{and} \quad \HH_1(\Torelli_{g,1};\Q[H_{\Z}]) = \HH_1(\Torelli_{g,1};\Q) \otimes \Q[H_{\Z}].\]
That $\Psi$ is an isomorphism is thus equivalent to the fact that the map $\HH_1(\Torelli_g^1;\Q) \rightarrow \HH_1(\Torelli_{g,1};\Q)$ is an isomorphism,
which follows from Johnson's computation of the first homology of the Torelli group \cite{JohnsonAbel}.

Since $\Psi$ is an isomorphism,\footnote{Actually, all that is needed is that it is a surjection.} a diagram chase shows that the map $\ker(\Phi) \rightarrow \ker(\phi)$ is a surjection.  Our goal is to prove
that $\ker(\phi)$ is a finite-dimensional algebraic representation of $\Sp_{2g}(\Z)$, so by \eqref{eqn:algclosed} it is enough
to prove this for $\ker(\Phi)$.  This is exactly the content of Corollary \ref{corollary:capboundaryseq}, so we are done. 
\end{proof}

The above only proved part of Theorem \ref{maintheorem:findim}.  It remains to prove that $\HH_1(\Mod_g^1;\cC_g^1)$ is finite-dimensional.

\begin{proof}[Proof of Theorem \ref{maintheorem:findim} for the mapping class group, assuming Theorem \ref{theorem:bigtheorem}]
Consider the exact sequence
\[\begin{tikzcd}
1 \arrow{r} & \Torelli_g^1 \arrow{r} & \Mod_g^1 \arrow{r} & \Sp_{2g}(\Z) \arrow{r} & 1.
\end{tikzcd}\]
The associated 5-term exact sequence with coefficients in $\cC_g^1$ contains
\[\begin{tikzcd}
\HH_1(\Torelli_g^1;\cC_g^1)_{\Sp_{2g}(\Z)} \arrow{r} & \HH_1(\Mod_g^1;\cC_g^1) \arrow{r} & \HH_1(\Sp_{2g}(\Z);(\cC_g^1)_{\Torelli_g^1}) \arrow{r} & 0.
\end{tikzcd}\]
We proved above that $\HH_1(\Torelli_g^1;\cC_g^1)$ is finite-dimensional, so
$\HH_1(\Torelli_g^1;\cC_g^1)_{\Sp_{2g}(\Z)}$ is also finite-dimensional.  Also, Lemma \ref{lemma:cg1torellicoinv} says
that $(\cC_g^1)_{\Torelli_g^1}$ is a finite-dimensional algebraic representation of $\Sp_{2g}(\Z)$.
Since $\Sp_{2g}(\Z)$ is finitely generated, it follows
that $\HH_1(\Sp_{2g}(\Z);(\cC_g^1)_{\Torelli_g^1})$ is finite-dimensional.  Plugging all of this into the above exact sequence, we conclude
that $\HH_1(\Mod_g^1;\cC_g^1)$ is finite-dimensional, as desired.
\end{proof}

\subsection{Second main theorem}

Our second main theorem is:\footnote{Just like above, this is copied from the introduction before we imposed our genus
assumption, but we remind the reader that we are assuming throughout the paper that $g \geq 4$ (see Assumption \ref{assumption:genus}).}

\newtheorem*{maintheorem:infdim}{Theorem \ref{maintheorem:infdim}}
\begin{maintheorem:infdim}
For $g \geq 4$, both $\HH_1(\Mod_{g,1};\cC_g)$ and $\HH_1(\Torelli_{g,1};\cC_g)$ are infinite-dimensional.
\end{maintheorem:infdim}

\begin{proof}[Proof of Theorem \ref{maintheorem:infdim} for the Torelli group, assuming Theorem \ref{theorem:bigtheorem}]
Consider the short exact sequence of representations from Lemma \ref{lemma:capboundaryrep}:
\[\begin{tikzcd}
0 \arrow{r} & \Q[H_{\Z}] \arrow{r} & \cC_g^1 \arrow{r} & \cC_{g} \arrow{r} & 0.
\end{tikzcd}\]
There is an associated long exact sequence in $\Torelli_{g,1}$-homology that contains the
connecting homomorphism $\bd\colon \HH_1(\Torelli_{g,1};\cC_g) \rightarrow \Q[H_{\Z}]$.
It is enough to prove that the image of $\bd$ is infinite-dimensional.
Let $\pi_g$ be the point-pushing subgroup of $\Torelli_{g,1}$.  By looking at the associated long exact sequence
in $[\pi_g,\pi_g]$-homology, we get a connecting homomorphism $\fr\colon \HH_1([\pi_g,\pi_g];\cC_g) \rightarrow \Q[H_{\Z}]$
fitting into a commutative diagram
\begin{equation}
\label{eqn:reidemeisternaturality}
\begin{tikzcd}[row sep=small]
\HH_1([\pi_g,\pi_g];\cC_g) \arrow{d} \arrow{r}{\fr}     & \Q[H_{\Z}] \arrow[equals]{d} \\
\HH_1(\Torelli_{g,1};\cC_g) \arrow{r}{\bd} & \Q[H_{\Z}]
\end{tikzcd}
\end{equation}
Identifying $\HH_1([\pi_g,\pi_g];\cC_g)$ with $\cC_g^{\otimes 2}$, Lemma \ref{lemma:reidemeisterboundary} says that
$\fr$ is the Reidemeister pairing.  Theorem \ref{theorem:bigtheorem} then implies that $\Image(\fr)$ is infinite-dimensional,
so $\Image(\bd)$ is also infinite-dimensional.
\end{proof}

The above only proved part of Theorem \ref{maintheorem:infdim}.  It remains to prove that $\HH_1(\Mod_{g,1};\cC_g)$ is infinite-dimensional:

\begin{proof}[Proof of Theorem \ref{maintheorem:infdim} for the mapping class group, assuming Theorem \ref{theorem:bigtheorem}]
We have a short exact sequence
\[\begin{tikzcd}
1 \arrow{r} & \Torelli_{g,1} \arrow{r} & \Mod_{g,1} \arrow{r} & \Sp_{2g}(\Z) \arrow{r} & 1.
\end{tikzcd}\]
The associated 5-term exact sequence in homology with coefficients in $\cC_g$ contains the segment
\[\begin{tikzcd}[column sep=small]
\HH_2(\Sp_{2g}(\Z);(\cC_g)_{\Torelli_{g,1}}) \arrow{r} & \HH_1(\Torelli_{g,1};\cC_g)_{\Sp_{2g}(\Z)} \arrow{r} & \HH_1(\Mod_{g,1};\cC_g).
\end{tikzcd}\]
Since $\Torelli_{g,1}$ contains the point-pushing subgroup $\pi_g$, Lemma \ref{lemma:cgpicoinv} implies
that $(\cC_g)_{\Torelli_{g,1}} \cong (\wedge^2 H)/\Q$.  It follows that
\[\HH_2(\Sp_{2g}(\Z);(\cC_g)_{\Torelli_{g,1}}) \cong \HH_2(\Sp_{2g}(\Z);(\wedge^2 H)/\Q)\]
is finite-dimensional.  We deduce that it is enough to prove that the $\Sp_{2g}(\Z)$-coinvariants of $\HH_1(\Torelli_{g,1};\cC_g)$ 
are infinite-dimensional.  Let $\bd\colon \HH_1(\Torelli_{g,1};\cC_g) \rightarrow \Q[H_{\Z}]$ be the connecting homomorphism
discussed in the previous proof.  Since taking coinvariants is right exact, we have a surjection
$\HH_1(\Torelli_{g,1};\cC_g)_{\Sp_{2g}(\Z)} \rightarrow \Image(\bd)_{\Sp_{2g}(\Z)}$.  It is thus
enough to prove that $\Image(\bd)_{\Sp_{2g}(\Z)}$ is infinite-dimensional.

Consider the short exact sequence of representations
\[\begin{tikzcd}
0 \arrow{r} & \Image(\bd) \arrow{r} & {\Q[H_{\Z}]} \arrow{r} & \coker(\bd) \arrow{r} & 0.
\end{tikzcd}\]
The associated long exact sequence in $\Sp_{2g}(\Z)$-homology contains
\begin{small}
\[\begin{tikzcd}[column sep=small, row sep=small]
\HH_1(\Sp_{2g}(\Z);\coker(\bd)) \arrow{r} & {\HH_0(\Sp_{2g}(\Z);\Image(\bd))} \arrow[equals]{d} \arrow{r} & {\HH_0(\Sp_{2g}(\Z);\Q[H_{\Z}])} \arrow[equals]{d} \arrow{r} & {\HH_0(\Sp_{2g}(\Z);\coker(\bd))} \arrow[equals]{d}\\
                                                  & {\Image(\bd)_{\Sp_{2g}(\Z)}}                          & {\Q[H_{\Z}]_{\Sp_{2g}(\Z)}}        & {\coker(\bd)_{\Sp_{2g}(\Z)}}                          
\end{tikzcd}\]
\end{small}%
Using the commutative diagram \eqref{eqn:reidemeisternaturality},
Theorem \ref{theorem:bigtheorem} implies that $\coker(\bd)$ is finite-dimensional.  Since $\Sp_{2g}(\Z)$ is finitely generated, we see that
$\HH_1(\Sp_{2g}(\Z);\coker(\bd))$ and $\coker(\bd)_{\Sp_{2g}(\Z)}$ are finite-dimensional.

It follows that $\Image(\bd)_{\Sp_{2g}(\Z)}$ is infinite-dimensional if and only if $\Q[H_{\Z}]_{\Sp_{2g}(\Z)}$ is infinite-dimensional, so we
only need to prove the latter fact.  But this is easy: the dimension of $\Q[H_{\Z}]_{\Sp_{2g}(\Z)}$ is the cardinality of the set of $\Sp_{2g}(\Z)$-orbits
in $H_{\Z}$, and there are infinitely many orbits.  Indeed, if $v \in H_{\Z}$ is primitive,\footnote{That is, not divisible by any integers other than $\pm 1$.}
then $\Set{$\ell \Cdot v$}{$\ell \geq 0$}$ is a complete set of orbit representatives.
\end{proof}

\part{Generators for the kernel of the coinvariant Reidemeister pairing}
\label{part:2}

Let $\ofr$ be the coinvariant Reidemeister
pairing.  Our remaining task is to prove Theorem \ref{theorem:bigtheorem}, which says
that $\ker(\ofr)$ is a finite-dimensional
algebraic representation of $\Sp_{2g}(\Z)$ and that $\coker(\ofr)$ is finite-dimensional.  
In this part of the paper, we calculate $\Image(\ofr)$ and find
generators for $\ker(\ofr)$.  We will then find some relations in $\ker(\ofr)$ in Part \ref{part:3} and
use these generators and relations to complete the proof of Theorem \ref{theorem:bigtheorem}.
We will outline this part more in the introductory \S \ref{section:part2intro} below.

\section{Introduction to Part \ref{part:2}}
\label{section:part2intro}

This section fixes some notation and does some preliminary calculations, and then outlines
what we will do in the rest of Part \ref{part:2}.

\subsection{Conjugation and commutator conventions}
\label{section:conjugationcommutator}

Let $G$ be a group.  We have been viewing the conjugation action of $G$ on
itself as a left action.  For $x,y \in G$, we therefore write $\precon{y}{x}$ for $y x y^{-1}$.
With this notation, we have $\precon{z}{(\precon{y}{x})} = \precon{zy}{x}$.
For $x,y \in G$ we also write $[x,y] = x y x^{-1} y^{-1}$.

\subsection{Notation for group ring}

For $h \in H_{\Z}$, let $\GrpR{h}$ denote the corresponding element of $\Q[H_{\Z}]$.
Though $\Z$ acts on both $H_{\Z}$ and $\Q[H_{\Z}]$, for $n \in \Z$
the elements $n \GrpR{h}$ and $\GrpR{n h}$ are not the same.  For $x \in \pi_g$, 
let $\ox \in H_{\Z}$ be its image, so $\GrpR{\ox} \in \Q[H_{\Z}]$.

\subsection{Notation for \texorpdfstring{$\cC_g$}{Cg}}

For $x,y \in \pi_g$, let $\Cg{x,y}$ be the element of $\cC_g = \HH_1([\pi_g,\pi_g];\Q)$ corresponding
to $[x,y]$.  Similarly, for $z \in [\pi_g,\pi_g]$ let $\Cg{z}$ be the corresponding element
of $\cC_g$.  The group $\pi_g$ has a left action by conjugation on $[\pi_g,\pi_g]$.  This descends to a left action of
$H_{\Z}$ on $\cC_g$.  Since $H_{\Z}$ is abelian, it is harmless\footnote{The point here is that since $H_{\Z}$ is abelian,
even though this is a left action for $c \in \cC_g$ and $h_1,h_2 \in H_{\Z}$ we have $(c^{h_1})^{h_2} = c^{h_1+h_2}$.}
to write this with superscripts: 
for $c \in \cC_g$ and $h \in H_{\Z}$, we denote the image of $c$ under the action
of $h$ by $c^h$.  For $x,y,z \in \pi_g$, it follows that
$\Cg{x,y}^{\oz}$ is the element of $\cC_g$ corresponding to $\precon{z}{[x,y]}$.

\subsection{Commutator identities}

Commutator identities give identities between the elements $\Cg{x,y}^h$.  The ones we need are:

\begin{lemma}[Commutator identities]
\label{lemma:commutatoridentities}
Let $x,y,z \in \pi_g$ and $h \in H_{\Z}$.  The following hold:
\begin{itemize}
\item $\Cg{y,x}^h = -\Cg{x,y}^h$
\item $\Cg{xy,z}^h = \Cg{x,z}^h + \Cg{y,z}^{h+\ox}$
\item $\Cg{x^{-1},y}^h = -\Cg{x,y}^{h - \ox}$.
\end{itemize}
\end{lemma}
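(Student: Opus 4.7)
The plan is to derive each identity from the corresponding standard commutator identity in $\pi_g$, then pass to the abelianization $[\pi_g,\pi_g]^{\mathrm{ab}} \otimes \Q = \cC_g$. The element $\Cg{x,y}^{h}$, by definition, is the class in $\cC_g$ of $\precon{w}{[x,y]}$ for any $w \in \pi_g$ with $\ow = h$ (well-defined because $[\pi_g,\pi_g]$ acts trivially on its own abelianization, so only the class of $w$ in $H_{\Z}$ matters). So each identity reduces to finding a formal equality between conjugates of commutators in $\pi_g$.

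For the first identity, this is immediate: $[y,x] = [x,y]^{-1}$ in any group, so conjugating by $w$ with $\ow = h$ gives $\precon{w}{[y,x]} = (\precon{w}{[x,y]})^{-1}$, which becomes $\Cg{y,x}^h = -\Cg{x,y}^h$ in the abelianization. For the second identity, I would verify by direct expansion that
\[
[xy,z] \;=\; \precon{x}{[y,z]} \cdot [x,z]
\]
(both sides equal $xyzy^{-1}x^{-1}z^{-1}$). Conjugating by $w$ with $\ow = h$ yields $\precon{w}{[xy,z]} = \precon{wx}{[y,z]} \cdot \precon{w}{[x,z]}$, and since $\overline{wx} = h + \ox$, passing to $\cC_g$ gives $\Cg{xy,z}^h = \Cg{x,z}^h + \Cg{y,z}^{h+\ox}$ after using that the abelianization is written additively.

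For the third identity, the relevant group-theoretic identity is
\[
[x^{-1},y] \;=\; \precon{x^{-1}}{\bigl([x,y]^{-1}\bigr)},
\]
which one checks by computing both sides as $x^{-1}yxy^{-1}$. Conjugating by $w$ with $\ow = h$ and abelianizing gives $\Cg{x^{-1},y}^h = -\Cg{x,y}^{h-\ox}$, using $\overline{wx^{-1}} = h - \ox$.

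There is no real obstacle here: the whole lemma is a routine verification once one fixes conventions on the direction of conjugation (the paper's convention $\precon{y}{x} = yxy^{-1}$ makes $\precon{z}{(\precon{y}{x})} = \precon{zy}{x}$, a left action, which is what makes the superscript notation $c^h$ compose additively in $H_{\Z}$). The only small subtlety worth flagging in the write-up is well-definedness of $\Cg{x,y}^h$ as a function of $h \in H_{\Z}$ rather than of a lift $w \in \pi_g$; this follows because any two lifts differ by an element of $[\pi_g,\pi_g]$, and such elements act trivially on $\HH_1([\pi_g,\pi_g];\Q)$.
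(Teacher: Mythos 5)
Your proof is correct and follows essentially the same approach as the paper's: each identity is deduced from the corresponding group-theoretic commutator identity ($[y,x]=[x,y]^{-1}$, $[xy,z]=\precon{x}{[y,z]}\,[x,z]$, and $[x^{-1},y]\cdot\precon{x^{-1}}{[x,y]}=1$, which is your $[x^{-1},y]=\precon{x^{-1}}{([x,y]^{-1})}$) by conjugating and passing to the abelianization. The paper states these identities without spelling out the verification; you supply the verification and the well-definedness remark, both of which are correct.
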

\begin{proof}
The first follows from the commutator identity $[y,x]=[x,y]^{-1}$.  The second follows
from the commutator identity $[xy,z]=\precon{x}[y,z] [x,z]$.  The third follows 
from the commutator identity $([x^{-1},y])(\precon{x^{-1}}{[x,y]}) = 1$.
\end{proof}

\subsection{Separation properties of curves}
\label{section:separationpropertiesofcurves}

Say that a collection of elements of $\pi_g$ are {\em almost disjoint} if they can be realized
so as to only intersect at the basepoint.  Also, $\delta \in \pi_g$ is said to {\em separate} 
a subset $C_1 \subset \pi_g$ from a subset $C_2 \subset \pi_g$ if:
\begin{itemize}
\item $\delta$ is a simple closed separating curve\footnote{This implies that $\delta \in [\pi_g,\pi_g]$.  Also, like
we described in \S \ref{section:conventions} when we say that $\delta$ is a simple closed separating curve we mean
that it can be realized by such a curve.} that separates $\Sigma_g$
into subsurfaces $S_1$ and $S_2$, ordered such that $S_1$ is to the left of $\delta$ and
$S_2$ to the right of $\delta$; and
\item for $i=1,2$, each curve in $C_i$ can be realized so as to lie in $S_i$.
\end{itemize}
See here, where the curves in $C_1$ and $C_2$ are in two different colors:\\
\Figure{SeparatedSets}
Note that this is {\em not} symmetric; in fact, if $\delta$ separates $C_1$ from $C_2$, then
$\delta^{-1}$ separates $C_2$ from $C_1$.  We also allow $\delta$ to be an element of $C_1$ or
$C_2$ (or both!).  For instance, if $\delta \in \pi_g$ is a simple closed separating curve and $\gamma \in \pi_g$
is almost disjoint from $\delta$, then $\delta$ separates $\{\delta\}$ from $\{\gamma\}$.  In fact, in this
case $\delta$ even separates $\{\delta\}$ from $\{\delta,\gamma\}$.

For subsets $C_1 \subset \pi_g$ and $C_2 \subset \pi_g$, we say that $C_1$ and $C_2$ are {\em separated} if
there exists a $\delta \in \pi_g$ that separates $C_1$ from $C_2$.  This implies that each curve in $C_1$ is almost
disjoint from each curve in $C_2$.
We will abuse notation in the obvious way and talk about a single $\gamma \in \pi_g$ and a subset $C \subset \pi_g$ being
separated, etc.  

\subsection{Key Reidemeister pairing calculation}

Recall that the Reidemeister pairing is the map $\fr\colon \cC_g^{\otimes 2} \rightarrow \Q[H_{\Z}]$
defined as follows.  Let $\tSigma_g \rightarrow \Sigma_g$ be the universal abelian cover of $\Sigma_g$, so
$H_{\Z}$ is the deck group of $\tSigma_g$ and $\cC_g = \HH_1(\tSigma_g;\Q)$.  Let $\iota$ be the algebraic
intersection pairing on $\cC_g = \HH_1(\tSigma_g;\Q)$.  Then
\[\fr(x \otimes y) = \sum_{h \in H_{\Z}} \iota(h \Cdot x, y) h \quad \text{for $x,y \in \cC_g$}.\]
Our results about the Reidemeister pairing are based on the following calculation:

\begin{lemma}
\label{lemma:coinvariantreidemeister}
Let $\fr\colon \cC_g^{\otimes 2} \rightarrow \Q[H_{\Z}]$ be the Reidemeister pairing.  Then:
\begin{itemize}
\item[(a)] If $\gamma_1,\gamma_2 \in [\pi_g,\pi_g]$ are almost 
disjoint and $h_1,h_2 \in H_{\Z}$, then $\fr(\Cg{\gamma_1}^{h_1} \otimes \Cg{\gamma_2}^{h_2}) = 0$.
\item[(b)] If $\delta \in [\pi_g,\pi_g]$ separates $\eta \in \pi_g$ from $\lambda \in \pi_g$, then for arbitrary $h \in H_{\Z}$ we have
$\fr(\Cg{\delta} \otimes \Cg{\eta,\lambda}^h) = \GrpR{h} - \GrpR{h+\oeta} - \GrpR{h+\olambda} + \GrpR{h+\oeta+\olambda}$.
\end{itemize}
\end{lemma}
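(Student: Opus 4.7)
The plan is to compute $\fr$ directly from its definition. First, I will record a general shift identity: using the $H_\Z$-equivariance of the algebraic intersection form $\iota$ on $\HH_1(\tSigma_g;\Q)$, a one-line reindexing of the defining sum yields
\[
\fr(c_1^{h_1} \otimes c_2^{h_2}) = \GrpR{h_2 - h_1}\cdot \fr(c_1 \otimes c_2)
\]
for any $c_1, c_2 \in \cC_g$ and any $h_1, h_2 \in H_\Z$. This reduces Part (a) to the case $h_1 = h_2 = 0$ and Part (b) to the case $h = 0$.

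For Part (a), I realize $\gamma_1$ and $\gamma_2$ as based loops in $\Sigma_g$ meeting only at $\ast$, chosen so that each passes through $\ast$ only at its endpoints, and lift them to closed $1$-cycles $\tilde{\gamma}_1, \tilde{\gamma}_2$ in $\tSigma_g$ based at $\tilde{\ast}$. Any point of $h\tilde{\gamma}_1 \cap \tilde{\gamma}_2$ projects to $\gamma_1 \cap \gamma_2 = \{\ast\}$ and so lies over $\ast$; but $h\tilde{\gamma}_1$ visits only the single preimage $h\tilde{\ast}$ and $\tilde{\gamma}_2$ visits only $\tilde{\ast}$, so the two cycles are disjoint for $h \neq 0$ and $\iota(h\tilde{\gamma}_1, \tilde{\gamma}_2) = 0$. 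The remaining term $h = 0$ is also zero, since the standard projection formula
\[
\iota_{\Sigma_g}(\gamma_1, \gamma_2) = \sum_{h \in H_\Z} \iota(h\tilde{\gamma}_1, \tilde{\gamma}_2)
\]
collapses to the single term $\iota(\tilde{\gamma}_1, \tilde{\gamma}_2)$, and the left-hand side vanishes because $\gamma_1 \in [\pi_g,\pi_g]$ is null-homologous in $\Sigma_g$.

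For Part (b), let $\tilde{\delta}$ be the lift of $\delta$ based at $\tilde{\ast}$. The lift $\widetilde{[\eta,\lambda]}$ of $[\eta,\lambda] = \eta\lambda\eta^{-1}\lambda^{-1}$ is a closed concatenation of four lifted segments whose intermediate junctions over $\ast$ are, in cyclic order, $\tilde{\ast}$, $\oeta\cdot\tilde{\ast}$, $(\oeta + \olambda)\cdot\tilde{\ast}$, and $\olambda\cdot\tilde{\ast}$. Since $\delta$ is simple, $h\tilde{\delta}$ visits only $h\tilde{\ast}$ above $\ast$, so the projection argument of Part (a) shows that only $h \in \{0,\ \oeta,\ \olambda,\ \oeta + \olambda\}$ can contribute, each giving a single local intersection. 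Because $\eta \subset S_1$ and $\lambda \subset S_2$, inspecting the incoming/outgoing tangent directions at each junction shows that $\widetilde{[\eta,\lambda]}$ crosses $h\tilde{\delta}$ from the $S_2$-side to the $S_1$-side (local sign $+1$) when $h \in \{0,\ \oeta + \olambda\}$, and in the opposite direction (local sign $-1$) when $h \in \{\oeta,\ \olambda\}$. Summing gives
\[
\fr(\Cg{\delta} \otimes \Cg{\eta,\lambda}) = \GrpR{0} - \GrpR{\oeta} - \GrpR{\olambda} + \GrpR{\oeta + \olambda},
\]
and applying the shift identity to the second factor yields the general-$h$ formula.

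The hard part is the sign bookkeeping in Part (b): once the transverse orientation of $\delta$ (placing $S_1$ on the left) is fixed, consecutive segments of $[\eta,\lambda]$ alternate between $S_1$ and $S_2$, forcing the alternating sign pattern $(+,-,+,-)$ at the four junctions. This alternation is precisely what produces the factorization $\GrpR{h}\bigl(1 - \GrpR{\oeta}\bigr)\bigl(1 - \GrpR{\olambda}\bigr)$ of the final answer.
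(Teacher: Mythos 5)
Your proof is correct and takes a genuinely different route from the paper's, though both arrive at the same geometric counting. Three distinctions stand out. First, the shift identity $\fr(c_1^{h_1}\otimes c_2^{h_2}) = \GrpR{h_2-h_1}\cdot\fr(c_1\otimes c_2)$ is a nice organizational device absent from the paper, which instead carries the general $h$ through the whole computation by lifting $[\eta,\lambda]$ from the point $h\cdot\tast$. Second, for Part (a) the paper pushes $\gamma_1$ entirely off $\gamma_2$ (so \emph{all} lifts are disjoint), while you keep both loops through $\ast$, observe that translates $h\tilde\gamma_1$ miss $\tilde\gamma_2$ over $\ast$ for $h\neq 0$, and handle the remaining $h=0$ term by the covering-space summation formula $\iota_{\Sigma_g}(\gamma_1,\gamma_2)=\sum_h\iota(h\tilde\gamma_1,\tilde\gamma_2)$; that is a legitimately different mechanism. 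Third, for Part (b) the paper homotopes $\delta$ slightly off $\ast$ and reads the four crossings (and their signs) off a figure of the lift of $[\eta,\lambda]$, whereas you keep $\tdelta$ through the junction points and compute the signs from the $S_1\!\to\!S_2$ versus $S_2\!\to\!S_1$ direction of crossing at each of the four preimages $\tast,\ \oeta\tast,\ (\oeta+\olambda)\tast,\ \olambda\tast$; your sign bookkeeping matches the paper's $(+,-,-,+)$ pattern (the paper lists them in the order they occur along $[\eta,\lambda]$, giving $+,-,+,-$, which is the same data).

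One caveat worth flagging: in Part (a) you assert that $\gamma_1,\gamma_2$ can be realized meeting only at $\ast$ with each passing through $\ast$ \emph{only at its endpoints}. For an arbitrary almost-disjoint pair of elements of $[\pi_g,\pi_g]$ this is not immediate---locally at $\ast$ the arcs of $\gamma_1$ and $\gamma_2$ may interleave in the cyclic order, obstructing the naive push-off of the intermediate visits of $\gamma_1$ to $\ast$ without crossing $\gamma_2$. Your $h\neq 0$ step genuinely uses this claim (otherwise $\tilde\gamma_1$ visits several preimages of $\ast$ and the disjointness argument breaks). The paper's own proof has a comparable unstated step (``can be freely homotoped to a curve disjoint from $\gamma_2$''), so this is not a defect unique to your write-up; but since all downstream applications of the lemma are to \emph{separated} pairs, where the arcs of $\gamma_1$ lie entirely to one side of the separating $\delta$ and the arcs of $\gamma_2$ to the other, the push-off is unobstructed there, and you could safely restrict the hypothesis to ``separated'' or add a sentence justifying the representative for that case.
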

\begin{proof}
Let $\rho\colon \tSigma_g \rightarrow \Sigma_g$ be the universal abelian cover of $\Sigma_g$ and let $\iota$ be the algebraic
intersection pairing on $\tSigma_g$.  The cover $\tSigma_g$ has a basepoint, and 
for $x \in [\pi_g,\pi_g]$, the corresponding element $\Cg{x} \in \cC_g$ is the homology class of the closed curve
obtained by lifting the based curve $x$ to
$\tSigma_g$ starting at the basepoint of $\tSigma_g$.  We prove the two parts separately:

\begin{claim}{1}
\label{claim:lift1}
If $\gamma_1,\gamma_2 \in [\pi_g,\pi_g]$ are almost 
disjoint and $h_1,h_2 \in H_{\Z}$, then $\fr(\Cg{\gamma_1}^{h_1} \otimes \Cg{\gamma_2}^{h_2}) = 0$.
\end{claim}

Since $\gamma_1$ and $\gamma_2$ lie in $[\pi_g,\pi_g]$, their algebraic intersection number on $\Sigma_g$ is $0$.  Their
single intersection at the basepoint is thus not a transverse intersection, so
$\gamma_1$ can be freely homotoped to a curve $\gamma'_1$ that is disjoint from $\gamma_2$ as follows:\\
\Figure{PullGammaApart}
Let $\tgamma_1$ and $\tgamma_2$ be the lifts of $\gamma_1$ and $\gamma_2$ to $\tSigma_g$.  Lifting the homotopy between
$\gamma_1$ and $\gamma'_1$ to $\tSigma_g$ starting at $\tgamma_1$, we get a lift $\tgamma'_1$ of $\gamma'_1$ that
is homotopic to $\tgamma_1$.
Since $\gamma'_1$ and $\gamma_2$ are disjoint, for $k_1,k_2 \in H_{\Z}$ the curves $k_1 \Cdot \tgamma'_1$ and
$k_2 \Cdot \tgamma_2$ are also disjoint.  This implies that
$\iota(k_1 {\Cdot} [\tgamma_1],k_2 {\Cdot} [\tgamma_2]) = \iota(k_1 {\Cdot} [\tgamma'_1],k_2 {\Cdot} [\tgamma_2]) = 0$.
We conclude that
$\fr(\Cg{\gamma_1}^{h_1} \otimes \Cg{\gamma_2}^{h_2}) = \sum_{k \in H_{\Z}} \iota((k+h_1) {\Cdot} [\tgamma_1],h_2 {\Cdot} [\tgamma_2]) k = 0$.

\begin{claim}{2}
If $\delta \in [\pi_g,\pi_g]$ separates $\eta \in \pi_g$ from $\lambda \in \pi_g$, then for arbitrary $h \in H_{\Z}$ we have
$\fr(\Cg{\delta} \otimes \Cg{\eta,\lambda}^h) = \GrpR{h} - \GrpR{h+\oeta} - \GrpR{h+\olambda} + \GrpR{h+\oeta+\olambda}$.
\end{claim}

Let $\delta'$ be the curve obtained by homotoping $\delta$ like this:\\
\Figure{PullDeltaOff}
Let $\tdelta$ be the lift of $\delta$ to $\tSigma_g$.  
Lifting the homotopy between
$\delta$ and $\delta'$ to $\tSigma_g$ starting at $\tdelta$, we get a lift $\tdelta'$ of $\delta'$ that
is homotopic to $\tdelta$.
We then have
\[\fr(\Cg{\delta} \otimes \Cg{\eta,\lambda}^h) = \sum_{k \in H_{\Z}} \iota(k \Cg{\delta},\Cg{\eta,\lambda}^h) = \sum_{k \in H_{\Z}} \iota(k {\Cdot} [\tdelta'],\Cg{\eta,\lambda}^h) k.\]
Let $\tast$ be the basepoint of $\tSigma_g$.
The homology class $\Cg{\eta,\lambda}^h$ is the homology class of the curve obtained by lifting $[\eta,\lambda] = \eta \lambda \eta^{-1} \lambda^{-1}$
to $\tSigma_g$ starting at $h \Cdot \tast$.  The curve $[\eta,\lambda]$ intersects $\delta'$ four times, so 
this lift will intersect four different curves of the form $k \Cdot \tdelta'$.
See here, which shows the initial segments of $[\eta,\lambda] \in \pi_g$ 
whose lifts end in those four translates of $\tdelta'$ (the numbering
is the order in which those intersections appear in $[\eta,\lambda]$) and where
the end of the lift is labeled with the $k \in H_{\Z}$ such that the lift terminates in $k \Cdot \tdelta'$:\\
\Figure{LiftX}
We have perturbed some parts of these initial segments to make the picture easier to read.
Examining the signs of those intersections, those  labeled by $h$ and $h+\oeta+\olambda$ have a positive sign
and those labeled by $h+\oeta$ and $h+\olambda$ have a negative sign.  We conclude that
\[\fr(\Cg{\delta} \otimes \Cg{\eta,\lambda}^h) = \GrpR{h} - \GrpR{h+\oeta} - \GrpR{h+\olambda} + \GrpR{h+\oeta+\olambda}.\qedhere\]
\end{proof}

\subsection{Notation for coinvariant quotient}
\label{section:coinvariantquotient}

For $\kappa \in \cC_g^{\otimes 2}$, we denote by $\underline{\kappa}$ the associated
element of $(\cC_g^{\otimes 2})_{\Torelli_{g,1}}$.  For example, 
for $c_1,c_2 \in \cC_g$ the image in $(\cC_g^{\otimes 2})_{\Torelli_{g,1}}$ of $c_1 \otimes c_2 \in \cC_g^{\otimes 2}$
is written $\CgCg{c_1}{c_2}$.
Since the point-pushing subgroup $\pi_g$ of $\Torelli_{g,1}$ acts on $\pi_g$ by inner automorphisms,
for $h \in H_{\Z}$ the elements $c_1 \otimes c_2$ and $c_1^h \otimes c_2^h$ differ by an element
of $\Torelli_{g,1}$ and hence $\CgCg{c_1}{c_2} = \CgCg{c_1^h}{c_2^h}$.  Equivalently, we have
$\CgCg{c_1^h}{c_2} = \CgCg{c_1}{c_2^{-h}}$.

\subsection{Main result of Part 2}

Let $\ofr\colon (\cC_g^{\otimes 2})_{\Torelli_{g,1}} \rightarrow \Q[H_{\Z}]$ be the coinvariant
Reidemeister pairing.  Lemma \ref{lemma:coinvariantreidemeister}.(a) implies that $\ker(\ofr)$
contains all elements of the form $\CgCg{\Cg{\gamma_1}^{h_1}}{\Cg{\gamma_2}^{h_2}}$ with
$\gamma_1,\gamma_2 \in [\pi_g,\pi_g]$ almost disjoint and $h_1,h_2 \in H_{\Z}$.  We will prove
that these generate the kernel.  In fact, we only need elements where $\gamma_1$ and $\gamma_2$ are separated:

\begin{theorem}
\label{theorem:part2theorem}
Let $\ofr\colon (\cC_g^{\otimes 2})_{\Torelli_{g,1}} \rightarrow \Q[H_{\Z}]$ be the coinvariant Reidemeister pairing.  Then:
\begin{itemize}
\item the cokernel of $\ofr$ is finite-dimensional; and
\item the kernel of $\ofr$ is generated by the set of elements of the form
$\CgCg{\Cg{\gamma_1}^{h_1}}{\Cg{\gamma_2}^{h_2}}$ with $\gamma_1 \in [\pi_g,\pi_g]$ 
and $\gamma_2 \in [\pi_g,\pi_g]$ separated and $h_1,h_2 \in H_{\Z}$.
\end{itemize}
\end{theorem}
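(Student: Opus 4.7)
The strategy is to handle the cokernel and the kernel separately, with the cokernel being the more elementary computation.

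For the cokernel, I would verify the description announced in the remark after Theorem \ref{theorem:bigtheorem}. By Lemma \ref{lemma:coinvariantreidemeister}(b), a generator $\CgCg{\Cg{\delta}}{\Cg{\eta,\lambda}^h}$ with $\delta$ separating $\eta$ from $\lambda$ is sent to $\GrpR{h} - \GrpR{h+\oeta} - \GrpR{h+\olambda} + \GrpR{h+\oeta+\olambda}$, which lies in $\ker(\epsilon \times \fh)$ where $\epsilon\colon \Q[H_{\Z}] \to \Q$ is the augmentation and $\fh\colon \Q[H_{\Z}] \to H$ is induced by $H_{\Z} \hookrightarrow H$. Since $g \geq 4$, for any $u,v,h \in H_{\Z}$ we can realize curves $\eta,\lambda$ with $\oeta=u$, $\olambda=v$ separated by some bounding simple closed curve $\delta \in [\pi_g,\pi_g]$. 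A direct computation then shows that the resulting elements $\GrpR{h}-\GrpR{h+u}-\GrpR{h+v}+\GrpR{h+u+v}$ span $\ker(\epsilon \times \fh)$, so $\Image(\ofr)=\ker(\epsilon \times \fh)$ and $\coker(\ofr) \cong \Q \oplus H$ is finite-dimensional.

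For the kernel, let $K \subseteq (\cC_g^{\otimes 2})_{\Torelli_{g,1}}$ denote the subspace spanned by the asserted separated generators; by Lemma \ref{lemma:coinvariantreidemeister}(a) we have $K \subseteq \ker(\ofr)$. The plan is to reduce an arbitrary element of $(\cC_g^{\otimes 2})_{\Torelli_{g,1}}$ modulo $K$ to a normal form whose image under $\ofr$ is explicitly computable and injects into $\Q[H_{\Z}]$. First, since $\cC_g$ is spanned as a $\Q$-module by classes $\Cg{x,y}^h$ with $x,y \in \pi_g$ simple closed curves and $h \in H_{\Z}$, the coinvariants $(\cC_g^{\otimes 2})_{\Torelli_{g,1}}$ are spanned by basic terms $\CgCg{\Cg{x_1,y_1}^{h_1}}{\Cg{x_2,y_2}^{h_2}}$. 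The coinvariant identity $\CgCg{c_1^h}{c_2} = \CgCg{c_1}{c_2^{-h}}$ from \S\ref{section:coinvariantquotient} further lets us normalize $h_2 = 0$.

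The main geometric step is then: given such a basic term, use the $\Torelli_{g,1}$-action together with the change of coordinates principle applied to Johnson's bounding pair generators of $\Torelli_{g,1}$ to move the support of $[x_2,y_2]$ into a subsurface bounded by a separating curve $\delta \in [\pi_g,\pi_g]$ disjoint from $x_1 \cup y_1$. Then use the commutator identities of Lemma \ref{lemma:commutatoridentities} to rewrite $\Cg{x_2,y_2}$ in terms of $\Cg{\delta}$ plus error terms that are already separated from $\Cg{x_1,y_1}^{h_1}$, i.e., lie in $K$. Iterating this reduction, every element of $(\cC_g^{\otimes 2})_{\Torelli_{g,1}}$ becomes, modulo $K$, a sum of standard generators $\CgCg{\Cg{\delta_i}}{\Cg{\eta_i,\lambda_i}^{h_i}}$ with $\delta_i$ separating $\eta_i$ from $\lambda_i$. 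The explicit formula of Lemma \ref{lemma:coinvariantreidemeister}(b) then allows us to read off $\ofr$ on these normal form classes and verify injectivity modulo the relations that generate $K$.

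The main obstacle will be carrying out the geometric reduction rigorously when the $x_i,y_i$ intersect in complicated ways; making this precise requires careful use of the change of coordinates principle together with the standing assumption $g \geq 4$ to guarantee enough unused genus on both sides of a separating curve to absorb error terms. Once this reduction is in place, combining injectivity of $\ofr$ on normal forms with the observation that $K \subseteq \ker(\ofr)$ yields $\ker(\ofr) = K$, completing the proof.
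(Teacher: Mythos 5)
Your broad strategy---set $K$ equal to the span of the separated generators, observe $K \subseteq \ker(\ofr)$ via Lemma~\ref{lemma:coinvariantreidemeister}(a), and then try to show that $\ofr$ is injective on $(\cC_g^{\otimes 2})_{\Torelli_{g,1}}/K$---is exactly the paper's reformulation (Theorem~\ref{theorem:part2theorem2}, with $\cQ_g$ playing the role of your quotient). However, the body of the proposal has several significant gaps.

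First, your ``normal form'' reduction is internally inconsistent. You propose to use the $\Torelli_{g,1}$-action to move the support of $[x_2,y_2]$ into a subsurface cut off by a separating curve $\delta$ \emph{disjoint} from $x_1\cup y_1$. But if $\delta$ is disjoint from $x_1\cup y_1$ and is separating, then $[x_1,y_1]$ sits entirely on one side of $\delta$, so $\delta$ and $[x_1,y_1]$ are separated in the sense of \S\ref{section:separationpropertiesofcurves}; consequently $\CgCg{\Cg{x_1,y_1}^{h_1}}{\Cg{\delta}}$ would already be in $K$, and your reduction kills everything rather than producing nonzero normal forms. The correct reduction, as the paper does in Lemmas~\ref{lemma:generatefqweak1} and~\ref{lemma:generatefqdeltaweak}, reduces the \emph{first} slot to $\Cg{\delta}$ for $\delta$ a simple closed separating curve, and then lands in elements $\CgCg{\Cg{\delta}}{\Cg{\eta,\lambda}^h}$ where $\delta$ separates $\eta$ from $\lambda$ --- i.e., $\eta$ and $\lambda$ are on \emph{opposite} sides of $\delta$, not both far from the first curve. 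Also note you cannot move the two slots independently by the $\Torelli_{g,1}$-action, since you are taking diagonal coinvariants: an $f\in\Torelli_{g,1}$ acts on both tensor factors simultaneously.

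Second, even with the correct normal form classes $X(h,x,y)=\CgCg{\Cg{\delta}}{\Cg{\eta,\lambda}^h}$, you omit the crucial well-definedness issue: showing that this class depends only on $(h,\oeta,\olambda)$ and not on the choices of $\delta,\eta,\lambda$. This is Lemma~\ref{lemma:xwelldefined} in the paper, and proving it (including the change-of-coordinates step and the use of \cite[Lemma~A.1]{PutmanCutPaste}) is genuinely nontrivial.

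Third, and most seriously, you say essentially nothing about the injectivity of the induced map once you are in normal form. The normal forms $X(h,x,y)$ form an infinite family mapping into the infinite-dimensional $\Q[H_{\Z}]$, and nontrivial relations hold among them (the vanishing, symmetry, additivity, inverse, and cube relations of \S\ref{section:quotientrel}). Deducing injectivity requires finding a controlled spanning set and a filtration on both sides; this occupies the paper's \S\ref{section:quotientgen}--\S\ref{section:calcquotient3} (the $\cV_1,\cV_2,\cV_3$ generating sets and the three-stage reduction). ``Read off $\ofr$ and verify injectivity'' hides the entire core of the argument.

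Finally, your cokernel argument over-reaches: you claim for \emph{any} $u,v,h\in H_{\Z}$ you can realize $\eta,\lambda$ separated with $\oeta=u$, $\olambda=v$, but this requires $u$ and $v$ to lie in orthogonal symplectic summands (``homologically separate''), e.g.\ $u=a_1$, $v=b_1$ is not realizable. That these homologically separate quadruples still span $\ker(\epsilon\times\fh)$ is plausible but needs proof. Note also that the paper does not actually prove the $\ker(\epsilon\times\fh)$ description (it is flagged as an unproven remark); the finite-codimension statement is obtained as a byproduct of the filtration computation identifying $\Q[H_{\Z}]/\Span{\fq(\cV_1),\fq(\cV_2)}$ with $\Q[\cup_i\{0,a_i,b_i,a_i+b_i\}]$.
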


This will be the main theorem of this part of the paper.  Recall that
our yet-unproven Theorem \ref{theorem:bigtheorem} says that $\coker(\ofr)$ is finite-dimensional
and that $\ker(\ofr)$ is a finite-dimensional algebraic representation of $\Sp_{2g}(\Z)$.
The first conclusion of Theorem \ref{theorem:part2theorem} gives the first part of this,
and in Part \ref{part:3} we will use the generators for $\ker(\ofr)$ given by Theorem \ref{theorem:part2theorem}
to prove the second part, i.e., that $\ker(\ofr)$ is a finite-dimensional algebraic representation of $\Sp_{2g}(\Z)$.

\subsection{Outline of Part \ref{part:2}}
Before outlining our proof of Theorem \ref{theorem:part2theorem}, we rephrase it.  Make the following definition:

\begin{definition}
Let $\cQ_g$ be the quotient of $(\cC_g^{\otimes 2})_{\Torelli_{g,1}}$ by the span of the elements
of the form $\CgCg{\Cg{\gamma_1}^{h_1}}{\Cg{\gamma_2}^{h_2}}$ with $\gamma_1 \in [\pi_g,\pi_g]$ and
$\gamma_2 \in [\pi_g,\pi_g]$ separated and $h_1,h_2 \in H_{\Z}$.
\end{definition}

By Lemma \ref{lemma:coinvariantreidemeister}.(a), the coinvariant Reidemeister pairing
factors through a map $\fq\colon \cQ_g \rightarrow \Q[H_{\Z}]$ that we will call
the {\em quotiented Reidemeister pairing}.  Theorem \ref{theorem:part2theorem} is 
equivalent to:

\begin{theorem}
\label{theorem:part2theorem2}
Let $\fq\colon \cQ_g \rightarrow \Q[H_{\Z}]$ be the quotiented Reidemeister
pairing.  Then $\fq$ is an injection whose image has finite codimension.
\end{theorem}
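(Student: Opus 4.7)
The plan is to prove Theorem \ref{theorem:part2theorem2}---the injectivity of $\fq$ and the finite codimension of $\Image(\fq)$---via a direct analysis of the Reidemeister pairing on carefully chosen representatives. The finite-codimension claim is the easier of the two; the injectivity of $\fq$, equivalently the statement that the separated-pair generators exhaust $\ker(\ofr)$, is the core of the argument.

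For the image, I would apply Lemma \ref{lemma:coinvariantreidemeister}(b) to a simple closed separating curve $\delta$ together with a commutator $[\eta,\lambda]$ supported entirely on one side of $\delta$, obtaining
\[
\fr\bigl(\Cg{\delta}\otimes\Cg{\eta,\lambda}^h\bigr) = \GrpR{h}\bigl(1-\GrpR{\oeta}\bigr)\bigl(1-\GrpR{\olambda}\bigr).
\]
Varying $h \in H_{\Z}$ and choosing $\eta,\lambda$ so that $\oeta$ and $\olambda$ together span $H_{\Z}$, these products sweep out the square $I^{2}$ of the augmentation ideal $I = \ker(\epsilon) \subset \Q[H_{\Z}]$. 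Since $\Q[H_{\Z}]/I^{2}$ has dimension $1+2g$ (being an extension of $\Q$ by $I/I^{2} \cong H$), this realizes the description $\Image(\ofr) = \ker(\epsilon \times \fh)$ from the remark following Theorem \ref{theorem:bigtheorem}, and establishes codimension exactly $1+2g$.

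For injectivity, the strategy is to reduce every element of $\cQ_g$ to a canonical normal form and then verify that $\fq$ is faithful on that form. I would proceed in three steps. First, since the universal abelian cover $\tSigma_{g}$ admits an $H_{\Z}$-equivariant CW structure with finitely many orbits of cells, $\cC_g = \HH_1(\tSigma_g;\Q)$ is finitely generated as a $\Q[H_{\Z}]$-module; so any element of $\cQ_g$ can be written as a $\Q$-linear combination of classes $\CgCg{\Cg{\gamma_1}}{\Cg{\gamma_2}^{h}}$ for a finite collection of model commutator curves $\gamma_{1},\gamma_{2} \in [\pi_g,\pi_g]$. Second, the commutator identities of Lemma \ref{lemma:commutatoridentities} together with the coinvariant relation $\CgCg{c_1^h}{c_2} = \CgCg{c_1}{c_2^{-h}}$ of Section \ref{section:coinvariantquotient} normalize each such term. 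Third, for each normalized pair $(\gamma_{1},\gamma_{2})$ I would argue that, modulo the separated-pair generators and using Lemma \ref{lemma:coinvariantreidemeister}(a) to discard almost-disjoint but unseparated contributions, the term can be rewritten as a genuine separated generator, via the change-of-coordinates principle for $\Torelli_{g,1}$.

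The main obstacle will be this third step: an almost-disjoint pair of based curves need not admit a single separating curve realizing the required separation, so a direct replacement is not always available. I plan to handle this by induction on a complexity measure---for instance, the geometric intersection number of $\gamma_1 \cup \gamma_2$ with a fixed pants decomposition---using $\Torelli_{g,1}$ to bring each configuration into one of finitely many standard types and then verifying the reduction for each type by explicit calculation with the commutator identities. Once this normal-form reduction is in place, comparing distinct normal-form representatives via their images under $\fr$ in $\Q[H_{\Z}]$ distinguishes them, yielding the injectivity of $\fq$.
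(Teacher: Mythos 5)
Your proposal identifies the right objects (the Reidemeister values from Lemma \ref{lemma:coinvariantreidemeister}, the role of $\Torelli_{g,1}$-coinvariance, the change-of-coordinates principle) and correctly predicts the codimension, but there are two substantive gaps.

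First, the finite-codimension claim is asserted rather than proved. You state that the products $\GrpR{h}(1-\GrpR{\oeta})(1-\GrpR{\olambda})$ ``sweep out $I^{2}$,'' but the separation hypothesis in Lemma \ref{lemma:coinvariantreidemeister}(b) forces $\oeta$ and $\olambda$ to be homologically separate, so in particular $\omega(\oeta,\olambda)=0$. You therefore cannot directly produce, say, $(1-\GrpR{a_{1}})(1-\GrpR{b_{1}})$, and it is genuinely unclear (without calculation) that the available products generate all of $I^{2}$ as a $\Q[H_{\Z}]$-submodule. The paper devotes three full sections (\S\ref{section:calcquotient1}--\S\ref{section:calcquotient3}) precisely to establishing, via the explicit generating set $\cV=\cV_{1}\cup\cV_{2}\cup\cV_{3}$ and its filtered image analysis, what the image is; that work cannot be skipped.

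Second, the injectivity argument has a structural problem beyond the one you flag. Reducing to finitely many ``model commutator curves'' $(\gamma_{1},\gamma_{2})$ via finite generation of $\cC_{g}$ over $\Q[H_{\Z}]$ still leaves an infinite-dimensional family because the conjugating parameter $h\in H_{\Z}$ is unconstrained, and $\fq$ depends essentially on $h$. The real difficulty is controlling that $h$-dependence, which is not addressed by a normal form on curve configurations. The paper's resolution is to replace geometric representatives by the homology-parametrized generators $X(h,x,y)$ of \S\ref{section:xelements} (well-definedness being Lemma \ref{lemma:xwelldefined}), and then to exploit explicit relations --- crucially the additivity relation (Lemma \ref{lemma:additivityrelation}) and cube relation (Lemma \ref{lemma:cuberelation}) --- to push $h$ into a controlled set. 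Your proposed induction on geometric intersection number with a pants decomposition does not obviously interact with this $h$-freedom, and the relations that would make the intersection number drop are not identified. So the proposal as written stops at exactly the step where the paper's substantive new machinery begins.
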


The rest of Part \ref{part:2} is devoted to the proof of Theorem \ref{theorem:part2theorem2}.  The outline
is as follows.  In \S \ref{section:xelements}, we introduce generators $X(h,x,y)$ for $\cQ_g$.  
In \S \ref{section:quotientrel}, we construct some relations between the $X(h,x,y)$.  In
\S \ref{section:quotientgen}, we prove that $\cQ_g$ is generated by a certain subset
of the $X(h,x,y)$.  Finally, in \S \ref{section:calcquotientoutline} -- \S \ref{section:calcquotient3} we 
prove that these generators go to linearly independent elements of $\Q[H_{\Z}]$ that span a subspace
of finite codimension.

\section{Generators for the quotient}
\label{section:xelements}

This section constructs generators $X(h,x,y)$ for $\cQ_g$.

\subsection{Fixing the first curve, I}

Recall that $\cC_g = \HH_1([\pi_g,\pi_g];\Q)$.  
The group $\cQ_g$ is a quotient of $(\cC_g^{\otimes 2})_{\Torelli_{g,1}}$, which is itself a quotient
of $\cC_g^{\otimes 2}$.  For $\delta \in [\pi_g,\pi_g]$, let $\cQ_g[\delta]$ be the image of the map
$\cC_g \rightarrow \cQ_g$ taking $c \in \cC_g$ to $\CgCg{\Cg{\delta}}{c}$.  We then have:

\begin{lemma}
\label{lemma:generatefqweak1}
Let $S \subset [\pi_g,\pi_g]$ be such that $[\pi_g,\pi_g]$ is $\pi_g$-normally generated by $S$.
Then $\cQ_g$ is spanned by the set of all $\cQ_g[\delta]$ with $\delta \in S$.
\end{lemma}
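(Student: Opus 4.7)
The plan is a short two-step reduction: first use the normal-generation hypothesis to rewrite the first tensor factor in terms of elements of $S$, then use the coinvariant relation from \S\ref{section:coinvariantquotient} to absorb the resulting $H_{\Z}$-action into the second factor, placing the result in some $\cQ_g[\delta]$ with $\delta \in S$.

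First I would set up the spanning set. Since $\cC_g = \HH_1([\pi_g,\pi_g];\Q)$ is spanned by the classes $\Cg{z}$ with $z \in [\pi_g,\pi_g]$, and since $\cQ_g$ is in turn a quotient of $(\cC_g^{\otimes 2})_{\Torelli_{g,1}}$, it is enough to show that every element of the form $\CgCg{\Cg{z_1}}{\Cg{z_2}}$ with $z_1, z_2 \in [\pi_g,\pi_g]$ lies in the span of the subspaces $\cQ_g[\delta]$ for $\delta \in S$. The element $\Cg{z_2} \in \cC_g$ will play no role until the very end; all the work happens on the first factor.

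Next, I would exploit the normal generation. Since $S$ $\pi_g$-normally generates $[\pi_g,\pi_g]$, we can write $z_1$ as a finite product of $\pi_g$-conjugates $\precon{x_i}{\delta_i^{\epsilon_i}}$ with $\delta_i \in S$, $x_i \in \pi_g$, and $\epsilon_i \in \{\pm 1\}$. Passing to $\cC_g$, which by definition is the abelianization of $[\pi_g,\pi_g]$ tensored with $\Q$, products become sums and the conjugation action of $\pi_g$ on $[\pi_g,\pi_g]$ descends to the $H_{\Z}$-action on $\cC_g$. This gives
\[
\Cg{z_1} \;=\; \sum_i \epsilon_i\, \Cg{\delta_i}^{\ox_i} \;\in\; \cC_g.
\]
By linearity of $\CgCg{\cdot}{\cdot}$ in the first slot, we obtain
\[
\CgCg{\Cg{z_1}}{\Cg{z_2}} \;=\; \sum_i \epsilon_i\, \CgCg{\Cg{\delta_i}^{\ox_i}}{\Cg{z_2}}.
\]

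Finally, I would apply the coinvariant identity $\CgCg{c_1^h}{c_2} = \CgCg{c_1}{c_2^{-h}}$ recorded in \S\ref{section:coinvariantquotient} to each summand, yielding
\[
\CgCg{\Cg{\delta_i}^{\ox_i}}{\Cg{z_2}} \;=\; \CgCg{\Cg{\delta_i}}{\Cg{z_2}^{-\ox_i}},
\]
which lies in the image of the map $c \mapsto \CgCg{\Cg{\delta_i}}{c}$, i.e.\ in $\cQ_g[\delta_i]$ with $\delta_i \in S$. This completes the proof. There is no serious obstacle here: the lemma is a bookkeeping step translating normal generation in $\pi_g$ into linear generation after abelianizing $[\pi_g,\pi_g]$, with the $H_{\Z}$-equivariance built into the coinvariant quotient doing the rest.
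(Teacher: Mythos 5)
Your proof is correct and takes essentially the same approach as the paper's: both reduce to generators $\CgCg{\Cg{z_1}}{\Cg{z_2}}$, express $z_1$ as a product of $\pi_g$-conjugates of elements of $S$, and then use triviality of the diagonal $\pi_g$-action on the coinvariants to dispose of the conjugator. The only cosmetic difference is that the paper phrases the last step as the equality of subspaces $\cQ_g[x\delta x^{-1}] = \cQ_g[\delta]$, while you make the same fact explicit via the identity $\CgCg{c_1^h}{c_2} = \CgCg{c_1}{c_2^{-h}}$; your version is, if anything, a bit more detailed than the paper's.
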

\begin{proof}
Since $[\pi_g,\pi_g]$ is $\pi_g$-normally generated by $S$, it follows that $\cQ_g$ is spanned by
\[\bigcup_{\delta \in S} \bigcup_{x \in \pi_g} \cQ_g[x \delta x^{-1}].\]
The point-pushing subgroup $\pi_g$ of $\Torelli_{g,1}$ acts on $\pi_g$ by conjugation, so since
$\Torelli_{g,1}$ acts trivially on $\cQ_g$ we have
\[\cQ_g[x \delta x^{-1}] = \cQ_g[\delta] \quad \text{for all $x \in \pi_g$ and $\delta \in [\pi_g,\pi_g]$}.\]
The lemma follows.
\end{proof}

\subsection{Fixing the first curve, II}
\label{section:fixfirst}

The set $S$ we will use in Lemma \ref{lemma:generatefqweak1} will consist
of simple closed separating curves $\delta \in [\pi_g,\pi_g]$.  For such $\delta$,
the subspace $\cQ_g[\delta]$ is spanned by the following elements:

\begin{lemma}
\label{lemma:generatefqdeltaweak}
Let $\delta \in [\pi_g,\pi_g]$ be a simple closed separating curve.  Then
$\cQ_g[\delta]$ is spanned by elements of the form $\CgCg{\Cg{\delta}}{\Cg{\eta,\lambda}^h}$ where
$\delta$ separates $\eta \in \pi_g$ from $\lambda \in \pi_g$ and $h \in H_{\Z}$.
\end{lemma}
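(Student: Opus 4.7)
The plan is to span $\cQ_g[\delta]$ by commutator elements and then decompose them using the amalgamated product structure of $\pi_g$ along $\delta$. Since $\cC_g = \HH_1([\pi_g,\pi_g];\Q)$ and $[\pi_g,\pi_g]$ is generated as a group by commutators, every element of $\cC_g$ is a $\Q$-linear combination of elements of the form $\Cg{\eta,\lambda}^h$ with $\eta,\lambda \in \pi_g$ and $h \in H_{\Z}$. Hence $\cQ_g[\delta]$ is spanned by the elements $\CgCg{\Cg{\delta}}{\Cg{\eta,\lambda}^h}$ for arbitrary $\eta,\lambda \in \pi_g$ and $h \in H_{\Z}$, and the task is to reduce to the case where $\delta$ separates $\eta$ from $\lambda$.

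Realize $\delta$ geometrically as a simple closed separating curve through the basepoint, dividing $\Sigma_g$ into subsurfaces $S_1$ (to the left of $\delta$) and $S_2$ (to the right). By Seifert--van Kampen, $\pi_g \cong \pi_1(S_1) \ast_{\Span{\delta}} \pi_1(S_2)$, so any $\eta \in \pi_g$ admits a decomposition $\eta = \eta_1 \cdots \eta_m$ with each $\eta_k \in \pi_1(S_1) \cup \pi_1(S_2)$, and similarly $\lambda = \lambda_1 \cdots \lambda_n$. Applying the bilinearity identities of Lemma \ref{lemma:commutatoridentities} repeatedly expands
\[
\Cg{\eta,\lambda}^h \;=\; \sum_{i,j} \pm\, \Cg{\eta_i,\lambda_j}^{h_{ij}}
\]
for appropriate $h_{ij} \in H_{\Z}$. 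It then suffices to handle each summand $\CgCg{\Cg{\delta}}{\Cg{\eta_i,\lambda_j}^{h_{ij}}}$ according to which subsurfaces $\eta_i$ and $\lambda_j$ lie in.

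If $\eta_i \in \pi_1(S_1)$ and $\lambda_j \in \pi_1(S_2)$, then $\delta$ directly separates $\eta_i$ from $\lambda_j$ and the summand is already of the desired form. If instead $\eta_i \in \pi_1(S_2)$ and $\lambda_j \in \pi_1(S_1)$, the skew-symmetry relation $\Cg{\eta_i,\lambda_j}^{h_{ij}} = -\Cg{\lambda_j,\eta_i}^{h_{ij}}$ from Lemma \ref{lemma:commutatoridentities} reduces to the previous case. Finally, if $\eta_i$ and $\lambda_j$ both lie in $\pi_1(S_k)$ for the same $k$, then $[\eta_i,\lambda_j]$ can be realized inside $S_k$ and hence is almost disjoint from $\delta$; by the example in \S\ref{section:separationpropertiesofcurves}, $\delta$ (with some choice of orientation) separates $\{\delta\}$ from $\{[\eta_i,\lambda_j]\}$, so this summand vanishes in $\cQ_g$ by the defining quotient relation.

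The argument is largely routine, combining the amalgamated-product structure of $\pi_g$ along $\delta$, the commutator identities, and the definition of $\cQ_g$; there is no substantial obstacle. The one point requiring care is the ``diagonal'' case, which relies on the flexibility in the separation relation whereby $\delta$ itself (suitably oriented) qualifies as a separator when one of the two curves is $\delta$.
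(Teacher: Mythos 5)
Your proof is correct and follows essentially the same approach as the paper: decompose elements of $[\pi_g,\pi_g]$ into pieces supported on the two sides of $\delta$, then observe that the two ``same-side'' cases vanish in $\cQ_g$ by the separation relation, leaving only commutators with one factor on each side. The paper phrases this via the normal generation of $[\pi_g,\pi_g]$ by $[\pi_1(S),\pi_1(S)]$, $[\pi_1(T),\pi_1(T)]$, and $[\pi_1(S),\pi_1(T)]$, whereas you achieve the same reduction by expanding an arbitrary commutator with Lemma~\ref{lemma:commutatoridentities}; these are interchangeable packagings of the same underlying calculation.
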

\begin{proof}
Let $S$ and $T$ be the subsurfaces to the left and right of $\delta$, respectively.  The group
$\pi_g$ is generated by $\pi_1(S)$ and $\pi_1(T)$, and thus $[\pi_g,\pi_g]$ is the subgroup
of $\pi_g$ normally generated by $[\pi_1(S),\pi_1(S)]$ and $[\pi_1(T),\pi_1(T)]$ and $[\pi_1(S),\pi_1(T)]$.
It follows that $\cQ_g[\delta]$ is generated by the following three types of elements:
\begin{itemize}
\item Elements of the form $\CgCg{\Cg{\delta}}{\Cg{\eta_1,\eta_2}^h}$ with $\eta_1,\eta_2 \in \pi_1(S)$
and $h \in H_{\Z}$.  Since $\delta$ and $[\eta_1,\eta_2]$ are separated, it follows that
such $\CgCg{\Cg{\delta}}{\Cg{\eta_1,\eta_2}^h}$ are $0$.
\item Elements of the form $\CgCg{\Cg{\delta}}{\Cg{\lambda_1,\lambda_2}^h}$ with $\lambda_1,\lambda_2 \in \pi_1(T)$
and $h \in H_{\Z}$.  Since $\delta$ and $[\lambda_1,\lambda_2]$ are separated, it follows that
such $\CgCg{\Cg{\delta}}{\Cg{\lambda_1,\lambda_2}^h}$ are $0$.
\item Elements of the form $\CgCg{\Cg{\delta}}{\Cg{\eta,\lambda}^h}$ with $\eta \in \pi_1(S)$ and $\lambda \in \pi_1(T)$
and $h \in H_{\Z}$.\qedhere
\end{itemize}
\end{proof}

\subsection{Vanishing}  

We now prove certain classes in $\cQ_g$ vanish:

\begin{lemma}
\label{lemma:vanishingbasic}
Let $\delta \in [\pi_g, \pi_g]$ be a simple closed separating curve that separates $\mu \in \pi_g$ from $\lambda \in \pi_g$ 
and let $h \in H_{\Z}$ be arbitrary.  Assume that $\overline{\mu} = 0$ or $\overline{\lambda} = 0$.  
Then $\CgCg{\Cg{\delta}}{\Cg{\mu, \lambda}^h} = 0$.
\end{lemma}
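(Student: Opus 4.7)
The plan is to reduce the statement to the defining relations of $\cQ_g$ using the commutator structure of $\cC_g$ together with the coinvariant symmetry.

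By the antisymmetry $\Cg{\lambda,\mu}^h = -\Cg{\mu,\lambda}^h$ from Lemma \ref{lemma:commutatoridentities}, together with $\Cg{\delta^{-1}} = -\Cg{\delta}$, we may swap $\mu$ and $\lambda$ at the cost of replacing $\delta$ by $\delta^{-1}$, which reverses the two sides of the separation. Hence we may assume $\overline{\mu} = 0$, so $\mu \in [\pi_g,\pi_g]$. Writing $[\mu,\lambda] = \mu \cdot \precon{\lambda}{\mu^{-1}}$ as a product of two elements of $[\pi_g,\pi_g]$ yields
\[\Cg{\mu,\lambda}^h = \Cg{\mu}^h - \Cg{\mu}^{h+\overline{\lambda}},\]
so it suffices to prove $\CgCg{\Cg{\delta}}{\Cg{\mu}^{h'}} = 0$ in $\cQ_g$ for every $h' \in H_{\Z}$.

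The separation hypothesis lets us realize $\mu$ inside the subsurface $S_1$ to the left of $\delta$, so $\mu \in \pi_1(S_1,\ast)$. Since $\delta$ is separating, the inclusion $\HH_1(S_1) \hookrightarrow H_{\Z}$ is injective, and $\overline{\mu} = 0$ forces $\mu \in [\pi_1(S_1),\pi_1(S_1)]$. Writing $\mu$ as a product of commutators in $\pi_1(S_1)$ and using additivity of $\Cg{-}$ on $[\pi_g,\pi_g]$, the problem reduces to showing
\[\CgCg{\Cg{\delta}}{\Cg{a,b}^{h'}} = 0 \quad \text{for all } a,b \in \pi_1(S_1) \text{ and all } h' \in H_{\Z}.\]

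Finally, since $\delta$ is separating, $H_{\Z} = \HH_1(S_1) \oplus \HH_1(S_2)$, so we may decompose $h' = h_1 + h_2$ with $h_i \in \HH_1(S_i)$. Applying the coinvariant symmetry $\CgCg{c_1}{c_2^h} = \CgCg{c_1^{-h}}{c_2}$ from \S\ref{section:coinvariantquotient} gives
\[\CgCg{\Cg{\delta}}{\Cg{a,b}^{h_1+h_2}} = \CgCg{\Cg{\delta}^{-h_2}}{\Cg{a,b}^{h_1}}.\]
Choose $x \in \pi_1(S_1)$ with $\overline{x} = h_1$ and $y \in \pi_1(S_2)$ with $\overline{y} = -h_2$. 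Then the two factors are represented respectively by $y\delta y^{-1}$ (a loop supported in $S_2$) and $x[a,b]x^{-1}$ (a loop supported in $S_1$), both lying in $[\pi_g,\pi_g]$. These two elements are separated (by $\delta^{-1}$), so the expression vanishes by the defining relation of $\cQ_g$, completing the argument.

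The main obstacle is that a single conjugator inside $S_1$ cannot realize every class in $H_{\Z}$, since the image of $\pi_1(S_1)$ in $H_{\Z}$ spans only a half-dimensional symplectic subspace. The key trick is to split the shift $h'$ across the symplectic decomposition furnished by the separating curve and to use the coinvariant symmetry to transfer half of the shift onto the other tensor factor, so that each conjugator can be kept inside its respective subsurface.
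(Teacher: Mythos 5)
Your proof is correct, but it contains a substantial unnecessary detour. After reducing (as the paper also does) to the case $\overline{\mu}=0$ and expanding
\[
\Cg{\mu,\lambda}^h = \Cg{\mu}^h - \Cg{\mu}^{h+\overline{\lambda}},
\]
you conclude that it suffices to show $\CgCg{\Cg{\delta}}{\Cg{\mu}^{h'}}=0$ in $\cQ_g$ for all $h'\in H_{\Z}$. At that point you are already done: $\delta$ and $\mu$ both lie in $[\pi_g,\pi_g]$ and are separated (indeed $\delta$ itself separates $\{\mu\}$ from $\{\delta\}$), so $\CgCg{\Cg{\delta}^{h_1}}{\Cg{\mu}^{h_2}}$ lies in the defining relations of $\cQ_g$ for \emph{arbitrary} $h_1,h_2\in H_{\Z}$. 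Taking $h_1=0$, $h_2=h'$ kills both terms immediately, which is exactly what the paper does.

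The ``main obstacle'' you flag at the end --- that a conjugator inside a single subsurface cannot realize every class in $H_{\Z}$ --- is not an obstacle here, because the relations defining $\cQ_g$ are not restricted to shifts realizable by conjugation: they are imposed for all $h_1,h_2\in H_{\Z}$ by fiat. Your additional machinery (decomposing $\mu$ into commutators of $\pi_1(S_1)$, splitting $h'$ across the symplectic decomposition, invoking the coinvariant symmetry $\CgCg{c_1}{c_2^h}=\CgCg{c_1^{-h}}{c_2}$, and choosing conjugators $x,y$) is valid and does reach the right conclusion, but it reproves by hand something that is already built into the definition of $\cQ_g$. So the argument is a longer route to the same destination, and worth trimming: the paper's proof is essentially your first two displayed steps followed by a one-line appeal to the definition.
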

\begin{proof}
The proofs for $\overline{\mu} = 0$ and $\overline{\lambda} = 0$ are similar, so we will give
the details for the case where $\overline{\mu} = 0$.  Since $\overline{\mu} = 0$, we have
$\mu \in [\pi_g,\pi_g]$ and thus $\Cg{\mu}$ is well-defined.  We have
\[\Cg{\mu,\lambda}^h = \Cg{\mu\lambda \mu^{-1}\lambda^{-1}}^h = \Cg{\mu} - \Cg{\mu}^{h + \lambda}.\]  
This implies that
\[\CgCg{\Cg{\delta}}{\Cg{\mu,\lambda}^h} = \CgCg{\Cg{\delta}}{\Cg{\mu}^h} - \CgCg{\Cg{\delta}}{\Cg{\mu}^{h + \lambda}}.\]  
The curves $\delta$ and $\mu$ are separated, so by Lemma \ref{lemma:coinvariantreidemeister} both of these
terms vanish.  The lemma follows.
\end{proof}

\subsection{Only homology matters}

Recall that our goal (Theorem \ref{theorem:part2theorem})
is to prove that the quotiented Reidemeister pairing $\fq\colon\cQ_g \rightarrow \Q[H_{\Z}]$ is
an injection whose image has finite codimension.  Let $\delta \in [\pi_g,\pi_g]$ be a simple closed separating curve
that separates $\eta \in \pi_g$ from $\lambda \in \pi_g$ and let $h \in H_{\Z}$, so
$\CgCg{\Cg{\delta}}{\Cg{\eta,\lambda}^h}$ is one of the generators for $\cQ_g[\delta]$ given by
Lemma \ref{lemma:generatefqdeltaweak}.  Lemma \ref{lemma:coinvariantreidemeister}
implies that
\[\fq(\CgCg{\Cg{\delta}}{\Cg{\eta,\lambda}^h}) = \GrpR{h} - \GrpR{h+\oeta} - \GrpR{h+\olambda} + \GrpR{h+\oeta+\olambda}.\]
This only depends on $\oeta$ and $\olambda$ and $h$, so we expect that 
$\CgCg{\Cg{\delta}}{\Cg{\eta,\lambda}^h} \in \cQ_g$ only depends on $\oeta$ and $\olambda$ and $h$.  The following 
shows that this expectation holds:

\begin{lemma}
\label{lemma:xwelldefined}
For $i=1,2$, let $\delta_i \in [\pi_g,\pi_g]$ be a simple closed separating curve that 
separates $\eta_i \in \pi_g$ from $\lambda_i \in \pi_g$.  Assume that
$\oeta_1 = \oeta_2$ and $\olambda_1 = \olambda_2$.  Let $h \in H_{\Z}$.  Then
$\CgCg{\Cg{\delta_1}}{\Cg{\eta_1,\lambda_1}^h} = \CgCg{\Cg{\delta_2}}{\Cg{\eta_2,\lambda_2}^h}$.
\end{lemma}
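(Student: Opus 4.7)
The plan is to show that $\CgCg{\Cg{\delta}}{\Cg{\eta,\lambda}^h}$ depends only on $(h, \oeta, \olambda)$, via two invariance statements: (I) invariance under replacing $\eta$ (or $\lambda$) by another element with the same homology class, keeping $\delta$ fixed; and (II) invariance under replacing $\delta$ itself.

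For (I), place the basepoint of $\pi_g$ on $\delta$, so that van Kampen gives $\pi_g = \pi_1(S) \ast_{\langle \delta \rangle} \pi_1(T)$ with $S,T$ the two sides of $\delta$. Since $\delta$ is separating, $H_1(S) \hookrightarrow H_{\Z}$ is injective, so any $\eta' \in \pi_1(S)$ with $\oeta' = \oeta$ satisfies $\mu := \eta^{-1}\eta' \in [\pi_1(S),\pi_1(S)] \subset [\pi_g,\pi_g]$. The second commutator identity in Lemma~\ref{lemma:commutatoridentities}, combined with $\omu = 0$, gives
\[\Cg{\eta',\lambda}^h = \Cg{\eta\mu,\lambda}^h = \Cg{\eta,\lambda}^h + \Cg{\mu,\lambda}^h.\]
The extra term $\CgCg{\Cg{\delta}}{\Cg{\mu,\lambda}^h}$ vanishes in $\cQ_g$ by Lemma~\ref{lemma:vanishingbasic}, since $\omu = 0$ and $\delta$ separates $\mu \in S$ from $\lambda \in T$. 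A symmetric argument handles changes to $\lambda$.

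For (II), the key tool is that $\Torelli_{g,1}$ acts trivially on $\cQ_g$: for any $\phi \in \Torelli_{g,1}$, $\CgCg{\Cg{\delta}}{\Cg{\eta,\lambda}^h} = \CgCg{\Cg{\phi(\delta)}}{\Cg{\phi(\eta),\phi(\lambda)}^h}$, with homology classes preserved since $\phi$ acts trivially on $H_{\Z}$. When $\delta_1,\delta_2$ bound subsurfaces of the same genus, transitivity of the Torelli group on simple closed separating curves of a fixed topological type yields $\phi \in \Torelli_{g,1}$ with $\phi(\delta_1) = \delta_2$, and (I) then adjusts $\phi(\eta_1),\phi(\lambda_1)$ to $\eta_2,\lambda_2$ in $\cQ_g$.

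The main obstacle is when $\delta_1,\delta_2$ bound subsurfaces of different genera. Here the plan is to reduce each $\delta_i$ to a genus-$1$ separating curve. If $\oeta = 0$ or $\olambda = 0$ the class already vanishes by Lemma~\ref{lemma:vanishingbasic}; otherwise, (I) lets us realize $\eta_i$ inside a genus-$1$ subsurface $S_i' \subset S_i$ with $\oeta \in H_1(S_i')$. Then $\delta_i' := \partial S_i'$ also separates $\eta_i$ from $\lambda_i$, and all such $\delta_i'$ lie in a common Torelli orbit, reducing to the same-genus case. The crux is the identity $\CgCg{\Cg{\delta_i}}{\Cg{\eta_i,\lambda_i}^h} = \CgCg{\Cg{\delta_i'}}{\Cg{\eta_i,\lambda_i}^h}$: the difference equals $\CgCg{\Cg{\delta_i(\delta_i')^{-1}}}{\Cg{\eta_i,\lambda_i}^h}$, where $\delta_i(\delta_i')^{-1}$ is a null-homologous loop in the intermediate region $R_i := S_i \setminus \Interior(S_i')$, hence a product of commutators in $\pi_1(R_i)$. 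Each resulting summand should vanish in $\cQ_g$ after further applications of (I) and the defining separated-pair relations, with careful basepoint bookkeeping constituting the technical heart of this final step.
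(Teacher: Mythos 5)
Your step (I) is essentially the paper's Claim 1 and is correct (with a small bookkeeping slip: the commutator identity gives $\Cg{\eta\mu,\lambda}^h = \Cg{\eta,\lambda}^h + \Cg{\mu,\lambda}^{h+\oeta}$ rather than exponent $h$, but since Lemma~\ref{lemma:vanishingbasic} works for any exponent this does not affect the argument).

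The gap is in step (II), at the assertion ``transitivity of the Torelli group on simple closed separating curves of a fixed topological type yields $\phi \in \Torelli_{g,1}$ with $\phi(\delta_1) = \delta_2$,'' and its downstream claim that ``all such $\delta_i'$ lie in a common Torelli orbit.'' The Torelli group does \emph{not} act transitively on separating curves of a fixed genus: since $\Torelli_{g,1}$ acts trivially on $H_\Z$, it preserves the symplectic splitting $H_\Z = \HH_1(S) \oplus \HH_1(T)$ induced by $\delta$, so its orbits are classified by these splittings, not merely by the genus of the two sides. Johnson's theorem (the one the paper invokes via \cite{JohnsonConj}) says only that Torelli acts transitively on separating curves inducing a \emph{fixed} symplectic splitting. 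Concretely, even after your genus-$1$ reduction, $\delta_1'$ and $\delta_2'$ may induce different genus-$1$ summands $\HH_1(S_1') \neq \HH_1(S_2')$, and there is then no $\phi \in \Torelli_{g,1}$ taking one to the other. Your reduction step does not fix this, because the summand $\HH_1(S_i')$ must lie inside $\HH_1(S_i)$, and nothing forces $\HH_1(S_1) \cap \HH_1(S_2)$ to contain a genus-$1$ symplectic summand through $\oeta$.

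The paper sidesteps this by working in the opposite order: it first replaces $\eta_i, \lambda_i$ by powers of nonseparating simple closed curves $\eta_i', \lambda_i'$ with the prescribed homology classes, then applies Torelli transitivity on the ordered \emph{pair} $(\eta_1',\lambda_1') \mapsto (\eta_2',\lambda_2')$ (valid because the homology classes agree and the curves are disjoint) to reduce to the case $\eta_1 = \eta_2$, $\lambda_1 = \lambda_2$ while $\delta_1, \delta_2$ still differ. The remaining difference is a single term $\CgCg{\Cg{\delta_1\delta_2^{-1}}}{\Cg{\eta,\lambda}^h}$, killed by isolating $\eta \cup \lambda$ in a $3$-holed sphere $U$ disjoint (off the basepoint) from $\delta_1\delta_2^{-1}$ and decomposing $\delta_1\delta_2^{-1}$ as a product of genus-$1$ separating curves in $\Sigma_g \setminus \Interior(U)$, each separated from $[\eta,\lambda]$. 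This route never needs Torelli transitivity on separating curves across different splittings, which is exactly where your plan breaks down.
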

\begin{proof}
Set $x = \oeta_1 = \oeta_2$ and $y = \olambda_1 = \olambda_2$.  We start by proving a special case of the lemma:

\begin{claim}{1}
\label{claim:xwelldefined1}
If $\delta_1 = \delta_2$, then $\CgCg{\Cg{\delta_1}}{\Cg{\eta_1,\lambda_1}^h} = \CgCg{\Cg{\delta_2}}{\Cg{\eta_2,\lambda_2}^h}$.
\end{claim}
\begin{proof}[Proof of claim]
Let $\delta = \delta_1 = \delta_2$.  We will prove that
$\CgCg{\Cg{\delta}}{\Cg{\eta_1,\lambda_1}^h} = \CgCg{\Cg{\delta}}{\Cg{\eta_2,\lambda_1}^h}$.
The proof that this then equals $\CgCg{\Cg{\delta}}{\Cg{\eta_2,\lambda_2}^h}$ is identical.
Recall that $x = \oeta_1 = \oeta_2$ and $y = \olambda_1 = \olambda_2$.  
Using our commutator identities (Lemma \ref{lemma:commutatoridentities}), we have
\begin{align*}
\Cg{\eta_1,\lambda_1}^h - \Cg{\eta_2,\lambda_1}^h &= \Cg{\eta_1,\lambda_1}^h + \Cg{\eta_2^{-1},\lambda_1}^{h+x}
                                                  = \Cg{\eta_1 \eta_2^{-1},\lambda_1}^h.
\end{align*}                                      
We have $\eta_1 \eta_2^{-1} \in [\pi_g,\pi_g]$, so
\[\Cg{\eta_1 \eta_2^{-1},\lambda_1}^h = \Cg{(\eta_1 \eta_2^{-1}) \lambda_1 (\eta_1 \eta_2^{-1})^{-1} \lambda_1^{-1}} = \Cg{\eta_1 \eta_2^{-1}}^h - \Cg{\eta_1 \eta_2^{-1}}^{h + y}.\]
The curves $\delta$ and $\eta_1 \eta_2^{-1}$ are separated, so we conclude that
\begin{align*}
\CgCg{\Cg{\delta}}{\Cg{\eta_1,\lambda_1}^h} - \CgCg{\Cg{\delta}}{\Cg{\eta_2,\lambda_1}^h} &=
\CgCg{\Cg{\delta}}{\Cg{\eta_1 \eta_2^{-1}}^h - \Cg{\eta_1 \eta_2^{-1}}^{h + y}} \\
&= \CgCg{\Cg{\delta}}{\Cg{\eta_1 \eta_2^{-1}}^h} - \CgCg{\Cg{\delta}}{\Cg{\eta_1 \eta_2^{-1}}^{h+y}} \\
&= 0 - 0 = 0.\qedhere
\end{align*}
\end{proof}

We now turn to the general case.
If either $x=0$ or $y=0$, then we are done by Lemma \ref{lemma:vanishingbasic}, so assume that neither are $0$.  Write
$x = k x'$ and $y = \ell y'$ with $k,\ell \geq 1$ and $x',y' \in H_{\Z}$ primitive.  
Since $x'$ and $y'$ are primitive, for $i = 1,2$ we can choose
the following (see \cite{PutmanRealize}):
\begin{itemize}
\item in the component containing $\eta_i$ of $\Sigma_g$ cut open along $\delta_i$, a nonseparating simple closed curve $\eta'_i \in \pi_g$ with $\oeta'_i = x'$; and
\item in the component containing $\lambda_i$ of $\Sigma_g$ cut open along $\delta_i$, a nonseparating simple closed curve $\lambda'_i \in \pi_g$ with $\olambda'_i = y'$.
\end{itemize}
By freely homotoping $\eta'_i$ and $\lambda'_i$ to other based curves, we can assume that a regular neighborhood of the basepoint looks like this (where the key property is the cyclic order in which the curves enter and leave the basepoint):\\
\Figure{BasepointNbhd}
For instance, such a homotopy might move $\eta'_i$ as follows:\\
\Figure{HomotopeAlphaFixBasepoint}
For $i=1,2$, the homology classes of $\eta_i$ and $(\eta_i')^k$ are the same, and also the homology
classes of $\lambda_i$ and $(\lambda_i')^{\ell}$ are the same.  Using Claim \ref{claim:xwelldefined1},
we can therefore assume without loss of generality that $\eta_i = (\eta'_i)^k$ and $\lambda_i = (\lambda'_i)^{\ell}$.

Farb--Margalit's ``change of coordinates principle'' \cite[\S 1.3]{FarbMargalitPrimer} 
implies that there exists some $f \in \Mod_{g,1}$ with
$f(\eta'_1) = \eta'_2$ and $f(\lambda'_1) = \lambda'_2$.  In fact, using the argument from the proof
of \cite[Lemma 6.2]{PutmanCutPaste} we can actually find such an $f$ in $\Torelli_{g,1}$.
Since $\Torelli_{g,1}$ acts trivially on $\cQ_g$, we thus have
\begin{align*}
\CgCg{\Cg{\delta_1}}{\Cg{\eta_1,\lambda_1}^h} &= \CgCg{\Cg{\delta_1}}{\Cg{(\eta'_1)^k,(\lambda'_1)^{\ell}}^h} \\
                                             &= \CgCg{\Cg{f(\delta_1)}}{\Cg{(\eta'_2)^k,(\lambda'_2)^{\ell}}^h} \\
                                             &= \CgCg{\Cg{f(\delta_1)}}{\Cg{\eta_2,\lambda_2}^h}.
\end{align*}
We can therefore assume without loss of generality that $\eta_1 = \eta_2$ and $\lambda_1 = \lambda_2$.  When
doing this, we replace $\delta_1$ with $f(\delta_1)$.  We have therefore reduced the proof
to:

\begin{claim}{2}
\label{claim:xwelldefined2}
Assume that
\begin{itemize}
\item $\eta = (\eta')^k$ with $\eta' \in \pi_g$ a nonseparating simple closed curve; and
\item $\lambda = (\lambda')^{\ell}$ with $\lambda' \in \pi_g$ a nonseparating simple closed curve; and
\item for $i=1,2$, the simple closed separating curve $\delta_i \in [\pi_g,\pi_g]$ separates $\eta$ from $\lambda$.
\end{itemize}
Then $\CgCg{\Cg{\delta_1}}{\Cg{\eta,\lambda}^h} = \CgCg{\Cg{\delta_2}}{\Cg{\eta,\lambda}^h}$.
\end{claim}
\begin{proof}[Proof of claim]
We have
\[\CgCg{\Cg{\delta_1}}{\Cg{\eta,\lambda}^h} - \CgCg{\Cg{\delta_2}}{\Cg{\eta,\lambda}^h} = \CgCg{\Cg{\delta_1 \delta_2^{-1}}}{\Cg{\eta,\lambda}^h}.\]
We want to prove this vanishes.  To do this, it is enough to prove that $\delta_1 \delta_2^{-1}$ can be written as a product
of elements of $[\pi_g,\pi_g]$ that are separated from $[\eta,\lambda] \in [\pi_g,\pi_g]$.  The loops in question
look like those in the following figure:\\
\Figure{DeltaDeltaInverse}
We have not drawn the rest of the $\delta_i$ since while individually they are simple closed curves, they potentially intersect
each other, and the loop $\delta_1 \delta_2^{-1}$ potentially has self-intersections.  Let $U$ be a $3$-holed
sphere embedded in $\Sigma_g$ such that one of the boundary components of $U$ contains the basepoint, the loops
$\eta$ and $\lambda$ lie in $U$, and $\delta_1 \delta_2^{-1}$ only intersects $U$ in the basepoint:\\
\Figure{CutOutNbhd}
Set $S = \Sigma_g \setminus \Interior(U)$, so $S \cong \Sigma_{g-2}^3$.  Regard $\pi_1(S)$ as a subgroup
of $\pi_g$, so $\delta_1 \delta_2^{-1} \in \pi_1(S)$.  Since the map $\HH_1(S) \rightarrow \HH_1(\Sigma_g)$
is injective, the intersection of $[\pi_g,\pi_g]$ with $\pi_1(S)$ is $[\pi_1(S),\pi_1(S)]$.  It
follows that $\delta_1 \delta_2^{-1} \in [\pi_1(S),\pi_1(S)]$.  

Recall our standing assumption
that $g \geq 4$ (Assumption \ref{assumption:genus}).  This implies that the genus of $S$ is positive.
It follows (see \cite[Lemma A.1]{PutmanCutPaste}) that $[\pi_1(S),\pi_1(S)]$ is generated by
based isotopy classes of simple closed separating curves that cut off one-holed tori.  We can therefore write $\delta_1 \delta_2^{-1}$
as a product of such curves.  These are all separated from $[\eta,\lambda]$, and we are done.
\end{proof}

This completes the proof of Lemma \ref{lemma:xwelldefined}.
\end{proof}

\subsection{Generators}
\label{section:generatorsx}

A {\em symplectic splitting} of $H_{\Z}$ is a decomposition $H_{\Z} = X \oplus Y$ that is orthogonal
with respect to the intersection form.  For a symplectic splitting $H_{\Z} = X \oplus Y$,
by work of Johnson \cite{JohnsonConj} we can find a simple closed separating curve
$\delta \in \pi_g$ such that if $S$ (resp.\ $T$) is the subsurface to the left (resp.\ right) of $\delta$, then
$\HH_1(S) = X$ and $\HH_1(T) = Y$.  We will say that $\delta$ {\em induces} the symplectic
splitting $H_{\Z} = X \oplus Y$.  For a simple closed separating curve $\delta \in [\pi_g,\pi_g]$,
we will write $H_{\Z} = X(\delta) \oplus Y(\delta)$ for the symplectic splitting induced by $\delta$.

Consider elements $\{x_1,\ldots,x_k\}$ and $\{y_1,\ldots,y_{\ell}\}$ of $H_{\Z}$.  We say that the $x_i$
and $y_j$ are {\em homologically separate} if there exists a symplectic splitting
$H_{\Z} = X \oplus Y$ with $x_i \in X$ and $y_j \in Y$ for all $i$ and $j$.

Let $x \in H_{\Z}$ and $y \in H_{\Z}$ be homologically separate.  By our discussion
above, we can find a simple closed separating curve $\delta$ with $x \in X(\delta)$
and $y \in Y(\delta)$.
Let $S$ and $T$ be the subsurfaces to the left and right of $\delta$, respectively,
so $X(\delta) = \HH_1(S)$ and $Y(\delta) = \HH_1(T)$.
We can find $\eta,\lambda \in \pi_g$ with $\oeta = x$ and $\olambda = y$ such that
$\delta$ separates $\eta$ from $\lambda$.  Indeed, choose $\eta$ lying in $S$
with $\oeta = x$ and $\lambda$ lying in $T$ with $\olambda = y$.
For $h \in H_{\Z}$, we define
\[X(h,x,y) = \CgCg{\Cg{\delta}}{\Cg{\eta,\lambda}^h} \in \cQ_g.\]
By Lemma \ref{lemma:xwelldefined}, this does not depend on any of our choices.  By Lemma \ref{lemma:coinvariantreidemeister}, this satisfies
\[\fq(X(h,x,y)) = \GrpR{h} - \GrpR{h+x} - \GrpR{h+y} + \GrpR{h+x+y}.\]

\subsection{Summary}
\label{section:summaryx}

The following summarizes what we have accomplished in this section:

\begin{lemma}
\label{lemma:summarygenweak}
Let $S \subset [\pi_g,\pi_g]$ be a set of simple closed separating curves such that $[\pi_g,\pi_g]$ is
$\pi_g$-normally generated by $S$.  Then $\cQ_g$ is spanned by
\[\bigcup_{\delta \in S} \Set{$X(h,x,y)$}{$x \in X(\delta)$ and $y \in Y(\delta)$ and $h \in H_{\Z}$}.\]
\end{lemma}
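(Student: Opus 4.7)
The plan is straightforward: the lemma is a consolidation of the three immediately preceding results, and I would simply splice them together in order. First, since $S$ is a $\pi_g$-normal generating set for $[\pi_g,\pi_g]$, Lemma \ref{lemma:generatefqweak1} gives $\cQ_g = \sum_{\delta \in S} \cQ_g[\delta]$. So it suffices to show that each summand $\cQ_g[\delta]$ is spanned by the $X(h,x,y)$ indicated in the lemma.

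Fix $\delta \in S$, which is a simple closed separating curve in $[\pi_g,\pi_g]$ by assumption. By Lemma \ref{lemma:generatefqdeltaweak}, $\cQ_g[\delta]$ is spanned by elements of the form $\CgCg{\Cg{\delta}}{\Cg{\eta,\lambda}^h}$ where $\delta$ separates $\eta \in \pi_g$ from $\lambda \in \pi_g$ and $h \in H_{\Z}$ is arbitrary. So I only need to identify each such element with an $X(h,x,y)$ of the required form.

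The key observation is homological: if $\delta$ separates $\Sigma_g$ into a left subsurface $S_\ell$ and a right subsurface $S_r$ (with $\HH_1(S_\ell) = X(\delta)$ and $\HH_1(S_r) = Y(\delta)$ by definition), then by the separation convention of \S \ref{section:separationpropertiesofcurves}, $\eta$ is based-homotopic into $S_\ell$ and $\lambda$ is based-homotopic into $S_r$. Consequently $\oeta \in X(\delta)$ and $\olambda \in Y(\delta)$; in particular the pair $(\oeta,\olambda)$ is homologically separate, with the splitting witnessed by $\delta$ itself. Then the definition of $X(h,x,y)$ in \S \ref{section:generatorsx} (which depends only on $\oeta$, $\olambda$, $h$ by Lemma \ref{lemma:xwelldefined}) gives exactly
\[\CgCg{\Cg{\delta}}{\Cg{\eta,\lambda}^h} = X(h,\oeta,\olambda),\]
with $\oeta \in X(\delta)$ and $\olambda \in Y(\delta)$, as required.

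Since every step is already packaged by an earlier lemma, I do not foresee any serious obstacle. The only point that deserves any attention is the homological observation above: one must be careful that ``$\delta$ separates $\eta$ from $\lambda$'' — a statement about based loops — forces $\oeta$ and $\olambda$ into the correct summands of the splitting induced by $\delta$. But this is immediate from the way that splitting is defined from the two complementary subsurfaces, so the proof is essentially a one-line assembly.
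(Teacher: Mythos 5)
Your argument is correct and is precisely the paper's proof: assemble Lemma \ref{lemma:generatefqweak1}, Lemma \ref{lemma:generatefqdeltaweak}, and the definition of $X(h,x,y)$ from \S\ref{section:generatorsx} (the paper's proof is literally a one-line citation of these three ingredients). Your extra attention to why $\oeta \in X(\delta)$ and $\olambda \in Y(\delta)$ follow from the separation convention is a useful unpacking of what the paper leaves implicit, but it is not a different route.
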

\begin{proof}
Immediate from Lemmas \ref{lemma:generatefqweak1} and \ref{lemma:generatefqdeltaweak} along
with the definition of $X(h,x,y)$.
\end{proof}

\section{Relations in the quotient}
\label{section:quotientrel}

We want to prove that the quotiented Reidemeister pairing $\fq\colon \cQ_g \rightarrow \Q[H_{\Z}]$
is injective.  For $x \in H_{\Z}$ and $y \in H_{\Z}$ homologically separate and $h \in H_{\Z}$, Lemma
\ref{lemma:coinvariantreidemeister} implies that
\[\fq(X(h,x,y)) = \GrpR{h} - \GrpR{h+x} - \GrpR{h+y} + \GrpR{h+x+y}.\]
Our calculations will use the following five relations.  It is enlightening to verify
that $\fq$ takes these to relations in $\Q[H_{\Z}]$.

\begin{lemma}[Vanishing relation]
\label{lemma:vanishingrelation}
For all $h,x,y \in H_{\Z}$ we have $X(h,x,0) = X(h,0,y) = 0$.
\end{lemma}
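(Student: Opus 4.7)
The plan is to unfold the definition of $X(h,x,y)$ from Section \ref{section:generatorsx} and apply Lemma \ref{lemma:vanishingbasic} directly. Consider $X(h,x,0)$. To set this up, pick any symplectic splitting $H_{\Z}=X\oplus Y$ with $x\in X$ (the condition $0\in Y$ is automatic, so $x$ and $0$ are indeed homologically separate), and choose a simple closed separating curve $\delta\in[\pi_g,\pi_g]$ inducing this splitting, so that with $S$ and $T$ denoting the subsurfaces to the left and right of $\delta$, we have $\HH_1(S)=X$ and $\HH_1(T)=Y$. Pick $\eta\in\pi_1(S)\subset\pi_g$ with $\oeta=x$, and pick any $\lambda\in\pi_1(T)\subset\pi_g$ with $\olambda=0$ (for instance, any element of $[\pi_1(T),\pi_1(T)]$, or even $\lambda=1$). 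By construction, $\delta$ separates $\eta$ from $\lambda$, so by definition
\[
X(h,x,0)=\CgCg{\Cg{\delta}}{\Cg{\eta,\lambda}^h}.
\]

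Since $\olambda=0$, Lemma \ref{lemma:vanishingbasic} applies with $\mu=\eta$ and our $\lambda$, and gives $\CgCg{\Cg{\delta}}{\Cg{\eta,\lambda}^h}=0$ in $(\cC_g^{\otimes 2})_{\Torelli_{g,1}}$. Since $\cQ_g$ is a quotient of $(\cC_g^{\otimes 2})_{\Torelli_{g,1}}$, we conclude that $X(h,x,0)=0$ in $\cQ_g$. The argument for $X(h,0,y)$ is identical, applying the $\omu=0$ case of Lemma \ref{lemma:vanishingbasic} with the roles of the two factors swapped (choose $\delta$ inducing a splitting $H_{\Z}=X\oplus Y$ with $y\in Y$, pick $\eta\in\pi_1(S)$ with $\oeta=0$ and $\lambda\in\pi_1(T)$ with $\olambda=y$). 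There is no real obstacle here—the work has already been done in Lemmas \ref{lemma:xwelldefined} and \ref{lemma:vanishingbasic}—so the only thing to verify is that $X(h,x,0)$ and $X(h,0,y)$ are legitimate instances of the generator $X(h,\,\cdot\,,\,\cdot\,)$, which follows since $(x,0)$ and $(0,y)$ are trivially homologically separate.
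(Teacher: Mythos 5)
Your approach is identical to the paper's, which simply records this as ``Immediate from Lemma~\ref{lemma:vanishingbasic}'': you unfold the definition of $X(h,x,y)$, observe that $(x,0)$ and $(0,y)$ are trivially homologically separate so the generator is defined, and invoke Lemma~\ref{lemma:vanishingbasic}. That is the right argument, and it is complete.

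One small imprecision worth flagging: you assert that Lemma~\ref{lemma:vanishingbasic} yields $\CgCg{\Cg{\delta}}{\Cg{\eta,\lambda}^h}=0$ \emph{in} $(\cC_g^{\otimes 2})_{\Torelli_{g,1}}$ and then pass to the quotient $\cQ_g$. In fact, the conclusion of Lemma~\ref{lemma:vanishingbasic} already lives in $\cQ_g$, not in $(\cC_g^{\otimes 2})_{\Torelli_{g,1}}$ --- the section header ``We now prove certain classes in $\cQ_g$ vanish'' makes this explicit, and the proof there crucially uses that the pieces $\CgCg{\Cg{\delta}}{\Cg{\mu}^{h'}}$ with separated $\delta,\mu$ are the relators defining $\cQ_g$, not zero elements of $(\cC_g^{\otimes 2})_{\Torelli_{g,1}}$. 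For a generic $\lambda$ with $\olambda=0$ the element $\CgCg{\Cg{\delta}}{\Cg{\eta,\lambda}^h}$ is \emph{not} zero upstairs; it only dies in $\cQ_g$. Your final conclusion is correct because $X(h,x,y)$ is by definition an element of $\cQ_g$, so the extra passage-to-quotient step is harmless (though unnecessary), but you should state the vanishing at the level where the lemma actually proves it.
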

\begin{proof}
Immediate from Lemma \ref{lemma:vanishingbasic}.
\end{proof} 

\begin{lemma}[Symmetry relation\footnote{Though we will usually name the relations we are using, 
we will use the symmetry relation freely and without mention.}]
\label{lemma:symmetryrelation}
Let $h \in H_{\Z}$, and let $x \in H_{\Z}$ and $y \in H_{\Z}$ be homologically separate.  Then
$X(h,x,y) = X(h,y,x)$.
\end{lemma}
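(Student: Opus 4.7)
The plan is to exploit a very simple observation: if $\delta \in [\pi_g,\pi_g]$ is a simple closed separating curve, then the based loop $\delta^{-1}$ is again such a curve, and reversing its orientation swaps the subsurfaces to the left and right. Consequently $X(\delta^{-1}) = Y(\delta)$ and $Y(\delta^{-1}) = X(\delta)$, and $\delta^{-1}$ separates $\lambda$ from $\eta$ whenever $\delta$ separates $\eta$ from $\lambda$. The two sides of the identity $X(h,x,y) = X(h,y,x)$ can therefore be compared by using $\delta$ for the left-hand side and $\delta^{-1}$ for the right-hand side; the two resulting sign changes will cancel.

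Concretely, I would first pick a convenient representative: choose $\delta \in [\pi_g,\pi_g]$ inducing a splitting with $x \in X(\delta)$ and $y \in Y(\delta)$, together with $\eta, \lambda \in \pi_g$ satisfying $\oeta = x$, $\olambda = y$, such that $\delta$ separates $\eta$ from $\lambda$. By the definition from \S\ref{section:generatorsx},
\[ X(h,x,y) = \CgCg{\Cg{\delta}}{\Cg{\eta,\lambda}^h}. \]
Since $\delta^{-1}$ lies in $[\pi_g,\pi_g]$, is a simple closed separating curve inducing the swapped splitting, and separates $\lambda$ from $\eta$, Lemma \ref{lemma:xwelldefined} gives
\[ X(h,y,x) = \CgCg{\Cg{\delta^{-1}}}{\Cg{\lambda,\eta}^h}. \]

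To finish, I would apply two elementary identities. Inversion in $[\pi_g,\pi_g]$ negates classes in the abelianization $\cC_g$, so $\Cg{\delta^{-1}} = -\Cg{\delta}$. The first commutator identity of Lemma \ref{lemma:commutatoridentities} gives $\Cg{\lambda,\eta}^h = -\Cg{\eta,\lambda}^h$. Bilinearity of the tensor product then cancels the two minus signs:
\[ X(h,y,x) = \CgCg{-\Cg{\delta}}{-\Cg{\eta,\lambda}^h} = \CgCg{\Cg{\delta}}{\Cg{\eta,\lambda}^h} = X(h,x,y). \]

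I do not anticipate any real obstacle. The only thing that requires any care is verifying that $\delta^{-1}$ is a legitimate choice for the definition of $X(h,y,x)$, but this is straightforward once one recognizes that $\delta$ and $\delta^{-1}$ describe the same unoriented separating curve traversed in opposite directions. The real work has already been done: Lemma \ref{lemma:xwelldefined} makes the choice of separating curve immaterial, which is exactly what lets us freely switch from $\delta$ to $\delta^{-1}$.
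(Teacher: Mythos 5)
Your argument is essentially identical to the paper's: pick a convenient $\delta$ with $\eta$ on its left and $\lambda$ on its right, observe that $\delta^{-1}$ separates $\lambda$ from $\eta$ so $X(h,y,x) = \CgCg{\Cg{\delta^{-1}}}{\Cg{\lambda,\eta}^h}$, and cancel the two sign changes from $\Cg{\delta^{-1}} = -\Cg{\delta}$ and $\Cg{\lambda,\eta}^h = -\Cg{\eta,\lambda}^h$. This matches the paper's proof step for step, so there is nothing to add.
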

\begin{proof}
Let $H_{\Z} = X \oplus Y$ be a symplectic splitting with $x \in X$ and $y \in Y$.  Let $\delta$
be a simple closed separating curve inducing the splitting $H_{\Z} = X \oplus Y$, and let
$\eta,\lambda \in \pi_g$ be such that $\oeta = x$ and $\olambda = y$ and such that $\delta$
separates $\eta$ from $\lambda$.  We then have
\[X(h,x,y) = \CgCg{\Cg{\delta}}{\Cg{\eta,\lambda}^h}.\]
By definition, $\eta$ lies in the subsurface to the left of $\delta$ and $\lambda$ lies in the subsurface
to the right of $\delta$.  Reversing the orientation of $\delta$, we see that $\delta^{-1}$
separates $\lambda$ from $\eta$.  Lemma \ref{lemma:commutatoridentities} (commutator identities)
says that $\Cg{\lambda,\eta}^h = - \Cg{\eta,\lambda}^h$, so
\[X(h,y,x) = \CgCg{\Cg{\delta^{-1}}}{\Cg{\lambda,\eta}^h} = \CgCg{(-\Cg{\delta})}{(-\Cg{\eta,\lambda}^h)} = \CgCg{\Cg{\delta}}{\Cg{\eta,\lambda}^h} = X(h,x,y).\qedhere\] 
\end{proof}

\begin{lemma}[Additivity relation]
\label{lemma:additivityrelation}
Let $h \in H_{\Z}$, and let $x_1,x_2 \in H_{\Z}$ and $y \in H_{\Z}$ be homologically separate.  Then
\[X(h,x_1+x_2,y) = X(h,x_1,y) + X(h+x_1,x_2,y).\]
\end{lemma}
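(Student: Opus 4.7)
The plan is to derive the additivity relation by expanding the commutator $[\eta_1\eta_2, \lambda]$ using the commutator identities in Lemma \ref{lemma:commutatoridentities}, after making careful choices of representative curves.

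Since $x_1, x_2 \in H_{\Z}$ and $y \in H_{\Z}$ are homologically separate, I can fix a symplectic splitting $H_{\Z} = X \oplus Y$ with $x_1, x_2 \in X$ and $y \in Y$. Choose a simple closed separating curve $\delta \in [\pi_g, \pi_g]$ inducing this splitting, and let $S, T$ be the subsurfaces to the left and right of $\delta$, so $\HH_1(S) = X$ and $\HH_1(T) = Y$. Pick based loops $\eta_1, \eta_2 \in \pi_g$ lying in $S$ with $\oeta_i = x_i$, and a based loop $\lambda \in \pi_g$ lying in $T$ with $\olambda = y$. Then the concatenation $\eta_1 \eta_2 \in \pi_g$ also lies in $S$, with $\overline{\eta_1 \eta_2} = x_1 + x_2$, and $\delta$ separates $\eta_1 \eta_2$ from $\lambda$, separates $\eta_1$ from $\lambda$, and separates $\eta_2$ from $\lambda$.

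By Lemma \ref{lemma:xwelldefined}, we have the identifications
\[
X(h, x_1+x_2, y) = \CgCg{\Cg{\delta}}{\Cg{\eta_1\eta_2, \lambda}^h}, \quad X(h, x_1, y) = \CgCg{\Cg{\delta}}{\Cg{\eta_1, \lambda}^h},
\]
and $X(h+x_1, x_2, y) = \CgCg{\Cg{\delta}}{\Cg{\eta_2, \lambda}^{h+x_1}}$. The second commutator identity of Lemma \ref{lemma:commutatoridentities} applied to $\eta_1\eta_2$ gives
\[
\Cg{\eta_1 \eta_2, \lambda}^h = \Cg{\eta_1, \lambda}^h + \Cg{\eta_2, \lambda}^{h + x_1}.
\]
Substituting the left factor $\Cg{\delta}$ and using bilinearity of $\CgCg{\cdot}{\cdot}$ yields the desired relation.

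The only step with any subtlety is verifying that the choices above are actually realizable simultaneously, but this is straightforward: since $S$ is a subsurface with $\HH_1(S) = X$ and the basepoint lies on $\partial S = \partial T$, one can realize any prescribed homology class in $X$ by a based loop in $S$ (and similarly for $T$), so $\eta_1, \eta_2, \lambda$ exist as required. No genuine obstacle arises; the relation is essentially a direct manifestation of the commutator identity once the geometry has been set up correctly.
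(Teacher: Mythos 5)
Your proof is correct and essentially identical to the paper's: both choose a simple closed separating curve $\delta$ inducing a splitting $H_{\Z}=X\oplus Y$ with $x_1,x_2\in X$ and $y\in Y$, pick $\eta_1,\eta_2$ on one side and $\lambda$ on the other, and apply the second commutator identity from Lemma \ref{lemma:commutatoridentities} to $\Cg{\eta_1\eta_2,\lambda}^h$. Your explicit appeal to Lemma \ref{lemma:xwelldefined} for the well-definedness of $X(\cdot,\cdot,\cdot)$ under these choices is exactly the point built into the definition in \S\ref{section:generatorsx}, so no new content is needed there.
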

\begin{proof}
Let $H_{\Z} = X \oplus Y$ be a symplectic splitting with $x_1,x_2 \in X$ and $y \in Y$.  Let
$\delta \in \pi_g$ be a simple closed separating curve inducing the splitting $H_{\Z} = X \oplus Y$.
Let $\eta_1,\eta_2,\lambda \in \pi_g$ be such that $\oeta_i = x_i$ and $\olambda = y$ and such
that $\delta$ separates $\{\eta_1,\eta_2\}$ from $\lambda$.  We then have
\[X(h,x_1+x_2,y) = \CgCg{\Cg{\delta}}{\Cg{\eta_1 \eta_2,\lambda}^h}.\]
Using Lemma \ref{lemma:commutatoridentities} (commutator identities), this equals
\[\CgCg{\Cg{\delta}}{(\Cg{\eta_1,\lambda}^h + \Cg{\eta_2,\lambda}^{h+x_1})} = X(h,x_1,y) + X(h+x_1,x_2,y).\qedhere\]
\end{proof}

\begin{lemma}[Inverse relation]
\label{lemma:inverserelation}
Let $h \in H_{\Z}$, and let $x \in H_{\Z}$ and $y \in H_{\Z}$ be homologically separate.  Then
$X(h,-x,y) = -X(h-x,x,y)$.
\end{lemma}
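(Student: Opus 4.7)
The plan is to deduce the inverse relation as an immediate formal consequence of the two relations already in hand, namely the additivity relation (Lemma \ref{lemma:additivityrelation}) and the vanishing relation (Lemma \ref{lemma:vanishingrelation}). The key observation is that if $x$ and $y$ are homologically separate, then $-x$ and $x$ both lie in the same Lagrangian complement $X$ of any symplectic splitting $H_{\Z} = X \oplus Y$ with $y \in Y$; in particular, $\{-x, x\}$ and $\{y\}$ are jointly homologically separate, so additivity is applicable to them.

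Concretely, I would apply the additivity relation with $x_1 = -x$ and $x_2 = x$ to obtain
\[
X(h, (-x) + x, y) \;=\; X(h, -x, y) \;+\; X(h + (-x), x, y).
\]
The left-hand side is $X(h, 0, y)$, which vanishes by the vanishing relation. Rearranging gives $X(h, -x, y) = -X(h - x, x, y)$, as desired.

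There is no real obstacle here: once additivity and vanishing are in hand, the inverse relation is a one-line manipulation. The only thing to verify carefully is the side condition that additivity actually applies, i.e., that one may take $x_1 = -x$ and $x_2 = x$ and still have the homological separation hypothesis satisfied, which is automatic since $X$ is a subgroup of $H_{\Z}$.
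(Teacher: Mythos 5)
Your proof is correct, but it takes a genuinely different route from the paper's. The paper proves the inverse relation by going back to the commutator-level definition: writing $X(h,-x,y) = \CgCg{\Cg{\delta}}{\Cg{\eta^{-1},\lambda}^h}$ and invoking the third commutator identity from Lemma \ref{lemma:commutatoridentities}, namely $\Cg{x^{-1},y}^h = -\Cg{x,y}^{h-\ox}$, to conclude. You instead derive the inverse relation as a purely formal consequence of the additivity relation (Lemma \ref{lemma:additivityrelation}) applied with $x_1 = -x$, $x_2 = x$, together with the vanishing relation (Lemma \ref{lemma:vanishingrelation}) to kill the left-hand side $X(h,0,y)$. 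Your side-condition check is also correct: since $-x$ lies in the same summand of a symplectic splitting as $x$, the triple $-x, x, y$ is jointly homologically separate, so additivity applies. What your argument buys is a cleaner logical organization — the inverse relation becomes a theorem about the $X(h,x,y)$ formalism rather than requiring another appeal to the geometric definition — and it mirrors the paper's own proof of the cube relation (Lemma \ref{lemma:cuberelation}), which is likewise extracted from two applications of additivity. The paper's approach is marginally more self-contained in that it parallels the symmetry and additivity proofs, each of which is a one-step reduction to a commutator identity. Both are valid; yours is perhaps the more economical of the two.
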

\begin{proof}
Let $H_{\Z} = X \oplus Y$ be a symplectic splitting with $x \in X$ and $y \in Y$.  Let $\delta$
be a simple closed separating curve inducing the splitting $H_{\Z} = X \oplus Y$, and let
$\eta,\lambda \in \pi_g$ be such that $\oeta = x$ and $\olambda = y$ and such that $\delta$
separates $\eta$ from $\lambda$.  Using Lemma \ref{lemma:commutatoridentities} (commutator identities), we then have
\[X(h,-x,y) = \CgCg{\Cg{\delta}}{\Cg{\eta^{-1},\lambda}^h} = \CgCg{\Cg{\delta}}{(-\Cg{\eta,\lambda}^{h-x})} = -X(h-x,x,y).\qedhere\]
\end{proof}

\begin{lemma}[Cube relation\footnote{We call this the cube relation since 
$\fq$ takes the terms it to the points in $\Q[H_{\Z}]$ forming a ``cube'' with a corner
at $h$ in the directions of $x$, $y$, and $k$, namely $\{h, h+x, h+y, h+k, h+x+y, h+x+k, h+y+k, h+x+y+k\}$.}]
\label{lemma:cuberelation}
Let $h \in H_{\Z}$, and let $x,k \in H_{\Z}$ and $y \in H_{\Z}$ be homologically separate.  
Then\footnote{We think of this relation as relating $X(h,x,y)$ to $X(h+k,x,y)$, with the blue terms
$X(h,k,y)$ and $X(h+x,k,y)$ being the ``error''.}
$X(h+k,x,y) = X(h,x,y) \blue{-X(h,k,y) + X(h+x,k,y)}$
\end{lemma}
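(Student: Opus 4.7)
The plan is to derive the cube relation by applying the additivity relation (Lemma \ref{lemma:additivityrelation}) twice, with the two summands swapped, to the same element $X(h, x+k, y)$.

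First I would note that the hypotheses make the additivity relation applicable in both orderings. Since $x, k \in H_{\Z}$ and $y \in H_{\Z}$ are homologically separate, there is a symplectic splitting $H_{\Z} = X \oplus Y$ with $x, k \in X$ and $y \in Y$. In particular, both pairs $(x,k)$ and $(k,x)$ play the role of $(x_1, x_2)$ in Lemma \ref{lemma:additivityrelation} with the same $y$.

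Next I would expand $X(h, x+k, y)$ in two ways. Taking $x_1 = x$ and $x_2 = k$ in the additivity relation gives
\[X(h, x+k, y) = X(h, x, y) + X(h+x, k, y),\]
while taking $x_1 = k$ and $x_2 = x$ gives
\[X(h, k+x, y) = X(h, k, y) + X(h+k, x, y).\]
Since $H_{\Z}$ is abelian, $x+k = k+x$, so the two left-hand sides coincide.

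The main step is then purely algebraic: equating the two right-hand sides yields
\[X(h, x, y) + X(h+x, k, y) = X(h, k, y) + X(h+k, x, y),\]
and solving for $X(h+k, x, y)$ produces the claimed identity. There is no real obstacle here; the only thing to check was that the additivity relation legitimately applies in both orderings, which follows at once from the homological separation hypothesis. The mental picture is that the ``cube'' in $\Q[H_{\Z}]$ with corners $\{h, h+x, h+y, h+k, h+x+y, h+x+k, h+y+k, h+x+y+k\}$ can be split into two ``rectangles'' in either of two ways, and the cube relation is the identity expressing the consistency of these two decompositions.
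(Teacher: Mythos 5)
Your proof is correct and follows exactly the same route as the paper's: apply the additivity relation to $X(h,x+k,y) = X(h,k+x,y)$ in both orderings, equate the two expansions, and rearrange. The extra observation that the separation hypothesis makes both applications of additivity legitimate is a helpful clarification, but the argument is the same.
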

\begin{proof}
We apply the additivity relation (Lemma \ref{lemma:additivityrelation}) in two ways:
\begin{align*}
X(h,k+x,y) &= X(h,k,y) + X(h+k,x,y) \\
X(h,x+k,y) &= X(h,x,y) + X(h+x,k,y).
\end{align*}
Comparing these, we see that $X(h,k,y) + X(h+k,x,y) = X(h,x,y) + X(h+x,k,y)$.  Rearranging this
gives the desired relation.
\end{proof}

\section{A specific generating set for the quotient}
\label{section:quotientgen}

Choose a symplectic basis $\fB = \{a_1,b_1,\ldots,a_g,b_g\}$ for $H_{\Z}$.  This basis will be
fixed for the remainder of Part \ref{part:2}.  Let $\omega(-,-)$ be the algebraic intersection pairing
on $H_{\Z}$ and let $\perp$ be the orthogonal complement with respect to $\omega(-,-)$.  
Define $\cV = \cV_1 \cup \cV_2 \cup \cV_3$, where $\cV_i$ consists of
all $X(h,x,y)$ with $h \in H_{\Z}$ arbitrary and $x,y \in H_{\Z}$ nonzero satisfying:
\begin{itemize}
\item[$(\cV_1)$:\ \ ] $x \in \Span{a_d,b_d}$ and $y \in \Span{a_d,b_d}^{\perp}$ for some $1 \leq d \leq g$. 
\item[$(\cV_2)$:\ \ ] $x \in \{a_d,b_d\}$ and $y = x + z$ with $z \in \{\pm a_e,\pm b_e\}$ for some
distinct $1 \leq d,e \leq g$.
\item[$(\cV_3)$:\ \ ] the pair $(x,y)$ is either $(a_d+a_e,b_d-b_e)$ or
$(a_d+b_e,b_d+a_e)$ for some distinct $1 \leq d, e \leq g$.
Note that in both cases the elements
$x$ and $y$ are homologically separate and satisfy $\omega(x,y) = 0$. 
\end{itemize}

\begin{remark}
In $\cV_3$, we do not include $(a_d-a_e,b_d+b_e)$ or $(a_d-b_e,b_d-a_e)$.  It is enlightening
to go through the proof of Lemma \ref{lemma:generateq} below to see why they are not needed to generate $\cQ_g$.
\end{remark}

Our main result about $\cV$ is:

\begin{lemma}
\label{lemma:generateq}
The vector space $\cQ_g$ is spanned by $\cV$.
\end{lemma}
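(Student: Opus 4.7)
The plan is to apply Lemma~\ref{lemma:summarygenweak} with $S$ the set of based isotopy classes of simple closed separating curves in $\pi_g$ cutting off a one-holed torus. By \cite[Lemma A.1]{PutmanCutPaste} this set generates $[\pi_g,\pi_g]$, hence normally generates it; and by the realization result of \cite{JohnsonConj} every $2$-dimensional symplectic subspace of $H_\Z$ arises as $X(\delta)$ for some such curve. The lemma then reduces to showing that every generator $X(h,x,y)$ with $h \in H_\Z$, $x$ in some $2$-dimensional symplectic subspace $V \subset H_\Z$, and $y \in V^\perp$ is a $\Q$-linear combination of elements of $\cV$.

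I would first use additivity (Lemma~\ref{lemma:additivityrelation}), the inverse relation (Lemma~\ref{lemma:inverserelation}), and vanishing (Lemma~\ref{lemma:vanishingrelation}) to decompose $x$ with respect to a chosen symplectic basis $\{u_1,u_2\}$ of $V$, and $y$ with respect to a symplectic basis of $V^\perp$. All resulting sub-pairs remain homologically separate via the fixed splitting $H_\Z = V \oplus V^\perp$, so this step reduces the problem to the case $x \in \{u_1,u_2\}$ and $y$ a single basis vector of $V^\perp$. Next I would re-expand these vectors in the standard basis $\fB$. Whenever the naive $\fB$-decomposition preserves homological separation, additivity can be iterated further; whenever it does not, the cube relation (Lemma~\ref{lemma:cuberelation}), applied with an auxiliary vector inside $V$ or $V^\perp$, lets us shift the basepoint and extract a correction term already sitting in $\cV_1$ or $\cV_2$, while symmetry (Lemma~\ref{lemma:symmetryrelation}) lets us swap $x$ and $y$ whenever that makes a reduction cleaner.

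A final case analysis identifies the atomic patterns that cannot be decomposed further. Supports on disjoint $\fB$-handles give $\cV_1$; the case in which $x$ is a standard basis vector and $y$ equals $x$ plus one vector from a different handle gives $\cV_2$; and the two-handle patterns that genuinely resist additivity (because every attempted split creates a sub-pair with nonzero $\omega$) are precisely the two families $(a_d+a_e,b_d-b_e)$ and $(a_d+b_e,b_d+a_e)$ comprising $\cV_3$. Variant patterns such as $(a_d-a_e,b_d+b_e)$ or $(a_d-b_e,b_d-a_e)$ reduce to these via the inverse and symmetry relations, which is why they are omitted. The main obstacle is this last step: carefully tracking basepoints through repeated applications of the cube, additivity, and inverse relations to verify that the case analysis closes up and that no further generator beyond $\cV$ is required.
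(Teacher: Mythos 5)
Your route is genuinely different from the paper's, and the difference matters.  The paper does \emph{not} take $S$ to be the set of all genus-1 bounding separating curves.  Instead it starts from the finite set $S'$ of commutators $[\alpha_d,\beta_d]$, $[\alpha_d,\alpha_e]$, etc.\ of the standard $\pi_1$-generators (which $\pi_g$-normally generate $[\pi_g,\pi_g]$), factors each non-simple one as a short product of separating curves coming from explicit $ab$-pairs (e.g.\ $[\alpha_d,\alpha_e]=\precon{\alpha_d}{[\alpha_e,\beta'_e]}\,[\alpha_d\beta'_e,\alpha_e]$), and then only has to verify the condition $(\dagger)$ for these few curves.  The payoff is that the induced splittings, such as $\Span{a_d+b_e,a_e}^{\perp}\oplus\Span{a_d+b_e,a_e}$, are supported on at most two $\fB$-handles, so the reduction to $\cV_1\cup\cV_2\cup\cV_3$ is a short explicit calculation with the additivity relation.

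By instead taking $S$ to be the set of all genus-1 bounding separating curves, you have committed yourself to verifying $(\dagger)$ for an \emph{arbitrary} genus-1 symplectic splitting $H_{\Z}=V^{\perp}\oplus V$.  Your first reduction (decompose $x$ in a symplectic basis $\{u_1,u_2\}$ of $V$ and $y$ in a basis of $V^{\perp}$) is fine, since all intermediate pairs remain separated via the fixed splitting.  But the second step---``re-expand these vectors in the standard basis $\fB$''---is exactly where the hard work lives, and you have not carried it out.  The obstruction is concrete: every relation in \S\ref{section:quotientrel} (additivity, inverse, cube) carries a homological-separation hypothesis, and when $u_1\in V$ is an arbitrary primitive vector, the summands of its naive $\fB$-expansion will typically fail to be homologically separate from a given $v\in V^{\perp}$.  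Your suggestion of using the cube relation with an auxiliary vector in $V$ or $V^{\perp}$ only shifts the basepoint $h$; it does not by itself decompose $x$.  You acknowledge this yourself as ``the main obstacle,'' so this is a genuine gap, not just an unwritten routine step.  Similarly, your claim that patterns like $(a_d-a_e,b_d+b_e)$ reduce to $\cV_3$ via the inverse and symmetry relations is asserted but not shown; note that $X(h,a_d-a_e,b_d+b_e)$ cannot be split as $X(h,a_d,b_d+b_e)+\cdots$ because $\omega(a_d,b_d+b_e)\neq 0$, so the obvious move is blocked.  The paper's Remark following the definition of $\cV$ points out that these variant patterns are omitted, but the reason in the paper is simply that they never arise from its chosen finite set of $\delta$'s---not that there is an easy direct reduction.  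To salvage your approach you would need either to prove the general ``re-expansion'' claim (with a termination argument) or, more realistically, to shrink $S$ to a finite tailored set as the paper does.
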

\begin{proof}
By Lemma \ref{lemma:summarygenweak}, it is enough to find a set $S \subset [\pi_g,\pi_g]$ of simple closed separating
curves that $\pi_g$-normally generate $S$ such that for all $\delta \in S$ we have:
\begin{align*} 
\tag{$\dagger$}\label{eqn:deltagood} &\text{\space\space\space letting $H_{\Z} = X(\delta) \oplus Y(\delta)$ be the induced symplectic splitting, each
$X(h,x,y)$ with} \\
&\text{\space\space\space $x \in X(\delta)$ and $y \in Y(\delta)$ and $h \in H_{\Z}$ lies in the span of $\cV$.}
\end{align*} 
Let $B = \{\alpha_1,\beta_1,\ldots,\alpha_g,\beta_g\}$ be the following standard generating set for
$\pi_g$:\\
\Figure{Pi1Generators}
Recalling that $\fB = \{a_1,b_1,\ldots,a_g,b_g\}$, we can choose $B$ such that
$\oalpha_i = a_i$ and $\obeta_i = b_i$ for all $1 \leq i \leq g$.  Set
\[S' = \Set{$[\alpha_d,\beta_d]$}{$1 \leq d \leq g$} \cup \Set{$[\alpha_d,\alpha_e], [\alpha_d,\beta_e], [\beta_d,\alpha_e], [\beta_d,\beta_e]$}{$1 \leq d < e \leq g$}.\]
The group $[\pi_g,\pi_g]$ is $\pi_g$-normally generated by the elements of $S'$.
Unfortunately, most elements of $S'$ are not simple closed separating curves.  Indeed, the $[\alpha_d,\beta_d] = \alpha_d \beta_d \alpha_d^{-1} \beta_d^{-1}$ are
the only ones that are:\\
\Figure{Pi1CommutatorSimple}
More generally, an {\em $ab$-pair} of curves consists of $\zeta,\eta \in \pi_g$ that are simple closed curves that only
intersect at the basepoint and have algebraic intersection number $\pm 1$.  For instance, the curves $\alpha_d,\beta_d \in \pi_g$
form an $ab$-pair.  For an $ab$-pair of curves $\zeta$ and $\eta$,
the commutator $[\zeta,\eta] \in \pi_g$ is a simple closed separating curve.

To fix the above issue, we will do the following: for each $\delta' \in S'$, we will write $\delta'$ as a 
product of simple closed separating curves $\delta$ such that a conjugate of $\delta$ satisfies \eqref{eqn:deltagood}.  The
desired $S$ will consist of the \eqref{eqn:deltagood}-satisfying conjugates of all the $\delta$ that appear in these products.  During this,
we will freely use the relations from \S \ref{section:quotientrel}.

We start with $\delta' = [\alpha_d,\beta_d]$ for some $1 \leq d \leq g$.  Since $\alpha_d$ and $\beta_d$ form an $ab$-pair,
$\delta'$ is already a simple closed separating curve, so
for our product we take $\delta = \delta'$.  We must show that $\delta$ satisfies \eqref{eqn:deltagood}.  We have\footnote{It is
annoying that $\Span{a_d,b_d}$ comes second, but this is forced by our convention that $X(\delta)$ is the homology of the subsurface
to the left of $\delta$.}
\[H_{\Z} = X(\delta) \oplus Y(\delta) = \Span{a_d,b_d}^{\perp} \oplus \Span{a_d,b_d}.\]
Each $X(h,x,y)$ with $x \in X(\delta)$ and $y \in Y(\delta)$ and $h \in H_{\Z}$ is either\footnote{This holds when $x=0$ or $y=0$; see
the vanishing relation (Lemma \ref{lemma:vanishingrelation}).} $0$
or an element of $\cV_1$, verifying \eqref{eqn:deltagood}.

There are now four remaining cases: for $1 \leq d < e \leq g$, we either have $\delta' = [\alpha_d,\alpha_e]$ or
$\delta' = [\alpha_d,\beta_e]$ or $\delta' = [\beta_d,\alpha_e]$ or $\delta' = [\beta_d,\beta_e]$.  All four cases
are handled similarly, so we will give full details for $\delta' = [\alpha_d,\alpha_e]$ 
and then sketch the remaining cases.

We want to write $[\alpha_d,\alpha_d]$ as a product of simple closed separating curves.  The intuition guiding
our calculation is that the commutator bracket is very similar to an alternating bilinear pairing.  Because of this, we should
be able to write $[\alpha_d,\alpha_e]$ as a product of terms involving $[\alpha_d \beta'_e,\alpha_e]$ and 
$[\alpha_e,\beta'_e] = [\beta'_e,\alpha_e]^{-1}$
for any $\beta'_e \in \pi_g$.  Choosing $\beta'_e$ as in the following figure, the curves $\alpha_d \beta'_e$ and $\alpha_e$ (resp.\ $\alpha_e$ and $\beta'_e$) form
an $ab$-pair, so $[\alpha_d \beta'_e,\alpha_e]$ (resp.\ $[\alpha_e,\beta'_e]$) is a simple closed
separating curve:\\
\Figure{Pi1AlphaAlphaCalc}
The desired formula for $\delta' = [\alpha_d,\alpha_e]$ is\footnote{Recall from \S \ref{section:conjugationcommutator} that for $G$ a group and $c, d \in G$, we use the notation
$\precon{d}{c} = d c d^{-1}$ and $[c,d] =c d c^{-1} d^{-1}$.}
\[\delta'=[\alpha_d,\alpha_e] = \precon{\alpha_d}{[\alpha_e,\beta'_e]} [\alpha_d \beta'_e,\alpha_e].\]
We must check that conjugates of $\delta = \precon{\alpha_d}{[\alpha_e,\beta'_e]}$ and 
$\delta = [\alpha_d \beta'_e,\alpha_e]$ satisfy \eqref{eqn:deltagood}.  We will check that in fact
$\delta = [\alpha_e,\beta'_e]$ and $\delta = [\alpha_d \beta'_e,\alpha_e]$ satisfy \eqref{eqn:deltagood}.

The curves $\alpha_e,\beta'_e \in \pi_g$ form an $ab$-pair, so
$[\alpha_e,\beta'_e]$ is a simple closed separating curve.  We also have $\obeta'_e = \obeta_e = b_e$.  The
exact same argument we used above for $[\alpha_e,\beta_e]$ now also verifies \eqref{eqn:deltagood} for $\delta = [\alpha_e,\beta'_e]$.

Now consider $\delta = [\alpha_d \beta'_e,\alpha_e]$.  Again, since $\alpha_d \beta'_e$ and $\alpha_e$ form an $ab$-pair,
this is a simple closed separating curve.
The homology classes of $\{\alpha_d \beta'_e,\alpha_e\}$ are $\{a_d+b_e,a_e\}$, so
\begin{align*}
H_{\Z} &= X(\delta) \oplus Y(\delta) = \Span{a_d+b_e,a_e}^{\perp} \oplus \Span{a_d+b_e,a_e} \\
&= \SpanSet{$a_d,b_d+a_e,a_r,b_r$}{$1 \leq r \leq g$, $r \neq d,e$} \oplus \Span{a_d+b_e,a_e}.
\end{align*}
Using our relations, each $X(h,x,y)$ with $x \in X(\delta)$ and $y \in Y(\delta)$
and $h \in H_{\Z}$ can be written as a linear combination of elements of the form $X(h',x',y')$ with
\[x' \in \Set{$a_d,b_d+a_e,a_r,b_r$}{$1 \leq r \leq g$, $r \neq d,e$} \quad \text{and} \quad y' \in \{a_d+b_e,a_e\} \quad \text{and} \quad h' \in H_{\Z}.\]
These are either elements of $\cV_1$, or fall into one of the following cases:
\begin{itemize}
\item $X(h',a_d,a_d+b_e)$, which lies in $\cV_2$.
\item $X(h',b_d+a_e,a_e) = X(h',a_e,a_e+b_d)$, which lies in $\cV_2$.
\item $X(h',b_d+a_e,a_d+b_e) = X(h',a_d+b_e,b_d+a_e)$, which lies in $\cV_3$.
\end{itemize}
This completes the proof of \eqref{eqn:deltagood} for $\delta = [\alpha_d \beta'_e,\alpha_e]$, and thus verifies
what we must show for $\delta' = [\alpha_d,\alpha_e]$.

We must also handle $\delta' \in \{[\alpha_d,\beta_e],[\beta_d,\alpha_e],[\beta_d,\beta_d]\}$.  These are all
similar to $[\alpha_d,\alpha_e]$:
\begin{itemize}
\item For $\delta' = [\alpha_d,\beta_e]$, use $[\alpha_d,\beta_e] = \precon{\alpha_d}{[\beta_e,\alpha_e]} [\alpha_d \alpha_e, \beta_e]$.
\item For $\delta' = [\beta_d,\alpha_e]$, use $[\beta_d,\alpha_e] = [\beta_d,\alpha_e \alpha_d] \precon{\alpha_e}{[\alpha_d,\beta_d]}$.
\item For $\delta' = [\beta_d,\beta_e]$, use $[\beta_d,\beta_e] = \precon{\beta_d}{[\beta_e,\alpha'_e]} [\beta_d \alpha'_e,\beta_e]$
where $\alpha'_e$ is as shown in the following figure:
\end{itemize}
\Figure{Pi1BetaBetaCalc}
This completes the proof of the lemma.
\end{proof}

\section{Calculation of the quotient, outline}
\label{section:calcquotientoutline}

Recall from \S \ref{section:part2intro} that our goal in this part of the paper is to prove the following:

\newtheorem*{theorem:part2theorem2}{Theorem \ref{theorem:part2theorem2}}
\begin{theorem:part2theorem2}
Let $\fq\colon \cQ_g \rightarrow \Q[H_{\Z}]$ be the quotiented Reidemeister
pairing.  Then $\fq$ is an injection whose image has finite codimension.
\end{theorem:part2theorem2}

In the previous section, we constructed a generating set $\cV = \cV_1 \cup \cV_2 \cup \cV_3$ for $\cQ_g$.  
In the next three sections, we use our generating set to prove Theorem \ref{theorem:part2theorem2}.  The outline is:
\begin{itemize}
\item In \S \ref{section:calcquotient1}, we prove that the restriction of $\fq$ to $\Span{\cV_1}$ is an injection.
\item Now define $\cQ_{g/1} = \cQ_g/\Span{\cV_1}$.  There is an induced map
\[\fq_1\colon \cQ_{g/1} \rightarrow \Q[H_{\Z}] / \Span{\fq(\cV_1)}.\]
Let $\cV_{2/1}$ be the image of $\cV_{2/1}$ in $\cQ_{g/1}$.   In \S \ref{section:calcquotient2}, we
prove that the restriction of $\fq_1$ to $\Span{\cV_{2/1}}$ is an injection.
\item Finally, define
$\cQ_{g/2} = \cQ_g/\Span{\cV_1,\cV_2}$.  There is an induced map
\[\fq_{2}\colon \cQ_{g/2} \rightarrow \Q[H_{\Z}] / \Span{\fq(\cV_1), \fq(\cV_2)}.\]
The vector space $\cQ_{g/2}$ is spanned by the image of $\cV_3$, and in \S \ref{section:calcquotient3} we prove
that $\fq_{2}$ is an injection.
\end{itemize}
Together the above will imply that $\fq$ is an injection.  To make the calculations possible, we will also have to
control the quotients
\[\Q[H_{\Z}] / \Span{\fq(\cV_1)} \quad \text{and} \quad \Q[H_{\Z}] / \Span{\fq(\cV_1), \fq(\cV_2)}.\]
What we will show is that they can be identified with subspaces $\Q[S]$ of $\Q[H_{\Z}]$ associated to subsets\footnote{These $S$ are just subsets of $H_{\Z}$.  They are
not closed under addition.  The notation $\Q[S]$ simply means the set of formal $\Q$-linear combinations of elements of $S$.} $S \subset H_{\Z}$:
\begin{itemize}
\item $\Q[H_{\Z}] / \Span{\fq(\cV_1)}$ will be identified with $\Q[\cup_{i=1}^g \Span{a_i,b_i}]$; and
\item $\Q[H_{\Z}] / \Span{\fq(\cV_1),\fq(\cV_2)}$ will be identified with $\Q[\cup_{i=1}^g \{0,a_i,b_i,a_i+b_i\}]$.
\end{itemize}
The second identification implies that $\Image(\fq)$ has finite codimension, completing the proof of Theorem \ref{theorem:part2theorem2}.

\section{Calculation of the quotient I: the first set of generators}
\label{section:calcquotient1}

We start with the first step from the outline in \S \ref{section:calcquotientoutline}.  Recall that
the generating set $\cV$ depends on a fixed symplectic basis $\fB = \{a_1,b_1,\ldots,a_g,b_g\}$
for $H_{\Z}$, and $\cV_1$ is the set of all $X(h,x,y)$ with $h \in H_{\Z}$ arbitrary
and $x,y \in H_{\Z}$ nonzero such that $x \in \Span{a_d,b_d}$ and $y \in \Span{a_d,b_d}^{\perp}$
for some $1 \leq d \leq g$.  Our goal in
this section is to prove that the restriction of the quotiented Reidemeister pairing
$\fq\colon \cQ_g \rightarrow \Q[H_{\Z}]$ to $\Span{\cV_1}$ is injective and to identify
the quotient of $\Q[H_{\Z}]$ by $\Span{\fq(\cV_1)}$.  

\subsection{A smaller generating set}

Let $\cW_1$ be the set of all $X(h,x,y) \in \cV_1$ such that $h=0$ and such that
$x \in \Span{a_d,b_d}$ and $y \in \Span{a_{d+1},b_{d+1},\ldots,a_g,b_g}$ for some $1 \leq d \leq g-1$.
Our first order of business is to prove that $\cW_1$ spans the same subspace of $\cQ_g$ as $\cV_1$:

\begin{lemma}
\label{lemma:smallergenset1}
Letting the notation be as above, we have $\Span{\cW_1} = \Span{\cV_1}$.
\end{lemma}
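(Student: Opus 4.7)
The inclusion $\Span{\cW_1}\subset\Span{\cV_1}$ is immediate since $\cW_1\subset\cV_1$ (if $y\in\Span{a_{d+1},b_{d+1},\ldots,a_g,b_g}$ then in particular $y\in\Span{a_d,b_d}^\perp$). The plan is therefore to prove the reverse inclusion by rewriting an arbitrary generator $X(h,x,y)\in\cV_1$, with $x\in\Span{a_d,b_d}$ and $y\in\Span{a_d,b_d}^\perp$, as a $\Q$-linear combination of elements of $\cW_1$, using only the five relations from \S\ref{section:quotientrel}.

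First I would normalize the shape of the second argument. Decomposing $y=\sum_{e\neq d}y_e$ with $y_e\in\Span{a_e,b_e}$ and applying the additivity relation (Lemma \ref{lemma:additivityrelation}) to the $y$-slot via symmetry, each $X(h,x,y)$ is expressed as a sum of generators of the form $X(h',x,y')$ with $x\in\Span{a_d,b_d}$ and $y'\in\Span{a_e,b_e}$ for a single $e\neq d$; the vanishing relation (Lemma \ref{lemma:vanishingrelation}) discards any trivial summands. By the symmetry relation (Lemma \ref{lemma:symmetryrelation}), $X(h',x,y')=X(h',y',x)$, so after relabeling I may assume $d<e$. This matches the index constraint defining $\cW_1$; only the requirement $h=0$ remains.

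Next I would reduce the first argument to $0$ using the cube relation (Lemma \ref{lemma:cuberelation}) repeatedly. Write $h=\sum_{f=1}^g h_f$ with $h_f\in\Span{a_f,b_f}$. For any $f\neq e$, the component $h_f$ is symplectically orthogonal to $y\in\Span{a_e,b_e}$, hence $h_f$ and $y$ are homologically separate, so the cube relation applies with $k=h_f$: it replaces the $h$ in $X(h,x,y)$ by $h-h_f$ and produces two error terms $X(\cdot,h_f,y)$, each of which is still in $\cV_1$ (now with relevant symplectic pair $f$). Iterating over all $f\neq e$ reduces $h$ to its $e$-component $h_e$. To eliminate $h_e$, swap $x\leftrightarrow y$ via symmetry: in $X(h_e,y,x)$ the element $h_e\in\Span{a_e,b_e}$ is homologically separate from $x\in\Span{a_d,b_d}$, so the cube relation now absorbs $h_e$, again at the cost of error terms in $\cV_1$. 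The resulting pure term $X(0,x,y)$ (or $X(0,y,x)$, equal to it by symmetry) is manifestly in $\cW_1$.

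Finally I would close the argument via induction. The delicate point is that each invocation of the cube relation both reduces the first argument and produces two error terms whose first argument may have been shifted by $x$ or $y$; a naive complexity measure (say the number of pairs where $h$ is nonzero) need not decrease. I would organize the recursion lexicographically: introduce the pair (number of symplectic pairs on which $h$ is supported, size of the support of $h$ within the "currently active" pair), and show that each reduction strictly decreases this measure, with the base cases handled by the vanishing relation. The main obstacle is this bookkeeping — confirming that the cascade of error terms terminates — but once set up, it yields $\Span{\cV_1}=\Span{\cW_1}$.
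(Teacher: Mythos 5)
Your first stage---decomposing $y$ into single-pair pieces via the additivity relation, swapping via symmetry to arrange $d<e$, and discarding trivial summands---is sound and runs parallel to the paper's Step 1, though you over-normalize: $\cW_1$ only requires $y$ supported on indices strictly greater than $d$, not on a single pair, so the paper can stop sooner. The real problem is in your $h$-reduction, and while you are right to flag the cascade of error terms as the delicate point, the issue is not merely bookkeeping: \emph{the cube relation by itself cannot remove the component of $h$ lying in the same symplectic pair as $y$ (or, after the swap, as the first slot)}. Concretely, once you have reduced to $X(h_e,x,y) = X(h_e,y,x)$ and apply the cube relation with $k=h_e$, you get
\[X(h_e,y,x) = X(0,y,x) - X(0,h_e,x) + X(y,h_e,x),\]
but the third term still has nonzero $h$-slot $y\in\Span{a_e,b_e}$. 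Applying the cube relation to it with $k=y$ returns
\[X(y,h_e,x) = X(0,h_e,x) - X(0,y,x) + X(h_e,y,x),\]
which is the same equation rearranged: substituting one into the other gives a tautology. Iterating cube therefore never terminates; it only produces the single linear constraint $X(h_e,y,x)-X(y,h_e,x)=X(0,y,x)-X(0,h_e,x)$, which does not pin either unknown down. No choice of lexicographic measure repairs this, because there is literally no finite sequence of cube moves that reduces $X(h_e,y,x)$ to $h$-slot zero.

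The missing ingredient is a direct appeal to the additivity relation, which is genuinely stronger than the cube relation at this point: for instance $X(0,h_e+y,x)=X(0,h_e,x)+X(h_e,y,x)$ immediately solves for the stuck element in terms of $\cW_1$. This is exactly how the paper proceeds. Its $h$-reduction is split in two: first, cube clears the components $h_1,\dots,h_{d-1}$ lying at indices \emph{below} the pair of $x$, and all error terms from this phase land automatically in the intermediate set $\cV_1^2$; second, the components $h_d,\dots,h_g$ are cleared using additivity (not cube), with the induction organized by the smallest index $d'$ on which $h$ is supported and a maximal-counterexample argument. For $d'=d$ the identity $X(A,h_d+x,y)=X(A,h_d,y)+X(h,x,y)$ with $A=h_{d+1}+\cdots+h_g$ rewrites the target in terms of elements whose $h$-support starts strictly above $d'$; for $d'>d$ the analogous move is made in the $y$-slot. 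I recommend restructuring your second stage along these lines: reserve cube for the low-index components and switch to additivity once $h$ and one of the arguments share a symplectic pair.
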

\begin{proof}
Define $\cV_1^1$ and $\cV_1^2$ with $\cW_1 \subset \cV_1^2 \subset \cV_1^1 \subset \cV_1$ as follows.
Let $\cV_1^1$ be the set of all $X(h,x,y) \in \cV_1$ such that $x \in \Span{a_d,b_d}$ and $y \in \Span{a_{d+1},b_{d+1},\ldots,a_g,b_g}$
for some $1 \leq d \leq g-1$.  Let
$\cV_1^2$ be the set of all $X(h,x,y) \in \cV_1$ such that $x \in \Span{a_d,b_d}$ and $y \in \Span{a_{d+1},b_{d+1},\ldots,a_g,b_g}$ and $h \in \Span{a_d,b_d,\ldots,a_g,b_g}$
for some $1 \leq d \leq g-1$.

\begin{step}{1}
\label{step:smallergenset1.1}
We prove that $\Span{\cV_1^1} = \Span{\cV_1}$.
\end{step}

Consider some $X(h,x,y) \in \cV_1$.  We must show that $X(h,x,y)$ is in the span of $\cV_1^1$.  Let $1 \leq d \leq g$
be such that $x \in \Span{a_d,b_d}$ and $y \in \Span{a_d,b_d}^{\perp}$.  Write
\[y = y_1 + \cdots + y_g \quad \text{with $y_i \in \Span{a_i,b_i}$},\]
so $y_d = 0$.  Using the additivity relation (Lemma \ref{lemma:additivityrelation}), we see that
$X(h,x,y) = X(h,x,y_1+\cdots+y_g)$ equals the following, where the colored term vanishes since
$y_d=0$:
\[X(h,x,y_1) + \cdots + \orange{X(h+y_1+\cdots+y_{d-1},x,y_d)} + \cdots + X(h+y_1+\cdots+y_{g-1},x,y_g).\]
For $1 \leq i \leq g$ with $i \neq d$, the symmetry relation (Lemma \ref{lemma:symmetryrelation}) says that
\[X(h+y_1+\cdots+y_{i-1},x,y_i) = X(h+y_1+\cdots+y_{i-1},y_i,x).\]
From this, we see that whether or not $i<d$ or $i>d$ this term lies in $\cV_1^1$.  The step follows.

\begin{step}{2}
\label{step:smallergenset1.2}
We prove that $\Span{\cV_1^2} = \Span{\cV_1^1}$.
\end{step}

Consider some $X(h,x,y) \in \cV_1^1$.  We must show that $X(h,x,y)$ is in the span of $\cV_1^2$.  Let $1 \leq d \leq g-1$
be such that $x \in \Span{a_d,b_d}$ and $y \in \Span{a_{d+1},b_{d+1},\ldots,a_g,b_g}$.  If $d=1$ there is nothing
to prove, so assume that $d>1$.  Write
\[h = h_1 + \cdots + h_{d-1} + h' \quad \text{with $h_i \in \Span{a_i,b_i}$ and $h' \in \Span{a_d,b_d,\ldots,a_g,b_g}$}.\]
Applying the cube relation (Lemma \ref{lemma:cuberelation}) repeatedly, we see that
\begin{align*}
X(h_1+\cdots+h_{d-1}+h',x,y) &= X(h_2+\cdots+h_{d-1}+h',x,y) \blue{-X(h_2+\cdots+h_{d-1}+h',h_1,y)} \\
                                 &\ \ \ \ \blue{+X(h_2+\cdots+h_{d-1}+h'+x,h_1,y)} \\
X(h_2+\cdots+h_{d-1}+h',x,y) &= X(h_3+\cdots+h_{d-1}+h',x,y) \blue{-X(h_3+\cdots+h_{d-1}+h',h_2,y)} \\
                                 &\ \ \ \ \blue{+X(h_3+\cdots+h_{d-1}+h'+x,h_2,y)} \\
                             &\vdotswithin{=} \\
X(h_{d-1}+h',x,y)            &= X(h',x,y) \blue{-X(h',h_{d-1},y) +X(h'+x,h_{d-1},y)}.
\end{align*}
The element $X(h',x,y)$ and all the blue terms are in $\cV_1^2$, so $X(h,x,y) \in \Span{\cV_1^2}$, as desired.

\begin{step}{3}
\label{step:smallergenset1.3}
We prove that $\Span{\cW_1} = \Span{\cV_1}$.
\end{step}

Assume this is false.  By Step \ref{step:smallergenset1.2}, there must be some $X(h,x,y) \in \cV_1^2$ with
$X(h,x,y) \notin \cW_1$.  Choose $X(h,x,y) \in \cV_1^2$ with $X(h,x,y) \notin \Span{\cW_1}$ in the following way:
\begin{itemize}
\item Elements $X(h,x,y) \in \cV_1^2$ have
$x \in \Span{a_d,b_d}$ and $y \in \Span{a_{d+1},b_{d+1},\ldots,a_g,b_g}$
and $h \in \Span{a_d,b_d,\ldots,a_g,b_g}$ for some $1 \leq d \leq g-1$.  They do not lie in
$\cW_1$ precisely when $h \neq 0$.  In that case, for some $d \leq d' \leq g$ we can write
\[h = h_{d'} + \cdots + h_{g} \quad \text{with $h_i \in \Span{a_i,b_i}$ and $h_{d'} \neq 0$}.\]
Among all the $X(h,x,y) \in \cV_1^2$ with $X(h,x,y) \notin \Span{\cW_1}$, choose the one with $d'$ as large as possible.
\end{itemize}
There are now two cases.  The first is $d' = d$.  By the additivity relation (Lemma \ref{lemma:additivityrelation}),
\[\blue{X(h_{d+1}+\cdots+h_g,h_d+x,y)} = \blue{X(h_{d+1}+\cdots+h_g,h_d,y)} + X(h_d+\cdots+h_g,x,y).\]
Since $d'=d$ is as large as possible, both blue terms are in the span of $\cW_1$.  This implies that
$X(h_d+\cdots+h_g,x,y)$ is also in the span of $\cW_1$, a contradiction.

The second case is $d+1 \leq d' \leq g$.
The additivity relation (Lemma \ref{lemma:additivityrelation}) implies that
\[\blue{X(h_{d'+1}+\cdots+h_g,x,h_{d'}+y)} = \blue{X(h_{d'+1}+\cdots+h_g,x,h_{d'})} + X(h_{d'}+\cdots+h_g,x,y).\]
Since $d'$ is as large as possible, both blue terms are in the span of $\cW_1$.  This implies that
$X(h_d+\cdots+h_g,x,y)$ is also in the span of $\cW_1$, a contradiction.
\end{proof}

\subsection{Main result}

We now prove the main result of this section:

\begin{proposition}
\label{proposition:calcquotient1}
The restriction of $\fq$ to $\Span{\cV_1}$ is injective, and\hspace{2pt}\footnote{Here $\Q[\bigcup_{i=1}^g \Span{a_i,b_i}]$ is the
set of formal $\Q$-linear combinations of $\GrpR{h}$ for $h \in H_{\Z}$ an element such that $h \in \Span{a_i,b_i}$ for some
$1 \leq i \leq g$.  This is not a disjoint union since all these terms contain $0 \in H_{\Z}$.}
\[\Q[H_{\Z}] = \Span{\fq(\cV_1)} \oplus \Q[\bigcup_{i=1}^g \Span{a_i,b_i}].\]
\end{proposition}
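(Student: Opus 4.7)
The plan is to apply Lemma \ref{lemma:smallergenset1} to replace $\cV_1$ by the much smaller generating set $\cW_1$, and then identify $\fq(\cW_1)$ as a basis of a complement of $\Q[\bigcup_{i=1}^g \Span{a_i,b_i}]$ in $\Q[H_{\Z}]$. Recall that $\cW_1$ consists of those $X(0,x,y)$ with $x \in \Span{a_d,b_d}\setminus\{0\}$ and $y \in \Span{a_{d+1},b_{d+1},\ldots,a_g,b_g}\setminus\{0\}$ for some $1 \le d \le g-1$, and that
\[ \fq(X(0,x,y)) = \GrpR{0} - \GrpR{x} - \GrpR{y} + \GrpR{x+y}. \]

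For $h \in H_{\Z}$, write $h = h_1 + \cdots + h_g$ with $h_i \in \Span{a_i,b_i}$, and set $\operatorname{supp}(h) = \{i : h_i \neq 0\}$. The subspace $\Q[\bigcup_{i=1}^g \Span{a_i,b_i}]$ is spanned by those $\GrpR{h}$ with $|\operatorname{supp}(h)| \le 1$, and it has an obvious complement $U \subset \Q[H_{\Z}]$ spanned by those $\GrpR{h}$ with $|\operatorname{supp}(h)| \ge 2$. Let $\pi\colon \Q[H_{\Z}]\to U$ denote the corresponding projection.

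The key observation is that $X(0,x,y) \mapsto x + y$ is a bijection from $\cW_1$ onto $\{z \in H_{\Z} : |\operatorname{supp}(z)| \ge 2\}$, with inverse sending $z$ to $X(0,z_d,z-z_d)$ where $d = \min \operatorname{supp}(z)$. Among the four terms of $\fq(X(0,x,y))$, the first three have support of size at most $1$ and so lie in $\ker\pi$, while $\operatorname{supp}(x+y)$ contains $d$ together with the nonempty $\operatorname{supp}(y) \subset \{d+1,\ldots,g\}$, so $\GrpR{x+y}$ survives the projection. Consequently $\pi \circ \fq$ bijects $\cW_1$ onto the standard basis of $U$.

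Both conclusions of the proposition then follow. Since the images form a linearly independent set in $U$, the map $\pi \circ \fq$ is injective on $\Span{\cW_1} = \Span{\cV_1}$, hence so is $\fq$; this also shows that $\cW_1$ is a basis of $\Span{\cV_1}$. Moreover $\pi$ restricts to a linear isomorphism $\Span{\fq(\cV_1)} \xrightarrow{\sim} U$, so $\Span{\fq(\cV_1)}$ is a complement to $\ker\pi = \Q[\bigcup_{i=1}^g \Span{a_i,b_i}]$ in $\Q[H_{\Z}]$, giving the stated direct sum decomposition. I anticipate no real obstacle here; Lemma \ref{lemma:smallergenset1} has already done the hard work of choosing a good set of generators, and what remains is essentially bookkeeping with supports.
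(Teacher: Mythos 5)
There is a real gap in the middle of your argument. You assert that among the four terms of $\fq(X(0,x,y)) = \GrpR{0} - \GrpR{x} - \GrpR{y} + \GrpR{x+y}$, ``the first three have support of size at most $1$ and so lie in $\ker\pi$.'' This is false for the third term. In $\cW_1$ the element $y$ ranges over all nonzero vectors in $\Span{a_{d+1},b_{d+1},\ldots,a_g,b_g}$, so $\operatorname{supp}(y)$ can have any size from $1$ to $g-d$. For example $X(0,a_1,a_2+a_3) \in \cW_1$ has
\[
\pi\bigl(\fq(X(0,a_1,a_2+a_3))\bigr) = -\GrpR{a_2+a_3} + \GrpR{a_1+a_2+a_3},
\]
a sum of two standard basis vectors of $U$, not one. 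So $\pi\circ\fq$ does \emph{not} biject $\cW_1$ onto the standard basis of $U$, and the ``consequently'' in your next sentence does not follow as stated.

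The conclusion is still true, and there are two ways to repair the argument. One is to observe that $\pi\circ\fq$ is unitriangular with respect to the filtration of $U$ by $|\operatorname{supp}|$: writing $s = |\operatorname{supp}(x+y)|$, the projected image is $\GrpR{x+y}$ (support size $s$) minus $\GrpR{y}$ (support size $s-1$), so ordering $\cW_1$ and the standard basis of $U$ by support size gives a matrix that is triangular with $1$'s on the diagonal, whence $\pi\circ\fq\colon\Span{\cW_1}\to U$ is an isomorphism. The paper instead constructs an explicit two-sided inverse $\fp\colon U \to \Span{\cW_1}$ via the telescoping sum $\fp(\GrpR{z_{d_1}+\cdots+z_{d_r}}) = \sum_{i=1}^{r-1} X(0,z_{d_i},z_{d_{i+1}}+\cdots+z_{d_r})$, and verifies $\hfq\circ\fp = \id$ and $\fp\circ\hfq = \id$ directly. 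Your underlying decomposition of $\Q[H_{\Z}]$ by support size and your identification of the bijection $X(0,x,y)\mapsto x+y$ from $\cW_1$ to $\{z : |\operatorname{supp}(z)|\geq 2\}$ are both correct and useful; the missing step is just the triangularity observation (or equivalently the telescoping inverse).
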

\begin{proof}
Let $\hfq$ be the composition
\[\begin{tikzcd}
\cQ_g \arrow{r}{\fq} & \Q[H_{\Z}] \arrow[two heads]{r} & \Q[H_{\Z}] / \Q[\bigcup_{i=1}^g \Span{a_i,b_i}]
\end{tikzcd}\]
and let $\cQ_g(1) = \Span{\cV_1}$.  We will prove that the restriction of $\hfq$ to $\cQ_g(1)$ is an isomorphism.  

Let $Q$ be the codomain of $\hfq$.  We construct an inverse $\fp\colon Q \rightarrow \cQ_g(1)$ to
$\hfq|_{\cQ_g(1)}$ as follows.  We can identify $Q$ with the set of formal $\Q$-linear combinations
of terms of the form $\GrpR{z_{d_1} + \cdots + z_{d_r}}$ where:
\begin{itemize}
\item $1 \leq d_1 < d_2 < \cdots < d_r \leq g$ with $r \geq 2$; and
\item for $1 \leq i \leq r$, the term $z_{d_i}$ is a nonzero element of $\Span{a_{d_i},b_{d_i}}$.
\end{itemize}
Define
\[\fp(\GrpR{z_{d_1} + \cdots + z_{d_r}}) = \sum_{j=1}^{r-1} X(0,z_{d_i},z_{d_{i+1}}+\cdots+z_{d_{r}}).\]
To see that this is an inverse to $\hfq|_{\cQ_g(1)}$, we must check two things:

\begin{unnumberedclaim}
For $\GrpR{z_{d_1}+\cdots+z_{d_r}}$ as above, we have 
\[\hfq(\fp(\GrpR{z_{d_1}+\cdots+z_{d_r}})) = \GrpR{z_{d_1}+\cdots+z_{d_r}}.\]
\end{unnumberedclaim}

For this, we calculate as follows.  Terms of $\Q[\bigcup_{i=1}^g \Span{a_i,b_i}]$
are in blue, and vanish in $Q$:
\begin{align*}
\fq(\fp(\GrpR{z_{d_1}+\cdots+z_{d_r}})) &=  \fq\left(\sum_{i=1}^{r-1} X(0,z_{d_i}, z_{d_{i+1}} + \cdots + z_{d_r})\right) \\
                                         &= \sum_{i=1}^{r-1} \blue{\GrpR{0}} - \blue{\GrpR{z_{d_i}}} - \GrpR{z_{d_{i+1}}+\cdots+z_{d_r}} + \GrpR{z_{d_i} + \cdots+z_{d_r}}.
\end{align*}
Deleting the indicated blue terms gives a telescoping sum adding up to $\GrpR{z_{d_1}+\cdots+z_{d_r}}-\blue{\GrpR{z_{d_r}}}$.  Deleting
this final blue term gives $\GrpR{z_{d_1}+\cdots+z_{d_r}}$, as desired.

\begin{unnumberedclaim}
The composition $\fp \circ \hfq$ is the identity on $\cQ_g(1)$. 
\end{unnumberedclaim}

We check this on the generating set
$\cW_1$ given by Lemma \ref{lemma:smallergenset1}.  An element of $\cW_1$ can be written as $X(0,x_{e_1},x_{e_2}+\cdots+x_{e_s})$ where:
\begin{itemize}
\item $1 \leq e_1 < \cdots < e_s \leq g$ with $s \geq 2$; and
\item for $1 \leq i \leq s$, the term $x_{e_{i}}$ is a nonzero element of $\Span{a_{e_i},b_{e_i}}$.
\end{itemize}
If $s = 2$, then when we apply $\hfq$ to this the only term that survives
in the quotient $Q$ is $\GrpR{x_{e_1}+x_{e_2}}$, which is taken by $\fp$ back to $X(0,x_{e_1},x_{e_2})$.  If $s \geq 3$, then
when we apply $\hfq$ to this two terms survive in the quotient $Q$:
\[\hfq(X(0,x_{e_1},x_{e_2}+\cdots+x_{e_s})) = -\GrpR{x_{e_2}+\cdots+x_{e_s}} + \GrpR{x_{e_1}+\cdots+x_{e_s}}.\]
Applying $\fp$ to this gives
\[-\sum_{i=2}^{s-1} X(0,x_{e_i},x_{e_{i+1}}+\cdots+x_{e_{s}}) + \sum_{i=1}^{s-1} X(0,x_{e_i},x_{e_{i+1}}+\cdots+x_{e_{s}}).\]
All terms cancel except $X(0,x_{e_1},x_{e_2}+\cdots+x_{e_s})$, as desired.
\end{proof}

\section{Calculation of the quotient II: the second set of generators}
\label{section:calcquotient2}

We now move on the set $\cV_2$ of generators, which we recall consists of all $X(h,x,x+y)$ with
$h \in H_{\Z}$ arbitrary such that for some distinct $1 \leq d,e \leq g$ we have
$x \in \{a_d,b_d\}$ and $y \in \{\pm a_e,\pm b_e\}$.
As notation, define $\cQ_{g/1} = \cQ_g/\Span{\cV_1}$.  For any generator $X(h,x,y)$ of $\cQ_g$, 
let $X_{1}(h,x,y)$ be its image in $\cQ_{g/1}$.

\subsection{Y-elements}

We start by proving that for $X(h,x,x+y) \in \cV_2$, its image $X_{1}(h,x,x+y) \in \cQ_{g/1}$ does not
depend on $y$:

\begin{lemma}
\label{lemma:ywelldefined}
Let $h \in H_{\Z}$ and $x \in \{a_d,b_d\}$ for some $1 \leq d \leq g$.  For some $1 \leq e,e' \leq g$ with $e,e' \neq d$, let
$y \in \{\pm a_e,\pm b_e\}$ and $y' \in \{\pm a_{e'},\pm b_{e'}\}$.  Then $X_{1}(h,x,x+y) = X_{1}(h,x,x+y')$.
\end{lemma}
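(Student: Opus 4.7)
The plan is to show that the difference $X(h,x,x+y) - X(h,x,x+y')$ lies in $\Span{\cV_1}$ via a single application of the additivity relation. Writing $x+y = (x+y') + (y-y')$ and applying Lemma \ref{lemma:additivityrelation} in the second argument (which is valid by combining it with the symmetry relation), I would obtain
\[
X(h,x,x+y) = X(h,x,x+y') + X(h+x+y',\, x,\, y-y').
\]
The second term then lies in $\cV_1$, or vanishes when $y=y'$: since $e,e'\neq d$, the element $y-y'$ lies in $\Span{a_e,b_e,a_{e'},b_{e'}} \subseteq \Span{a_d,b_d}^{\perp}$, while $x \in \Span{a_d,b_d}$, so the term is of the form characterizing $\cV_1$.

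For the additivity relation to apply, both $x+y'$ and $y-y'$ must be homologically separate from $x$. The case of $y-y'$ is immediate from the standard splitting $H_{\Z} = \Span{a_d,b_d} \oplus \Span{a_d,b_d}^{\perp}$. The case of $x+y'$ is the only non-routine point: we have $\omega(x,x+y') = \omega(x,y') = 0$ since $x$ and $y'$ lie in distinct symplectic blocks, but $\omega$-orthogonality alone does not guarantee homological separateness. To handle this I would exhibit an explicit symplectic splitting. For $x = a_d$ and $y' = \varepsilon a_{e'}$ with $\varepsilon \in \{+1,-1\}$, take $X = \Span{a_d,\, b_d - \varepsilon b_{e'}}$; a direct check gives $\omega(a_d, b_d - \varepsilon b_{e'}) = 1$ (so $\omega|_X$ is non-degenerate) and $x+y' \in X^{\perp}$, yielding the splitting $H_{\Z} = X \oplus X^{\perp}$. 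The remaining subcases ($y' = \varepsilon b_{e'}$, or $x = b_d$) are handled by the evident analogous choices such as $X = \Span{a_d,\, b_d + \varepsilon a_{e'}}$.

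The main obstacle is this last verification. It is tempting to assume that any $\omega$-orthogonal pair is homologically separate, but this is false (an element is $\omega$-orthogonal to itself), so a splitting must actually be produced. In our setting the fact that $x$ and $y'$ are standard symplectic basis vectors up to sign makes the construction essentially mechanical, and once it is done the lemma follows by passing to $\cQ_{g/1}$.
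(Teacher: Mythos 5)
Your argument has a genuine gap, and it has two parts.

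First, the additivity relation requires $\{x+y',\,y-y'\}$ and $\{x\}$ to be homologically separate, which by the paper's definition means a \emph{single} symplectic splitting $H_{\Z} = X' \oplus Y'$ with both $x+y'$ and $y-y'$ in $X'$ and $x$ in $Y'$. You instead check the two conditions separately, using the standard splitting for $y-y'$ and a different one for $x+y'$; that is not the same condition. Concretely, the splitting you exhibit, $X = \Span{a_d,\,b_d - \varepsilon b_{e'}}$, does not place $y-y'$ in $X^{\perp}$: for $y = a_e$ with $e \neq d, e'$ one has $\omega(a_e - \varepsilon a_{e'},\, b_d - \varepsilon b_{e'}) = \varepsilon^2\,\omega(a_{e'},b_{e'}) = 1 \neq 0$. (A correct single splitting in this case is $Y' = \Span{a_d,\,b_d - b_e - b_{e'}}$ when $y=a_e$, $y'=a_{e'}$, and its analogues in the other subcases.)

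Second, and fatally, you never reduce to $e \neq e'$, and when $e = e'$ with $y = -y'$ the required splitting does not exist at all. If $y = a_e$ and $y' = -a_e$, then $x+y' = a_d - a_e$ and $y - y' = 2a_e$; if both lay in a symplectic complement $X'$ of a summand $Y'$ containing $a_d$, then $a_d = (a_d - a_e) + a_e$ would lie in $X' \cap Y' = 0$, a contradiction. So the one-step decomposition $x+y = (x+y') + (y-y')$ simply cannot be fed into the additivity relation in that case. The paper's proof handles this by first invoking $g \ge 4$ to reduce to $e \neq e'$ (chaining through a third index if necessary) and then comparing both $X(h,x,x+y)$ and $X(h,x,x+y')$ to the common element $X(h,x,x+y+y')$, where each application of additivity needs only the straightforward separateness of $\{x+y,\,y'\}$ from $\{x\}$ once $d,e,e'$ are distinct. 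Your asymmetric decomposition can be salvaged, but only after adding both the reduction to $e \neq e'$ and a correct combined splitting.
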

\begin{proof}
Recall our standing assumption that $g \geq 4$ (Assumption \ref{assumption:genus}).  Because of this,
it is enough to prove the claim when $e \neq e'$.  Using the additivity relation (Lemma \ref{lemma:additivityrelation}), we
have
\begin{align*}
X(h,x,x+y+y') &= X(h,x,x+y)  + \blue{X(h+x+y,x,y')}, \\
X(h,x,x+y'+y) &= X(h,x,x+y') + \blue{X(h+x+y',x,y)}.
\end{align*}
The blue terms here lie in $\cV_1$ and thus die in $\cQ_{g/1}$.  The lemma follows.
\end{proof}

Using this lemma, if $h \in H_{\Z}$ and $x \in \{a_d,b_d\}$ for some $1 \leq d \leq g$, then we can define
$Y(h,x) \in \cQ_{g/1}$ to be the image of any corresponding element $X(h,x,x+y) \in \cV_2$.

\subsection{h-values of Y-elements}

We now prove that if $h \in H_{\Z}$ and $x \in \{a_d,b_d\}$ for some $1 \leq d \leq g$, then
$Y(h,x)$ only depends on the projection of $h$ to $\Span{a_d,b_d}$:

\begin{lemma}
\label{lemma:hvaluey}
Let $h,h' \in H_{\Z}$ and let $x \in \{a_d,b_d\}$ for some $1 \leq d \leq g$.  Assume that
$h'-h \in \Span{a_d,b_d}^{\perp}$.  Then $Y(h,x) = Y(h',x)$.
\end{lemma}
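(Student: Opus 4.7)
The plan is to reduce to the case where $h'-h$ lies in a single symplectic summand $\Span{a_i,b_i}$ with $i \neq d$, and then apply the cube relation with a carefully chosen auxiliary element. Write $k := h' - h = \sum_{i \neq d} k_i$ with $k_i \in \Span{a_i,b_i}$. By telescoping, it suffices to prove that $Y(h_0,x) = Y(h_0 + k_i, x)$ for every $h_0 \in H_{\Z}$ and every $k_i \in \Span{a_i,b_i}$ with $i \neq d$.

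Fix such $i$ and $k_i$ (the case $k_i = 0$ is trivial). Since $g \geq 4$, I can pick an index $e \notin \{d,i\}$ together with some $z \in \{\pm a_e, \pm b_e\}$. By Lemma \ref{lemma:ywelldefined}, the two $Y$-elements in question are represented by $X(h_0, x, x+z)$ and $X(h_0+k_i, x, x+z)$. I would then apply the cube relation (Lemma \ref{lemma:cuberelation}) to the triple $(x, k_i, x+z)$ to obtain
\[X(h_0+k_i, x, x+z) = X(h_0, x, x+z) - X(h_0, k_i, x+z) + X(h_0+x, k_i, x+z).\]

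The main technical obstacle is to verify that $\{x, k_i\}$ is homologically separate from $\{x+z\}$, which is more subtle than it looks because $x$ and $x+z$ share a $\Span{a_d,b_d}$-component, so the naive splitting $\Span{a_d,b_d} \oplus \Span{a_d,b_d}^{\perp}$ does not work. The key is to use a \emph{tilted} symplectic splitting. Taking $x = a_d$ and $z = a_e$ for concreteness, I would use
\[A = \Span{a_d,\, b_d - b_e} \oplus \Span{a_j, b_j : j \neq d,e}, \qquad B = \Span{a_d + a_e,\, b_e}.\]
Then $H_{\Z} = A \oplus B$ is a symplectic splitting, with $x, k_i \in A$ (the latter since $j = i \neq d, e$) and $x+z \in B$. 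The remaining sign and index choices for $x \in \{a_d, b_d\}$ and $z \in \{\pm a_e, \pm b_e\}$ are handled by analogous tilted splittings.

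Finally, the two error terms $X(h_0, k_i, x+z)$ and $X(h_0+x, k_i, x+z)$ both lie in $\Span{\cV_1}$: indeed $k_i \in \Span{a_i,b_i}$ while $x+z \in \Span{a_d, b_d, a_e, b_e} \subset \Span{a_i,b_i}^{\perp}$, so each has exactly the shape of a $\cV_1$-generator with index $d' = i$. Passing to $\cQ_{g/1}$ these terms vanish, yielding $Y(h_0+k_i, x) = Y(h_0, x)$ and completing the induction on the summand decomposition of $k$.
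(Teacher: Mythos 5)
Your proof is correct and follows the same overall strategy as the paper's: telescope down to the case where $k = h'-h$ lies in a single summand $\Span{a_i,b_i}$ with $i \neq d$, pick an auxiliary index, apply the cube relation, and observe that the error terms die in $\cQ_{g/1}$. Two small refinements are worth noting. First, after applying the cube relation the paper further decomposes the error terms $X(h,k,x+a_{e'})$ and $X(h+x,k,x+a_{e'})$ via the additivity relation before concluding that everything lies in $\Span{\cV_1}$; your direct observation that these terms are already $\cV_1$-generators with index $i$ (since $k_i \in \Span{a_i,b_i}$ while $x+z \in \Span{a_d,b_d,a_e,b_e} \subset \Span{a_i,b_i}^{\perp}$) shortcuts that step and is a genuine simplification. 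Second, the paper does not explicitly verify the homological-separateness hypothesis needed to invoke the cube relation; you correctly flagged this as the one non-obvious point and produced the required tilted symplectic splitting $H_{\Z} = A \oplus B$ with $\{x,k_i\} \subset A$ and $x+z \in B$, and your remark that the remaining sign and index choices are handled by analogous splittings is acceptable.
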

\begin{proof}
Set $k = h'-h$, so $h' = h + k$.  We can write $k$ as a sum of elements lying
in subspaces of the form $\Span{a_e,b_e}$ with $e \neq d$, and it is enough to prove
the lemma for $k$ an element of such an $\Span{a_e,b_e}$.  Using our standing assumption $g \geq 4$ (see 
Assumption \ref{assumption:genus}), we can find $1 \leq e' \leq g$ with $e' \neq d,e$.  
Using the cube relation (Lemma \ref{lemma:cuberelation}) and the additivity relation (Lemma \ref{lemma:additivityrelation}), we see that
\begin{align*}
X(h+k,x,x+a_{e'}) &= X(h,x,x+a_{e'}) \blue{-X(h,k,x+a_{e'}) + X(h+x,k,x+a_{e'})}\\
                  &= X(h,x,x+a_{e'}) \blue{-X(h,k,x) - X(h+x,k,a_{e'})} \\
                  &\ \ \ \ \blue{+ X(h+x,k,x) + X(h+2x,k,a_{e'})}.
\end{align*}
The blue terms lie in the span of $\cV_1$, and thus vanish in $\cQ_{g/1}$.  We conclude that
\[Y(h,x) = X_{1}(h,x,x+a_{e'}) = X_{1}(h+k,x,x+a_{e'}) = Y(h+k,x).\qedhere\]
\end{proof}

Letting $\cV_{2/1}$ be the image of $\cV_2$ in $\cQ_{g/1}$, we see from the above claim that $\cV_{2/1}$ consists of all $Y(h,x)$
with $h \in \Span{a_d,b_d}$ and $x \in \{a_d,b_d\}$ for some $1 \leq d \leq g$.

\subsection{Mapping Y-elements}

Consider the composition
\[\begin{tikzcd}
\cQ_g \arrow{r}{\fq} & \Q[H_{\Z}] = \Span{\fq(\cV_1)} \oplus \Q[\bigcup_{i=1}^g \Span{a_i,b_i}] \arrow[two heads]{r} & \Q[\bigcup_{i=1}^g \Span{a_i,b_i}],
\end{tikzcd}\]
where the equality comes from Proposition \ref{proposition:calcquotient1}.  This induces a map
\[\fq_{/1}\colon \cQ_{g/1} \rightarrow \Q[\bigcup_{i=1}^g \Span{a_i,b_i}].\]
For $h \in \Span{a_i,b_i}$, we will still denote the corresponding element of $\Q[\bigcup_{i=1}^g \Span{a_i,b_i}]$ by $\GrpR{h}$.
The following lemma calculates the image of $Y(h,x)$ under the map $\fq_{/1}$:

\begin{lemma}
\label{lemma:yimage}
For $Y(h,x) \in \cV_{2/1}$, we have $\fq_{/1}(Y(h,x)) = \GrpR{h} - 2 \GrpR{h+x} + \GrpR{h+2x}$.
\end{lemma}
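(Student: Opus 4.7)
The plan is to pick a convenient representative of $Y(h,x)$, compute its image under the original $\fq$, and then identify the part that gets killed by the projection $\fq_{/1}$ as a single explicit element of $\Span{\fq(\cV_1)}$. So let me fix $1 \leq d \leq g$ with $x \in \{a_d, b_d\}$ and, using the standing assumption $g \geq 4$, pick any $e \neq d$ and set $y = a_e$ (or any element of $\{\pm a_e, \pm b_e\}$). Then by the very definition of $Y(h,x)$ we have $Y(h,x) = X_1(h, x, x+y)$, and $X(h, x, x+y)$ is a legitimate generator since $x \in \Span{a_d,b_d}$ and $x+y$ are homologically separate (a splitting is provided by $\Span{a_d,b_d} \oplus \Span{a_d,b_d}^{\perp}$, the vector $x+y$ living in the perpendicular complement together with $x$ up to moving $x$ out — more precisely, $x$ and $x+y$ are separated by using the splitting $\Span{a_d,b_d} \oplus \Span{a_d,b_d}^{\perp}$ applied to the pair $(x,\,x+y)$ after noting $x+y \notin \Span{a_d,b_d}$ forces us instead to take a splitting containing $x+y$ in one factor and $x$ in the other — this is routine).

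From Lemma \ref{lemma:coinvariantreidemeister}.(b), applied via the definition of $X(h,x,y)$ in \S \ref{section:generatorsx}, we have
\[
\fq(X(h, x, x+y)) = \GrpR{h} - \GrpR{h+x} - \GrpR{h+x+y} + \GrpR{h+2x+y}.
\]
The claimed image of $Y(h,x)$ is $\GrpR{h} - 2\GrpR{h+x} + \GrpR{h+2x}$, so the difference is
\[
\GrpR{h+x} - \GrpR{h+x+y} - \GrpR{h+2x} + \GrpR{h+2x+y}.
\]
The key observation is that this difference is itself the $\fq$-image of one specific element of $\cV_1$, namely $X(h+x, x, y)$, which lies in $\cV_1$ because $x \in \Span{a_d,b_d}$ and $y \in \Span{a_e,b_e} \subset \Span{a_d,b_d}^{\perp}$ are both nonzero. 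Thus
\[
\fq(X(h, x, x+y)) = \bigl(\GrpR{h} - 2\GrpR{h+x} + \GrpR{h+2x}\bigr) + \fq\bigl(X(h+x, x, y)\bigr).
\]

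To finish, I invoke the direct sum decomposition from Proposition \ref{proposition:calcquotient1}: $\Q[H_{\Z}] = \Span{\fq(\cV_1)} \oplus \Q[\bigcup_{i=1}^g \Span{a_i,b_i}]$. The term $\fq(X(h+x, x, y))$ is visibly in the first summand, while — after using Lemma \ref{lemma:hvaluey} to arrange $h \in \Span{a_d, b_d}$ — the three terms $\GrpR{h}$, $\GrpR{h+x}$, $\GrpR{h+2x}$ all lie in $\Span{a_d,b_d}$ and hence in the second summand. The projection defining $\fq_{/1}$ therefore kills the $\cV_1$-piece and retains precisely the claimed expression, giving $\fq_{/1}(Y(h,x)) = \GrpR{h} - 2\GrpR{h+x} + \GrpR{h+2x}$. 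There is no serious obstacle here; the only subtlety is keeping straight the decomposition used to define $\fq_{/1}$ and confirming the explicit $\cV_1$-element that absorbs the unwanted terms.
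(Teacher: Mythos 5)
Your proof is correct and follows essentially the same strategy as the paper: compute $\fq$ of a representative $X(h,x,x+y)$, exhibit the discrepancy with the claimed answer as an explicit element of $\Span{\fq(\cV_1)}$, and invoke the direct sum decomposition $\Q[H_{\Z}] = \Span{\fq(\cV_1)} \oplus \Q[\bigcup_i \Span{a_i,b_i}]$ from Proposition \ref{proposition:calcquotient1}. The only (cosmetic) difference is your choice of corrector: you use the single generator $\fq(X(h+x,x,y)) \in \fq(\cV_1)$, whereas the paper writes the same element of $\Q[H_{\Z}]$ as the difference $\fq(X(h,x,a_e)) - \fq(X(h,2x,a_e))$ of two generators. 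Your version is slightly tidier. One small note: the parenthetical digression about why $x$ and $x+y$ are homologically separate is unnecessary and a bit garbled --- this is already built into the definition of $\cV_2$ (the pair $(x, x+z)$ with $x \in \{a_d,b_d\}$ and $z \in \{\pm a_e, \pm b_e\}$ is required to be homologically separate for $X(h,x,x+z)$ to even be defined) --- but it does no harm.
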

\begin{proof}
Let $1 \leq d \leq g$ be such that $h \in \Span{a_d,b_d}$ and $x \in \{a_d,b_d\}$.  Pick $1 \leq e \leq g$ with $e \neq d$, so
$Y(h,x) = X_{1}(h,x,x+a_e)$.  We then have
\[\fq(X(h,x,x+a_e)) = \GrpR{h} - \GrpR{h+x} - \GrpR{h+x+a_e} + \GrpR{h+2x+a_e}.\]
To project this into $\Q[\bigcup_{i=1}^g \Span{a_i,b_i}]$, we can add the images under $\fq$ of any
elements of $\cV_1$.  Adding
\begin{align*}
\fq(X(h,x,a_e)) - \fq(X(h,2x,a_e)) &= (\GrpR{h}-\GrpR{h+x}-\GrpR{h+a_e}+\GrpR{h+x+a_e}) \\
                                   &\ \ \ \ -(\GrpR{h}-\GrpR{h+2x} - \GrpR{h+a_e} + \GrpR{h+2x+a_e}),
\end{align*}
we get $\GrpR{h} - 2 \GrpR{h+x} + \GrpR{h+2x} \in \Q[\Span{a_d,b_d}]$.
Since this lies in $\Q[\bigcup_{i=1}^g \Span{a_i,b_i}]$, it is the projection of $\fq(X(h,x,x+a_e))$.  The lemma
follows.
\end{proof}

\subsection{Y-relation}

We now give a basic relation between the $Y(h,x)$:

\begin{lemma}
\label{lemma:yrelation}
Let $1 \leq d \leq g$ and let $h \in \Span{a_d,b_d}$.  Then
\[Y(h,a_d) -2 Y(h+b_d,a_d) + Y(h+2b_d,a_d) = Y(h,b_d) - 2 Y(h+a_d,b_d) + Y(h+2a_d,b_d).\]
\end{lemma}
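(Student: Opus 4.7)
The strategy is to derive the identity directly in $\cQ_{g/1}$ by exhibiting a single element of $\cQ_g$ whose image in $\cQ_{g/1}$ admits two expansions via the additivity relation, one producing the LHS and the other producing the RHS, with all error terms lying in $\Span{\cV_1}$.

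As a consistency check (not yet a proof), I would first apply $\fq_{/1}$ to both sides. By Lemma~\ref{lemma:yimage}, each $Y$-term contributes a second-order difference $\GrpR{h'} - 2\GrpR{h'+x} + \GrpR{h'+2x}$, and the LHS expands to a sum of nine terms
\[\GrpR{h} - 2\GrpR{h+a_d} + \GrpR{h+2a_d} - 2\GrpR{h+b_d} + 4\GrpR{h+a_d+b_d} - 2\GrpR{h+2a_d+b_d} + \GrpR{h+2b_d} - 2\GrpR{h+a_d+2b_d} + \GrpR{h+2a_d+2b_d}\]
in $\Q[\Span{a_d,b_d}]$, which is manifestly symmetric under the swap $a_d \leftrightarrow b_d$; thus it equals the image of the RHS. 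Since injectivity of $\fq_{/1}$ is not yet known (indeed, it is the ultimate goal), this check only says that $\text{LHS} - \text{RHS}$ lies in $\ker(\fq_{/1})$, and we still need an actual derivation.

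For the derivation, I would fix $e \neq d$ and use the specific representatives $Y(h',a_d) = X_1(h', a_d, a_d + a_e)$ and $Y(h', b_d) = X_1(h', b_d, b_d + a_e)$ furnished by Lemma~\ref{lemma:ywelldefined}. The natural candidate for the common element is one of the form $X(h, a_d + b_d, (a_d + b_d) + a_e)$ or $X(h, 2a_d + 2b_d, (a_d + b_d) + a_e)$, which satisfies $\omega(\text{first}, \text{second}) = 0$ and is homologically separate via a nonstandard symplectic splitting such as $\Span{a_d, b_d - b_e} \oplus \Span{a_d + b_d + a_e, b_e, \ldots}$ after a suitable adjustment. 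Expanding such an element via additivity in one order (first separating the $a_d$-contributions, then the $b_d$-contributions) should produce sums involving $Y(\cdot, a_d)$, while the reverse order should produce $Y(\cdot, b_d)$ contributions; the difference of the two expansions, together with Lemma~\ref{lemma:hvaluey} (to absorb $h$-shifts in directions orthogonal to $\Span{a_d, b_d}$) and the vanishing of $\cV_1$-terms in $\cQ_{g/1}$, should yield exactly $\text{LHS} - \text{RHS} = 0$.

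The main obstacle is that most natural intermediate elements, such as $X(h', a_d, b_d)$, are undefined because $\omega(a_d, b_d) = 1 \neq 0$, and cube-relation shifts by $k = b_d$ fail for the same reason whenever the second slot contains an $a_d$-component. So the proof requires carefully choosing intermediate elements that avoid such symplectically-paired arguments, and may need auxiliary ``bridge'' elements, possibly $\cV_3$-type generators like $X(h', a_d + a_e, b_d - b_e)$, whose own expansions under additivity relate $a_d$-shifts to $b_d$-shifts. The hard part will be identifying the correct sequence of manipulations; once it is in hand, the verification using only the relations of \S\ref{section:quotientrel} and Lemma~\ref{lemma:hvaluey} should be routine.
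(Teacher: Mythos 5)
Your proposal does not contain a complete argument: you correctly identify that applying $\fq_{/1}$ shows $\text{LHS}-\text{RHS} \in \ker(\fq_{/1})$, but you then explicitly defer the actual derivation, writing that ``the hard part will be identifying the correct sequence of manipulations'' without supplying it. That deferred step is the entire content of the lemma, and the obstacles you list (that $X(h',a_d,b_d)$ is undefined, that cube-relation shifts by $k=b_d$ fail when the second slot involves $a_d$, that the candidate common elements need ``suitable adjustment'') suggest that a direct additivity-expansion argument would be quite painful to complete.

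The key idea you are missing is that you do not need to work at the level of $\fq_{/1}$. The paper instead exhibits an identity entirely among elements of $\cV_1$, namely
\begin{align*}
&X(k,a_1,a_2) - X(k+b_1,a_1,a_2) - X(k+b_2,a_1,a_2) + X(k+b_1+b_2,a_1,a_2) \\
&\quad = X(k,b_1,b_2) - X(k+a_1,b_1,b_2) - X(k+a_2,b_1,b_2) + X(k+a_1+a_2,b_1,b_2),
\end{align*}
and observes that both sides lie in $\Span{\cV_1}$ and have the same image under $\fq$. Since Proposition~\ref{proposition:calcquotient1} has \emph{already} established that $\fq$ restricted to $\Span{\cV_1}$ is injective (this is the piece of injectivity that is available at this stage, unlike $\fq_{/1}$), the identity holds in $\cQ_g$. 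One then applies a symplectic automorphism $\phi \in \Sp_{2g}(\Z)$ sending $(a_1,b_1,a_2,b_2)$ to $(a_d+a_e, b_d-b_{e'}, a_d+a_{e'}, b_d-b_e)$ — which exists because $g \geq 4$ — to transport the relation; after passing to $\cQ_{g/1}$, peeling off the $\cV_1$-contributions via the additivity relation, and using Lemma~\ref{lemma:hvaluey} to discard the $b_{e'}$- and $b_e$-shifts, each of the eight terms becomes exactly one term of the desired $Y$-relation. Your ``consistency check'' was in the right spirit, but you applied $\fq_{/1}$ (whose injectivity is the goal, not a tool) rather than $\fq|_{\Span{\cV_1}}$ (whose injectivity is already proven), and you did not see the symplectic-automorphism step that converts a $\cV_1$-internal identity into the $Y$-relation.
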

\begin{proof}
For an arbitrary $k \in H_{\Z}$, we first claim that there is a relation
\begin{align*}
&X(k,a_1,a_2) - X(k+b_1,a_1,a_2) - X(k+b_2,a_1,a_2) + X(k+b_1+b_2,a_1,a_2) \\
&\ \ \ \ = X(k,b_1,b_2) - X(k+a_1,b_1,b_2) - X(k+a_2,b_1,b_2) + X(k+a_1+a_2,b_1,b_2)
\end{align*}
in $\cQ_g$.  To see this, observe that $\fq$ takes this to a true relation in $\Q[H_{\Z}]$, and each term of it
lies in $\cV_1$.  Proposition \ref{proposition:calcquotient1} says that the restriction of $\fq$ to $\Span{\cV_1}$ is an injection,
so we conclude that the above is a relation in $\cQ_g$, as claimed.

Recall our standing assumption that $g \geq 4$ (Assumption \ref{assumption:genus}).
Using this, pick distinct $1 \leq e,e' \leq g$ with $e,e' \neq d$.  Let $\phi \in \Sp_{2g}(\Z)$ be a symplectic automorphism taking $(a_1,b_1,a_2,b_2)$ to
$(a_d+a_e,b_d-b_{e'},a_d+a_{e'},b_d-b_e)$, which exists since both are partial symplectic bases.  Choose $k \in H_{\Z}$ such that $\phi(k) = h$.  The
group $\Sp_{2g}(\Z)$ acts on $\cQ_g$, so we can apply $\phi$ to the above relation and get a new relation
\begin{align*}
  &X(h,a_d+a_e,a_d+a_{e'}) - X(h+b_d-b_{e'},a_d+a_e,a_d+a_{e'}) \\
  &\ \ \  - X(h+b_d-b_e,a_d+a_e,a_d+a_{e'}) + X(h+2 b_d-b_{e'}-b_e,a_d+a_e,a_d+a_{e'}) \\
= &X(h,b_d-b_{e'},b_d-b_e) - X(h+a_d+a_e,b_d-b_{e'},b_d-b_e) \\
  &\ \ \ \ - X(h+a_d+a_{e'},b_d-b_{e'},b_d-b_e) + X(h+2a_d+a_e+a_{e'},b_d-b_{e'},b_d-b_e).
\end{align*}
Each term of this maps to a corresponding term in the desired relation between the $Y(h,x)$.  For instance, using
the additivity relation (Lemma \ref{lemma:additivityrelation}) we have
\begin{small}
\begin{align*}
X_{1}(h,a_d+a_e,a_d+a_{e'}) &= X_{1}(h,a_d,a_d+a_{e'}) + \blue{X_{1}(h+a_d,a_e,a_d+a_{e'})} = Y(h,a_d), \\
X_{1}(h+b_d-b_{e'},a_d+a_e,a_d+a_{e'}) &= X_{1}(h+b_d-b_{e'},a_d,a_d+a_{e'}) \\
                                     &\ \ \ \ + \blue{X_{1}(h+b_d-b_{e'}+a_d,a_e,a_d+a_{e'})} = Y(h+b_d,a_d). 
\end{align*}
\end{small}%
Here the blue terms are images of elements of $\Span{\cV_1}$ and thus die in $\oQ_g$, and for the
final equality we are using Lemma \ref{lemma:hvaluey} to discard the $-b_{e'}$.  The other terms are similar.
\end{proof}

\subsection{A smaller generating set}

We now show that only some of the $Y(h,x)$ are needed to generate $\Span{\cV_{2/1}}$.  Define
$\cW_2$ to be the union of the following two sets:
\begin{itemize}
\item $\Set{$Y(h,a_d)$}{$1 \leq d \leq g$, $h \in \Span{a_d,b_d}$}$; and
\item $\Set{$Y(h,b_d)$}{$1 \leq d \leq g$, $h \in \Span{a_d,b_d}$ with $a_d$-coordinate $0$ or $1$}$.
\end{itemize}
We then have:

\begin{lemma}
\label{lemma:smallergenset2}
Letting the notation be as above, we have $\Span{\cW_2} = \Span{\cV_{2/1}}$.
\end{lemma}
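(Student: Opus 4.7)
The plan is to use the Y-relation from Lemma \ref{lemma:yrelation} as a three-term recurrence to reduce an arbitrary $Y(h,b_d)$ with $h \in \Span{a_d,b_d}$ to the cases where the $a_d$-coordinate of $h$ lies in $\{0,1\}$. Since every $Y(h,a_d)$ with $h \in \Span{a_d,b_d}$ already lies in $\cW_2$ by definition, it is enough to handle the elements $Y(h,b_d)$.

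Fix $1 \leq d \leq g$ and write a general element as $Y(ma_d + nb_d, b_d)$ with $m,n \in \Z$. Applying Lemma \ref{lemma:yrelation} with $h = ma_d + nb_d$ and collecting terms, the left-hand side of the identity is a combination of elements $Y(-,a_d)$, all of which lie in $\cW_2$. Therefore, modulo $\Span{\cW_2}$, we have the congruence
\[Y(ma_d + nb_d, b_d) - 2\,Y((m{+}1)a_d + nb_d, b_d) + Y((m{+}2)a_d + nb_d, b_d) \equiv 0.\]
This is a linear three-term recurrence in the index $m$ (with $n$ fixed), and the base cases $m = 0$ and $m = 1$ are in $\cW_2$ by definition.

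The main step is then a straightforward two-sided induction on $|m|$. For $m \geq 2$, apply the recurrence at index $m-2$ to solve for $Y(ma_d+nb_d,b_d)$ in terms of $Y((m{-}2)a_d+nb_d,b_d)$ and $Y((m{-}1)a_d+nb_d,b_d)$; both have strictly smaller $a_d$-coordinate, so the inductive hypothesis places the result in $\Span{\cW_2}$. For $m \leq -1$, apply the recurrence at index $m$ to solve for $Y(ma_d+nb_d,b_d)$ in terms of $Y((m{+}1)a_d+nb_d,b_d)$ and $Y((m{+}2)a_d+nb_d,b_d)$, whose $a_d$-coordinates are strictly closer to $\{0,1\}$. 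Iterating reduces every case to the base cases.

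There is no real obstacle here: the only content is Lemma \ref{lemma:yrelation}, which has already been proved, and the observation that all $Y(-,a_d)$ contributions drop into $\cW_2$ for free. The only thing to be slightly careful about is the direction of the induction on the two sides of $\{0,1\}$, but both follow the same pattern of solving the recurrence for the extreme term.
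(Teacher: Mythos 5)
Your proof is correct and takes essentially the same approach as the paper: both treat Lemma \ref{lemma:yrelation} as a three-term recurrence in the $a_d$-coordinate (with the $Y(-,a_d)$ contributions absorbed into $\Span{\cW_2}$ for free) and push toward the base cases $m \in \{0,1\}$. The paper packages the two-sided reduction as a minimal-counterexample contradiction while you phrase it as a direct two-sided induction on $|m|$, but these are the same argument.
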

\begin{proof}
Let $1 \leq d \leq g$.  For all $h \in \Span{a_d,b_d}$ we must show that $\Span{\cW_2}$ contains
$Y(h,b_d)$.  Assume for the sake of contradiction that it does not, and let $Y(h,b_d)$ be an element
not lying in $\Span{\cW_2}$ such that the $a_d$-coordinate $\lambda$ of $h$ is as small as possible.
We thus either have $\lambda \geq 2$ or $\lambda \leq -1$.  If $\lambda \geq 2$, then the relation
\begin{align*}
&Y(h-2a_d,a_d) -2 Y(h-2a_d+b_d,a_d) + Y(h-2a_d+2b_d,a_d) \\
&\quad\quad = Y(h-2a_d,b_d) - 2 Y(h-a_d,b_d) + Y(h,b_d)
\end{align*}
from Lemma \ref{lemma:yrelation} allows us to write $Y(h,b_d)$ in terms of elements that lie in $\Span{\cW_2}$, a
contradiction.  The case where $\lambda \leq -1$ is similar.
\end{proof}

\subsection{Main result}

We now prove the main result of this section:

\begin{proposition}
\label{proposition:calcquotient2}
The restriction of $\fq_{/1}$ to $\Span{\cV_{2/1}}$ is injective, and
\[\Q[\bigcup_{i=1}^g \Span{a_i,b_i}] = \Span{\fq_{/1}(\cV_{2/1})} \oplus \Q[\bigcup_{i=1}^g \{0,a_i,b_i,a_i+b_i\}].\]
\end{proposition}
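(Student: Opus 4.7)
The plan is to mimic the argument of Proposition \ref{proposition:calcquotient1}: construct an explicit map $\fp\colon \Q[\bigcup_{i=1}^g \Span{a_i,b_i}] \to \Span{\cV_{2/1}} \subset \cQ_{g/1}$ with kernel $\Q[\bigcup_{i=1}^g \{0,a_i,b_i,a_i+b_i\}]$, and verify that the induced map on the quotient is a two-sided inverse of $\fq_{/1}|_{\Span{\cV_{2/1}}}$. Having such an $\fp$ yields both injectivity on $\Span{\cV_{2/1}}$ and the desired direct-sum decomposition at once.

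I would begin by decomposing the problem by index $d$. Since $\Span{a_d,b_d}\cap\Span{a_e,b_e}=\{0\}$ for $d\neq e$, the target quotient splits as $\bigoplus_d \Q[\Span{a_d,b_d}\setminus\{0,a_d,b_d,a_d+b_d\}]$. By Lemma \ref{lemma:yimage}, the $\fq_{/1}$-image of each generator in $\cW_2$ indexed by $d$ is supported in $\Q[\Span{a_d,b_d}]$, and Lemma \ref{lemma:smallergenset2} says $\Span{\cV_{2/1}}$ is spanned by $\cW_2$. So it suffices to treat a single $d$. Identifying $\Span{a_d,b_d}\cong\Z^2$ via the basis $(a_d,b_d)$, the images of the $d$-indexed generators are the second-difference elements
\[ T_a(k,\ell)=\GrpR{(k,\ell)}-2\GrpR{(k+1,\ell)}+\GrpR{(k+2,\ell)} \quad \text{for all } (k,\ell)\in\Z^2, \]
together with $T_b(k,\ell)=\GrpR{(k,\ell)}-2\GrpR{(k,\ell+1)}+\GrpR{(k,\ell+2)}$ only for $k\in\{0,1\}$ and $\ell\in\Z$.

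To define the local component $\fp_d$ on a basis element $\GrpR{(k,\ell)}$ with $(k,\ell)\notin\{0,1\}^2$, I would apply a two-stage induction. Stage one: if $k\geq 2$, use the rearrangement
\[ \GrpR{(k,\ell)} = T_a(k-2,\ell) - \GrpR{(k-2,\ell)} + 2\GrpR{(k-1,\ell)} \]
to replace $\GrpR{(k,\ell)}$ with a $T_a$-term plus basis terms with smaller $k$-coordinate (and symmetrically if $k\leq -1$), iterating until $k\in\{0,1\}$. Stage two: for $k\in\{0,1\}$ and $\ell\notin\{0,1\}$, apply the analogous reduction in the $\ell$-direction using $T_b(k,\cdot)$. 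Doing stage one first is essential: it guarantees that stage two only ever requires $T_b(k,\cdot)$ for $k\in\{0,1\}$, which is precisely the reason the generating set $\cW_2$ was chosen this way.

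Finally I would verify the two compositions. That $\fq_{/1}\circ\fp_d$ is the identity modulo $\Q[\{0,a_d,b_d,a_d+b_d\}]$ is immediate from the construction: each inductive step rewrites $\GrpR{(k,\ell)}$ as a $T$-element plus two basis elements strictly closer to $\{0,1\}^2$, so applying $\fq_{/1}$ and iterating gives back $\GrpR{(k,\ell)}$ modulo the four ``corner'' terms. That $\fp_d\circ\fq_{/1}$ is the identity on $\cW_2^d$ is a short telescoping check, most easily done by computing $\fp_d$ on each of the three terms of $T_a(k,\ell)$ or $T_b(k,\ell)$ and observing that all intermediate contributions cancel. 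The main obstacle is purely bookkeeping, namely keeping the two-stage reduction honest so that no $T_b(k,\cdot)$ with $k\notin\{0,1\}$ is ever invoked; this is the whole reason for the asymmetric definition of $\cW_2$.
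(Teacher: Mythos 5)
Your proposal is correct, but it proceeds by a genuinely different route from the paper.  The paper does not build an explicit section $\fp$; instead it isolates a one-variable linear algebra claim: if $\{\vec{e}_n\}_{n\in\Z}$ is a basis and $\vec{f}_n = \vec{e}_n - 2\vec{e}_{n+1} + \vec{e}_{n+2}$, then the $\vec{f}_n$ are linearly independent and span a complement to $\Span{\vec{e}_0,\vec{e}_1}$.  It then applies this first row by row in the $a_d$-direction (for every $m_0$), reducing to the two lines $L_0, L_1$, and then once more in the $b_d$-direction on those two lines.  This packages your two-stage recursion into a reusable lemma and sidesteps the well-definedness and termination bookkeeping entirely: one never has to verify that a recursive formula is consistent, only that certain vectors are linearly independent and span a complement, which is read off from an explicit basis change.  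Your approach, by contrast, mimics the paper's proof of Proposition~\ref{proposition:calcquotient1} (constructing a two-sided inverse $\fp$ by hand) and is sound: the $k$-coordinate strictly decreases under the stage-one rewriting, the base cases $k\in\{0,1\}$ feed into stage two which only ever invokes $T_b(k,\cdot)$ with $k\in\{0,1\}$, exactly matching the asymmetric definition of $\cW_2$, and the telescoping verifications go through including the boundary cases (e.g.\ $T_a(-1,\ell)$ or $T_b(0,-1)$, where some of the three terms are corner elements killed by $\fp$).  What your approach costs is this verification burden and the need to argue that the recursion is consistent; what it buys is an explicit formula for the section, which the paper never writes down.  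Either way, once you note that $\Span{\cV_{2/1}} = \Span{\cW_2}$ via Lemma~\ref{lemma:smallergenset2} and that the target splits as $\bigoplus_d \Q[\Span{a_d,b_d}]$ glued along $\GrpR{0}$, both arguments complete the proof.
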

\begin{proof}
By Lemma \ref{lemma:smallergenset2}, we have $\Span{\cV_{2/1}} = \Span{\cW_2}$.  We will prove that $\fq_{/1}$ takes
the elements of $\cW_2$ to linearly independent elements of $\Q[\bigcup_{i=1}^g \Span{a_i,b_i}]$ spanning a subspace that
is a complement to $\Q[\bigcup_{i=1}^g \{0,a_i,b_i,a_i+b_i\}]$.  For this, fix some $1 \leq d \leq g$.
Let $\cW_2(d)$ be the set of all $Y(h,x) \in \cW_2$ such that $h,x \in \Span{a_d,b_d}$.  By Lemma \ref{lemma:yimage}, for
$Y(h,x) \in \cW_2(d)$ we have
\[\fq_{/1}(Y(h,x)) =  \GrpR{h} - 2\GrpR{h+x} + \GrpR{h+2x} \in \Q[\Span{a_d,b_d}].\]
It is enough to prove that $\fq_{/1}$ takes the elements of $\cW_2(d)$ to linearly
independent elements of $\Q[\Span{a_d,b_d}]$ such that
\[\Q[\Span{a_d,b_d}] = \Span{\fq_{/1}(\cW_2(d)} \oplus \Q[\{0,a_d,b_d,a_d+b_d\}].\]
The key to this is the following easy piece of linear algebra:

\begin{unnumberedclaim}
Let $\{\vec{e}_n\}_{n \in \Z}$ be a basis for a $\Q$-vector space $V$ of countable dimension.
For each $n \in \Z$, let $\vec{f}_n = \vec{e}_n - 2 \vec{e}_{n+1} + \vec{e}_{n+2} \in V$.  Then the $\{\vec{f}_n\}_{n \in \Z}$ are linearly
independent elements of $V$ spanning a complement to $\Span{\vec{e}_0,\vec{e}_1}$.
\end{unnumberedclaim}
\begin{proof}[Proof of claim]
Immediate from the fact that
\begin{itemize}
\item $\{\vec{e}_0,\vec{e}_1,\vec{f}_0,\vec{f}_1,\vec{f}_2,\ldots\}$ is a basis for $\Span{\vec{e}_0,\vec{e}_1,\vec{e}_2,\ldots}$; and
\item $\{\vec{e}_1,\vec{e}_0,\vec{f}_{-1},\vec{f}_{-2},\vec{f}_{-3},\ldots\}$ is a basis for $\Span{\vec{e}_1,\vec{e}_0,\vec{e}_{-1},\ldots}$.\qedhere
\end{itemize}
\end{proof}

For $n,m \in \Z$, let $\vec{e}_{n,m} = n a_d+m b_d \in \Span{a_d,b_d}$.  The $\vec{e}_{n,m}$ form a basis for
$\Q[\Span{a_d,b_d}]$.  Define the following subspaces of $\Q[\Span{a_d,b_d}]$:
\begin{itemize}
\item for $n_0 \in \Z$, the subspace $L_{n_0} = \SpanSet{$\vec{e}_{n_0,m}$}{$m \in \Z$}$; and
\item for $m_0 \in \Z$, the subspace $M_{m_0} = \SpanSet{$\vec{e}_{n,m_0}$}{$n \in \Z$}$.
\end{itemize}
For $m_0 \in \Z$, we have
\[\fq_{/1}(Y(\vec{e}_{n,m_0},a_0)) = \vec{e}_{n,m_0} - 2 \vec{e}_{n+1,m_0} + \vec{e}_{n+2,m_0} \in M_{m_0} \quad \text{for all $n \in \Z$}.\]
By the above claim, these are linearly independent elements of $M_{m_0}$ spanning a complement to $\Span{\vec{e}_{0,m_0},\vec{e}_{1,m_0}}$.
We have
\[\Q[\Span{a_d,b_d}] = \bigoplus_{m_0 \in \Z} M_{m_0},\]
so this implies that $\fq_{/1}$ takes the elements of $\Set{$Y(\vec{e}_{n,m},a_d)$}{$n,m \in \Z$}$
to linearly independent elements of $\Q[\Span{a_d,b_d}]$ spanning a complement to $L_0 \oplus L_1$.
For $n_0 \in \{0,1\}$, we have
\[\fq_{/1}(Y(\vec{e}_{n_0,m},b_0)) = \vec{e}_{n_0,m} - 2 \vec{e}_{n_0,m+1} + \vec{e}_{n_0,m+2} \in L_{n_0} \quad \text{for all $m \in \Z$}.\]
By the above claim, these are linearly independent elements of $L_{n_0}$ spanning a complement to $\Span{\vec{e}_{n_0,0},\vec{e}_{n_0,1}}$.
Putting this all together, we conclude that $\fq_{/1}$ takes the elements of
\[\cW_2(d) = \Set{$Y(\vec{e}_{n,m},a_d)$}{$n,m \in \Z$} \cup \Set{$Y(\vec{e}_{n,m},b_d)$}{$n \in \{0,1\}$, $m \in \Z$}\]
to linearly independent elements of $\Q[\Span{a_d,b_d}]$ spanning a complement to $\Span{\vec{e}_{0,0}, \vec{e}_{0,1},\vec{e}_{1,0},\vec{e}_{1,1}}$,
as desired.
\end{proof}

\section{Calculation of the quotient III: the third set of generators}
\label{section:calcquotient3}

We conclude with the final set $\cV_3$ of generators, which we recall consists of all $X(h,x,y)$ with
$h \in H_{\Z}$ arbitrary and the pair $(x,y)$ equal to either
$(a_d+a_e,b_d-b_e)$ or $(a_d+b_e,b_d+a_e)$ for some distinct $1 \leq d,e \leq g$.
As notation, define $\cQ_{g/2} = \cQ_g/\Span{\cV_1, \cV_2}$.  For any generator $X(h,x,y)$ of $\cQ_g$,
let $X_{2}(h,x,y)$ be its image in $\cQ_{g/2}$.

\subsection{ZW-elements}

We start by proving that for $X(h,x,y) \in \cV_3$, its image $X_{2}(h,x,y) \in \cQ_{g/2}$ does not
depend on $h$.  

\begin{lemma}
\label{lemma:zwelldefined} 
For distinct $1 \leq d, e \leq g$, let $(x,y)$ be either $(a_d+a_e,b_d-b_e)$ or $(a_d+b_e,b_d+a_e)$.
Then for $h,h' \in H_{\Z}$ we have $X_2(h,x,y) = X_2(h',x,y)$.
\end{lemma}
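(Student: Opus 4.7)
The plan is to reduce to checking shifts by a single basis vector. Since $H_{\Z}$ is generated by the symplectic basis $\fB = \{a_1,b_1,\ldots,a_g,b_g\}$, and the set of $k \in H_{\Z}$ satisfying $X_2(h+k,x,y) = X_2(h,x,y)$ for every $h$ is a subgroup of $H_{\Z}$, it suffices to verify this equality for each basis vector $k \in \fB$. The crucial observation is that in both families of pairs $(x,y)$ appearing in $\cV_3$ one has $\omega(x,y) = 0$ and, moreover, every basis vector $k \in \fB$ is orthogonal with respect to $\omega$ to at least one of $x$ or $y$.

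This dichotomy drives the argument. When $\omega(k,y) = 0$ the triple $(x,k,y)$ is homologically separate, so the cube relation (Lemma \ref{lemma:cuberelation}) gives
\[
X(h+k,x,y) - X(h,x,y) = X(h+x,k,y) - X(h,k,y).
\]
When instead $\omega(k,x) = 0$, I first use the symmetry relation (Lemma \ref{lemma:symmetryrelation}) to interchange $x$ and $y$, and then apply the cube relation to the triple $(y,k,x)$, obtaining the analogous identity with $y$ in place of $x$ on the right-hand side. In either variant, the task becomes showing that each error term $X(h',k,\square)$ on the right lies in $\Span{\cV_1,\cV_2}$, so that it vanishes in $\cQ_{g/2}$.

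For basis vectors $k = a_r$ or $k = b_r$ with $r \notin \{d,e\}$, we have $k \in \Span{a_r,b_r}$ and the third slot lies in $\Span{a_r,b_r}^{\perp}$, so the error is in $\cV_1$. For the four remaining basis vectors $k \in \{a_d,b_d,a_e,b_e\}$, I choose between the direct and swapped cube variants according to which of $x$ or $y$ the vector $k$ is orthogonal to, so that the error acquires the shape $X(h',k,k+z)$ with $z \in \{\pm a_j,\pm b_j\}$ for some $j \neq i$, placing it in $\cV_2$. The main subtlety, and the principal obstacle, occurs in the first family $(x,y) = (a_d+a_e,\,b_d-b_e)$ at $k = b_e$: the error $X(h',b_e,b_d-b_e)$ does not immediately match the $\cV_2$ template because $b_d - b_e$ is not of the form $b_e + z$ for a signed basis vector $z$. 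To resolve this I chain the symmetry relation with the inverse relation (Lemma \ref{lemma:inverserelation}), rewriting
\[
X(h',b_e,b_d-b_e) \;=\; -\,X(h' + b_d - b_e,\, b_e,\, b_e - b_d);
\]
the latter is in $\cV_2$ with $z = -b_d$. Once every basis direction has been verified by the appropriate combination of the cube, symmetry, and inverse relations, the stabilizer subgroup equals $H_{\Z}$, completing the proof.
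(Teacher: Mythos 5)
Your proof is correct and follows the same strategy as the paper's: reduce to a shift by a single basis vector $k$, and then apply the cube relation (after possibly swapping $x$ and $y$ via the symmetry relation) so that the resulting error terms lie in $\Span{\cV_1,\cV_2}$ and therefore vanish in $\cQ_{g/2}$. The paper only writes out the case $k = a_d$ and ``leaves the other cases to the reader,'' asserting that ``all these values of $k$ are handled using the cube relation,'' but you have actually carried out every case and noticed the genuine wrinkle at $k = b_e$ in the family $(a_d+a_e,\,b_d-b_e)$: the error $X(h',b_e,b_d-b_e)$ does not lie in $\cV_2$ until one further applies the symmetry and inverse relations to recast it as $-X(h'+b_d-b_e,\,b_e,\,b_e-b_d)$, and this extra step, which the paper's sketch glosses over, is exactly the right fix.
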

\begin{proof}
We will give the details for $(x,y) = (a_d+a_e,b_d-b_e)$; the other case is similar.  It is enough
to prove that for\footnote{You might think we also need to handle things like $k=-a_1$, but
since $h$ is arbitrary this is not necessary.} $k \in \{a_1,b_1,\ldots,a_g,b_g\}$, we have $X_2(h,x,y) = X_2(h+k,x,y)$.
All these values of $k$ are handled using the cube relation (Lemma \ref{lemma:cuberelation}).  We
will give the details for $k = a_d$ and leave the other cases to the reader.  

The elements $a_d+a_e \in H_{\Z}$ and $a_d,b_d-b_e \in H_{\Z}$ are homologically separate.  By the cube relation (Lemma \ref{lemma:cuberelation}),
we thus have
\begin{align*}
X(h+a_d,a_d+a_e,b_d-b_e) &= X(x,a_d+a_e,b_d-b_e) \\
                         &\ \ \ \ \ \ \ \ \blue{-X(x,a_d+a_e,a_d) + X(x+b_d-b_e,a_d+a_e,a_d)}.
\end{align*}
Both blue terms lie in $\cV_2$ and thus vanish in $\cQ_{g/2}$.  The lemma follows.
\end{proof}

By this lemma, for distinct $1 \leq d,e \leq g$ we can define
$Z(a_d+a_e,b_d-b_e) = X_2(h,a_d+a_e,b_d-b_e)$ and $W(a_d+b_e,b_d+a_e) = X_2(h,a_d+b_e,b_d+a_e)$
for any $h \in H_{\Z}$.  Let $\cV_{3/2}$ be the set of these $Z(x,y)$ and $W(x,y)$.  
Lemma \ref{lemma:generateq} says that $\cV = \cV_1 \cup \cV_2 \cup \cV_3$ spans $\cQ_g$,
so $\cV_{3/2}$ spans $\cQ_{g/2}$.

\subsection{ZW-relations}

These elements satisfy several relations:

\begin{lemma}
\label{lemma:zwrelationdumb}
For distinct $1 \leq d,e \leq g$, we have $Z(a_d+a_e,b_d-b_e) = - Z(a_e+a_d,b_e-b_d)$ and
$W(a_d+b_e,b_d+a_e) = W(a_e+b_d,b_e+a_d)$.
\end{lemma}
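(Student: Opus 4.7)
The plan is to deduce both identities directly from the basic relations collected in \S \ref{section:quotientrel} together with Lemma \ref{lemma:zwelldefined}, which tells us we may evaluate $Z$ and $W$ at any convenient basepoint $h$.

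For the $W$ identity, the key observation is that the pairs $(a_d+b_e,\,b_d+a_e)$ and $(a_e+b_d,\,b_e+a_d)$ are literally the same pair of elements of $H_{\Z}$ with the two entries swapped. So I would simply apply the symmetry relation (Lemma \ref{lemma:symmetryrelation}): choosing any $h$,
\[
W(a_d+b_e,\,b_d+a_e) = X_2(h,\,a_d+b_e,\,b_d+a_e) = X_2(h,\,b_d+a_e,\,a_d+b_e) = W(a_e+b_d,\,b_e+a_d).
\]
No sign appears because the two entries are homologically separate and symmetry is sign-free.

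For the $Z$ identity, the pairs are $(a_d+a_e,\,b_d-b_e)$ and $(a_e+a_d,\,b_e-b_d)$; the first entries agree, but the second entry in the second pair is the negative of the second entry in the first. Here I would apply the inverse relation (Lemma \ref{lemma:inverserelation}), combined once with symmetry so that the inverse is taken in the second slot: for homologically separate $x,y$,
\[
X(h,\,x,\,-y) = X(h,\,-y,\,x) = -X(h-y,\,y,\,x) = -X(h-y,\,x,\,y).
\]
Taking $h=0$, $x=a_d+a_e$, $y=b_d-b_e$ and passing to $\cQ_{g/2}$ gives
\[
Z(a_e+a_d,\,b_e-b_d) = X_2(0,\,a_d+a_e,\,-(b_d-b_e)) = -X_2(b_e-b_d,\,a_d+a_e,\,b_d-b_e) = -Z(a_d+a_e,\,b_d-b_e),
\]
where the last equality uses Lemma \ref{lemma:zwelldefined} to discard the shift by $b_e-b_d$ in the first argument.

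There is no real obstacle here: once Lemma \ref{lemma:zwelldefined} has freed us from tracking basepoints, the $W$-identity is pure symmetry and the $Z$-identity is a single application of the inverse relation. The only thing to verify along the way is that the hypotheses for each relation are met, namely that the relevant entries in $H_{\Z}$ are homologically separate, which is automatic from the definition of $\cV_3$ since in both cases $(x,y)$ already admits a symplectic splitting with $x$ and $y$ on opposite sides.
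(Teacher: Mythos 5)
Your proposal is correct and uses essentially the same ingredients as the paper's own (much terser) proof: the $W$-identity follows from the symmetry relation, and the $Z$-identity follows from the inverse relation, with Lemma \ref{lemma:zwelldefined} used to discard the $h$-shift that appears. Your write-up merely fleshes out the intermediate applications of symmetry needed to move the negated argument into the slot to which Lemma \ref{lemma:inverserelation} applies, which the paper leaves implicit.
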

\begin{proof}
The first follows from the inverse relation (Lemma \ref{lemma:inverserelation}) and the second follows
from the symmetry relation (Lemma \ref{lemma:symmetryrelation}).
\end{proof}

\begin{lemma}
\label{lemma:zrelation}
For distinct $1 \leq d,e,f \leq g$ we have 
\[Z(a_d+a_f,b_d-b_f) = Z(a_d+a_e,b_d-b_e) + Z(a_e+a_f,b_e-b_f).\]
\end{lemma}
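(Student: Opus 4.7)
My approach is to introduce a single auxiliary element $\xi = X(0,\, a_d+a_e+a_f,\, b_d-b_f)$ in $\cQ_g$ and to decompose it in two different ways, using the additivity relation (Lemma \ref{lemma:additivityrelation}) and the symmetry relation (Lemma \ref{lemma:symmetryrelation}), each decomposition producing one side of the claimed identity modulo $\Span{\cV_1 \cup \cV_2}$.  Well-definedness of $\xi$ follows from $\omega(a_d+a_e+a_f,\, b_d-b_f)=1-1=0$: the pair extends to a symplectic splitting of $H_\Z$ by adjoining $\{a_i,b_i : i \notin \{d,e,f\}\}$, so $a_d+a_e+a_f$ and $b_d-b_f$ are homologically separate.

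For the first decomposition, split the middle argument as $a_d+a_e+a_f = a_e + (a_d+a_f)$; both $a_e$ and $a_d+a_f$ are $\omega$-orthogonal to $b_d-b_f$, so additivity yields
\[\xi = X(0,\, a_e,\, b_d-b_f) + X(a_e,\, a_d+a_f,\, b_d-b_f).\]
The first summand lies in $\cV_1$ (since $a_e\in\Span{a_e,b_e}$ and $b_d-b_f\in\Span{a_e,b_e}^{\perp}$), and the second, being of $\cV_3$ type, represents $Z(a_d+a_f,\, b_d-b_f)$ in $\cQ_{g/2}$ by Lemma \ref{lemma:zwelldefined}.  Hence $[\xi]_2 = Z(a_d+a_f,\, b_d-b_f)$.

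For the second decomposition, first split the third argument as $b_d-b_f = (b_d-b_e) + (b_e-b_f)$.  Since $b_d-b_e$ and $b_e-b_f$ are each $\omega$-orthogonal to $a_d+a_e+a_f$, the symmetry-plus-additivity combination gives
\[\xi = X(0,\, a_d+a_e+a_f,\, b_d-b_e) + X(b_d-b_e,\, a_d+a_e+a_f,\, b_e-b_f).\]
Now split the middle argument in each summand, using $a_d+a_e+a_f = a_f + (a_d+a_e)$ in the first (with $\omega(a_f,b_d-b_e)=\omega(a_d+a_e,b_d-b_e)=0$) and $a_d+a_e+a_f = a_d + (a_e+a_f)$ in the second (with $\omega(a_d,b_e-b_f)=\omega(a_e+a_f,b_e-b_f)=0$).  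In each case the summand whose middle argument is a single basis vector ($X(\cdot,a_f,b_d-b_e)$ and $X(\cdot,a_d,b_e-b_f)$) lies in $\cV_1$ and vanishes in $\cQ_{g/2}$, while the remaining $\cV_3$-type summand represents $Z(a_d+a_e,\, b_d-b_e)$ and $Z(a_e+a_f,\, b_e-b_f)$ respectively by Lemma \ref{lemma:zwelldefined}.  Hence $[\xi]_2 = Z(a_d+a_e,\, b_d-b_e) + Z(a_e+a_f,\, b_e-b_f)$, and equating the two expressions for $[\xi]_2$ yields the lemma.  The only routine task is checking homological separation at each split, which reduces to short $\omega$-pairing calculations inside $\Span{a_d,b_d,a_e,b_e,a_f,b_f}$ followed by adjoining the remaining $\{a_i,b_i\}$, so no serious obstacle arises.
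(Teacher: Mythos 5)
Your proof is correct and essentially identical to the paper's argument: both introduce $\xi = X(0,a_d+a_e+a_f,b_d-b_f)$, split the third argument via symmetry plus additivity to obtain the right-hand side of the relation, and split the middle argument via additivity to reduce to a $\cV_3$-type element plus a $\cV_1$-type error term that vanishes in $\cQ_{g/2}$. The only difference is cosmetic: you split the middle argument as $a_e+(a_d+a_f)$ so that the $\cV_1$ term appears first and the $\cV_3$ term inherits a nonzero $h$-value (disposed of by Lemma \ref{lemma:zwelldefined}), whereas the paper splits it as $(a_d+a_f)+a_e$ so the $\cV_3$ term is $X(0,\cdot,\cdot)$ directly and the $\cV_1$ term appears second; both are equally valid.
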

\begin{proof}
Since $(b_d-b_e) + (b_e-b_f) = b_d-b_f$, the additivity relation (Lemma \ref{lemma:additivityrelation}) says that
\begin{equation}
\label{eqn:zrelation1lift}
X(0,a_d+a_e+a_f,b_d-b_f) = X(0,a_d+a_e+a_f,b_d-b_e) + X(b_d-b_e,a_d+a_e+a_f,b_e-b_f).
\end{equation}
The additivity relation also implies that
\[X(0,a_d+a_e+a_f,b_d-b_f) = X(0,a_d+a_f,b_d-b_f) \blue{+X(a_d+a_f,a_e,b_d-b_f)}.\]
The blue term lies in the span of $\cV_1$, and thus vanishes in $\cQ_{g/2}$.  We therefore have
\[X_2(0,a_d+a_e+a_f,b_d-b_f) = X_2(0,a_d+a_f,b_d-b_f) = Z(a_d+a_f,b_d-b_f).\]
Similarly, we have
\begin{align*}
X_2(0,a_d+a_e+a_f,b_d-b_e) &= Z(a_d+a_e,b_d-b_e), \\
X_2(b_d-b_e,a_d+a_e+a_f,b_e-b_f) &= Z(a_e+a_f,b_e-b_f).
\end{align*}
Plugging all of this into \eqref{eqn:zrelation1lift} gives the desired relation.
\end{proof}

\begin{lemma} 
\label{lemma:wrelation}
For distinct $1 \leq d,e,f \leq g$ we have
\[W(a_d+b_f,b_d+a_f) = Z(a_d+a_e,b_d-b_e) + W(a_e+b_f,b_e+a_f).\]
\end{lemma}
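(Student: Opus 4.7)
The plan is to mimic the three-term expansion strategy used in Lemma \ref{lemma:zrelation}: I will consider the single element $X(0, a_d + a_e + b_f, b_d + a_f)$ and compute its image in $\cQ_{g/2}$ in two different ways, obtaining $W(a_d + b_f, b_d + a_f)$ on one side and $Z(a_d + a_e, b_d - b_e) + W(a_e + b_f, b_e + a_f)$ on the other.

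First I would verify the element is well-defined, i.e., that $a_d + a_e + b_f$ and $b_d + a_f$ are homologically separate. They are $\omega$-orthogonal (the $\omega(a_d,b_d) = 1$ and $\omega(b_f,a_f) = -1$ contributions cancel), and the $2$-dimensional symplectic subspace $Y = \Span{a_d, b_d + a_f}$ furnishes a splitting $H_{\Z} = Y^{\perp} \oplus Y$ with $b_d + a_f \in Y$ and $a_d + a_e + b_f \in Y^{\perp}$.

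For the first expansion, I would split $a_d + a_e + b_f = a_e + (a_d + b_f)$ via the additivity relation (Lemma \ref{lemma:additivityrelation}), whose hypothesis is verified by the same splitting, since $a_e, a_d + b_f \in Y^{\perp}$. This gives
\[X(0, a_d + a_e + b_f, b_d + a_f) = X(0, a_e, b_d + a_f) + X(a_e, a_d + b_f, b_d + a_f).\]
Splitting the second argument of the first summand additively writes it as a sum of two $\cV_1$-elements, hence it vanishes in $\cQ_{g/2}$; the second summand equals $W(a_d + b_f, b_d + a_f)$ by the $h$-independence from Lemma \ref{lemma:zwelldefined}.

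For the second expansion, I would split the second argument as $b_d + a_f = (b_d - b_e) + (b_e + a_f)$; the required homological separation is witnessed by $Y = \Span{a_d, b_d - b_e, b_f, b_e + a_f}$, which one checks is a $4$-dimensional symplectic subspace whose complement contains $a_d + a_e + b_f$. This produces two summands which I would further reduce: $X(0, a_d + a_e + b_f, b_d - b_e)$ splits via $(a_d + a_e) + b_f$ as $Z(a_d + a_e, b_d - b_e)$ plus a $\cV_1$-term, while $X(b_d - b_e, a_d + a_e + b_f, b_e + a_f)$ splits via $a_d + (a_e + b_f)$ as $W(a_e + b_f, b_e + a_f)$ plus another $\cV_1$-term, again appealing to Lemma \ref{lemma:zwelldefined} to discard the $h$-value. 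Equating the two expressions for $X_2(0, a_d + a_e + b_f, b_d + a_f)$ in $\cQ_{g/2}$ yields the claimed relation. The main obstacle is bookkeeping: each additivity step requires explicitly exhibiting a symplectic splitting of $H_{\Z}$ jointly separating the relevant vectors, which one constructs case-by-case as above.
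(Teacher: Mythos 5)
Your proposal is correct and follows essentially the same route as the paper: both proofs start from $X(0, a_d+a_e+b_f, b_d+a_f)$, split the second argument as $(b_d-b_e)+(b_e+a_f)$ via the additivity relation, and then peel off a $\cV_1$-term and invoke $h$-independence (Lemma \ref{lemma:zwelldefined}) to identify each summand with the corresponding $Z$ or $W$ generator. The paper compresses the final reduction to ``just like in the proof of Lemma \ref{lemma:zrelation},'' whereas you spell it out and additionally verify the homological separation hypotheses explicitly; the one small extra step in your write-up (splitting $X(0,a_e,b_d+a_f)$ into two $\cV_1$-terms) is unnecessary since that element is already in $\cV_1$, but it is harmless.
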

\begin{proof}
Since $(b_d-b_e) + (b_e+a_f) = b_d+a_f$, the additivity relation (Lemma \ref{lemma:additivityrelation}) says that
\[X(0,a_d+a_e+b_f,b_d+a_f) = X(0,a_d+a_e+b_f,b_d-b_e) + X(b_d-b_e,a_d+a_e+b_f,b_e+a_f).\]
Just like in the proof of Lemma \ref{lemma:zrelation}, this projects to the desired relation.
\end{proof}

\subsection{A smaller generating set}

Let 
\[\cW_3 = \Set{$Z(a_d+a_{d+1},b_d-b_{d+1})$}{$1 \leq d \leq g-1$} \cup \{W(a_1+b_2,b_1+a_2)\}.\]
We prove that this spans $\cQ_{g/2}$:

\begin{lemma}
\label{lemma:smallergenset3}
Letting the notation be as above, we have $\Span{\cW_3} = \cQ_{g/2}$.
\end{lemma}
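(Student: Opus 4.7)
My plan is to show that every element of $\cV_{3/2}$ lies in $\Span{\cW_3}$; since we already know $\cV_{3/2}$ spans $\cQ_{g/2}$, this will suffice.

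I will first handle the $Z$-generators. Given $Z(a_d+a_e,b_d-b_e)$ with $d \neq e$, after possibly applying Lemma \ref{lemma:zwrelationdumb} to swap $d$ and $e$ (at the cost of a sign), I may assume $d < e$. I will then prove by induction on $e-d$ the telescoping identity
\[
Z(a_d+a_e,b_d-b_e) = \sum_{i=d}^{e-1} Z(a_i+a_{i+1},b_i-b_{i+1}),
\]
with the base case $e = d+1$ trivial and the inductive step given by applying Lemma \ref{lemma:zrelation} with intermediate index $d+1$. Each summand on the right lies in $\cW_3$, so every $Z$-generator lies in $\Span{\cW_3}$.

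Next I will handle the $W$-generators, reducing each $W(a_d+b_e,b_d+a_e)$ to $W(a_1+b_2,b_1+a_2)$ modulo $Z$-generators (which have already been dealt with). The idea is that Lemma \ref{lemma:wrelation} lets me change the first index of a $W$-element to any value distinct from the second, at the cost of a $Z$-term, and the symmetry in Lemma \ref{lemma:zwrelationdumb} lets me swap the two indices. Concretely, starting from $W(a_d+b_e,b_d+a_e)$: if $1 \notin \{d,e\}$, apply Lemma \ref{lemma:wrelation} with middle index $1$ to replace the first index by $1$; if $1 = e$, first swap via the symmetry relation. This reduces to $W(a_1+b_e,b_1+a_e)$ for some $e \neq 1$. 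If $e = 2$ we are done; otherwise swap to $W(a_e+b_1,b_e+a_1)$, apply Lemma \ref{lemma:wrelation} with middle index $2$ (which is distinct from both $e$ and $1$) to land in $W(a_2+b_1,b_2+a_1)$, and swap back to $W(a_1+b_2,b_1+a_2)$.

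The only subtlety is the "distinct index" constraint when invoking Lemma \ref{lemma:wrelation}, which requires the middle index to differ from both indices in the current $W$-element; this is automatic under our standing assumption $g \geq 4$ (Assumption \ref{assumption:genus}), since we always have enough room to pick middle indices in $\{1,2\}$ after possibly swapping. Combining the $Z$-reduction and the $W$-reduction shows that every element of $\cV_{3/2}$ lies in $\Span{\cW_3}$, completing the proof.
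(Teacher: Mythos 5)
Your proposal is correct and follows essentially the same strategy as the paper: reduce every $Z$-generator to the consecutive-index $Z$'s in $\cW_3$ via the telescoping identity from Lemma \ref{lemma:zrelation} (after using Lemma \ref{lemma:zwrelationdumb} to arrange $d < e$), and reduce every $W$-generator to $W(a_1+b_2,b_1+a_2)$ modulo already-handled $Z$-terms by alternating Lemma \ref{lemma:wrelation} and the symmetry from Lemma \ref{lemma:zwrelationdumb}. The only difference is cosmetic: you first normalize the first index to $1$ and then normalize the second, whereas the paper first establishes $W(a_2+b_e,b_2+a_e) \in \Span{\cW_3}$ and then changes $d$ to $2$; both are valid under Assumption \ref{assumption:genus}.
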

\begin{proof}
It is enough to prove that $\Span{\cW_3}$ contains $\cV_{3/2}$.  We do this in
two steps:

\begin{unnumberedclaim}
For all distinct $1 \leq d,e \leq g$, we have $Z(a_d+a_e,b_d-b_f) \in \Span{\cW_3}$.
\end{unnumberedclaim}

Using Lemma \ref{lemma:zwrelationdumb}, we can assume that $d < e$.  Lemma \ref{lemma:zrelation} then
implies that
\[Z(a_d+a_e,b_d-b_f) = \sum_{i=d}^{e-1} Z(a_i+a_{i+1},b_i-b_{i+1}) \in \Span{\cW_3}.\]

\begin{unnumberedclaim}
For all distinct $1 \leq d,e \leq g$, we have $W(a_d+b_e,b_d+a_e) \in \Span{\cW_3}$.
\end{unnumberedclaim}

We will prove this in the case where $d,e \notin \{1,2\}$.  The cases where one or both
are in $\{1,2\}$ are similar (but easier).
Start by using Lemmas \ref{lemma:zwrelationdumb} and \ref{lemma:wrelation} to see that
\begin{align*}
W(a_2+b_e,b_2+a_e) &= W(a_e+b_2,b_e+a_2) \\
                     &= Z(a_e+a_1,b_e-b_1)+W(a_1+b_2,b_1+a_2) \in \Span{\cW_3}.
\end{align*}
Using this, another application of Lemma \ref{lemma:wrelation} says that
\[W(a_d+b_e,b_d+a_e) = Z(a_d+a_2,b_d-b_2)+W(a_2+b_e,b_2+a_e) \in \Span{\cW_3}.\qedhere\]
\end{proof}

\subsection{Mapping ZW-elements}

Consider the composition
\[\begin{tikzcd}[column sep=small]
\cQ_g \arrow{r}{\fq} & \Q[H_{\Z}] = \Span{\fq(\cV_1), \fq(\cV_2)} \oplus \Q[\bigcup_{i=1}^g \Span{0,a_i,b_i,a_i+b_i}] \arrow[two heads]{r} & \Q[\bigcup_{i=1}^g \Span{0,a_i,b_i,a_i+b_i}],
\end{tikzcd}\]
where the equality comes from Proposition \ref{proposition:calcquotient2}.  This induces a map
\[\fq_{/2}\colon \cQ_{g/2} \rightarrow \Q[\bigcup_{i=1}^g \Span{0,a_i,b_i,a_i+b_i}].\]
For $h \in \Span{0,a_i,b_i,a_i+b_i}$, we will still denote the corresponding element of $\Q[\bigcup_{i=1}^g \Span{0,a_i,b_i,a_i+b_i}]$ by $\GrpR{h}$.
For $1 \leq d \leq g$, let $\theta_d = \GrpR{0} - \GrpR{a_i} - \GrpR{b_i} + \GrpR{a_i+b_i}$.
The following lemma calculates the images of $Z(x,y)$ and $W(x,y)$ under the map $\fq_{/2}$:

\begin{lemma}
\label{lemma:zimage}
For distinct $1 \leq d, e \leq g$, we have
\[\fq_{/2}(Z(a_d+a_e,b_d-b_e)) = \theta_d - \theta_e \quad \text{and} \quad \fq_{/2}(W(a_d+b_e,b_d+a_e)) = \theta_d + \theta_e.\]
Consequently, the image of $\fq_{/2}$ is $\SpanSet{$\theta_d$}{$1 \leq d \leq g$}$.
\end{lemma}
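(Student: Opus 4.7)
The plan is a direct double reduction using the explicit decompositions from Propositions \ref{proposition:calcquotient1} and \ref{proposition:calcquotient2}. By Lemma \ref{lemma:zwelldefined} we may work with the lifts $X(0,a_d+a_e,b_d-b_e)$ and $X(0,a_d+b_e,b_d+a_e)$, apply $\fq$, and then project along $\Span{\fq(\cV_1)}$ and subsequently along $\Span{\fq_{/1}(\cV_{2/1})}$ into $\Q[\bigcup_{i=1}^g \{0,a_i,b_i,a_i+b_i\}]$. The workhorse of the first reduction is the observation that if $u \in \Span{a_d,b_d}$ and $v \in \Span{a_d,b_d}^{\perp}$, then $X(0,u,v) \in \cV_1$, so $\GrpR{u+v} \equiv -\GrpR{0} + \GrpR{u} + \GrpR{v}$ modulo $\Span{\fq(\cV_1)}$.

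For $W(a_d+b_e,b_d+a_e)$ this first reduction already suffices. Splitting the three offending cross-terms via $(u,v) = (a_d,b_e),\ (a_e,b_d),\ (a_d+b_d,a_e+b_e)$ collapses the expression to $2\GrpR{0} - \GrpR{a_d} - \GrpR{a_e} - \GrpR{b_d} - \GrpR{b_e} + \GrpR{a_d+b_d} + \GrpR{a_e+b_e} = \theta_d + \theta_e$. For $Z(a_d+a_e,b_d-b_e)$ the analogous first reduction via $(u,v) = (a_d,a_e),\ (b_d,-b_e),\ (a_d+b_d,a_e-b_e)$ produces an element of $\Q[\bigcup_i \Span{a_i,b_i}]$ that still contains the stray terms $\GrpR{-b_e}$ and $\GrpR{a_e-b_e}$, which lie in $\Span{a_e,b_e}$ but outside $\{0,a_e,b_e,a_e+b_e\}$. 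These are eliminated by the second reduction: Lemma \ref{lemma:yimage} applied to $Y(-b_e,b_e)$ and $Y(a_e-b_e,b_e)$ (both honest elements of $\cW_2$ since their $a_e$-coordinates are $0$ and $1$ respectively) yields the congruences $\GrpR{-b_e} \equiv 2\GrpR{0} - \GrpR{b_e}$ and $\GrpR{a_e-b_e} \equiv 2\GrpR{a_e} - \GrpR{a_e+b_e}$ modulo $\Span{\fq_{/1}(\cV_{2/1})}$. Substituting then collapses the expression to $\theta_d - \theta_e$.

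For the ``consequently'' clause, Lemma \ref{lemma:smallergenset3} says $\cQ_{g/2}$ is spanned by $\cW_3 = \SpanSet{$Z(a_d+a_{d+1},b_d-b_{d+1})$}{$1 \le d \le g-1$} \cup \{W(a_1+b_2,b_1+a_2)\}$, which the formulas just proved map to $\{\theta_d - \theta_{d+1} : 1 \le d \le g-1\} \cup \{\theta_1 + \theta_2\}$. This set spans $\SpanSet{$\theta_d$}{$1 \le d \le g$}$: one recovers $\theta_1 = \tfrac{1}{2}((\theta_1 + \theta_2) + (\theta_1 - \theta_2))$, then $\theta_2 = (\theta_1+\theta_2) - \theta_1$, and each subsequent $\theta_{d+1} = \theta_d - (\theta_d - \theta_{d+1})$ inductively. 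The main obstacle is the $Z$-computation: one must track carefully that the $\cV_1$-reduction leaves exactly the two stray terms $\GrpR{-b_e}$ and $\GrpR{a_e-b_e}$, and that these fall precisely within the restricted set of $Y(h,b_e)$-relations (those with $a_e$-coordinate $0$ or $1$) that the smaller generating set $\cW_2$ provides, so that the second reduction step is actually available.
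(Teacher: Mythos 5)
Your proof is correct and follows essentially the same route as the paper: you kill the cross-terms modulo $\Span{\fq(\cV_1)}$ by adding $\fq(X(0,a_d,a_e)) + \fq(X(0,b_d,-b_e)) - \fq(X(0,a_d+b_d,a_e-b_e))$, and then kill the stray $\GrpR{-b_e}$ and $\GrpR{a_e-b_e}$ with $\fq_{/1}(Y(-b_e,b_e)) - \fq_{/1}(Y(a_e-b_e,b_e))$, exactly as the paper does for the $Z$-case. You go slightly further than the paper in two small ways, both correct: you carry out the $W$-calculation explicitly and note it never leaves $\Q[\bigcup_i\{0,a_i,b_i,a_i+b_i\}]$ after the first reduction (so the second reduction is unnecessary there, a point the paper's ``the two calculations are similar'' glosses over), and you spell out the spanning argument for the ``consequently'' clause, which the paper leaves implicit. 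One minor remark: for the second reduction step you only need $Y(-b_e,b_e)$ and $Y(a_e-b_e,b_e)$ to lie in $\cV_{2/1}$ (they do, since $-b_e, a_e-b_e \in \Span{a_e,b_e}$); the observation that they also happen to lie in the reduced generating set $\cW_2$ is true but not needed, since Proposition \ref{proposition:calcquotient2} already gives the decomposition against $\Span{\fq_{/1}(\cV_{2/1})}$, not against $\Span{\fq_{/1}(\cW_2)}$.
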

\begin{proof}
The two calculations are similar, so we will give the details for the first.  Note that
\[\fq(X(0,a_d+a_e,b_d-b_e)) = \GrpR{0} - \GrpR{a_d+a_e} - \GrpR{b_d-b_e} + \GrpR{a_d+a_e+b_d-b_e}.\]
To project this into $\Q[\bigcup_{i=1}^g \Span{0,a_i,b_i,a_i+b_i}]$, we can add the images under $\fq$ of any elements of $\cV_1$ or $\cV_2$.  Adding
\[\fq(X(0,a_d,a_e)) + \fq(X(0,b_d,-b_e)) - \fq(X(0,a_d+b_d,a_e-b_e)),\]
we get
\[\left(\GrpR{0} - \GrpR{a_d} - \GrpR{b_d} + \GrpR{a_d+b_d}\right) + \left(\GrpR{0} - \GrpR{a_e} - \GrpR{-b_e} + \GrpR{a_e-b_e}\right).\]
Project this to $\Q[\bigcup_{i=0}^g \Span{a_i,b_i}]$ and add
\begin{align*}
\fq_{/1}(Y(-b_e,b_e)) - \fq_{/1}(Y(a_e-b_e,b_e)) &= \left(\GrpR{-b_e} - 2 \GrpR{0} +\GrpR{b_e}\right) \\
                                                 &\ \ \ \ \ \ \ \ -\left(\GrpR{a_e-b_e} - 2\GrpR{a_e} + \GrpR{a_e+b_e}\right).
\end{align*}
We get
\[\left(\GrpR{0} - \GrpR{a_d} - \GrpR{b_d} + \GrpR{a_d+b_d}\right) + \left(-\GrpR{0} + \GrpR{a_e} + \GrpR{-b_e} - \GrpR{a_e-b_e}\right) = \theta_d - \theta_e.\qedhere\]
\end{proof}

\subsection{Main result}

We now prove the main result of this section.  This will complete the proof of Theorem \ref{theorem:part2theorem2}
outlined in \S \ref{section:calcquotientoutline}.

\begin{proposition}
\label{proposition:calcquotient3}
The map $\fq_{/2}\colon \cQ_{g/2} \rightarrow \Q[\bigcup_{i=1}^g \Span{0,a_i,b_i,a_i+b_i}]$ is injective.
\end{proposition}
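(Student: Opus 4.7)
The plan is to prove injectivity by a dimension count, using the generating set $\cW_3$ from Lemma \ref{lemma:smallergenset3} together with the explicit formulas from Lemma \ref{lemma:zimage}. Since $\cW_3$ has exactly $g$ elements and spans $\cQ_{g/2}$, it suffices to show that $\fq_{/2}(\cW_3)$ consists of $g$ linearly independent vectors in $\Q[\bigcup_{i=1}^g \{0, a_i, b_i, a_i+b_i\}]$.

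The first step is to verify that the $g$ elements $\theta_1, \dots, \theta_g$ are themselves linearly independent in $\Q[\bigcup_{i=1}^g \{0, a_i, b_i, a_i+b_i\}]$. This is essentially by inspection: each $\theta_d = \GrpR{0} - \GrpR{a_d} - \GrpR{b_d} + \GrpR{a_d+b_d}$, and the three non-zero index vectors $a_d, b_d, a_d+b_d$ occur in exactly one $\theta_d$. So if $\sum_d c_d \theta_d = 0$, then comparing coefficients at $\GrpR{a_d+b_d}$ (for instance) forces $c_d = 0$ for each $d$.

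The second step is to show that the $g$ vectors in $\fq_{/2}(\cW_3)$ are linearly independent in $\Span{\theta_1, \dots, \theta_g}$. By Lemma \ref{lemma:zimage}, these vectors are $\theta_d - \theta_{d+1}$ for $1 \leq d \leq g-1$ together with $\theta_1 + \theta_2$. The first $g-1$ of these lie in the hyperplane $V = \{\sum a_i \theta_i : \sum a_i = 0\}$ and in fact span it (they are a standard basis for the root lattice of type $A_{g-1}$ expressed in the $\theta$-basis). Meanwhile $\theta_1 + \theta_2$ has coefficient sum $2 \neq 0$, hence lies outside $V$. Therefore the $g$ vectors together span all of $\Span{\theta_1, \dots, \theta_g}$ and are linearly independent.

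Combining these two steps, $\fq_{/2}$ sends the spanning set $\cW_3$ of $\cQ_{g/2}$ to a linearly independent set, so $\fq_{/2}$ is injective (and in fact its image is precisely $\Span{\theta_1, \dots, \theta_g}$, consistent with the last sentence of Lemma \ref{lemma:zimage}). There is no real obstacle here: all the hard work has already been done in Sections \ref{section:quotientrel}--\ref{section:calcquotient2} to reduce the problem to this finite-dimensional linear algebra calculation, and the special form of $W(a_1+b_2, b_1+a_2)$ included in $\cW_3$ is exactly what guarantees that we escape the hyperplane $V$ and obtain the full $g$-dimensional image.
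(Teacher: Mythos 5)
Your proof is correct and takes essentially the same approach as the paper: both arguments reduce to the observation that the $g$-element set $\cW_3$ spans $\cQ_{g/2}$ and maps under $\fq_{/2}$ to $g$ linearly independent vectors. The only small difference is that the paper deduces the linear independence of $\fq_{/2}(\cW_3)$ by a dimension count (the image is $\Span{\theta_1,\dots,\theta_g}$, which is $g$-dimensional, and a spanning set of size $g$ for a $g$-dimensional space must be independent), whereas you verify the independence of $\{\theta_d-\theta_{d+1}\}_{d=1}^{g-1}\cup\{\theta_1+\theta_2\}$ directly by the coefficient-sum argument; both are valid and amount to the same thing.
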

\begin{proof}
Lemma \ref{lemma:zimage} says that $\fq_{/2}$ takes $\cQ_{g/2}$ onto $\SpanSet{$\theta_d$}{$1 \leq d \leq g$}$.  Since
the $\theta_d$ are linearly independent, the image of $\fq_{/2}$ is $g$-dimensional.
Lemma \ref{lemma:smallergenset3} says that $\cW_3$ spans $\cQ_{g/2}$.  Since $\cW_3$ contains
$g$ elements, we deduce that $\fq_{/2}$ takes the elements of $\cW_3$ to linearly independent elements.
We conclude that $\fq_{/2}$ is injective.
\end{proof} 

\part{Relations in the kernel of the coinvariant Reidemeister pairing}
\label{part:3}

Let $\ofr$ be the coinvariant Reidemeister pairing.  Having proved Theorem \ref{theorem:part2theorem}
in Part \ref{part:2}, the remaining conclusion of Theorem \ref{theorem:bigtheorem} 
that must be proved is that $\ker(\ofr)$ is a finite-dimensional algebraic representation
of $\Sp_{2g}(\Z)$.  In this part of the paper, we will construct enough relations in
$\ker(\ofr)$ to force it to be a subrepresentation of $((\wedge^2 H)/\Q)^{\otimes 2}$.
See the introductory \S \ref{section:part3intro}
for an outline.

\section{Introduction to Part \ref{part:3}}
\label{section:part3intro}

This section fixes some notation and does some preliminary calculations, and then outlines
what we will do in the rest of Part \ref{part:3}.

\subsection{Intersection pairing}
\label{section:introductionpairing}

Let $\omega$ be the algebraic intersection pairing on $H$.  Like in \S \ref{section:cgpicoinv}, we will
identify $\omega$ with an $\Sp_{2g}(\Q)$-invariant element $\omega \in \wedge^2 H$.  If $\{a_1,b_1,\ldots,a_g,b_g\}$
is a symplectic basis for $H$, then
\[\omega = a_1 \wedge b_1 + \cdots + a_g \wedge b_g.\]
The line spanned by $\omega$ is an $\Sp_{2g}(\Q)$-invariant copy of $\Q$ in $\wedge^2 H$, and whenever we talk
about $(\wedge^2 H)/\Q$ we mean the quotient by this line.

\subsection{Coinvariants}

As we discussed in \S \ref{section:birmanexactseq}, the group $\pi_g$ is the point-pushing subgroup
of $\Torelli_{g,1}$.  The coinvariants $(\cC_g)_{\Torelli_{g,1}}$ are thus a quotient of
$(\cC_g)_{\pi_g}$.  However:

\begin{lemma}
\label{lemma:cgtorellicoinv}
We have $(\cC_g)_{\Torelli_{g,1}} \cong (\cC_g)_{\pi_g} \cong (\wedge^2 H)/\Q$.
\end{lemma}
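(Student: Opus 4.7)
The plan is to take coinvariants in two stages using the Birman exact sequence for the Torelli group (Corollary \ref{corollary:birmantorelli}):
\[\begin{tikzcd}
1 \arrow{r} & \pi_g \arrow{r} & \Torelli_{g,1} \arrow{r} & \Torelli_g \arrow{r} & 1.
\end{tikzcd}\]
Since $\pi_g$ is normal in $\Torelli_{g,1}$ with quotient $\Torelli_g$, we have a canonical identification
\[(\cC_g)_{\Torelli_{g,1}} \cong \bigl((\cC_g)_{\pi_g}\bigr)_{\Torelli_g}.\]
So it is enough to show that $\Torelli_g$ acts trivially on $(\cC_g)_{\pi_g}$, and then invoke Lemma \ref{lemma:cgpicoinv}.

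For the triviality, I would appeal to the remark following Lemma \ref{lemma:cgpicoinv}: the isomorphism $(\cC_g)_{\pi_g} \cong (\wedge^2 H)/\Q$ is an isomorphism of $\Mod_g$-representations, where the action of $\Mod_g = \Mod_{g,1}/\pi_g$ on $(\wedge^2 H)/\Q$ factors through its symplectic representation $\Mod_g \twoheadrightarrow \Sp_{2g}(\Z)$. Since $\Torelli_g$ is by definition the kernel of this symplectic representation, it acts trivially on $(\wedge^2 H)/\Q$, hence trivially on $(\cC_g)_{\pi_g}$. Therefore $\bigl((\cC_g)_{\pi_g}\bigr)_{\Torelli_g} = (\cC_g)_{\pi_g}$, which completes the chain of isomorphisms.

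There is essentially no obstacle here; the lemma is a formal consequence of material already established (Corollary \ref{corollary:birmantorelli} and Lemma \ref{lemma:cgpicoinv}) together with the fact that the action on the quotient $(\wedge^2 H)/\Q$ is symplectic. The only thing worth double-checking is that the $\Mod_{g,1}$-action on $(\cC_g)_{\pi_g}$ used here really does descend to the $\Mod_g$-action described in that earlier remark, which is immediate from functoriality of coinvariants applied to the conjugation action of $\Mod_{g,1}$ on its normal subgroup $\pi_g$.
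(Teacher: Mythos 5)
Your proof is correct and follows essentially the same route as the paper's: take iterated coinvariants via the Birman exact sequence, invoke Lemma \ref{lemma:cgpicoinv} to identify $(\cC_g)_{\pi_g}$ with $(\wedge^2 H)/\Q$, and observe that $\Torelli_g$ acts trivially on this quotient because its action factors through $\Sp_{2g}(\Z)$ (equivalently, because $\Torelli_g$ acts trivially on $H$). The paper phrases the triviality slightly more tersely but the argument is the same.
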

\begin{proof}
Lemma \ref{lemma:cgpicoinv} says that that $(\cC_g)_{\pi_g} \cong (\wedge^2 H)/\Q$.
The induced action of $\Torelli_{g,1}/\pi_g \cong \Torelli_g$ on $(\wedge^2 H)/\Q$ is trivial since
$\Torelli_g$ acts trivially on $H$.  It follows that nothing has to be killed when passing
from $(\cC_g)_{\pi_g}$ to $(\cC_g)_{\Torelli_{g,1}}$, as desired.
\end{proof}

The product $\Torelli_{g,1} \times \Torelli_{g,1}$ acts on $\cC_g^{\otimes 2}$, and Lemma \ref{lemma:cgtorellicoinv}
implies that:

\begin{corollary}
\label{corollary:doublecoinv}
We have 
$(\cC_g^{\otimes 2})_{\Torelli_{g,1} \times \Torelli_{g,1}} \cong (\cC_g^{\otimes 2})_{\pi_g \times \pi_g} \cong \left((\wedge^2 H)/\Q\right)^{\otimes 2}$.
\end{corollary}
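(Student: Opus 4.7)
The plan is to reduce the computation of the double coinvariants to a tensor product of single coinvariants and then invoke Lemma \ref{lemma:cgtorellicoinv}. Specifically, for any two groups $G_1, G_2$ acting on $\Q$-vector spaces $V_1, V_2$ respectively, there is a canonical isomorphism
\[(V_1 \otimes V_2)_{G_1 \times G_2} \;\cong\; (V_1)_{G_1} \otimes (V_2)_{G_2},\]
since the relations imposed in forming $(V_1 \otimes V_2)_{G_1 \times G_2}$, namely those of the form $g_1 v_1 \otimes g_2 v_2 - v_1 \otimes v_2$, can be rewritten as a sum
\[g_1 v_1 \otimes g_2 v_2 - v_1 \otimes v_2 = (g_1 v_1 - v_1) \otimes g_2 v_2 + v_1 \otimes (g_2 v_2 - v_2),\]
and the right-hand side visibly generates the kernel of $V_1 \otimes V_2 \twoheadrightarrow (V_1)_{G_1} \otimes (V_2)_{G_2}$ (with the opposite inclusion being obvious).

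First I would apply this general fact with $G_1 = G_2 = \Torelli_{g,1}$ (acting independently on the two tensor factors of $\cC_g \otimes \cC_g$) to obtain
\[(\cC_g^{\otimes 2})_{\Torelli_{g,1} \times \Torelli_{g,1}} \;\cong\; (\cC_g)_{\Torelli_{g,1}} \otimes (\cC_g)_{\Torelli_{g,1}},\]
and likewise with $G_1 = G_2 = \pi_g$ to obtain $(\cC_g^{\otimes 2})_{\pi_g \times \pi_g} \cong (\cC_g)_{\pi_g} \otimes (\cC_g)_{\pi_g}$. Then I would plug in Lemma \ref{lemma:cgtorellicoinv}, which identifies both $(\cC_g)_{\Torelli_{g,1}}$ and $(\cC_g)_{\pi_g}$ with $(\wedge^2 H)/\Q$, to conclude both isomorphisms of the corollary.

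There is no real obstacle here; this is a routine bookkeeping argument built directly on Lemma \ref{lemma:cgtorellicoinv}. The only mild subtlety worth a sentence is to make clear that the two $\Torelli_{g,1}$-factors (respectively $\pi_g$-factors) act on distinct tensor slots, which is exactly why the standard tensor-of-coinvariants identity applies without further modification.
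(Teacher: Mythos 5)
Your proposal is correct and takes essentially the same route as the paper, which treats the corollary as an immediate consequence of Lemma \ref{lemma:cgtorellicoinv} without even writing a proof. You have simply made explicit the standard identity $(V_1 \otimes V_2)_{G_1 \times G_2} \cong (V_1)_{G_1} \otimes (V_2)_{G_2}$ that the paper leaves implicit, and your verification of that identity (both inclusions of kernels) is sound.
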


\subsection{Reidemeister kernel}

Recall from \S \ref{section:mainproofs} that the coinvariant Reidemeister pairing is a linear map
\[\ofr\colon (\cC_g^{\otimes 2})_{\Torelli_{g,1}} \rightarrow \Q[H_{\Z}].\]
In this, $\Torelli_{g,1}$ acts on $\cC_g^{\otimes 2}$ via the diagonal map $\Delta\colon \Torelli_{g,1} \rightarrow \Torelli_{g,1} \times \Torelli_{g,1}$.
To help distinguish this from the $\Torelli_{g,1} \times \Torelli_{g,1}$-action,
we will write this using $\Delta(\Torelli_{g,1})$.  With this notation, the coinvariant Reidemeister pairing is a linear map
\[\ofr\colon (\cC_g^{\otimes 2})_{\Delta(\Torelli_{g,1})} \rightarrow \Q[H_{\Z}].\]
Define $\cK_g = \ker(\ofr)$, so we have an exact sequence
\[\begin{tikzcd}
0 \arrow{r} & \cK_g \arrow{r} & (\cC_g^{\otimes 2})_{\Delta(\Torelli_{g,1})} \arrow{r}{\ofr} & \Q[H_{\Z}].
\end{tikzcd}\]
Theorem \ref{theorem:part2theorem} says that $\cK_g$ is generated by the set of elements of the form
$\CgCg{\Cg{\gamma_1}^{h_1}}{\Cg{\gamma_2}^{h_2}}$ with $\gamma_1 \in [\pi_g,\pi_g]$
and $\gamma_2 \in [\pi_g,\pi_g]$ separated and $h_1,h_2 \in H_{\Z}$ arbitrary.

\subsection{Main theorem}
\label{section:mainpart3}

Let us recall the theorem we are trying to prove:

\newtheorem*{theorem:bigtheorem}{Theorem \ref{theorem:bigtheorem}}
\begin{theorem:bigtheorem}
Let $\ofr\colon (\cC_g \otimes \cC_g)_{\Delta(\Torelli_{g,1})} \rightarrow \Q[H_{\Z}]$ be the coinvariant Reidemeister pairing.
Then both $\ker(\ofr)$ and $\coker(\ofr) = \coker(\fr)$ are finite-dimensional.  Moreover, $\ker(\ofr)$ is an algebraic
representation of $\Sp_{2g}(\Z)$.
\end{theorem:bigtheorem}

In Part \ref{part:2}, we proved Theorem \ref{theorem:part2theorem}, which among other things
said that $\coker(\ofr)$ is finite-dimensional.  To complete the proof of Theorem \ref{theorem:bigtheorem},
we must therefore prove that $\cK_g = \ker(\ofr)$ is a finite-dimensional algebraic representation
of $\Sp_{2g}(\Z)$.
Using Corollary \ref{corollary:doublecoinv}, we obtain a surjective map
\[\fa\colon (\cC_g^{\otimes 2})_{\Delta(\Torelli_{g,1})} \twoheadrightarrow (\cC_g^{\otimes 2})_{\Torelli_{g,1} \times \Torelli_{g,1}} \cong \left((\wedge^2 H)/\Q\right)^{\otimes 2}\]
whose codomain is a finite-dimensional algebraic representation of\footnote{Actually, it is an algebraic representation of $\Sp_{2g}(\Z) \times \Sp_{2g}(\Z)$, but we only
care about the diagonal subgroup $\Sp_{2g}(\Z)$ since the domain is only a representation of this diagonal subgroup.} $\Sp_{2g}(\Z)$.  We will call this the {\em algebraization map}.  It
is far from an isomorphism; indeed, the coinvariant Reidemeister pairing takes $(\cC_g^{\otimes 2})_{\Delta(\Torelli_{g,1})}$ to an infinite-dimensional
representation of $\Sp_{2g}(\Z)$.  However, we will prove that this is the only obstruction
to $\fa$ being an isomorphism:

\begin{theorem}
\label{theorem:part3theorem}
The restriction of the algebraization map $\fa$ to $\cK_g$ is an injection.
\end{theorem}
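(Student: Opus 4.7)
The plan is to establish sufficient relations in $\cK_g$ to force the algebraization map $\fa$ to restrict to an injection on $\cK_g$. Since $\fa$ factors through the surjection $(\cC_g^{\otimes 2})_{\Delta(\Torelli_{g,1})} \twoheadrightarrow (\cC_g^{\otimes 2})_{\Torelli_{g,1} \times \Torelli_{g,1}}$, what must be shown is that for elements of $\cK_g$, the additional relations introduced when passing from diagonal to full product coinvariants already hold in the diagonal quotient. Concretely, one must verify that for any separated generator $\CgCg{c_1}{c_2}$ and any $f \in \Torelli_{g,1}$, the difference $\CgCg{f c_1}{c_2} - \CgCg{c_1}{c_2}$ vanishes already in $(\cC_g^{\otimes 2})_{\Delta(\Torelli_{g,1})}$.

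The key mechanism is to exploit the separation structure. For a separated generator $\CgCg{\Cg{\gamma_1}^{h_1}}{\Cg{\gamma_2}^{h_2}}$ with $\gamma_i \in \pi_1(S_i)$ (where $\delta$ separates $S_1$ from $S_2$ and contains the basepoint), every element $f \in \Torelli_{g,1}$ supported in $S_1$ fixes $\gamma_2 \in \pi_g$ pointwise and hence acts trivially on $\Cg{\gamma_2}^{h_2}$. The diagonal coinvariant relation then yields $\CgCg{f \cdot \Cg{\gamma_1}^{h_1}}{\Cg{\gamma_2}^{h_2}} = \CgCg{\Cg{\gamma_1}^{h_1}}{\Cg{\gamma_2}^{h_2}}$, so the independent action of the subsurface Torelli group $\Torelli(S_1)$ on the first factor is absorbed into the diagonal coinvariants, and symmetrically for $\Torelli(S_2)$ on the second factor. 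Combined with the commutator identities from Lemma \ref{lemma:commutatoridentities} and a Farb--Margalit change-of-coordinates argument in the spirit of Lemma \ref{lemma:xwelldefined}, these relations should reduce every separated generator in $\cK_g$ to one whose class depends only on the algebraic data $([\Cg{\gamma_1}], [\Cg{\gamma_2}]) \in ((\wedge^2 H)/\Q)^{\otimes 2}$ together with a residual twist $h = h_2 - h_1$ (since the diagonal point-pushing subgroup $\pi_g \subset \Torelli_{g,1}$ uniformly shifts both $h_i$ by the same element of $H_{\Z}$).

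The critical remaining step is to eliminate this twist $h$ at the level of $\cK_g$. Using the commutator identity $\Cg{\gamma_2}^h - \Cg{\gamma_2} = \Cg{[\gamma_2, \tilde h]}$ for a lift $\tilde h \in \pi_g$ of $h$, the difference between $\CgCg{\Cg{\gamma_1}}{\Cg{\gamma_2}^h}$ and $\CgCg{\Cg{\gamma_1}}{\Cg{\gamma_2}}$ collapses into a ``double commutator'' separated generator that I expect to vanish in $\cK_g$ by a lift of the vanishing argument from Lemma \ref{lemma:vanishingbasic}. Once this $h$-independence is verified, the normalized generators of $\cK_g$ are parameterized by their $\fa$-images in $((\wedge^2 H)/\Q)^{\otimes 2}$, and injectivity of $\fa|_{\cK_g}$ is then tautological.

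The main obstacle is precisely this elimination of $h$ at the level of $\cK_g$, rather than at the level of $\cQ_g$ where separated generators vanish by fiat. The diagonal Torelli action alone provides only the uniform shift $(h_1, h_2) \to (h_1 + k, h_2 + k)$, and to kill the relative twist one must invoke Torelli elements interacting nontrivially with both sides of $\delta$ --- most naturally bounding pair maps straddling $\delta$ --- together with a refined bookkeeping of the separated-generator ``error terms'' that were allowed to vanish in the $\cQ_g$-calculations of Part \ref{part:2} but must now be handled honestly.
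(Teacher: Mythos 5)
Your proposal has the right overall shape — normalize the generators of $\cK_g$ to remove the twist $h$, then pass to the $\fa$-image — but two essential ingredients are missing, and one of them makes the sketch circular.  First, the claim that injectivity of $\fa|_{\cK_g}$ is ``tautological'' once each generator depends only on its $\fa$-image is incorrect: you must also show that every linear relation holding among the $\fa$-images in $((\wedge^2 H)/\Q)^{\otimes 2}$ already holds among the corresponding generators in $\cK_g$.  The paper handles this by packaging the generators $\Pres{V,\kappa}$, $\Pres{\kappa,V}$ and their relations (Lemmas \ref{lemma:refinegensetkg} and \ref{lemma:redundancykg}) into an abstractly presented vector space $\fK_g$ and invoking the external algebraic result \cite[Theorem A.6]{MinahanPutmanRepPresentations}, which identifies $\fK_g$ with the ``symmetric kernel'' of the contraction $((\wedge^2 H)/\Q)^{\otimes 2}\rightarrow\Sym^2(H)$.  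That is the genuine algebraic content of the theorem, and nothing in your sketch substitutes for it.

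Second, your proposed mechanism for eliminating $h$ does not work.  The ``double commutator'' element $\CgCg{\Cg{\gamma_1}}{\Cg{[\gamma_2,\widetilde{h}]}}$ is a separated-generator-shaped element of $\cK_g$, and since $\ogamma_2 = 0$ it lies in $\ker(\fa)$; but in $\cK_g$ separated pairs are the \emph{generators}, not zero.  Lemma \ref{lemma:vanishingbasic} kills such elements in $\cQ_g$ only because $\cQ_g$ is, by definition, the quotient by separated pairs, and there is no ``lift'' of that vanishing to $\cK_g$.  Worse, proving that this particular element of $\cK_g\cap\ker(\fa)$ vanishes is itself an instance of the injectivity you are trying to establish, so using it as an intermediate step is circular.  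The paper's Lemma \ref{lemma:kgexponents} proves $h$-independence by a genuinely different two-step argument: the component of $h$ in $\HH_1(T)$ is removed by a Johnson change-of-coordinates trick producing an $f\in\Torelli_{g,1}$ with $f(\eta)=\gamma_2\eta$, and the component in $\HH_1(S)$ is removed by showing that two a priori different quotients of $\HH_1([\pi_1(S),\pi_1(S)];\Q)$ (the $\Torelli(S)$-coinvariants versus the $\pi_1(S)$-coinvariants) coincide, so that $\pi_1(S)$-conjugate curves map to the same class.  Neither mechanism appears in your sketch, and the ``refined bookkeeping'' you gesture at does not supply them.
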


This will imply that $\cK_g$ is a subrepresentation of the finite-dimensional algebraic representation $\left((\wedge^2 H)/\Q\right)^{\otimes 2}$ of $\Sp_{2g}(\Z)$.  By
\eqref{eqn:algclosed}, this will imply that $\cK_g$ is a finite-dimensional algebraic representation of $\Sp_{2g}(\Z)$, as was claimed by Theorem \ref{theorem:bigtheorem}.
The rest of this paper is devoted to the proof of Theorem \ref{theorem:part3theorem}.  We
divide the proof into four steps:
\begin{itemize}
\item \S \ref{section:prelimkg} does some preliminary calculations in $\cK_g$.
\item \S \ref{section:kggen} constructs a refined generating set for $\cK_g$.
\item \S \ref{section:kgrel} identifies some redundancies among these generators.
\item \S \ref{section:kgalg} uses these generators and relations to prove
Theorem \ref{theorem:part3theorem}.  
\end{itemize}

\section{Preliminary calculations in \texorpdfstring{$\cK_g$}{Kg}}
\label{section:prelimkg}

In this section, we make a preliminary study of $\cK_g$.

\subsection{Pairs of genus-1 curves}
Recall from Theorem \ref{theorem:part2theorem} that $\cK_g$ is spanned by elements 
$\CgCg{\Cg{\gamma_1}^{h_1}}{\Cg{\gamma_2}^{h_2}}$ with $\gamma_1 \in [\pi_g,\pi_g]$
and $\gamma_2 \in [\pi_g,\pi_g]$ separated and $h_1,h_2 \in H_{\Z}$ arbitrary.  Our first
result is that we can take the $\gamma_i$ to be simple closed 
curves that bound on their right sides genus-$1$ subsurfaces $T_i$ such that 
$T_1 \cap T_2$ is the basepoint:\\
\Figure{Gamma1andGamma2WithT}

\begin{lemma}
\label{lemma:kggenus1}
The vector space $\cK_g$ is spanned by 
$\CgCg{\Cg{\gamma_1}^{h_1}}{\Cg{\gamma_2}^{h_2}}$ where:
\begin{itemize}
\item for $i=1,2$, the curve $\gamma_i \in [\pi_g,\pi_g]$ is a simple closed separating
curve that bounds on its right side a genus-$1$ subsurface $T_i$ and $h_i \in H_{\Z}$ is arbitrary; and
\item the intersection of $T_1$ and $T_2$ is the basepoint.
\end{itemize}
\end{lemma}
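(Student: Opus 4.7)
The plan is to refine the generating set from Theorem~\ref{theorem:part2theorem}. Start with an arbitrary generator $\CgCg{\Cg{\gamma_1}^{h_1}}{\Cg{\gamma_2}^{h_2}}$ with $\gamma_1, \gamma_2 \in [\pi_g, \pi_g]$ separated by some simple closed separating curve $\delta \in \pi_g$, and let $S_1$, $S_2$ be the subsurfaces to the left and right of $\delta$ respectively. Each $\gamma_i$ can be realized in $S_i$ and thus represents an element of $[\pi_1(S_i), \pi_1(S_i)]$. If either $S_i$ has genus zero the term is trivial, so we may assume both $S_i$ have positive genus.

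The key input is the same lemma from \cite[Lemma A.1]{PutmanCutPaste} that was used in the proof of Lemma~\ref{lemma:xwelldefined}: for any bordered surface $S$ of positive genus with a basepoint on $\partial S$, the commutator subgroup $[\pi_1(S), \pi_1(S)]$ is generated by based isotopy classes of simple closed separating curves that cut off one-holed tori inside $S$. Applying this inside each $S_i$, I would write $\gamma_i$ as a product of such generators (and their inverses). Since $\cC_g$ is the abelianization of $[\pi_g, \pi_g]$ tensored with $\Q$, this gives a decomposition
\[
\Cg{\gamma_i} = \sum_j c_{i,j} \, \Cg{\epsilon_i^{(j)}} \quad \text{with } c_{i,j} \in \Z,
\]
where each $\epsilon_i^{(j)}$ is a simple closed separating curve in $S_i$ bounding a one-holed torus $T_i^{(j)}$. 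Because $\Cg{\epsilon^{-1}} = -\Cg{\epsilon}$ in $\cC_g$ for any $\epsilon \in [\pi_g,\pi_g]$, I can freely reverse the orientation of each $\epsilon_i^{(j)}$ to guarantee that $T_i^{(j)}$ lies on its right side, absorbing the resulting signs into the coefficients. Expanding bilinearly then writes $\CgCg{\Cg{\gamma_1}^{h_1}}{\Cg{\gamma_2}^{h_2}}$ as a rational linear combination of terms $\CgCg{\Cg{\epsilon_1^{(j)}}^{h_1}}{\Cg{\epsilon_2^{(k)}}^{h_2}}$ of the desired type, with the $T_i^{(j)}$ one-holed tori on the right side of $\epsilon_i^{(j)}$, contained in $S_i$.

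The main obstacle, and the step requiring genuine care, is verifying the geometric condition $T_1^{(j)} \cap T_2^{(k)} = \{\ast\}$. A priori we only get $T_1^{(j)} \cap T_2^{(k)} \subset S_1 \cap S_2 = \delta$, and each $T_i^{(j)}$ meets $\delta$ at least at the basepoint. To arrange that this intersection is exactly the basepoint, I would, when applying \cite[Lemma A.1]{PutmanCutPaste} to $S_i$, choose the realization of each generator $\epsilon_i^{(j)}$ so that it meets $\delta$ only at the basepoint and so that its bounded one-holed torus $T_i^{(j)}$ is contained in $\operatorname{Int}(S_i) \cup \{\ast\}$. This is possible because the basepoint lies on $\delta \subset \partial S_i$ and $S_i$ has positive genus, leaving ample room to push each one-holed torus into the interior of $S_i$. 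With this careful choice, $T_1^{(j)} \cap \delta = T_2^{(k)} \cap \delta = \{\ast\}$, so $T_1^{(j)} \cap T_2^{(k)} = \{\ast\}$, and the decomposition exhibits the original generator as a linear combination of generators of the form claimed in the lemma.
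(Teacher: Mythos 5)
Your overall approach matches the paper's: start from Theorem~\ref{theorem:part2theorem}, let $\delta$ separate $\gamma_1$ from $\gamma_2$, decompose each $\gamma_i$ inside $[\pi_1(S_i),\pi_1(S_i)]$ into genus-$1$ bounding curves, and expand bilinearly in $(\cC_g^{\otimes 2})_{\Delta(\Torelli_{g,1})}$.

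However, there is a gap in how you invoke \cite[Lemma A.1]{PutmanCutPaste}. You assert that for a bordered surface $S$ with basepoint on $\partial S$, the group $[\pi_1(S),\pi_1(S)]$ is \emph{generated} by based isotopy classes of simple closed separating curves cutting off one-holed tori. That is not what the reference gives in this setting. The statement in \cite{PutmanCutPaste} requires the basepoint to lie in the \emph{interior} of $S$; when the basepoint is on the boundary, as it is for $S_1$ and $S_2$ here, the same proof only yields $\pi_1(S_i)$-\emph{normal} generation. The paper flags exactly this subtlety in the footnote attached to its citation. Consequently your decomposition
\[
\Cg{\gamma_i} = \sum_j c_{i,j}\,\Cg{\epsilon_i^{(j)}}, \qquad c_{i,j} \in \Z,
\]
does not follow: the correct version involves conjugates $\precon{c_{i,j}}{\epsilon_i^{(j)}}$, and in $\cC_g$ conjugation by $c \in \pi_g$ acts by the deck transformation $\oc \in H_{\Z}$, so one instead obtains $\Cg{\gamma_i} = \sum_j (\pm 1)\,\Cg{\epsilon_i^{(j)}}^{\,\oc_{i,j}}$ with nontrivial exponents. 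This is a real error as written, but it is easily repaired: since the lemma allows $h_1, h_2 \in H_{\Z}$ to be arbitrary, the extra shifts $\oc_{i,j}$ simply get absorbed into the exponents, and the bilinear expansion then produces terms $\CgCg{\Cg{\epsilon_1^{(j)}}^{h_1 + \oc_{1,j}}}{\Cg{\epsilon_2^{(k)}}^{h_2 + \oc_{2,k}}}$, which are still of the allowed form. Your treatment of the geometric condition $T_1^{(j)} \cap T_2^{(k)} = \{\ast\}$ is correct and agrees with the paper's (the paper simply observes it is true by construction since the $\delta_{i,j}$ and their one-holed tori are taken inside $S_i$).
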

\begin{proof}
Theorem \ref{theorem:part2theorem} says that $\cK_g$ is spanned by 
$\CgCg{\Cg{\gamma_1}^{h_1}}{\Cg{\gamma_2}^{h_2}}$ with $\gamma_1 \in [\pi_g,\pi_g]$
and $\gamma_2 \in [\pi_g,\pi_g]$ separated and $h_1,h_2 \in H_{\Z}$ arbitrary.  Fixing
such $\gamma_1,\gamma_2 \in [\pi_g,\pi_g]$ and $h_1,h_2 \in H_{\Z}$, we must show that
$\CgCg{\Cg{\gamma_1}^{h_1}}{\Cg{\gamma_2}^{h_2}}$ can be written as a linear
combination of the indicated generators.

Let $\delta \in \pi_g$ separate $\gamma_1$ from $\gamma_2$ and let $S_1$ and $S_2$ 
be the subsurfaces to the left and right of $\delta$, respectively:\\
\Figure{DeltaSeparatesGamma1Gamma2}
For $i=1,2$, the curve $\gamma_i$ is in the image of the map
$[\pi_1(S_i),\pi_1(S_i)] \rightarrow [\pi_g,\pi_g]$.
Since $[\pi_1(S_i),\pi_1(S_i)]$ is $\pi_1(S_i)$-normally generated by simple closed curves bounding genus-$1$ subsurfaces on their right sides (see, e.g., \cite[Lemma A.1]{PutmanCutPaste}\footnote{This reference
gives generation rather than $\pi_1(S_i)$-normal generation.  However, it
requires the basepoint to lie in the interior, while ours lies on the boundary.  The proof of \cite[Lemma A.1]{PutmanCutPaste} shows that in this case
we only get $\pi_1(S_i)$-normal generation.}), we can write\footnote{Here we
are using our convention that $\precon{a}{b} = a b a^{-1}$.}
\[\gamma_i = \left(\precon{c_{i,1}}{\delta_{i,1}}\right)^{\epsilon_{i,1}} \cdots \left(\precon{c_{i,n_i}}{\delta_{i,n_i}}\right)^{\epsilon_{i,n}}\]
where each $\delta_{i,j} \in [\pi_1(S_i),\pi_1(S_i)]$ is a simple closed curve bounding a genus-$1$ subsurface of $S_i$ on its right side, each $c_{ij}$ is an element of $\pi_1(S_i)$, and each $\epsilon_{ij}$ is $\pm 1$.

Regard these expressions as occurring in $\pi_g$.  
By construction, for $1 \leq j \leq n_1$ and $1 \leq k \leq n_2$ 
the curves $\delta_{1,j}$ and $\delta_{2,k}$ only intersect
at the basepoint and bound genus-$1$ subsurfaces on their right sides that only intersect
at the basepoint.  The desired
expression is then
\begin{align*}
\CgCg{\Cg{\gamma_1}^{h_1}}{\Cg{\gamma_2}^{h_2}} 
&= \CgCg{\left(\sum_{j=1}^{n_1} \epsilon_{1,j} \Cg{\delta_{1,j}}^{h_1+\oc_{1,j}}\right)}{\left(\sum_{k=1}^{n_2} \epsilon_{2,k} \Cg{\delta_{2,k}}^{h_2+\oc_{2,k}}\right)} \\
&= \sum_{j=1}^{n_1} \sum_{k=1}^{n_2} \epsilon_{1,j} \epsilon_{2,k} \CgCg{\Cg{\delta_{1,j}}^{h_1+\oc_{1,j}}}{\Cg{\delta_{2,k}}^{h_2+\oc_{2,k}}}. \qedhere
\end{align*}
\end{proof}

\subsection{Exponents do not matter}
We next prove that the exponents $h_i$ are unnecessary:

\begin{lemma}
\label{lemma:kgexponents}
Let $\gamma_1 \in [\pi_g,\pi_g]$ and $\gamma_2 \in [\pi_g,\pi_g]$ be separated and let $h_1,h_2 \in H_{\Z}$ be
arbitrary.  Then
$\CgCg{\Cg{\gamma_1}^{h_1}}{\Cg{\gamma_2}^{h_2}} = \CgCg{\Cg{\gamma_1}}{\Cg{\gamma_2}}$.
\end{lemma}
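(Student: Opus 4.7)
\medskip

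The strategy is to realize the shifts by $h_1$ and $h_2$ as the diagonal action of a carefully chosen element of $\Torelli_{g,1}$ on the coinvariants, so that the equality follows from $\Torelli_{g,1}$-invariance.

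By the point-pushing identity $\CgCg{c_1^h}{c_2^h} = \CgCg{c_1}{c_2}$ recorded in \S\ref{section:coinvariantquotient}, I can rewrite $\CgCg{\Cg{\gamma_1}^{h_1}}{\Cg{\gamma_2}^{h_2}} = \CgCg{\Cg{\gamma_1}^{h_1-h_2}}{\Cg{\gamma_2}}$, so it suffices to prove $\CgCg{\Cg{\gamma_1}^{h}}{\Cg{\gamma_2}} = \CgCg{\Cg{\gamma_1}}{\Cg{\gamma_2}}$ for every $h \in H_{\Z}$. Let $\delta$ witness the separation of $\gamma_1$ from $\gamma_2$ and write $\Sigma_g = S_1 \cup_{\delta} S_2$, inducing $H_{\Z} = H_1(S_1) \oplus H_1(S_2)$. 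For $h = h_1 + h_2$ with $h_i \in H_1(S_i)$, pick $y_1 \in \pi_1(S_1)$ with $\overline{y_1} = h_1$: then $\Cg{\gamma_1}^{h_1} = \Cg{y_1 \gamma_1 y_1^{-1}}$ and $\gamma_1' := y_1 \gamma_1 y_1^{-1}$ is still an element of $[\pi_1(S_1), \pi_1(S_1)]$ separated from $\gamma_2$ by $\delta$. Iterating this, together with the ``swap'' identity $\CgCg{\Cg{\gamma_1}^{h}}{\Cg{\gamma_2}} = \CgCg{\Cg{\gamma_1}}{\Cg{\gamma_2}^{-h}}$, reduces the claim to the single case where $h \in H_1(S_2)$ (with $\gamma_1 \subset S_1$ and $\gamma_2 \subset S_2$).

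For this remaining case I produce a mapping class $\phi \in \Torelli_{g,1}$ supported in $S_2$ such that $\phi_{\ast} \Cg{\gamma_2} = \Cg{\gamma_2}^h$. Since $\phi$ is supported away from $\gamma_1$, it fixes $\gamma_1$ pointwise and so $\phi_{\ast} \Cg{\gamma_1} = \Cg{\gamma_1}$; the $\Torelli_{g,1}$-invariance of the diagonal coinvariants then yields
\[
\CgCg{\Cg{\gamma_1}}{\Cg{\gamma_2}^h} \;=\; \CgCg{\phi_{\ast} \Cg{\gamma_1}}{\phi_{\ast} \Cg{\gamma_2}} \;=\; \CgCg{\Cg{\gamma_1}}{\Cg{\gamma_2}},
\]
which combined with the previous paragraph gives the lemma.

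The construction of $\phi$ is the main obstacle. Using that $[\pi_1(S_2), \pi_1(S_2)]$ is $\pi_1(S_2)$-normally generated by simple closed curves bounding genus-$1$ subsurfaces of $S_2$ (cf.\ the proof of Lemma \ref{lemma:generatefqdeltaweak}) together with the commutator identities of Lemma \ref{lemma:commutatoridentities} to expand $\Cg{\gamma_2}^h$ and $\Cg{\gamma_2}$ into a corresponding linear combination, I reduce to the case where $\gamma_2$ itself bounds a genus-$1$ subsurface $T_2 \subset S_2$. Then, picking a simple based loop $\eta \subset S_2 \setminus T_2$ with $\overline{\eta} = h$ (possible after a change-of-coordinates modification of $T_2$ in the spirit of Lemma \ref{lemma:xwelldefined}, since $g \geq 4$ leaves ample room in $S_2$), I take $\phi$ to be the ``point-push of $T_2$ around $\eta$'', realized as the bounding pair map $T_{\eta_+} T_{\eta_-}^{-1}$ attached to the two boundary components of a tubular neighborhood of $\eta$ in $S_2 \setminus T_2$. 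This is a bounding pair map, hence lies in $\Torelli_g \subset \Torelli_{g,1}$, is supported in a regular neighborhood of $T_2 \cup \eta \subset S_2$, and sends the based loop $\gamma_2 = \partial T_2$ to $\eta \gamma_2 \eta^{-1}$; therefore $\phi_{\ast} \Cg{\gamma_2} = \Cg{\gamma_2}^h$ as required. The technical heart of the argument is verifying this last point-pushing identification precisely and carrying the reduction from general $\gamma_2$ to a genus-$1$ bounding curve through coherently with the choice of $\eta$.
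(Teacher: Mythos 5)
There is a genuine gap in the construction of the mapping class $\phi$, which is the heart of your argument. You want $\phi \in \Torelli_{g,1}$ supported in $S_2$ with $\phi_*\Cg{\gamma_1} = \Cg{\gamma_1}$ and $\phi_*\Cg{\gamma_2} = \Cg{\gamma_2}^h$, and you propose to take $\phi = T_{\eta_+}T_{\eta_-}^{-1}$ for the boundary circles of a regular neighborhood of $T_2 \cup \eta$. But both $\gamma_1$ and $\gamma_2$ are (essentially) disjoint from $\eta_\pm$: the curve $\gamma_1$ lives on the $S_1$ side of $\delta$, and $\gamma_2 = \partial T_2$ lies in the interior of the neighborhood being twisted about, not in the annular support of the twists. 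A Dehn twist fixes, on the nose, any based loop it can be realized disjointly from, so $\phi$ fixes \emph{both} $\gamma_1$ and $\gamma_2$ as based loops. Thus $\phi_*\Cg{\gamma_2} = \Cg{\gamma_2}$, not $\Cg{\gamma_2}^h$, and the equality you derive from $\Delta(\Torelli_{g,1})$-invariance is vacuous. (If instead one takes $\eta_\pm$ to be the boundary circles of an annular neighborhood of $\eta$ alone---which is what the phrase ``tubular neighborhood of $\eta$'' literally says---then $T_{\eta_+}T_{\eta_-}^{-1}$ is precisely the point-push of the basepoint around $\eta$, which shifts $\Cg{\gamma_1}$ and $\Cg{\gamma_2}$ by the \emph{same} $h$; this is exactly the identity $\CgCg{c_1^h}{c_2^h} = \CgCg{c_1}{c_2}$ you already used at the start, so the argument is circular.) There is also a secondary issue you flag yourself: when $h$ is not primitive there is no simple based loop $\eta$ with $\oeta = h$, and the suggested change-of-coordinates modification does not obviously repair this.

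The paper proves the lemma by a route that avoids constructing any single such $\phi$. After reducing (via Lemma~\ref{lemma:kggenus1}) to the case where $\gamma_1$ and $\gamma_2$ are simple separating curves bounding disjoint genus-$1$ pieces, the shift $h$ is decomposed along the genus-$1$ piece $T$ bounded by $\gamma_1$ as $h = t + s$ with $t \in \HH_1(T)$ and $s \in \HH_1(S)$ (where $S$ is the complementary subsurface). The $t$-part is killed by a commutator expansion: one picks $\lambda \in \pi_1(T)$ with $\olambda = -t$, an auxiliary simple closed curve $\eta \subset S$, and an $f \in \Torelli_{g,1}$ supported in $S$ with $f(\eta) = \gamma_2\eta$ (this exists by Johnson's change of coordinates because $\eta$ and $\gamma_2\eta$ are homologous simple closed curves), and the resulting identity forces $\CgCg{\Cg{\gamma_1}}{\Cg{\gamma_2,\lambda}^{t+s}} = 0$, which expands to give the cancellation of $t$. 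The $s$-part is killed structurally: one observes that the map $[\pi_1(S),\pi_1(S)] \to \cK_g$, $\zeta \mapsto \CgCg{\Cg{\gamma_1}}{\Cg{\zeta}}$, factors through $\HH_1([\pi_1(S),\pi_1(S)];\Q)_{\Torelli(S)}$, and that this coinvariant quotient coincides with the $\pi_1(S)$-coinvariant quotient (both being $\wedge^2 \HH_1(S;\Q)$, via Lemma~\ref{lemma:cg1torellicoinv} and \cite[Theorem~C]{PutmanCommutator}); since $\gamma_2$ and $\sigma\gamma_2\sigma^{-1}$ are $\pi_1(S)$-conjugate, they are identified already at the level where the map factors, with no explicit Torelli element needed. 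That coinvariant coincidence is precisely the substitute for the $\phi$ you are trying (and failing) to build, and it sidesteps both the nonprimitivity issue and the delicate question of what a ``subsurface push'' does to the based boundary loop.
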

\begin{proof}
The proof has two steps.  The first step handles the generators given by Lemma \ref{lemma:kggenus1} above, and the
second step reduces the lemma to those generators.

\begin{step}{1}
\label{step:kgexponents1}
The lemma is true if $\gamma_1$ and $\gamma_2$ are simple closed separating curves that bound on
their right sides genus-$1$ subsurfaces that only intersect at the basepoint.
\end{step}

Identify the group $\pi_g$ of inner automorphisms of $\pi_g$ with 
the point-pushing subgroup of $\Torelli_{g,1}$.  Since the group $\Delta(\Torelli_{g,1})$ acts trivially on 
$\cK_g \subset (\cC_g^{\otimes 2})_{\Delta(\Torelli_{g,1})}$, the subgroup $\Delta(\pi_g)$ of $\Delta(\Torelli_{g,1})$ acts trivially.  Letting $h = h_2 - h_1$,
we therefore have the following (c.f.\ \S \ref{section:coinvariantquotient}):
\[\CgCg{\Cg{\gamma_1}^{h_1}}{\Cg{\gamma_2}^{h_2}} = \CgCg{\Cg{\gamma_1}}{\Cg{\gamma_2}^{h_2-h_1}} =  \CgCg{\Cg{\gamma_1}}{\Cg{\gamma_2}^{h}}.\]
Let $T \cong \Sigma_1^1$ be the genus-$1$ subsurface bounded by $\gamma_1$ on its right side and let $S \cong \Sigma_{g-1}^1$ be the
subsurface bounded by $\gamma_1$ on its left side:\\
\Figure{Gamma1andGamma2FocusGamma1}  
We have $H_{\Z} = \HH_1(T) \oplus \HH_1(S)$.  Write $h = t+s$ with
$t \in \HH_1(T)$ and $s \in \HH_1(S)$, so our goal is to prove that
$\CgCg{\Cg{\gamma_1}}{\Cg{\gamma_2}^{t+s}} = \CgCg{\Cg{\gamma_1}}{\Cg{\gamma_2}}$.

We first prove that
\begin{equation}
\label{eqn:cancelt}
\CgCg{\Cg{\gamma_1}}{\Cg{\gamma_2}^{t+s}} = \CgCg{\Cg{\gamma_1}}{\Cg{\gamma_2}^{s}}.
\end{equation}
For this, pick $\lambda \in \pi_1(T) \subset \pi_g$ with $\olambda = -t$.  Choose a simple closed nonseparating curve
$\eta \in \pi_1(S) \subset \pi_g$ that intersects $\gamma_2$ as depicted in the following figure:\\
\Figure{Gamma2Eta}
The curve $\gamma_2 \eta$ is also a simple closed nonseparating curve, and since $\gamma_2 \in [\pi_g,\pi_g]$
the curves $\gamma_2 \eta$ and $\eta$ are homologous.  Using work of Johnson \cite{JohnsonConj}, we can
find $f \in \Torelli_{g,1}$ that is supported on $S$ such that $f(\eta) = \gamma_2 \eta$.  Since $\Delta(\Torelli_{g,1})$
acts trivially on $\cK_g \subset (\cC_g^{\otimes 2})_{\Delta(\Torelli_{g,1})}$ and also fixes $t,s \in H_{\Z}$, we therefore have
\begin{equation}
\label{eqn:cancelt1}
\CgCg{\Cg{\gamma_1}}{\Cg{\eta,\lambda}^{t+s}} = \CgCg{\Cg{f(\gamma_1)}}{\Cg{f(\eta),f(\lambda)}^{f(t)+f(s)}} = \CgCg{\Cg{\gamma_1}}{\Cg{\gamma_2 \eta,\lambda}^{t+s}}.
\end{equation}
Using our commutator identities (Lemma \ref{lemma:commutatoridentities}) along with the fact that $\ogamma_2 = 0$, we have
\[\Cg{\gamma_2 \eta,\lambda}^{t+s} = \Cg{\gamma_2,\lambda}^{t+s}+\Cg{\eta,\lambda}^{t+s+\ogamma_2} = \Cg{\gamma_2,\lambda}^{t+s} + \Cg{\eta,\lambda}^{t+s}.\]
Plugging this into \eqref{eqn:cancelt1} and canceling the term $\CgCg{\Cg{\gamma_1}}{\Cg{\eta,\lambda}^{t+s}}$, we get
$\CgCg{\Cg{\gamma_1}}{\Cg{\gamma_2,\lambda}^{t+s}} = 0$.
Using the fact that $\olambda = -t$, we conclude that
\begin{align*}
0 &= \CgCg{\Cg{\gamma_1}}{\Cg{\gamma_2,\lambda}^{t+s}} = \CgCg{\Cg{\gamma_1}}{\Cg{\gamma_2 \lambda \gamma_2^{-1} \lambda^{-1}}^{t+s}} 
  = \CgCg{\Cg{\gamma_1}}{\Cg{\gamma_2}^{t+s}} - \CgCg{\Cg{\gamma_1}}{\Cg{\gamma_2}^{t+s+\olambda}} \\
  &= \CgCg{\Cg{\gamma_1}}{\Cg{\gamma_2}^{t+s}} - \CgCg{\Cg{\gamma_1}}{\Cg{\gamma_2}^{t+s+(-t)}} = \CgCg{\Cg{\gamma_1}}{\Cg{\gamma_2}^{t+s}} - \CgCg{\Cg{\gamma_1}}{\Cg{\gamma_2}^{s}},
\end{align*}
as was claimed in \eqref{eqn:cancelt}.

To complete the proof, it is now enough to prove that
\begin{equation}
\label{eqn:cancelx}
\CgCg{\Cg{\gamma_1}}{\Cg{\gamma_2}^{s}} = \CgCg{\Cg{\gamma_1}}{\Cg{\gamma_2}}.
\end{equation}
For this, note that since $s \in \HH_1(S)$ we have that $\CgCg{\Cg{\gamma_1}}{\Cg{\gamma_2}^{s}}$ is in the image of the map
\begin{equation}
\label{eqn:mapsin}
\begin{tikzcd}
\HH_1({[\pi_1(S),\pi_1(S)]};\Q) \arrow{r} & \cK_g \subset (\cC_g^{\otimes 2})_{\Delta(\Torelli_{g,1})}
\end{tikzcd}
\end{equation}
taking the homology class of $\zeta \in [\pi_1(S),\pi_1(S)]$ to $\CgCg{\Cg{\gamma_1}}{\Cg{\zeta}}$.  Let
$\Torelli(S)$ denote the Torelli group of $S$.  By extending mapping classes on $S$ to $\Sigma_{g,1}$ by the identity,
we get an inclusion $\Torelli(S) \hookrightarrow \Torelli_{g,1}$.  Since $\Torelli(S)$ fixes $\gamma_1$, the map
\eqref{eqn:mapsin} factors through the $\Torelli(S)$-coinvariants as 
\begin{equation}
\label{eqn:mapsin2}
\begin{tikzcd}
\HH_1({[\pi_1(S),\pi_1(S)]};\Q)_{\Torelli(S)} \arrow{r} & \cK_g \subset (\cC_g^{\otimes 2})_{\Delta(\Torelli_{g,1})}.
\end{tikzcd}
\end{equation}
Lemma \ref{lemma:cg1torellicoinv} implies that\footnote{Though when we proved Lemma \ref{lemma:cg1torellicoinv} we were
working under our standing assumption that $g \geq 4$ (Assumption \ref{assumption:genus}), the proof only requires
$g \geq 3$ and thus applies to $S$.}
\begin{equation}
\label{eqn:commutator1}
\HH_1([\pi_1(S),\pi_1(S)];\Q)_{\Torelli(S)} \cong \wedge^2 \HH_1(S;\Q).
\end{equation}
The free group $\pi_1(S)$ also acts on $[\pi_1(S),\pi_1(S)]$ by conjugation, and it is classical
that
\begin{equation}
\label{eqn:commutator2}
\HH_1([\pi_1(S),\pi_1(S)];\Q)_{\pi_1(S)} \cong \wedge^2 \HH_1(S;\Q).
\end{equation}
See, e.g., \cite[Theorem C]{PutmanCommutator}.  The isomorphisms \eqref{eqn:commutator1} and \eqref{eqn:commutator2} 
give two quotients of $\HH_1([\pi_1(S),\pi_1(S)];\Q)$ that happen to be isomorphic, and thus two different maps
\[\begin{tikzcd}
{[\pi_1(S),\pi_1(S)]} \arrow{r} & \HH_1({[\pi_1(S),\pi_1(S)]};\Q) \arrow[two heads]{r} & \wedge^2 \HH_1(S;\Q).
\end{tikzcd}\]
Examining the proofs of Lemma \ref{lemma:cg1torellicoinv} and \cite[Theorem C]{PutmanCommutator}, we see
that these are actually the same map.
This implies that $\pi_1(S)$-conjugate elements of $[\pi_1(S),\pi_1(S)]$ map to the same
element of $\HH_1([\pi_1(S),\pi_1(S)];\Q)_{\Torelli(S)}$.  Choosing $\sigma \in \pi_1(S)$ with $\osigma = s$,
the images of $\precon{\sigma}{\gamma_2}$ and $\gamma_2$ in $\HH_1([\pi_1(S),\pi_1(S)];\Q)_{\Torelli(S)}$ are
therefore the same.
Mapping this to $(\cC_g^{\otimes 2})_{\Delta(\Torelli_{g,1})}$ via
\eqref{eqn:mapsin2}, we conclude that $\CgCg{\Cg{\gamma_1}}{\Cg{\gamma_2}^{s}}$ and
$\CgCg{\Cg{\gamma_1}}{\Cg{\gamma_2}}$ are the same, as was claimed in \eqref{eqn:cancelx}.

\begin{step}{2}
\label{step:kgexponents2}
The lemma is true for general $\gamma_i$.
\end{step}

Use Lemma \ref{lemma:kggenus1} to write
\[\CgCg{\Cg{\gamma_1}}{\Cg{\gamma_2}} = \sum_{j=1}^n c_j \CgCg{\Cg{\delta_{1,j}}^{k_{1,j}}}{\Cg{\delta_{2,j}}^{k_{2,j}}}\]
where for $1 \leq j \leq n$ we have $c_j \in \Z$ and the following holds:
\begin{itemize}
\item for $i=1,2$, the curve $\delta_{i,j} \in [\pi_g,\pi_g]$ is a simple closed separating
curve that bounds on its right side a genus-$1$ subsurface $T_{i,j}$ and $k_{i,j} \in H_{\Z}$ is arbitrary; and
\item the intersection of $T_{1,j}$ and $T_{2,j}$ is the basepoint.
\end{itemize}
We then have
\begin{equation}
\label{eqn:kgexponents2.1}
\CgCg{\Cg{\gamma_1}^{h_1}}{\Cg{\gamma_2}^{h_2}} = \sum_{j=1}^n c_j \CgCg{\Cg{\delta_{1,j}}^{h_1+k_{1,j}}}{\Cg{\delta_{2,j}}^{h_2+k_{2,j}}}.
\end{equation}
Applying Step \ref{step:kgexponents1} to each term in this sum, we get that
\begin{equation}
\label{eqn:kgexponents2.2}
\sum_{j=1}^n c_j \CgCg{\Cg{\delta_{1,j}}^{h_1+k_{1,j}}}{\Cg{\delta_{2,j}}^{h_2 + k_{2,j}}} = \sum_{j=1}^n c_j \CgCg{\Cg{\delta_{1,j}}^{k_{1,j}}}{\Cg{\delta_{2,j}}^{k_{2,j}}} = \CgCg{\Cg{\gamma_1}}{\Cg{\gamma_2}}.
\end{equation}
Combining \eqref{eqn:kgexponents2.1} and \eqref{eqn:kgexponents2.2}, we conclude that
$\CgCg{\Cg{\gamma_1}^{h_1}}{\Cg{\gamma_2}^{h_2}} = \CgCg{\Cg{\gamma_1}}{\Cg{\gamma_2}}$,
as desired.
\end{proof}

\section{A refined generating set for \texorpdfstring{$\cK_g$}{Kg}}
\label{section:kggen}

Our goal in this section is to construct a refined generating set for $\cK_g$.

\subsection{Symplectic terminology}

A {\em symplectic summand} of $H_{\Z}$ is a subgroup $V$ of $H_{\Z}$ such that $H_{\Z} = V \oplus V^{\perp}$,
where $\perp$ is taken with respect to the algebraic intersection pairing.  A symplectic summand
$V$ of $H_{\Z}$ is isomorphic to $\Z^{2h}$ for an integer $h$ called its {\em genus}.
For a subgroup $W$ of $H_{\Z}$, let $W_{\Q}$ denote the subspace $W \otimes \Q$ of $H = H_{\Z} \otimes \Q$.

\subsection{Generators}

Fix a genus-$1$ symplectic summand $V$ of $H_{\Z}$ and some $\kappa \in \wedge^2 V_{\Q}^{\perp}$.  We construct
elements $\Pres{V,\kappa}$ and $\Pres{\kappa,V}$ of $\cK_g$ in the following way.  By work of Johnson \cite{JohnsonConj},
we can find a simple closed separating curve $\delta \in [\pi_g,\pi_g]$ bounding on its right side a subsurface $T \cong \Sigma_1^1$
with $\HH_1(T) = V$.  Let $S \cong \Sigma_{g-1}^1$ be the subsurface to the left of $\delta$:\\
\Figure{DeltaTS}
Regard $\pi_1(S)$ and $\HH_1(S)$ as subgroups of $\pi_g$ and $\HH_1(\Sigma_g)$, so $\HH_1(S) = V^{\perp}$.  Let
\begin{align*}
\phi_L\colon [\pi_1(S),\pi_1(S)] &\longrightarrow \cK_g \subset (\cC_g^{\otimes 2})_{\Delta(\Torelli_{g,1})}, \\
\phi_R\colon [\pi_1(S),\pi_1(S)] &\longrightarrow \cK_g \subset (\cC_g^{\otimes 2})_{\Delta(\Torelli_{g,1})}
\end{align*}
be the maps defined by
\[\phi_L(\eta) = \CgCg{\Cg{\delta}}{\Cg{\eta}} \quad \text{and} \quad \phi_R(\eta) = \CgCg{\Cg{\eta}}{\Cg{\delta}} \quad \text{for $\eta \in [\pi_1(S),\pi_1(S)]$}.\]
Since their targets are $\Q$-vector spaces, these maps factor through $\HH_1([\pi_1(S),\pi_1(S)];\Q)$.  Letting $\pi_1(S)$ act
on $\HH_1([\pi_1(S),\pi_1(S)];\Q)$ via the conjugation action of $\pi_1(S)$ on $[\pi_1(S),\pi_1(S)]$, it follows from
Lemma \ref{lemma:kgexponents} that both induced maps $\HH_1([\pi_1(S),\pi_1(S)];\Q) \rightarrow \cK_g$ are
$\pi_1(S)$-invariant.  They therefore both factor through the coinvariants
\[\HH_1([\pi_1(S),\pi_1(S)];\Q)_{\pi_1(S)} \cong \wedge^2 \HH_1(S;\Q) = \wedge^2 V_{\Q}^{\perp},\]
where the first isomorphism is classical (see, e.g., \cite[Theorem C]{PutmanCommutator}).  Let
\begin{align*}
\ophi_L\colon \wedge^2 V_{\Q}^{\perp} &\longrightarrow \cK_g \subset (\cC_g^{\otimes 2})_{\Delta(\Torelli_{g,1})}, \\
\ophi_R\colon \wedge^2 V_{\Q}^{\perp} &\longrightarrow \cK_g \subset (\cC_g^{\otimes 2})_{\Delta(\Torelli_{g,1})}
\end{align*}
be these two induced maps.  Recalling that $\kappa \in \wedge^2 V_{\Q}^{\perp}$, we define
\[\Pres{\delta,\kappa} = \ophi_L(\kappa) \quad \text{and} \quad \Pres{\kappa,\delta} = \ophi_R(\kappa).\]
We claim this only depends on $V$:

\begin{lemma}
\label{lemma:genwelldefined}
Let $V$ be a genus-$1$ symplectic summand of $H_{\Z}$ and let $\kappa \in \wedge^2 V_{\Q}^{\perp}$.  Let
$\delta_1,\delta_2 \in [\pi_g,\pi_g]$ be simple closed separating curves such that $\delta_i$ bounds
on its right side a subsurface $T_i \cong \Sigma_1^1$ with $\HH_1(T_i) = V$.  Then
$\Pres{\delta_1,\kappa} = \Pres{\delta_2,\kappa}$ and $\Pres{\kappa,\delta_1} = \Pres{\kappa,\delta_2}$.
\end{lemma}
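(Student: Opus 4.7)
The plan is to deduce the lemma from the fact that $\Delta(\Torelli_{g,1})$ acts trivially on $\cK_g \subset (\cC_g^{\otimes 2})_{\Delta(\Torelli_{g,1})}$, once we know that the pairs $(\delta_1, T_1)$ and $(\delta_2, T_2)$ lie in a single $\Torelli_{g,1}$-orbit. More precisely, I will produce a mapping class $f \in \Torelli_{g,1}$ such that $f(\delta_1) = \delta_2$, and therefore $f(T_1) = T_2$ and $f(S_1) = S_2$ as subsurfaces up to isotopy (using that $f$ is orientation-preserving and that the $T_i$ lie to the right of the $\delta_i$).

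To produce such an $f$, I would follow the strategy from the proof of Claim 2 in Lemma \ref{lemma:xwelldefined}: Farb--Margalit's change of coordinates principle yields some $f_0 \in \Mod_{g,1}$ with $f_0(\delta_1) = \delta_2$, and then the argument used in \cite[Lemma 6.2]{PutmanCutPaste} upgrades this to an $f \in \Torelli_{g,1}$. The key input is that both $\delta_i$ bound genus-$1$ subsurfaces $T_i$ with the same homology $V$, so the associated symplectic decompositions of $H_{\Z}$ coincide; together with Johnson's realization theorem \cite{JohnsonConj} this forces the pairs to be in the same Torelli orbit. I expect this transitivity step to be the only real content of the proof.

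Once $f$ is in hand, I would write $\kappa = \sum_j c_j\, v_j \wedge w_j$ with $v_j, w_j \in V_{\Q}^{\perp} = \HH_1(S_1;\Q) = \HH_1(S_2;\Q)$, lift to $\eta_j, \lambda_j \in \pi_1(S_1)$ with $\oeta_j = v_j$ and $\olambda_j = w_j$, and form $\zeta = \prod_j [\eta_j,\lambda_j]^{c_j} \in [\pi_1(S_1),\pi_1(S_1)]$ (allowing formal linear combinations after passing to $\cC_g$). By construction $\phi_L(\zeta) = \Pres{\delta_1,\kappa}$. Applying $f$, the element $f(\zeta)$ lies in $[\pi_1(S_2),\pi_1(S_2)]$, and since $f$ acts trivially on $H_{\Z}$ the homology classes $\overline{f(\eta_j)} = v_j$ and $\overline{f(\lambda_j)} = w_j$ are unchanged; under the isomorphism $\HH_1([\pi_1(S_2),\pi_1(S_2)];\Q)_{\pi_1(S_2)} \cong \wedge^2 V_{\Q}^{\perp}$ the element $f(\zeta)$ still represents $\kappa$, so $\phi_L(f(\zeta)) = \Pres{\delta_2,\kappa}$.

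Finally, since $\Delta(\Torelli_{g,1})$ acts trivially on $\cK_g$ and $f(\delta_1) = \delta_2$,
\[
\Pres{\delta_1,\kappa} = \CgCg{\Cg{\delta_1}}{\Cg{\zeta}} = \CgCg{\Cg{f(\delta_1)}}{\Cg{f(\zeta)}} = \CgCg{\Cg{\delta_2}}{\Cg{f(\zeta)}} = \Pres{\delta_2,\kappa},
\]
and the identical argument with $\phi_R$ in place of $\phi_L$ gives $\Pres{\kappa,\delta_1} = \Pres{\kappa,\delta_2}$.
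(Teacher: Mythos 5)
Your proof is correct and follows essentially the same route as the paper's: find $f \in \Torelli_{g,1}$ with $f(\delta_1) = \delta_2$, then use that $\Delta(\Torelli_{g,1})$ acts trivially on $\cK_g$. The only (harmless) difference is that the paper cites Johnson's realization/conjugacy theorem \cite{JohnsonConj} directly to produce $f$, whereas you go through change-of-coordinates plus the argument of \cite[Lemma 6.2]{PutmanCutPaste}; and you spell out the "by construction" step (that $\Delta(f)(\Pres{\delta_1,\kappa}) = \Pres{\delta_2,\kappa}$ because $f$ is trivial on homology) that the paper leaves implicit.
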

\begin{proof}
By work of Johnson \cite{JohnsonConj}, we can find $f \in \Torelli_{g,1}$ such that $f(\delta_1) = \delta_2$.
Recall that $\Delta\colon \Torelli_{g,1} \rightarrow \Torelli_{g,1} \times \Torelli_{g,1}$ is the diagonal map
and $\cK_g \subset (\cC_g^{\otimes 2})_{\Delta(\Torelli_{g,1})}$.  By construction,
\[\Pres{\delta_2,\kappa} = \Delta(f)(\Pres{\delta_1,\kappa}) \quad \text{and} \quad \Delta(f)(\Pres{\kappa,\delta_1}) = \Pres{\kappa,\delta_2}.\]
Since $\Delta(\Torelli_{g,1})$ acts trivially on $\cK_g \subset (\cC_g^{\otimes 2})_{\Delta(\Torelli_{g,1})}$,
the lemma follows.
\end{proof}

Because of this lemma, we can define
$\Pres{V,\kappa} = \Pres{\delta,\kappa}$
and
$\Pres{\kappa,V} = \Pres{\kappa,\delta}$.

\subsection{Refined generating set}

The following says that the $\Pres{V,\kappa}$ and $\Pres{\kappa,V}$ generate $\cK_g$ and identifies
some relations between them:

\begin{lemma}
\label{lemma:refinegensetkg}
The vector space $\cK_g$ is generated by the $\Pres{V,\kappa}$ and $\Pres{\kappa,V}$ as
$V$ ranges over genus-$1$ symplectic summand of $H_{\Z}$ and $\kappa$ ranges over
elements of $\wedge^2 V_{\Q}^{\perp}$.  Moreover, for a genus-$1$ symplectic summand $V$ and
$\kappa_1,\kappa_2 \in \wedge^2 V_{\Q}^{\perp}$ and $\lambda_1,\lambda_2 \in \Q$ we have
relations
\begin{align*}
\Pres{V,\lambda_1 \kappa_1+\lambda_2 \kappa_2} &= \lambda_1 \Pres{V,\kappa_1} + \lambda_2 \Pres{V,\kappa_2},\\
\Pres{\lambda_1 \kappa_1+\lambda_2 \kappa_2,V} &= \lambda_1 \Pres{\kappa_1,V} + \lambda_2 \Pres{\kappa_2,V}.
\end{align*}
\end{lemma}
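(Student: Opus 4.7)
The plan is to combine the reductions supplied by Lemmas \ref{lemma:kggenus1} and \ref{lemma:kgexponents} with the construction of $\Pres{V,\kappa}$ just given. Bilinearity will be essentially formal, and generation will follow by matching up the generators from Lemma \ref{lemma:kggenus1} with the definition of $\Pres{V,\kappa}$. I do not expect any serious obstacle; the genuine structural work has already been carried out in the earlier lemmas.

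For the bilinearity assertions, I will observe that the maps $\ophi_L, \ophi_R \colon \wedge^2 V_{\Q}^{\perp} \to \cK_g$ used to define $\Pres{V,\kappa}$ and $\Pres{\kappa,V}$ are $\Q$-linear by construction: they are obtained by factoring $\Q$-linear maps out of $\HH_1([\pi_1(S),\pi_1(S)];\Q)$ through the $\pi_1(S)$-coinvariants, and both operations preserve linearity. Since $\Pres{V,\kappa} = \ophi_L(\kappa)$ and $\Pres{\kappa,V} = \ophi_R(\kappa)$ by definition, both claimed relations follow at once.

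For generation, I will show that each generator of $\cK_g$ furnished by Lemma \ref{lemma:kggenus1} can be rewritten as a single $\Pres{V,\kappa}$. Fix such a generator $\CgCg{\Cg{\gamma_1}^{h_1}}{\Cg{\gamma_2}^{h_2}}$, where for $i=1,2$ the curve $\gamma_i$ is a simple closed separating curve bounding a genus-$1$ subsurface $T_i$ on its right, and $T_1 \cap T_2$ is the basepoint. By Lemma \ref{lemma:kgexponents} the exponents may be taken to be $0$, so it suffices to handle $\CgCg{\Cg{\gamma_1}}{\Cg{\gamma_2}}$. Setting $V = \HH_1(T_1)$ and $S = \Sigma_g \setminus \Interior(T_1)$, the subsurface $S$ is exactly the one to the left of $\gamma_1$ used in constructing $\Pres{V,\cdot}$, and $\HH_1(S) = V^{\perp}$. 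Because $T_1 \cap T_2$ is the basepoint, the curve $\gamma_2$ is based-homotopic into $S$, and since $\HH_1(S) \hookrightarrow \HH_1(\Sigma_g)$ is an injection onto a direct summand (coming from the decomposition $\Sigma_g = T_1 \cup_{\gamma_1} S$), the condition $\gamma_2 \in [\pi_g,\pi_g]$ forces $\gamma_2 \in [\pi_1(S),\pi_1(S)]$. Letting $\kappa \in \wedge^2 V_{\Q}^{\perp}$ be the image of $\gamma_2$ under the canonical quotient $[\pi_1(S),\pi_1(S)] \to \HH_1([\pi_1(S),\pi_1(S)];\Q)_{\pi_1(S)} \cong \wedge^2 V_{\Q}^{\perp}$, the definition of $\Pres{V,\kappa}$ (using Lemma \ref{lemma:genwelldefined} to take $\delta = \gamma_1$) gives $\CgCg{\Cg{\gamma_1}}{\Cg{\gamma_2}} = \ophi_L(\kappa) = \Pres{V,\kappa}$, as desired.

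The only mild thing to check is that the element $\kappa$ so produced genuinely lies in $\wedge^2 V_{\Q}^{\perp}$ rather than in some a priori larger quotient, but this is exactly what the direct-summand decomposition $\HH_1(\Sigma_g) = V \oplus V^{\perp}$ ensures. Note that this argument actually expresses every generator as one of the $\Pres{V,\kappa}$ alone; the $\Pres{\kappa,V}$ family in the statement is an allowed additional stock of generators but is not needed for generation.
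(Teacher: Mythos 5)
Your proof is correct and follows the same route as the paper, which simply observes that every generator produced by Lemmas \ref{lemma:kggenus1} and \ref{lemma:kgexponents} is by construction one of the $\Pres{V,\kappa}$ (take $\delta = \gamma_1$, $V = \HH_1(T_1)$, and $\kappa$ the image of $\gamma_2$ in $\wedge^2 V_{\Q}^{\perp}$), while bilinearity is automatic because $\ophi_L$ and $\ophi_R$ are linear maps out of the coinvariants. Your concluding remark that the $\Pres{V,\kappa}$ alone already generate is accurate; the paper states both families because the $\Pres{\kappa,V}$ are needed later to match the presentation of $\fK_g$ in the proof of Theorem \ref{theorem:part3theorem}.
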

\begin{proof}
The elements $\Pres{V,\kappa}$ and $\Pres{\kappa,V}$ generate $\cK_g$ since by
Lemma \ref{lemma:kgexponents} they contain all the generators for $\cK_g$ identified
by Lemma \ref{lemma:kggenus1}.  The indicated relations are all immediate from the
construction of $\Pres{V,\kappa}$ and $\Pres{\kappa,V}$.
\end{proof}

\section{Redundancies among generators for \texorpdfstring{$\cK_g$}{Kg}}
\label{section:kgrel}

The generating set for $\cK_g$ given by Lemma \ref{lemma:refinegensetkg} has
some redundancies.

\subsection{Commutator projection}

Describing these redundancies requires some preliminaries.  
Let $F$ be a free group.  The group $F$ acts on conjugation on $[F,F]$, and it is classical
that the coinvariants of the induced action on $\HH_1([F,F])$ satisfy
\[\HH_1([F,F])_{F} \cong \wedge^2 \HH_1(F).\]
See, e.g., \cite[Theorem C]{PutmanCommutator}.  We have used this isomorphism several times
already.  Let $\rho\colon [F,F] \rightarrow \wedge^2 \HH_1(F)$ be the composition
\[\begin{tikzcd}
{[F,F]} \arrow[hook]{r} & \HH_1({[F,F]}) \arrow[two heads]{r} & \HH_1({[F,F]})_{F} \cong \wedge^2 \HH_1(F).
\end{tikzcd}\]
We will call this the {\em commutator projection} map.  For $z \in F$, let $\oz$ be the image of $z$
in $\HH_1(F)$.  The commutator projection map satisfies
\[\rho([x,y]) = \ox \wedge \oy \quad \text{for all $x,y \in F$}.\]

\subsection{Subsurface intersection form}

Let $W$ be a genus-$h$ symplectic summand of $H_{\Z}$.
Alternating bilinear forms on $W$ can be identified with elements of $\wedge^2 W$.  In particular, the restriction
to $W$ of the algebraic intersection form can be identified with an element $\omega_W$ of $\wedge^2 W \subset \wedge^2 H_{\Z}$. 
If $\{a_1,b_1,\ldots,a_h,b_h\}$ is a symplectic basis for $W$, then
\[\omega_W = a_1 \wedge b_1 + \cdots + a_h \wedge b_h.\]
The importance for us of these elements comes from:

\begin{lemma}
\label{lemma:identifyform}
Let $S \cong \Sigma_h^1$ be a subsurface of $\Sigma_g$ such that the basepoint $\ast$ of $\Sigma_g$ lies on $\partial S$
and let $\rho\colon [\pi_1(S),\pi_1(S)] \longrightarrow \wedge^2 \HH_1(S)$
be the commutator projection map.  Let $\gamma \in [\pi_1(S),\pi_1(S)]$ be a simple 
closed separating curve bounding on its right side a subsurface
$X \cong \Sigma_k^1$ of $S$ with $\partial X \cap \partial S = \{\ast\}$.  Then
$\rho(\gamma) = \omega_{\HH_1(X)}$.
\end{lemma}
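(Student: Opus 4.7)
The plan is to write $\gamma$ explicitly as a product of commutators using the standard surface group presentation on $X$, then apply the homomorphism $\rho$ term by term.

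First, I would examine the subsurface $X$ carefully. Since $X \cong \Sigma_k^1$ is a genus-$k$ subsurface of $S$ whose intersection with $\partial S$ is just the basepoint $\ast$, its boundary is a single simple closed loop based at $\ast$, which by hypothesis is $\gamma$ (oriented so that $X$ sits on its right). Inside $\pi_1(X,\ast)$ I would choose a standard geometric symplectic generating set $\{\alpha_1,\beta_1,\ldots,\alpha_k,\beta_k\}$ of the sort arising from the usual planar model of a surface with boundary. For such a generating set the surface relation
\[\gamma = \prod_{i=1}^{k}[\alpha_i,\beta_i]\]
holds in $\pi_1(X,\ast)$, and hence also in $\pi_1(S,\ast)$ via the inclusion $X \hookrightarrow S$. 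Moreover, the images $\overline{\alpha_i},\overline{\beta_i}$ in $\HH_1(S)$ lie in $\HH_1(X)$ and form a symplectic basis of $\HH_1(X)$.

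Next, I would exploit the fact that $\rho$ is not just a set map but a homomorphism: it factors through the abelian group $\HH_1([\pi_1(S),\pi_1(S)])$, so it is additive on products in $[\pi_1(S),\pi_1(S)]$. Applying $\rho$ term by term to the surface relation and using the defining identity $\rho([x,y]) = \overline{x}\wedge\overline{y}$ gives
\[\rho(\gamma) = \sum_{i=1}^{k} \rho([\alpha_i,\beta_i]) = \sum_{i=1}^{k} \overline{\alpha_i}\wedge\overline{\beta_i}.\]
The right-hand side is, by definition, $\omega_{\HH_1(X)}$ computed in the chosen symplectic basis, so $\rho(\gamma) = \omega_{\HH_1(X)}$, as required.

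The only subtle point is matching orientation conventions: one must ensure that the hypothesis ``$\gamma$ bounds $X$ on its right side'' leads to $\gamma$ itself (rather than $\gamma^{-1}$) equalling $\prod[\alpha_i,\beta_i]$ for a correctly oriented symplectic basis. This is the main thing to pin down, but it is routine from the standard planar picture of $\Sigma_k^1$: drawing the $\alpha_i,\beta_i$ as based loops inside $X$ in the usual way makes the orientation of the boundary match that of $\gamma$. (As a sanity check, reversing the orientation of $\gamma$ swaps the two sides of the separating curve in $S$, which replaces $\HH_1(X)$ by its symplectic complement and negates $\omega_{\HH_1(X)}$ accordingly.)
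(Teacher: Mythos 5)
Your argument is correct and follows the same route as the paper: pick a standard geometric generating set $\{\alpha_1,\beta_1,\ldots,\alpha_k,\beta_k\}$ for $\pi_1(X)$ with $\gamma=\prod_i[\alpha_i,\beta_i]$, use that $\rho$ is additive (it factors through $\HH_1([\pi_1(S),\pi_1(S)])$), and evaluate $\rho([\alpha_i,\beta_i])=\oalpha_i\wedge\obeta_i$ to get $\omega_{\HH_1(X)}$. The only difference is that the paper makes the orientation bookkeeping entirely pictorial, while you spell it out verbally; both are fine.
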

\begin{proof}
We can draw $S$ and $\gamma$ and $X$ as follows:\\
\Figure{Pi1GeneratorsSubsurface1}
As in the following figure, we can then find a generating set $\{\alpha_1,\beta_1,\ldots,\alpha_k,\beta_k\}$ for
$\pi_1(X)$ such that $\gamma = [\alpha_1,\beta_1] \cdots [\alpha_k,\beta_k]$:\\
\Figure{Pi1GeneratorsSubsurface2}
We then have
$\rho(\gamma) = \rho([\alpha_1,\beta_1])+\cdots+\rho([\alpha_k,\beta_k]) = \oalpha_1 \wedge \obeta_1 + \cdots + \oalpha_k \wedge \obeta_k = \omega_{\HH_1(X)}$.
\end{proof}

\subsection{Redundancy}

With the above preliminaries, the following identifies the redundancies between our generators:

\begin{lemma}
\label{lemma:redundancykg}
In $\cK_g$, we have the following relations:
\begin{itemize}
\item[(a)] For all orthogonal genus-$1$ symplectic summands $V_1$ and $V_2$ of $H_{\Z}$, the relation
$\Pres{V_1,\omega_{V_2}} = \Pres{\omega_{V_1},V_2}$.
\item[(b)] For all genus-$1$ symplectic summands $V$ of $H_{\Z}$, the relation $\Pres{V,\omega_{V^{\perp}}} = \Pres{\omega_{V^{\perp}},V}$.
\end{itemize}
\end{lemma}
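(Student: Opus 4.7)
The plan is to realize both generators appearing in each identity as the same underlying element of $\cK_g$, by making compatible geometric choices in the definitions of $\Pres{V,\kappa}$ and $\Pres{\kappa,V}$.

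For part (a), I would first use Johnson's realization result (the same one invoked in the definition of our generators) to choose disjoint genus-$1$ subsurfaces $T_1, T_2 \subset \Sigma_g$ meeting only at the basepoint $\ast$ with $\HH_1(T_i) = V_i$; this is possible because $V_1 \perp V_2$. For $i=1,2$, let $\delta_i \in \pi_g$ be the boundary of $T_i$ oriented with $T_i$ on its right, and let $S_i$ be the complement, so $T_{3-i} \subset S_i$. Taking $\delta = \delta_1$ in the construction of $\Pres{V_1,\omega_{V_2}}$, the curve $\delta_2$ lies in $S_1$, bounds $T_2$ on its right, and satisfies $\partial T_2 \cap \partial S_1 = \{\ast\}$; Lemma~\ref{lemma:identifyform} then gives $\rho(\delta_2) = \omega_{V_2}$, so $\Pres{V_1,\omega_{V_2}} = \phi_L(\delta_2) = \CgCg{\Cg{\delta_1}}{\Cg{\delta_2}}$. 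Using instead $\delta = \delta_2$ in the construction of $\Pres{\omega_{V_1},V_2}$, the curve $\delta_1$ lies in $S_2$, bounds $T_1$ on its right, and satisfies $\partial T_1 \cap \partial S_2 = \{\ast\}$, so Lemma~\ref{lemma:identifyform} gives $\rho(\delta_1) = \omega_{V_1}$ and therefore $\Pres{\omega_{V_1},V_2} = \phi_R(\delta_1) = \CgCg{\Cg{\delta_1}}{\Cg{\delta_2}}$. The two right-hand sides agree, establishing (a).

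For part (b), my plan is to take a single curve $\delta$ and use it simultaneously in both slots. Choose $\delta \in [\pi_g,\pi_g]$ bounding $T \cong \Sigma_1^1$ on the right with $\HH_1(T) = V$, and let $S \cong \Sigma_{g-1}^1$ be the complement, so $\HH_1(S) = V^{\perp}$; with our orientation conventions $\delta$ equals $\partial S$ oriented so that $S$ is on its left, which is the standard boundary orientation. Picking a standard symplectic generating set $\{\alpha_1,\beta_1,\ldots,\alpha_{g-1},\beta_{g-1}\}$ for $\pi_1(S)$ so that $\delta = \prod_{i=1}^{g-1}[\alpha_i,\beta_i]$, the same computation that proves Lemma~\ref{lemma:identifyform} shows $\rho(\delta) = \sum_i \alpha_i \wedge \beta_i = \omega_{V^{\perp}}$. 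Substituting $\eta = \delta$ in the definitions of both maps, I conclude that $\Pres{V,\omega_{V^{\perp}}} = \phi_L(\delta) = \CgCg{\Cg{\delta}}{\Cg{\delta}}$ and $\Pres{\omega_{V^{\perp}},V} = \phi_R(\delta) = \CgCg{\Cg{\delta}}{\Cg{\delta}}$ are the same element of $\cK_g$.

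Both parts are essentially unwindings of definitions once the geometric configurations are set up correctly, and I do not expect a serious obstacle. The subtlest point is the orientation bookkeeping in part (b): one must verify that $\delta$ itself (rather than $\delta^{-1}$) expresses $\partial S$ with respect to a positively oriented symplectic basis of $\HH_1(S)$, so that $\rho(\delta) = +\omega_{V^{\perp}}$ with the correct sign. This is guaranteed because the convention of placing $T$ to the right of $\delta$ forces $S$ to the left, matching the standard convention that a surface lies to the left of its oriented boundary.
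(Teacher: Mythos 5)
Your approach is essentially the same as the paper's. Part (a) is exactly the paper's argument (choose disjoint genus-$1$ subsurfaces $T_1,T_2$ and apply Lemma~\ref{lemma:identifyform} twice), so there is nothing further to say there. In part (b), however, your explicit sign claim is backwards, even though the conclusion you draw is still correct.

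You assert $\rho(\delta) = +\omega_{V^{\perp}}$, citing the convention that ``a surface lies to the left of its oriented boundary.'' But Lemma~\ref{lemma:identifyform} is set up with the opposite convention: a curve $\gamma$ bounding a subsurface $X$ on its \emph{right} has $\rho(\gamma) = +\omega_{\HH_1(X)}$. Since your $\delta$ bounds $T$ on its right and hence has $S$ on its left, it is $\delta^{-1}$ (pushed slightly into the interior of $S$) that bounds a copy of $S$ on its right, so $\rho(\delta^{-1}) = \omega_{V^{\perp}}$ and $\rho(\delta) = -\omega_{V^{\perp}}$. One can double-check this from the surface relator $\prod_{i=1}^{g}[\alpha_i,\beta_i]=1$: with $\delta = [\alpha_g,\beta_g]$ bounding the $(a_g,b_g)$-handle on its right, inside $\pi_1(S)$ one has $\delta = \bigl([\alpha_1,\beta_1]\cdots[\alpha_{g-1},\beta_{g-1}]\bigr)^{-1}$, whose commutator projection is $-\omega_{V^{\perp}}$. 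Accordingly the correct identification is $\Pres{V,\omega_{V^{\perp}}} = \phi_L(\delta^{-1}) = \CgCg{\Cg{\delta}}{\Cg{\delta^{-1}}} = -\CgCg{\Cg{\delta}}{\Cg{\delta}}$, and similarly $\Pres{\omega_{V^{\perp}},V} = -\CgCg{\Cg{\delta}}{\Cg{\delta}}$; this is precisely the computation the paper carries out with $\gamma^{-1}$. Because your sign error is identical on both sides, the relation $\Pres{V,\omega_{V^{\perp}}} = \Pres{\omega_{V^{\perp}},V}$ still follows, so the lemma is proved; but the intermediate claim that each side equals $+\CgCg{\Cg{\delta}}{\Cg{\delta}}$ is wrong, and this would bite you in any context where the scalar actually matters.
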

\begin{proof}
We start by verifying (a).  Let $V_1$ and $V_2$ be orthogonal genus-$1$ symplectic summands of $H_{\Z}$.  Using work of Johnson \cite{JohnsonConj},
we can find $\gamma_1,\gamma_2 \in \pi_g$ such that:
\begin{itemize}
\item for $i=1,2$, the curve $\gamma_i$ is a simple closed separating curve bounding a genus-$1$ surface $T_i \cong \Sigma_1^1$ on its right side with $\HH_1(T_i) = V_i$; and
\item the intersection of $T_1$ and $T_2$ is the basepoint.
\end{itemize}
See here:\\
\Figure{Gamma1andGamma2WithTAndV}
Using Lemma \ref{lemma:identifyform}, we have
\[\Pres{V_1,\omega_{V_2}} = \CgCg{\Cg{\gamma_1}}{\Cg{\gamma_2}} \quad \text{and} \quad \Pres{\omega_{V_1},V_2} = \CgCg{\Cg{\gamma_1}}{\Cg{\gamma_2}},\]
so $\Pres{V_1,\omega_{V_2}} = \Pres{\omega_{V_1},V_2}$, as claimed in (a).

We next verify (b).  Let $V$ be a genus-$1$ symplectic summand of $H_{\Z}$.  Again using work of Johnson \cite{JohnsonConj},
we can find a simple close separating curve $\gamma \in [\pi_g,\pi_g]$ that bounds a genus-$1$ subsurface $T \cong \Sigma_1^1$ on its
right side with $\HH_1(T) = V$.  Note that $\gamma^{-1}$ can be homotoped to be disjoint from $\gamma$ and bound a subsurface $S \cong \Sigma_{g-1}^1$
on its right side such that $\HH_1(S) = V^{\perp}$ and such that $S$ and $T$ only intersect at the basepoint:\\
\Figure{GammaWithTS}
Using Lemma \ref{lemma:identifyform}, we have
\[\Pres{V,\omega_{V^{\perp}}} = \CgCg{\Cg{\gamma}}{\Cg{\gamma^{-1}}} \quad \text{and} \quad \Pres{\omega_{V^{\perp}},V} = \CgCg{\Cg{\gamma^{-1}}}{\Cg{\gamma}}.\]
It follows that
\[\Pres{V,\omega_{V^{\perp}}} = \CgCg{\Cg{\gamma}}{\Cg{\gamma^{-1}}} = -\CgCg{\Cg{\gamma}}{\Cg{\gamma}} = \CgCg{\Cg{\gamma^{-1}}}{\Cg{\gamma}} = \Pres{\omega_{V^{\perp}},V},\]
as claimed by (b).
\end{proof}

\section{Identifying \texorpdfstring{$\cK_g$}{Kg}}
\label{section:kgalg}

Recall that the algebraization map is the map
\[\fa\colon (\cC_g^{\otimes 2})_{\Delta(\Torelli_{g,1})} \twoheadrightarrow (\cC_g^{\otimes 2})_{\Torelli_{g,1} \times \Torelli_{g,1}} \cong \left((\wedge^2 H)/\Q\right)^{\otimes 2}.\]
See Corollary \ref{corollary:doublecoinv} for this isomorphism.
We close the paper by proving Theorem \ref{theorem:part3theorem}, whose statement
we recall:

\newtheorem*{theorem:part3theorem}{Theorem \ref{theorem:part3theorem}}
\begin{theorem:part3theorem}
The restriction of the algebraization map $\fa$ to $\cK_g$ is an injection.
\end{theorem:part3theorem}
\begin{proof}
For $\kappa \in \wedge^2 H$, let $\okappa$ be the image of $\kappa$ in $(\wedge^2 H)/\Q$.
For a genus-$1$ symplectic summand $V$ of $H_{\Z}$ and $\kappa \in \wedge^2 V_{\Q}^{\perp}$,
it is immediate from Lemma \ref{lemma:identifyform} that
\[\fa(\Pres{V,\kappa}) = \oomega_V \otimes \okappa \quad \text{and} \quad \fa(\Pres{\kappa,V}) = \okappa \otimes \oomega_V.\]
Here are are identifying $\wedge^2 V_{\Q}^{\perp}$ with the corresponding subspace of
$\wedge^2 H$ to allow us to talk about $\okappa \in (\wedge^2 H)/\Q$.

Now define $\fK_g$ to be the vector space with the following presentation:
\begin{itemize}
\item {\bf Generators}.  For all genus-$1$ symplectic summands $V$ of $H_{\Z}$ and all
$\kappa \in \wedge^2 V_{\Q}^{\perp}$, generators $\PresPrime{V,\kappa}$ and $\PresPrime{\kappa,V}$.
\item {\bf Relations}.  The following families of relations:
\begin{itemize}
\item For all genus-$1$ symplectic summands $V$ of $H_{\Z}$ and all $\kappa_1,\kappa_2 \in \wedge^2 V_{\Q}^{\perp}$
and all $\lambda_1,\lambda_2 \in \Q$, the linearity relations
\begin{align*}
\PresPrime{V,\lambda_1 \kappa_1 + \lambda_2 \kappa_2} &= \lambda_1 \PresPrime{V,\kappa_1} + \lambda_2 \PresPrime{V,\kappa_2} \quad \text{and} \\
\PresPrime{\lambda_1 \kappa_1 + \lambda_2 \kappa_2,V} &= \lambda_1 \PresPrime{\kappa_1,V} + \lambda_2 \PresPrime{\kappa_2,V}.
\end{align*}
\item For all orthogonal genus-$1$ symplectic summands $V$ and $W$ of $H_{\Z}$, the relation
\[\PresPrime{V,\omega_W} = \PresPrime{\omega_V,W}.\]
\item For all genus-$1$ symplectic summands $V$ of $H_{\Z}$, the relation
\[\PresPrime{V,\omega_{V^{\perp}}} = \PresPrime{\omega_{V^{\perp}},V}.\]
\end{itemize}
\end{itemize}
Define a map $\pi\colon \fK_g \rightarrow \cK_g$ on generators $\PresPrime{V,\kappa}$ and $\PresPrime{\kappa,V}$
by letting
\[\pi(\PresPrime{V,\kappa}) = \Pres{V,\kappa} \quad \text{and} \quad \pi(\PresPrime{V,\kappa}) = \Pres{V,\kappa}.\]
This makes sense since by Lemmas \ref{lemma:refinegensetkg} and \ref{lemma:redundancykg} it
takes relations to relations.  Moreover, since the image of $\pi$ contains all the generators
of $\fK_g$ identified by Lemma \ref{lemma:refinegensetkg} it follows that $\pi$ is surjective.

The composition $\fa \circ \pi\colon \fK_g \rightarrow \left((\wedge^2 H)/\Q\right)^{\otimes 2}$ satisfies
\[\fa \circ \pi(\PresPrime{V,\kappa}) = \oomega_V \otimes \okappa \quad \text{and} \quad
\fa \circ \pi(\PresPrime{\kappa,V}) = \okappa \otimes \oomega_V.\]
In \cite[Theorem A.6]{MinahanPutmanRepPresentations}, the authors proved that
this map $\fa \circ \pi$ is injective.  The paper \cite{MinahanPutmanRepPresentations} calls
the image of $\fa \circ \pi$ the {\em symmetric kernel}.  It 
is the kernel of a contraction
\[\left((\wedge^2 H)/\Q\right)^{\otimes 2} \rightarrow \Sym^2(H).\]
Since $\pi$ is surjective and $\fa \circ \pi$ is injective, it follows that $\fa$ is injective,\footnote{And
also that $\pi$ is an isomorphism, so $\cK_g$ is also isomorphic to the symmetric kernel.}
as desired.
\end{proof}

\end{document}